\newtheorem{proposition}{Proposition}[section]
\newtheorem{theorem}{Theorem}[section]
\newtheorem{lemma}[proposition]{Lemma}
\newtheorem{remark}{Remark}[section]
\numberwithin{equation}{section}
\title{
Global dynamics above the ground state energy for the combined power-type nonlinear Schr\"odinger equations with energy-critical growth 
 at low frequencies  
}
\author{Takafumi Akahori, Slim Ibrahim, Hiroaki Kikuchi and Hayato Nawa}
\date{}
\begin{document}
\maketitle

\begin{abstract}
We consider the combined power-type nonlinear Schr\"odinger equations with 
 energy-critical growth, and study the solutions slightly above the ground state  threshold at low frequencies, so that we obtain a so-called nine-set theory developed by Nakanishi and Schlag \cite{Nakanishi-Schlag1, Nakanishi-Schlag2}.
\end{abstract}

\section{Introduction}\label{18/01/09/07:17}
We consider the following nonlinear Schr\"odinger equation: 
\begin{equation}\label{12/03/23/17:57}\tag{NLS}
i\displaystyle{\frac{\partial \psi}{\partial t}} +\Delta \psi
+|\psi|^{p-1}\psi
+|\psi|^{\frac{4}{d-2}}\psi
=0,   
\end{equation}
where $\psi=\psi(x,t)$ is a complex-valued function on $\mathbb{R}^{d}\times \mathbb{R}$ ($d\ge 3$), $\Delta$ is the Laplace operator on $\mathbb{R}^{d}$ and $p$ satisfies that 
\begin{equation}\label{09/05/13/15:03}
2+\frac{4}{d} <p+1 < 2^{*}:=2+\frac{4}{d-2} .
\end{equation}
We denote the mass and the Hamiltonian of \eqref{12/03/23/17:57} by $\mathcal{M}$ and $\mathcal{H}$, respectively, that is, 
\begin{align}
\label{13/03/30/2:01}
\mathcal{M}(u)
&:=
\frac{1}{2}\left\| u \right\|_{L^{2}}^{2},
\\[6pt]
\label{13/03/30/2:02}
\mathcal{H}(u)
&:=
\frac{1}{2}\left\|\nabla u \right\|_{L^{2}}^{2}
-
\frac{1}{p+1}\left\| u \right\|_{L^{p+1}}^{p+1}
-
\frac{1}{2^{*}}\left\|u \right\|_{L^{2^{*}}}^{2^{*}}.
\end{align}

The Cauchy problem for \eqref{12/03/23/17:57} is locally well-posed  in $H^{1}$ (see, e.g., Proposition 3.1 in \cite{Tao-Visan-Zhang}), and the mass and the Hamiltonian are conserved quantities for the flow defined by \eqref{12/03/23/17:57}. Furthermore, for any solution $\psi$ of finite variance, we have the ``virial identity'':
\begin{equation}\label{15/05/07/10:34}
\frac{d^{2}}{dt^{2}}\int_{\mathbb{R}^{d}}|x|^{2}|\psi(x,t)|^{2}\,dx
=
8\mathcal{K}(\psi(t)),
\end{equation}
where  
\begin{equation}\label{13/03/30/12:13}
\mathcal{K}(u):=
\left\|\nabla u \right\|_{L^{2}}^{2}
-
\frac{d(p-1)}{2(p+1)}\left\| u \right\|_{L^{p+1}}^{p+1}
-
\left\|u \right\|_{L^{2^{*}}}^{2^{*}}. 
\end{equation}
It is easy to see that   
\begin{equation}\label{14/09/19/16:12}
\mathcal{H}(u)> \frac{1}{2}\mathcal{K}(u).
\end{equation}

A standing wave of \eqref{12/03/23/17:57} with frequency $\omega$ is a solution  to \eqref{12/03/23/17:57} of the form $e^{it\omega}u$, so that $u$ solves 
 the following semilinear elliptic equation:

\begin{equation}\label{12/03/23/18:09} 
\omega u -\Delta u
-|u|^{p-1}u
-|u|^{\frac{4}{d-2}}u
=0,
\qquad 
u \in H^{1}(\mathbb{R}^{d})\setminus\{0\} .
\end{equation}
Moreover, a ground state of \eqref{12/03/23/18:09} is a solution to \eqref{12/03/23/18:09} of the minimal action, where by the action, we mean the functional $\mathcal{S}_{\omega}$ defined by 
\begin{equation}\label{13/03/30/2:03}
\mathcal{S}_{\omega}:= \omega \mathcal{M}+\mathcal{H}.
\end{equation}
In \cite{AIKN}, the same authors showed the existence of a ground state 
 via the variational problem   
\begin{equation}
\label{12/03/24/17:51}
m_{\omega}
:=
\inf\big\{ \mathcal{S}_{\omega}(u) \colon 
u \in H^{1}(\mathbb{R}^{d})\setminus \{0\}, \ \mathcal{K}(u)=0
\big\}.
\end{equation}
More precisely, we proved that for any $d\ge 4$, any $p$ with $2+\frac{4}{d}< p +1 < 2^{*}$ and any $\omega>0$, the variational problem associated with $m_{\omega}$ has a minimizer, and any minimizer for $m_{\omega}$ becomes a ground state of \eqref{12/03/23/18:09}. Moreover, we see from Proposition 1.2 in \cite{AIKN} that for any $d\ge 3$, any $p$ with $ 2+\frac{4}{d}< p +1 < 2^{*}$ and any $\omega>0$,   
\begin{equation}\label{12/03/23/18:17}
\begin{split}
m_{\omega}
&=
\inf\big\{ \mathcal{I}_{\omega}(u) \colon 
u \in H^{1}(\mathbb{R}^{d})\setminus \{0\}, \ \mathcal{K}(u)\le 0
\big\}
\\[6pt]
&=
\inf\big\{ \mathcal{J}_{\omega}(u) \colon 
u \in H^{1}(\mathbb{R}^{d})\setminus \{0\}, \ \mathcal{K}(u)\le 0
\big\},
\end{split}
\end{equation}
where 
\begin{align}
\label{13/03/30/12:14}
\mathcal{I}_{\omega}(u)
&:=\mathcal{S}_{\omega}(u)
-\frac{2}{d(p-1)}\mathcal{K}(u),
\\[6pt]
\label{13/03/08/10:38}
\mathcal{J}_{\omega}(u)
&:=\mathcal{S}_{\omega}(u)
-\frac{1}{2}\mathcal{K}(u)
.
\end{align}
It is worthwhile noting that $\mathcal{I}_{\omega}$  and $\mathcal{J}_{\omega}$  do not contain the $L^{p+1}$-norm and $\dot{H}^{1}$-norm, respectively: Precisely, 
\begin{align}
\label{15/12/31/11:01}
\mathcal{I}_{\omega}(u)
&=
\frac{\omega}{2}\|u \|_{L^{2}}^{2}
+
\frac{p-1-\frac{4}{d}}{2(p-1)} \|\nabla u \|_{L^{2}}^{2}
+
\frac{2^{*}-(p+1)}{2^{*}(p-1)}\|u \|_{L^{2^{*}}}^{2^{*}}
,
\\[6pt]
\label{15/12/31/11:02}
\mathcal{J}_{\omega}(u)
&=
\frac{\omega}{2}\|u \|_{L^{2}}^{2}
+
\frac{d(p-1-\frac{4}{d})}{4(p+1)}\|u \|_{L^{p+1}}^{p+1}
+
\frac{1}{d} \|u \|_{L^{2^{*}}}^{2^{*}}
.
\end{align}

\begin{remark}\label{17/12/30/14:47}
The authors have to say that Theorem 1.2 in \cite{AIKN} contains a mistake. Actually, if $d=3$ and $3<p<5$, then for any $\omega>0$, we can prove the existence of ground state of \eqref{12/03/23/18:09} (cf. Theorem 1 in \cite{Zhang-Zou}). Moreover, we find from Proposition \ref{17/12/25/17:17} in Section \ref{17/12/16/15:13} that if $d=3$ and $1<p<5$, then  there exists $\omega_{3}>0$ such that for any $0< \omega <\omega_{3}$, the equation \eqref{12/03/23/18:09} admits at least one ground state. 
\end{remark}

Our aim in this paper is to study the behavior of solutions to \eqref{12/03/23/17:57} slightly above the ``ground state threshold at low frequencies'', in the spirit of Nakanishi and Schlag \cite{Nakanishi-Schlag1, Nakanishi-Schlag2}. To this end, we need detailed information about ground states of \eqref{12/03/23/18:09} . We will see that for any  $d\ge 3$ and $2+\frac{4}{d}<p+1 <2^{*}$, there exists $\omega_{1}>0$ such that  for any $0< \omega <\omega_{1}$, a positive radial ground state $\Phi_{\omega}$ of \eqref{12/03/23/18:09} exists and unique (see Proposition \ref{13/03/16/15:33} and {\rm (i)} of Proposition \ref{13/03/29/15:30}). Furthermore, the ground state $\Phi_{\omega}$ satisfies that $\frac{d}{d\omega}\mathcal{M}(\Phi_{\omega})<0$ on $(0,\omega_{1})$ (see {\rm (iii)} of Proposition \ref{13/03/29/15:30}), so that the inverse of the mapping $\alpha \in (0,\omega_{1}) \to \mathcal{M}(\Phi_{\alpha}) \in (\mathcal{M}(\Phi_{\omega_{1}}), \infty)$ exists (we will see that $\lim_{\omega \to 0}\mathcal{M}(\Phi_{\omega})=\infty$ in \eqref{13/03/03/10:05}). 
\par 
Throughout this paper, we use the symbol $\Phi_{\omega}$ to denote the positive  radial ground state of \eqref{12/03/23/18:09}. Moreover, $\mathscr{O}(\Phi_{\omega})$ denotes the orbit     
\begin{equation}
\label{13/03/02/20:46}
\mathscr{O}(\Phi_{\omega}):=
\big\{ e^{i\theta} \Phi_{\omega} \colon \theta \in \mathbb{R} \big\}.
\end{equation}

Now,  we state our main result.
\begin{theorem}\label{15/03/22/11:28}
Assume either $d= 3$ and $3\le p < 5$, or $d\ge 4$ and $1+\frac{4}{d-1}<p< 2^{*}-1$. Then, there exists $\omega_
{*}>0$ such that for any $\omega \in (0,\omega_{*})$, there exists a 
positive function $\varepsilon_{\omega}\colon [0,\infty) \to (0,\infty)$ 
 with the following property: Set  
\begin{equation}\label{15/03/22/11:23}
\widetilde{PW}_{\omega}
:=
\big\{u \in H^{1}(\mathbb{R}^{d}) \colon 
\mathcal{S}_{\omega}(u) <m_{\omega}+\varepsilon_{\omega}(\mathcal{M}(u)) 
\big\}.
\end{equation}
Then, any radial solution $\psi$ starting from $\widetilde{PW}_{\omega}$ 
exhibits one of the following scenarios: In the statement below, $\alpha>
0$ denotes the constant such that $\mathcal{M}(\psi)=\mathcal{M}(\Phi_{\alpha})$.  
\\
{\rm (i)} Scattering both forward and backward in time;
\\
{\rm (ii)} Finite time blowup both forward and backward in time; 
\\
{\rm (iii)} Scattering forward in time, and finite time blowup backward 
in time;
\\
{\rm (iv)} Finite time blowup forward in time, and scattering backward 
in time;
\\
{\rm (v)} Trapped by $\mathscr{O}(\Phi_{\alpha})$ forward in time, and 
scattering backward in time;
\\
{\rm (vi)} Scattering forward in time, and trapped by 
$\mathscr{O}(\Phi_{\alpha})$ backward in time;
\\
{\rm (vii)} Trapped by $\mathscr{O}(\Phi_{\alpha})$ forward in time, and 
finite time blowup backward in time;
\\
{\rm (viii)} Finite time blowup forward in time, and trapped by 
 $\mathscr{O}(\Phi_{\alpha})$ backward in time;
\\ 
{\rm (ix)} Trapped by $\mathscr{O}(\Phi_{\alpha})$ both forward and 
backward in time. 
\\
Here, ``scattering forward in time'' means that the maximal lifespan of 
a solution $\psi$ is infinity and there exists $\phi \in H^{1}(\mathbb{R}
^{d})$ such that 
\begin{equation}\label{15/02/21/16:35}
\lim_{t\to \infty}\big\|\psi(t)-e^{it\Delta}\phi \big\|_{H^{1}}=0;
\end{equation}
``blowup forward in time '' means that the maximal lifespan of a 
solution is finite; and ``trapped by $\mathscr{O}(\Phi_{\alpha})$ 
forward in time'' means that 
 the maximal lifespan of a solution is infinity and the solution stays 
in some neighborhood of $\mathscr{O}(\Phi_{\alpha})$ in $H^{1}(\mathbb{R}
^{d})$ after some time. The terms corresponding to ``backward in time'' 
are used in a similar manner. 
\end{theorem}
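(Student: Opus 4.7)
The plan is to follow the Nakanishi--Schlag program \cite{Nakanishi-Schlag1, Nakanishi-Schlag2}, adapted to our combined power nonlinearity at low frequency, and to decompose the dynamics according to the sign of the virial functional $\mathcal{K}$ and the behavior relative to the orbit $\mathscr{O}(\Phi_{\alpha})$. The starting observation is that on the set $\{\mathcal{S}_{\omega}<m_{\omega}\}$ the variational characterization \eqref{12/03/23/18:17} splits $H^{1}$ into two invariant regions, $\{\mathcal{K}>0\}$ and $\{\mathcal{K}<0\}$: on the former one expects scattering (Kenig--Merle type), on the latter blowup (Glassey/Ogawa--Tsutsumi). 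Hence the nontrivial content lies in the thin shell $m_{\omega}\le \mathcal{S}_{\omega}<m_{\omega}+\varepsilon_{\omega}(\mathcal{M})$, where the dynamics passes close to $\mathscr{O}(\Phi_{\alpha})$ and the sign of $\mathcal{K}$ is not conserved a priori. Here one compares with the ground state of the correct mass $\alpha$ fixed by conservation of $\mathcal{M}$, which is why the trapping alternatives are stated with $\Phi_{\alpha}$ rather than $\Phi_{\omega}$.

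The first technical step is a linearized analysis around $\Phi_{\alpha}$. Using uniqueness and non-degeneracy at low frequency (Proposition \ref{13/03/29/15:30}), one shows that $\mathcal{S}_{\alpha}''(\Phi_{\alpha})$ has exactly one negative direction generating a pair of real eigenvalues $\pm k_{\alpha}$ of the linearized Hamiltonian flow, together with kernel directions coming from phase symmetry. With this in hand I would introduce modulation parameters $(\theta(t),\lambda(t),\gamma(t))$ to write $\psi=e^{i\theta}(\Phi_{\alpha}+v)$ with $v$ symplectically orthogonal to the kernel and to the stable/unstable modes, and prove the ejection lemma: whenever a solution enters a small tube around $\mathscr{O}(\Phi_{\alpha})$ with energy excess $\le \varepsilon_{\omega}(\mathcal{M})$ but is not precisely on the center--stable manifold, the unstable coordinate grows like $e^{k_{\alpha} t}$ until the solution leaves the tube; moreover, at the exit time, the sign of $\mathcal{K}(\psi)$ is determined by the sign of the dominant unstable coordinate.

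The heart of the argument is the one-pass (no-return) theorem: once a solution leaves the tube around $\mathscr{O}(\Phi_{\alpha})$, it never returns. I would prove this by a localized virial identity, integrating $\mathcal{K}(\psi(t))$ in time against a smooth radial cutoff $\chi_{R}(x)$. The radial assumption and Strauss embedding allow one to absorb the error terms generated by the cutoff, using that $\Phi_{\alpha}$ has small potential energy at low frequency (it converges, after rescaling, to the Aubin--Talenti bubble). Combining monotonicity of the localized virial, the control \eqref{14/09/19/16:12}, and the ejection estimate, one shows that the excursion away from the tube is monotone: $\mathcal{K}$ keeps a definite sign, and $|\mathcal{K}(\psi(t))|$ stays bounded below, which precludes re-entry.

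With the one-pass theorem, the forward (resp. backward) dynamics of any $\psi_{0}\in \widetilde{PW}_{\omega}$ falls into exactly one of three scenarios: (a) the solution stays trapped in the tube forever and is trapped by $\mathscr{O}(\Phi_{\alpha})$; (b) the solution exits with $\mathcal{K}<0$, and a Glassey--Ogawa--Tsutsumi virial argument (again localized via a radial cutoff $\chi_{R}$) yields finite-time blowup; (c) the solution exits with $\mathcal{K}>0$, and one runs a Kenig--Merle concentration-compactness/rigidity argument to get scattering. Taking the product of the three forward and three backward scenarios produces the nine cases in the theorem. The principal obstacles I anticipate are two: proving scattering below the threshold for the combined subcritical/critical nonlinearity in the radial class, where the energy-critical term forces a full Kenig--Merle profile decomposition controlling both the $\dot H^{1}$ and $L^{p+1}$ nonlinearities simultaneously; and calibrating the smallness $\varepsilon_{\omega}(\mathcal{M})$ and cutoff radius $R$ in the one-pass step so that the error terms produced by the low-frequency, low-mass ground state (which is close to the non-$L^{2}$ Aubin--Talenti bubble) are genuinely subordinate to the unstable-mode growth.
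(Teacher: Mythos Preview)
Your outline of the Nakanishi--Schlag machinery (spectral decomposition, ejection lemma, one-pass theorem, forward/backward trichotomy) is broadly the right architecture and matches what the paper does for the mass-constrained problem (Theorem~\ref{13/03/16/12:01}). But there are two genuine problems.

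\textbf{First, the limiting profile is wrong.} You assert that at low frequency $\Phi_{\alpha}$ ``converges, after rescaling, to the Aubin--Talenti bubble'' and is ``close to the non-$L^{2}$ Aubin--Talenti bubble.'' This is false: Proposition~\ref{13/03/03/9:43} shows $T_{\omega}\Phi_{\omega}\to U$ in $H^{1}$ as $\omega\to 0$, where $U$ is the \emph{subcritical} ground state of \eqref{13/03/03/9:51}, not $W$. Consequently the linearized operators $\widetilde{L}_{\omega,\pm}$ are perturbations of $L_{\pm}^{\dagger}$ (Weinstein's operators around $U$), and the entire spectral step---the single negative direction, the nondegeneracy of the kernel, the existence of the unstable eigenvalue $\mu$---is proved by continuity from the subcritical theory (Lemma~\ref{13/03/36/15:35}, Proposition~\ref{13/02/20/9:41}, Lemma~\ref{13/03/31/16:12}). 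If you set this up as a perturbation of $W$ you will not get the correct kernel (the kernel of the linearization around $W$ contains the scaling direction $\Lambda W$, which has no analogue here) and you will not recover \eqref{13/03/30/10:40}. Relatedly, ``low-frequency, low-mass'' is backwards: from \eqref{13/03/03/10:05} with $q=2$ one has $\mathcal{M}(\Phi_{\omega})\sim \omega^{-s_{p}}\to\infty$ as $\omega\to 0$.

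\textbf{Second, you are missing the reduction that actually proves the stated theorem.} The statement of Theorem~\ref{15/03/22/11:28} allows arbitrary mass, but the Nakanishi--Schlag machinery is run on the mass-constrained set $A_{\alpha}^{\varepsilon(\alpha)}=\{\mathcal{S}_{\alpha}<m_{\alpha}+\varepsilon(\alpha),\ \mathcal{M}=\mathcal{M}(\Phi_{\alpha})\}$ (Theorem~\ref{13/03/16/12:01}). The proof of Theorem~\ref{15/03/22/11:28} in the paper is a short geometric reduction: one uses that $\omega\mapsto m_{\omega}$ is $C^{1}$, strictly increasing with $dm_{\omega}/d\omega=\mathcal{M}(\Phi_{\omega})$, and strictly concave in the sense of \eqref{15/03/22/15:00} (Lemma~\ref{15/03/22/14:59}), to build $\varepsilon_{\omega}(M)$ explicitly by \eqref{15/05/06/09:50} so that $\psi\in\widetilde{PW}_{\omega}$ with $\mathcal{M}(\psi)=M$ forces either $\psi\in PW_{\omega_{*}}$ (if $M\le\mathcal{M}(\Phi_{\omega_{*}})$, handled by Theorem~\ref{15/03/24/16:05}) or $\psi\in A_{\alpha(M)}^{\varepsilon(\alpha(M))}$. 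Without this step you have not defined $\varepsilon_{\omega}(\cdot)$ and have not connected $\widetilde{PW}_{\omega}$ to the set on which your dynamical argument runs.
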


\begin{remark}\label{17/10/29/17:28}
We continue our study in anticipation of validity of Theorem 1.1 for $d\ge 3$ and $1+\frac{4}{d} \le p < 2^{*}-1$. We describe main obstructions to the extension: 
\\
\noindent 
{\rm (1)}~The double critical case (namely, $p=1+\frac{4}{d}$) has a special difficulty that the same proof as {\rm (iii)} of Proposition \ref{13/03/29/15:30} cannot apply to derive the property \eqref{13/03/30/10:40}. In fact, the property \eqref{13/03/30/10:40} fails for the equation \eqref{15/01/29/09:41} with $p=1+\frac{4}{d}$. Moreover, in the double critical case $p=1+\frac{4}{d}$, the scaling-exponent $s_{p}$ (see \eqref{13/03/08/10:39})  vanishes ($s_{1+\frac{4}{d}}=0$), which requires some modifications in our argument, especially the proof of existence of unstable mode (Proposition \ref{13/02/20/9:41}).
\\
\noindent 
{\rm (2)}~The restriction on the exponent $p$ in Theorem \ref{15/03/22/11:28} comes from the ``one-pass theorem'' (see Theorem \ref{13/06/15/20:32} and Remark \ref{18/01/08/13:52}). 
\par 
We will undertake the extension of Theorem \ref{15/03/22/11:28} including the double critical case in a forthcoming paper as well as the high-frequency case $\omega \gg 1$.
\end{remark}

We give a proof of Theorem \ref{15/03/22/11:28} in Section \ref{15/03/22/11:50} below. In particular, we show how to construct the function $\varepsilon_{\omega}$ (see \eqref{15/05/06/09:50}). 
\par 
The equation \eqref{12/03/23/17:57} reminds us of  the following ones
\begin{align}
\label{15/05/06/11:39}
&i\displaystyle{\frac{\partial \psi}{\partial t}} +\Delta \psi
+|\psi|^{p-1}\psi
=0,
\\[6pt]
\label{14/01/31/11:03}
&i\displaystyle{\frac{\partial \psi}{\partial t}} +\Delta \psi
+|\psi|^{\frac{4}{d-2}}\psi
=0.
\end{align}
These equations are invariant  under the following scalings, respectively: 
\begin{align}
\label{15/05/08/11:47}
&\psi(x,t) \mapsto \lambda^{-\frac{2}{p-1}} 
\psi(\frac{x}{\lambda}, \frac{t}{\lambda^{2}}),
\\[6pt]
\label{15/05/08/11:48}
&\psi(x,t) \mapsto \lambda^{-\frac{d-2}{2}} \psi(\frac{x}{\lambda}, \frac{t}{\lambda^{2}}).
\end{align}
On the other hand, there is no scaling which leaves \eqref{12/03/23/17:57} invariant. 
\par 
The stationary problems corresponding to \eqref{15/05/06/11:39} and \eqref{14/01/31/11:03} are respectively   
\begin{align}
\label{15/01/29/09:41}
\omega v-\Delta v - |v|^{p-1}v&=0,
\\[6pt]
\label{12/08/22/13:34}
\Delta v +|v|^{\frac{4}{d-2}}v&=0.
\end{align}
Any solution $v$ to \eqref{15/01/29/09:41} gives rise to a standing wave $e^{i\omega t}v$ to \eqref{15/05/06/11:39}, and any solution to \eqref{12/08/22/13:34} solves the equation \eqref{14/01/31/11:03}. Here, we introduce the ``scaling-operator'' $T_{\omega}$ to be that for any function $f$ on $\mathbb{R}^{d}$,  
\begin{equation}\label{13/03/30/12:20}
T_{\omega} f(x):=\omega^{-\frac{1}{p-1}}f( \frac{x}{\sqrt{\omega}}).
\end{equation}
Then, putting $u:=T_{\omega}v$, we see that $u$ solves  
\begin{equation}\label{13/03/03/9:51}
u -\Delta u -|u|^{p-1}u=0.
\end{equation}
It is well known that the equation \eqref{13/03/03/9:51} has a unique positive solution up to translations which is radially symmetric with respect to some point. We denote the positive solution symmetric about the origin by  $U$. On the other hand, the equation \eqref{12/08/22/13:34} possesses a solution of the form 
\begin{equation}\label{13/03/16/16:43}
W(x):=\bigg( \frac{\sqrt{d(d-2)}}{1+|x|^{2}}\bigg)^{\frac{d-2}{2}}.
\end{equation}
Here, a positive $C^{2}$-solution of \eqref{12/08/22/13:34}  is unique  up to 
 scalings and translations (see Corollary 8.2 in \cite{CGS}). Following Br\'ezis-Nirenberg \cite{Brezis-Nirenberg}, we introduce the variational value  
\begin{equation}\label{12/03/24/17:52}
\sigma:=\inf\big\{ 
\left\| \nabla u \right\|_{L^{2}}^{2} \colon
u \in \dot{H}^{1}(\mathbb{R}^{d}), \ 
\left\| u \right\|_{L^{2^{*}}}=1
\big\}.
\end{equation}
Then, it is known that $u=W/\|W\|_{L^{2^{*}}}$ is a minimizer of the variational problem associated with $\sigma$ and 
\begin{equation}\label{12/08/22/13:32}
\sigma^{\frac{d}{2}}
=\left\|\nabla W \right\|_{L^{2}}^{2}
=\left\|W \right\|_{L^{2^{*}}}^{2^{*}}.
\end{equation}
We also introduce the variational value
\begin{equation}\label{15/08/10/17:31}
m^{\ddagger}:=
\inf\big\{ 
\mathcal{I}^{\ddagger}(u) \colon
u \in \dot{H}^{1}(\mathbb{R}^{d})\setminus \{0\}, \ 
\mathcal{K}^{\ddagger}(u)\le 0 
\big\},
\end{equation}
where 
\begin{equation}\label{14/09/20/13:55}
\mathcal{I}^{\ddagger}(u)
:=
\mathcal{H}^{\ddagger}(u)-\frac{2}{d(p-1)} \mathcal{K}^{\ddagger}(u)
\end{equation}
with 
\begin{align}
\label{14/02/01/17:06}
\mathcal{H}^{\ddagger}(u)
&:=
\frac{1}{2}\|\nabla u\|_{L^{2}}^{2}
-
\frac{1}{2^{*}}\|u \|_{L^{2^{*}}}^{2^{*}},
\\[6pt]
\label{14/02/01/17:08}
\mathcal{K}^{\ddagger}(u)
&:=
\|\nabla u \|_{L^{2}}^{2}
-
\|u\|_{L^{2^{*}}}^{2^{*}}
.
\end{align} 
Note that 
\begin{equation}\label{18/01/30/14:39}
\mathcal{I}^{\ddagger}(u)
=
\frac{d(p-1)-4}{2d(p-1)} \|\nabla u\|_{L^{2}}^{2}
+
\frac{4-(d-2)(p-1)}{2d(p-1)}\| u\|_{L^{2^{*}}}^{2^{*}}.
\end{equation}
In particular, $\mathcal{I}^{\ddagger}(u) \ge 0$ for $1+\frac{4}{d} \le p \le 2^{*}-1$.
\par 
We can verify (see Appendix \ref{17/12/16/15:13}) that 
\begin{equation}\label{15/02/24/20:55}
m^{\ddagger}=\mathcal{I}^{\ddagger}(W)=\mathcal{H}^{\ddagger}(W)
=
\frac{1}{d}\sigma^{\frac{d}{2}}.
\end{equation}
Furthermore, we proved in \cite{AIKN} that for any $d\ge 4$, any $1+\frac{4}{d} < p < 2^{*}-1$ and any $\omega>0$,
\begin{equation}\label{12/03/23/17:48}
0<m_{\omega}
<\frac{1}{d}\sigma^{\frac{d}{2}}.
\end{equation}
We can also prove \eqref{12/03/23/17:48} for $d=3$ if $\omega>0$ is sufficiently small (see Lemma \ref{17/12/30/10:09}).
\par 
Now, we go back to  the equation \eqref{12/03/23/18:09}. If $u$ is a solution to \eqref{12/03/23/18:09}, then $v:=T_{\omega}u$ satisfies that    
\begin{equation}\label{13/03/17/18:45}
v-\Delta v -|v|^{p-1}v-\omega^{\frac{2^{*}-(p+1)}{p-1}}|v|^{\frac{4}{d-2}}v=0.
\end{equation}
We should mention that if $\omega$ is small enough, it is expected that  \eqref{12/03/23/18:09} has properties similar to \eqref{13/03/03/9:51}. Indeed, this is the heart of our analysis. 
\par 
We briefly review results for \eqref{15/05/06/11:39} and \eqref{14/01/31/11:03} which are used in this paper.  
\par 
In \cite{Kenig-Merle}, Kenig and Merle studied the equation \eqref{14/01/31/11:03} in the dimensions $d=3,4,5$, and proved the scattering of radial solutions starting from the set  
\begin{equation}\label{14/02/01/16:53}
PW_{+}^{\ddagger}
:=
\Big\{ u\in H^{1}(\mathbb{R}^{d}) \colon \mathcal{H}^{\ddagger}(u)<\mathcal{H}^{\ddagger}(W), \ 
\|\nabla u\|_{L^{2}}^{2}< \|\nabla W\|_{L^{2}}^{2} \Big\}
.
\end{equation}
Their result is extended to the higher dimensional cases by Killip and Visan \cite{Killip-Visan}. We summarize these results:
\begin{theorem}[\cite{Kenig-Merle, Killip-Visan}]\label{15/04/05/15:27}
Assume $d\ge 3$. Then, the set $PW_{+}^{\ddagger}$ is invariant under the flow defined by \eqref{14/01/31/11:03}. Furthermore, any non-trivial radial solution $\psi$ to \eqref{14/01/31/11:03} starting from $PW_{+}^{\ddagger}$ exists globally in time, and satisfies that 
\begin{align}
\label{14/02/01/16:58}
&\|\nabla \psi\|_{St(\mathbb{R})}<\infty,
\\[6pt]
\label{14/09/07/15:12}
&
\mathcal{H}^{\ddagger}(\psi)
\ge 
\frac{1}{2} \inf_{t\in \mathbb{R}}\mathcal{K}^{\ddagger}(\psi(t))
>0,
\end{align}
where $St(\mathbb{R}):=L^{\infty}(\mathbb{R},L^{2}(\mathbb{R}^{d}))\cap L^{2}(\mathbb{R},L^{2^{*}}(\mathbb{R}^{d}))$.  
\end{theorem}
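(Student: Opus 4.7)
The plan is to follow the concentration-compactness/rigidity road map pioneered by Kenig-Merle. First, I would establish the invariance of $PW_+^\ddagger$: since $\mathcal{H}^\ddagger$ is conserved along the flow and $t\mapsto\|\nabla\psi(t)\|_{L^2}^2$ is continuous, exiting $PW_+^\ddagger$ would require crossing the level $\|\nabla\psi(t)\|_{L^2}^2=\|\nabla W\|_{L^2}^2$; combined with $\mathcal{H}^\ddagger(\psi)<\mathcal{H}^\ddagger(W)$, the sharp Sobolev inequality encoded in \eqref{12/03/24/17:52}--\eqref{12/08/22/13:32} rules this out. The same variational analysis yields the quantitative comparison
\begin{equation*}
\mathcal{K}^\ddagger(\psi(t))\ge c\bigl(\mathcal{H}^\ddagger(W)-\mathcal{H}^\ddagger(\psi)\bigr),\qquad c>0,
\end{equation*}
which gives the uniform positivity \eqref{14/09/07/15:12}. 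Because $\|\nabla\psi(t)\|_{L^2}$ is then bounded uniformly in $t$, $H^1$ local well-posedness yields global existence.

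Second, the scattering bound \eqref{14/02/01/16:58} is established by contradiction. Set $E_c$ to be the supremum of the energies $E\le\mathcal{H}^\ddagger(W)$ such that every radial solution with $\mathcal{H}^\ddagger(\psi_0)<E$ and $\|\nabla\psi_0\|_{L^2}<\|\nabla W\|_{L^2}$ satisfies $\|\nabla\psi\|_{St(\mathbb{R})}<\infty$. Small-data Strichartz theory gives $E_c>0$, and the task is to rule out $E_c<\mathcal{H}^\ddagger(W)$. To this end, I would apply the linear profile decomposition at the $\dot H^1$-critical regularity to a minimizing sequence, invoke the nonlinear stability theory for \eqref{14/01/31/11:03} together with the coercivity from the first step, and extract a critical element: a non-trivial, radial, global, non-scattering solution $\psi_c$ whose orbit modulo the symmetry \eqref{15/05/08/11:48} is precompact in $\dot H^1$.

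Third and most delicate is the rigidity step, which is where I expect the main difficulty to lie. One classifies the scaling parameter $N(t)$ of the almost-periodic trajectory into the soliton-like, double cascade, and self-similar regimes. Each case is ruled out by combining a truncated virial or Morawetz identity with the coercivity $\mathcal{K}^\ddagger(\psi_c(t))\ge c>0$; the radial hypothesis is essential, providing the enhanced spatial decay that controls the error terms produced by the spatial cutoff. For $d=3,4,5$ this is the original Kenig-Merle argument; for $d\ge 6$ the low H\"older regularity of the energy-critical nonlinearity forces one to replace parts of the stability and regularity analysis by the in/out Strichartz decomposition of Killip-Visan, but the rigidity conclusion is the same. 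The resulting contradiction forces $E_c=\mathcal{H}^\ddagger(W)$, completing the proof.
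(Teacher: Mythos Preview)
Your outline is correct and faithfully summarizes the Kenig--Merle concentration-compactness/rigidity scheme (with the Killip--Visan adaptations for $d\ge 6$) that underlies this result. Note, however, that the paper does not supply its own proof of this theorem: it is stated as a citation of \cite{Kenig-Merle, Killip-Visan} and used as a black box, so there is no in-paper argument to compare against beyond the references themselves.
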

The result for \eqref{15/05/06/11:39} corresponding to Theorem \ref{15/04/05/15:27} is derived by Duyckaerts, Holmer and Roudenko \cite{D-H-R, Holmer-Roudenko} for $d=p=3$. Furthermore, the first and the fourth authors extended their result to the general dimensions $d\ge 1$ and the powers $p$ satisfying \eqref{09/05/13/15:03} in \cite{Akahori-Nawa}; the result says that we have either the scattering or the blowup,  if the solutions  start from the set  
\begin{equation}\label{15/05/06/16:35}
PW^{\dagger}
:=\big\{ u \in H^{1}(\mathbb{R}^{d})\setminus\{0\} \colon 
\mathcal{H}^{\dagger}(u) 
< 
\Big( \frac{\mathcal{M}(U)}{\mathcal{M}(u)}\Big)^{\frac{1-s_{p}}{s_{p}}} 
\mathcal{H}^{\dagger}(U) 
\big\},
\end{equation}
where 
\begin{equation}\label{15/05/06/16:50}
\mathcal{H}^{\dagger}(u)
:=
\frac{1}{2} \|\nabla u \|_{L^{2}}^{2}
-
\frac{1}{p+1} \|u\|_{L^{p+1}}^{p+1},
\end{equation}
and $s_{p}$ is the ``scaling-exponent'' given by  
\begin{equation}\label{13/03/08/10:39}
s_{p}:=\frac{d}{2}-\frac{2}{p-1}.
\end{equation}
The condition \eqref{09/05/13/15:03} on $p$ implies that $0<s_{p}<1$. 
Furthermore, we can classify the behavior of the solutions by the sign of the functional $\mathcal{K}^{\dagger}$ defined by 
\begin{equation}\label{13/03/06/0:11}
\mathcal{K}^{\dagger}(u)
:=
\|\nabla u \|_{L^{2}}^{2}
-
\frac{d(p-1)}{2(p+1)}\|u\|_{L^{p+1}}^{p+1}.
\end{equation}
Note here that for any $\omega>0$, the replacement of $U$ with $T_{\omega}U$ in \eqref{15/05/06/16:35} leaves $PW^{\dagger}$ unchanged. Moreover, introducing the set $PW_{\omega}^{\dagger}$ as 
\begin{equation}\label{15/05/06/18:09}
PW_{\omega}^{\dagger}
:=
\big\{u \in H^{1}(\mathbb{R}^{d}) \setminus\{0\} \colon 
\mathcal{S}_{\omega}^{\dagger}(u) <\mathcal{S}_{\omega}^{\dagger}(U) 
\big\},
\end{equation}
we can express $PW^{\dagger}$ as the union of $PW_{\omega}^{\dagger}$ over all $\omega>0$:
\begin{equation}\label{15/05/06/18:07}
PW^{\dagger}=\bigcup_{\omega>0}PW_{\omega}^{\dagger}.
\end{equation}
Here, $\mathcal{S}_{\omega}^{\dagger}$ denotes the action for \eqref{15/01/29/09:41}, namely, 
\begin{equation}\label{13/03/05/23:59}
\mathcal{S}_{\omega}^{\dagger}(u)
:=
\omega \mathcal{M}(u)
+
\mathcal{H}^{\dagger}(u).
\end{equation}

In \cite{Nakanishi-Schlag2}, Nakanishi and Schlag considered the equation \eqref{15/05/06/11:39} in the case $d=p=3$. They developed a method to analyze the behavior of (radial) solutions starting from the set  
\begin{equation}\label{15/05/06/17:43}
PW^{\dagger,\varepsilon}:=\big\{ u \in H^{1}(\mathbb{R}^{3})\setminus \{0\} \colon 
\mathcal{H}^{\dagger}(u) 
< 
\Big( \frac{\mathcal{M}(U)}{\mathcal{M}(u)}\Big) (\mathcal{H}^{\dagger}(U)+\varepsilon ) 
\big\}
\end{equation}
for some $\varepsilon>0$. Clearly, the set $PW^{\dagger,\varepsilon}$ is an enlargement of $PW^{\dagger}$. 
In particular, they proved that there exists $\varepsilon>0$ such that all solutions starting from $PW^{\dagger,\varepsilon}$ exhibit one of the same nine scenarios as Theorem \ref{15/03/22/11:28} above. This result motivated our study. 
\par 
We note that the way to define $PW^{\dagger}$ (see \eqref{15/05/06/16:35}) is based on the scaling-invariant nature of $U$ (cf. \cite{Akahori-Nawa}). Due to lack of such a scaling property,  it is not appropriate to define the corresponding ``potential well'' for our equation \eqref{12/03/23/17:57} by simply replacing $\mathcal{H}^{\dagger}$ and $U$ with $\mathcal{H}$ and $\Phi_{\omega}$, respectively  in $PW^{\dagger}$. Instead, from the viewpoint of \eqref{15/05/06/18:07}, we consider  
 the frequency-wise ``potential well'' $PW_{\omega}$ defined by 
\begin{equation}\label{15/03/21/11:40}
PW_{\omega}
:=
\big\{u \in H^{1}(\mathbb{R}^{d}) \colon 
\mathcal{S}_{\omega}(u) <m_{\omega} 
\big\}. 
\end{equation}
This set is closely related to a variational nature of $\Phi_{\omega}$. Indeed, the definition of $m_{\omega}$ (see \eqref{12/03/24/17:51}) implies that if $u\in H^{1}(\mathbb{R}^{d})\setminus \{0\}$ and $\mathcal{K}(u)=0$, then $\mathcal{S}_{\omega}(u)\ge m_{\omega}$, so that we can 
 split $PW_{\omega}$ into three parts according to the sign of the functional $\mathcal{K}$: 
\begin{equation}\label{13/02/15/15:34}
 PW_{\omega}=PW_{\omega,+}\cup \{0\} \cup PW_{\omega,-}, 
\end{equation}
where 
\begin{align}
\label{13/02/15/15:32}
PW_{\omega,+}
&:=\big\{
u \in H^{1}(\mathbb{R}^{d}) \colon 
\mathcal{S}_{\omega}(u) <m_{\omega},\ 
\mathcal{K}(u)>0 
\big\},
\\[6pt]
\label{15/01/24/00:01}
PW_{\omega,-}
&:=
\big\{u \in H^{1}(\mathbb{R}^{d}) \colon 
\mathcal{S}_{\omega}(u) <m_{\omega},\ 
\mathcal{K}(u)<0 
\big\}.
\end{align}

The following theorem follows from the results in \cite{AIKN, AIKN2} together with the existence theorem of ground state in $\mathbb{R}^{3}$ (see  Proposition \ref{17/12/25/17:17} and Proposition \ref{17/12/30/10:09}, and \cite{Zhang-Zou}).\footnote{The papers \cite{AIKN, AIKN2} dealt with the case where $d\ge 4$ and $2+\frac{4}{d}<p+1<2^{*}$ only. However, we can easily verify that the arguments in \cite{AIKN, AIKN2} work well for the case where $d=3$ and $1+\frac{4}{3}<p<5$ as long as there exists a ground state $Q_{\omega}$ with $m_{\omega}=\mathcal{S}_{\omega}(Q_{\omega})$.}  
\begin{theorem}\label{15/03/24/16:05}
Let $\omega>0$. Assume $d\ge 3$ and $1+\frac{4}{d}<p <2^{*}-1$. Furthermore, assume $\omega \in (0,\omega_{3})$ if $d=3$, where $\omega_{3}$ is the frequency given in Proposition \ref{17/12/25/17:17}. Then, the following hold: $PW_{\omega,+}$ and $PW_{\omega,-}$ are invariant under the flow defined by \eqref{12/03/23/17:57}; and   
\\
{\rm (i)} Any radial solution $\psi$ to \eqref{12/03/23/17:57} starting from $PW_{\omega,+}$ satisfies 
\begin{align}
\label{14/02/01/16:56}
&\|\langle \nabla \rangle \psi\|_{St(\mathbb{R})}<\infty,
\\[6pt]
\label{14/09/07/15:21}
&
\mathcal{H}(\psi)
\ge 
\frac{1}{2}\inf_{t\in \mathbb{R}}\mathcal{K}(\psi(t))
>0,
\end{align}
where $St(\mathbb{R}):=L^{\infty}(\mathbb{R},L^{2}(\mathbb{R}^{d}))\cap L^{2}(\mathbb{R},L^{2^{*}}(\mathbb{R}^{d}))$. In particular, $\psi$ scatters both forward and backward in time.
\\
{\rm (ii)} Any radial solution $\psi$ to \eqref{12/03/23/17:57} starting from $PW_{\omega,-}$ satisfies 
\begin{equation}\label{17/12/02/16:59}
\sup_{t\in I_{\max}(\psi)}\mathcal{K}(\psi(t))
< 0,
\end{equation}
and blows up in a finite time both forward and backward in time, where $I_{\max}(\psi)$ denotes the maximal existence-interval of $\psi$.  
\end{theorem} 

Note that Theorem \ref{15/03/22/11:28} is an extension of Theorem \ref{15/03/24/16:05}. 
\par 
This paper is organized as follows. In Section \ref{15/05/10/16:16}, we give fundamental properties of ground state of \eqref{12/03/23/18:09}.  In Section \ref{15/03/22/11:50}, we prove Theorem \ref{15/03/22/11:28}. The main ingredient of the proof is Theorem \ref{13/03/16/12:01}. We assign preliminaries for the proof of Theorem \ref{13/03/16/12:01} to Section \ref{12/03/25/16:46} through 
 Section \ref{13/06/15/20:31}: In Section \ref{12/03/25/16:46}, we think about the decomposition of solution around the ground state in the same viewpoint of Nakanishi and Schlag \cite{Nakanishi-Schlag2}. In Section \ref{15/01/07/14:31}, we prove the ``ejection lemma'' for the equation \eqref{12/03/23/17:57}. 
 In Section \ref{15/02/08/14:28}, we introduce the distance function used in \cite{Nakanishi-Schlag2}, and derive some variational lemmas. In Section \ref{13/06/15/20:31}, we prove the ``one-pass theorem''. In Section \ref{14/01/21/14:18}, we finally prove Theorem \ref{13/03/16/12:01}. This paper also has appended sections: In Section \ref{17/12/16/15:13}, we prove the existence of ground state of \eqref{12/03/23/18:09} in three dimensions. In Section \ref{15/01/29/14:22}, we state fundamental properties of linearized operator around a ground state of \eqref{12/03/23/18:09}. In Section \ref{13/12/29/11:25}, we give two well-known inequalities for radial functions. In Section \ref{14/01/30/10:18}, we record a small-data theory, and in Section \ref{15/12/09/14:27} we give a long-time perturbation theory in a general setting. 
\\
\\
\noindent 
{\bf Notation}.~Besides the notation introduced so far, we use the following notation (see also the table of notation in Section \ref{17/12/09/16:47}): 
\\[6pt]
{\rm (i)} We use $\partial_{\omega}\Phi_{\omega}$ to denote the derivative of the positive radial grand state $\Phi_{\omega}$ of \eqref{12/03/23/18:09} with respect to $\omega$ (see Proposition \ref{13/03/29/15:30} below for the differentiability). 
\\[6pt]
\noindent 
{\rm (ii)} The daggered symbols are related to \eqref{13/03/03/9:51}: 
\begin{align}
\label{13/03/30/12:01}
&L_{+}^{\dagger}:=1-\Delta -p U^{p-1},
\\[6pt]
\label{13/04/12/12:14}
&L_{-}^{\dagger}:=1-\Delta-U^{p-1}.
\end{align}  
\noindent 
{\rm (iii)} Functionals with tilde are related to the rescaled equation \eqref{13/03/17/18:45}: 
\begin{align}
\label{13/03/17/16:24}
&\widetilde{\mathcal{K}}_{\omega}(u)
:=
\| \nabla u \|_{L^{2}}^{2}-\frac{d(p-1)}{2(p+1)}\|u\|_{L^{p+1}}^{p+1}
-\omega^{\frac{2^{*}-(p+1)}{p-1}}\|u\|_{L^{2^{*}}}^{2^{*}},
\\[6pt]
\label{13/03/17/16:25}
&\widetilde{\mathcal{I}}_{\omega}(u)
:=
\frac{1}{2}\|u\|_{L^{2}}^{2}
+
\frac{s_{p}}{d}
\| \nabla u \|_{L^{2}}^{2}
+
\omega^{\frac{2^{*}-(p+1)}{p-1}}\frac{1-s_{p}}{d}\|u\|_{L^{2^{*}}}^{2^{*}},
\\[6pt]
\label{13/03/30/11:40}
&\widetilde{L}_{\omega,+}
:= 
1-\Delta 
-p\big( T_{\omega}\Phi_{\omega}\big)^{p-1}
-
\omega^{\frac{2^{*}-(p+1)}{p-1}}(2^{*}-1)\big( T_{\omega}\Phi_{\omega} 
\big)^{\frac{4}{d-2}},
\\[6pt]
\label{13/04/12/14:40}
&\widetilde{L}_{\omega,-}
:= 
1-\Delta -\big( T_{\omega}\Phi_{\omega}\big)^{p-1}
-\omega^{\frac{2^{*}-(p+1)}{p-1}}\big( T_{\omega} \Phi_{\omega} \big)^{\frac{4}{d-2}}.
\end{align}
\noindent 
{\rm (iv)} We use $(\cdot,\cdot)_{L^{2}}$ to denote the inner product in $L^{2}(\mathbb{R}^{d})$:  
\[
(u,v)_{L^{2}}:=\int_{\mathbb{R}^{d}}u(x)\overline{v(x)}\,dx.
\]
We also use $L_{real}^{2}(\mathbb{R}^{d})$ to denote the real Hilbert space of complex-valued functions in $L^{2}(\mathbb{R}^{d})$ which is equipped with 
the inner product 
\[
( u,v )_{L_{real}^{2}}
:=
\Re \int_{\mathbb{R}^{d}}u(x)\overline{v(x)} 
\,dx .
\]
Furthermore, $H_{real}^{1}(\mathbb{R}^{d})$ denotes the real Hilbert space of functions in $H^{1}(\mathbb{R}^{d})$ equipped with the inner product 
\[
( u,v )_{H_{real}^{1}}
:=
( u,v )_{L_{real}^{2}}
+
( \nabla u, \nabla v )_{L_{real}^{2}}.
\]
{\rm (v)} We use $\langle v, u \rangle_{H^{-1},H^{1}}$ to denote the duality pair of $u\in H_{real}^{1}(\mathbb{R}^{d})$ and $v \in H_{real}^{-1}(\mathbb{R}^{d})$:
\[
\langle v, u \rangle_{H^{-1},H^{1}}:=((1-\Delta)^{-\frac{1}{2}}v, (1-\Delta)^{\frac{1}{2}} u)_{L_{real}^{2}}.
\]

\noindent 
{\rm (vi)} Let $I$ be an interval, and let $1+\frac{4}{d} \le q \le 2^{*}-1$. Then, we introduce Strichartz-type spaces on $I$ as  
\begin{align}
\label{14/01/30/12:07}
&St(I):=L_{t}^{\infty}L_{x}^{2}(I)\cap L_{t}^{2}L_{x}^{2^{*}}(I),\\[6pt]
\label{14/01/30/11:19}
&V_{q+1}(I)
:=L_{t}^{\frac{(d+2)(q-1)}{2}}L_{x}^{\frac{2d(d+2)(q-1)}{d(d+2)(q-1)-8}}(I),
\\[6pt]
\label{14/01/30/11:20}
&W_{q+1}(I):=L_{t,x}^{\frac{(d+2)(q-1)}{2}}(I)
.
\end{align}
Note here that Sobolev's embedding shows $|\nabla|^{-s_{q}}V_{q+1}(I) \hookrightarrow W_{q+1}(I)$. Moreover, by Strichartz' estimate, we mean the following estimate: for any appropriate space-time function $u$, any $t_{0}\in I$ and any pair $(q,r) \in [2,2^{*}]\times [2,\infty]$ with $\frac{2}{r}=d(\frac{1}{2}-\frac{1}{q})$, 
\begin{equation}\label{15/07/21/15:31}
\|u\|_{St(I)} 
\lesssim  
\| u(t_{0}) \|_{L^{2}}
+
\|i\displaystyle{\frac{\partial u}{\partial t}} +\Delta u \|_{L_{t}^{r'}L_{x}^{q'}(I)},
\end{equation}
where $q'$ and $r'$ denote the H\"older conjugates of $q$ and $r$ respectively.
\begin{figure}[H]
\label{15/05/10/13:27}
\input{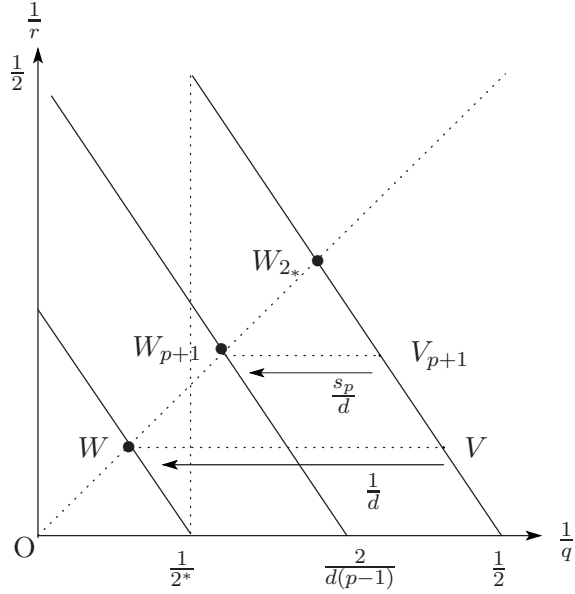}
\caption{Strichartz-type spaces}
\end{figure}

\noindent 
{\rm (vii)}  For a space-time function $u$, we define 
\begin{align}
\label{14/06/05/17:01}
F[u]&:=|u|^{p-1}u+|u|^{\frac{4}{d-2}}u,
\quad 
F^{\dagger}[u]:=|u|^{p-1}u,
\quad
F^{\ddagger}[u]:=|u|^{\frac{4}{d-2}}u,  
\\[6pt]
\label{14/05/05/17:02}
e[u]&:=i\frac{\partial u}{\partial t}+\Delta u+ F[u],
\quad 
e^{\ddagger}[u]:=i\frac{\partial u}{\partial t}+\Delta u+F^{\ddagger}[u]
.
\end{align}


\section{Properties of Ground State}\label{15/05/10/16:16}

In this section, we give several fundamental properties of ground state of \eqref{12/03/23/18:09}. As mentioned in Section \ref{18/01/09/07:17}, for any $d\ge 4$, any $1+\frac{4}{d}<p< 2^{*}-1$ and any $\omega>0$, the equation \eqref{12/03/23/18:09} admits at least one ground state $Q_{\omega}$ satisfying $m_{\omega}=\mathcal{S}_{\omega}(Q_{\omega})$ (see Proposition 1.1 and Theorem 1.1 in \cite{AIKN}). When $d=3$, it follows from Proposition \ref{17/12/25/17:17} and Proposition \ref{17/12/30/10:09} in Appendix \ref{17/12/16/15:13} that   for any $1+\frac{4}{3}< p< 5$, there exists $\omega_{3}>0$ such that for any $0<\omega<\omega_{3}$, the equation \eqref{12/03/23/18:09} admits a ground state $Q_{\omega}$ satisfying $m_{\omega}=\mathcal{S}_{\omega}(Q_{\omega})$.

The following  proposition tells us that any ground state of \eqref{12/03/23/18:09} is essentially positive and radial: 
\begin{proposition}\label{13/03/16/15:33}
Assume $d\ge 3$ and $1+\frac{4}{d}<p<2^{*}-1$. Let $\omega>0$, and let $Q_{\omega}$ be a ground state of \eqref{12/03/23/18:09}. Then, there exist $\theta \in \mathbb{R}$, $y \in \mathbb{R}^{d}$ and a positive radial ground state $\Phi_{\omega}$ of \eqref{12/03/23/18:09} such that $\Phi_{\omega}\in C^{2}(\mathbb{R}^{d})\cap H^{1}(\mathbb{R}^{d})$, $\frac{x}{|x|}\cdot \nabla \Phi_{\omega} <0$ and $Q_{\omega}=e^{i\theta} \Phi_{\omega}(\cdot -y)$.
\end{proposition}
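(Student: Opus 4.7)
The proof plan follows a classical three-step reduction: (a) replace $Q_\omega$ by $|Q_\omega|$ using the diamagnetic inequality and the variational characterization in \eqref{12/03/23/18:17}, (b) bootstrap to $C^2$ regularity and strict positivity via elliptic theory and the strong maximum principle, and (c) invoke the moving plane method of Gidas--Ni--Nirenberg to extract radial symmetry and strict radial monotonicity, which simultaneously supply the translation $y$ and the desired sign of $\frac{x}{|x|}\cdot\nabla\Phi_\omega$.

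For step (a), the key observation is that $\mathcal{I}_\omega$ contains only $L^2$, $\dot{H}^1$ and $L^{2^*}$ contributions, all with coefficients that are positive under \eqref{09/05/13/15:03} (a short computation, using $d(p-1)>4$ and $(d-2)(p-1)<4$, confirms this). The pointwise diamagnetic inequality $|\nabla|Q_\omega||\le|\nabla Q_\omega|$, together with preservation of $L^q$ norms under $u\mapsto|u|$, gives $\mathcal{K}(|Q_\omega|)\le\mathcal{K}(Q_\omega)=0$ and $\mathcal{I}_\omega(|Q_\omega|)\le\mathcal{I}_\omega(Q_\omega)=m_\omega$. By \eqref{12/03/23/18:17}, $|Q_\omega|$ is itself a minimizer, equality must hold throughout, and in particular $\|\nabla|Q_\omega|\|_{L^2}=\|\nabla Q_\omega\|_{L^2}$. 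Once strict positivity of $|Q_\omega|$ is established in step (b), the equality case of diamagnetic then forces $Q_\omega=e^{i\theta}|Q_\omega|$ for a constant $\theta\in\mathbb{R}$.

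For step (b), since $|Q_\omega|$ is a minimizer for the problem in \eqref{12/03/24/17:51}, the Lagrange multiplier rule (the multiplier being seen to vanish by testing against the scaling generator used to define $\mathcal{K}$) gives that $|Q_\omega|$ solves \eqref{12/03/23/18:09}. Standard bootstrap in dimension $d\ge 4$, starting from $|Q_\omega|\in H^1\hookrightarrow L^{2^*}$ and using that the critical and subcritical nonlinearities lie in good Lebesgue spaces, iteratively upgrades to $W^{2,q}$ for every finite $q$ and hence to $C^2$. Writing the equation as $(-\Delta+\omega-c(x))|Q_\omega|=0$ with $c(x)=|Q_\omega|^{p-1}+|Q_\omega|^{4/(d-2)}\in L^\infty_{\mathrm{loc}}$ and applying the strong maximum principle to the nonnegative function $|Q_\omega|$ yields $|Q_\omega|>0$ throughout $\mathbb{R}^d$. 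Agmon/Kato exponential decay at infinity follows from $\omega>0$. Now step (a) can be closed, producing the constant $\theta$. Finally, for step (c), apply Gidas--Ni--Nirenberg to the positive, $C^2$, decaying function $|Q_\omega|$: it is radial about some $y\in\mathbb{R}^d$ and strictly decreasing along radii, so $\Phi_\omega(x):=|Q_\omega|(x+y)$ is the required positive radial ground state with $\frac{x}{|x|}\cdot\nabla\Phi_\omega(x)<0$, and $Q_\omega=e^{i\theta}\Phi_\omega(\,\cdot-y)$.

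The main obstacle is a subtle ordering issue in step (a): concluding that $Q_\omega=e^{i\theta}|Q_\omega|$ with a \emph{constant} phase from equality in the diamagnetic inequality requires a connected set on which $|Q_\omega|>0$, which is only available after step (b); one must therefore be careful not to use the equation for $|Q_\omega|$ before establishing that $|Q_\omega|$ is a minimizer (which, crucially, is obtained purely variationally from $\mathcal{I}_\omega$ without reference to constant phase). A second minor point is justifying the vanishing of the Lagrange multiplier, which is needed to identify the Euler--Lagrange equation of the constrained problem with \eqref{12/03/23/18:09} itself.
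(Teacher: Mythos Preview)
Your proposal is correct and follows essentially the same route as the paper: show $|Q_\omega|$ is again a minimizer for $m_\omega$ via \eqref{12/03/23/18:17} (using that $\mathcal{I}_\omega$ involves only $L^2$, $\dot H^1$, $L^{2^*}$ with positive coefficients), bootstrap to $C^2$, obtain strict positivity, apply Gidas--Ni--Nirenberg, and then conclude constant phase from equality in the diamagnetic inequality. The only cosmetic differences are that the paper obtains positivity from the Harnack-type inequality of \cite[Theorem~9.10]{Lieb-Loss} rather than the strong maximum principle, writes out the equality case of diamagnetic explicitly as $\|\nabla Q_\omega\|_{L^2}^2=\|\nabla|Q_\omega|\|_{L^2}^2+\int|Q_\omega|^2|\nabla(Q_\omega/|Q_\omega|)|^2$, and bypasses your Lagrange-multiplier step by citing \cite{AIKN,AIKN2} for the fact that minimizers of $m_\omega$ are ground states (hence automatically solve \eqref{12/03/23/18:09}).
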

\begin{proof}[Proof of Proposition \ref{13/03/16/15:33}]
Let $Q_{\omega}$ be a ground state of \eqref{12/03/23/18:09}. Then, using \eqref{12/03/23/18:17}, we can verify that $|Q_{\omega}|$ is also a ground state of \eqref{12/03/23/18:09}.  We also see that $|Q_{\omega}| \in W_{loc}^{2,q}(\mathbb{R}^{d})$ for any $2\le q < \infty$ (see Theorem 2.3 in \cite{Brezis-Lieb}). Furthermore, the Schauder theory shows that $Q_{\omega} \in C^{2}(\mathbb{R}^{d})$ and $\lim_{|x|\to \infty}|Q_{\omega}(x)|= 0$. On the other hand, it follows from Theorem 9.10 in \cite{Lieb-Loss} that 
for any compact set  $K$ in $\mathbb{R}^{d}$, there exists a constant $C$ depending only on $K$ and $\omega$ such that for any $x \in K$, 
\begin{equation}\label{13/03/29/16:14}
|Q_{\omega}(x)|\ge C 
\int_{K}|Q_{\omega}(y)|\,dy .
\end{equation}
Hence, $|Q_{\omega}|$ is a positive ground state of \eqref{12/03/23/18:09}. We see from the result of Gidas, Ni and Nirenberg \cite{Gidas-Ni-Nirenberg} that there exist a positive radial function $\Phi_{\omega}$ with $\frac{x}{|x|}\cdot \nabla \Phi_{\omega}(x)<0$ for any $x\in \mathbb{R}^{d}\setminus \{0\}$, and $y \in \mathbb{R}^{d}$, such that $|Q_{\omega}|=\Phi_{\omega}(\cdot-y)$. 
 We introduce the sign of a complex-valued function $u$, denoted by ${\rm sgn}\,u$, as 
\begin{equation}\label{17/10/23/10:15}
{\rm sgn}\,u:=\frac{\overline{u}}{|u|}
.
\end{equation}
Then, we have 
\begin{equation}\label{13/04/11/12:25}
{\rm sgn}\,{\overline{Q_{\omega}}}:=\frac{Q_{\omega}}{|Q_{\omega}|}
,
\end{equation}
so that $Q_{\omega} =|Q_{\omega}| {\rm sgn}\,{\overline{Q_{\omega}}}$. We shall show that ${\rm sgn}\,{\overline{Q_{\omega}}}$ is constant, which together with  $|{\rm sgn}\,{\overline{Q_{\omega}}}|\equiv 1$  completes the proof. Since 
\begin{equation}\label{13/03/29/15:41}
\Re\big[
{\rm sgn}\,{\overline{Q_{\omega}}} \nabla {\rm sgn}\,Q_{\omega}(x)
\big]
=
\Re\bigg[
\frac{Q_{\omega}}{|Q_{\omega}|} \nabla \left( \frac{\overline{Q_{\omega}}}{|Q_{\omega}|}\right)
\bigg]
=
\frac{\Re[Q_{\omega}\overline{\nabla Q_{\omega}}]-|Q_{\omega}|\nabla |Q_{\omega}|}{|Q_{\omega}|^{2}}
=0,
\end{equation}
we have   
\begin{equation}\label{13/04/11/15:54}
\begin{split}
\|\nabla Q_{\omega} \|_{L^{2}}^{2}
&=
\big\|\nabla \bigm( |Q_{\omega}| {\rm sgn}\,{\overline{Q_{\omega}}} \bigm)  \big\|_{L^{2}}^{2}
\\[6pt]
&=
\int_{\mathbb{R}^{d}} \big| \nabla |Q_{\omega}(x)| \big|^{2}\,dx 
+
\int_{\mathbb{R}^{d}} 
\big|Q_{\omega}(x)\big|^{2}\big|\nabla {\rm sgn}\,{\overline{Q_{\omega}}}(x)\big|^{2}\,dx 
\\[6pt]
&\quad 
+
\int_{\mathbb{R}^{d}} \nabla \big| Q_{\omega}(x) \big|^{2} 
\Re\big[
{\rm sgn}\,{\overline{Q_{\omega}}} \nabla {\rm sgn}\,Q_{\omega}(x)
\big]
\,dx 
\\[6pt]
&=
\int_{\mathbb{R}^{d}} \big| \nabla |Q_{\omega}(x)| \big|^{2}\,dx 
+
\int_{\mathbb{R}^{d}} 
\big|Q_{\omega}(x)\big|^{2}\big|\nabla {\rm sgn}\,{\overline{Q_{\omega}}}(x)\big|^{2}\,dx 
.
\end{split}
\end{equation}
Hence, if $\nabla {\rm sgn}\,{\overline{Q_{\omega}}} \not \equiv 0$, then we would have 
\begin{equation}\label{13/03/29/18:17}
m_{\omega}=\mathcal{S}_{\omega}(Q_{\omega})
> \mathcal{S}_{\omega}(|Q_{\omega}|)=\mathcal{S}_{\omega}(\Phi_{\omega})=m_{\omega}.
\end{equation}
However, this is a contradiction. Thus, ${\rm sgn}\,{\overline{Q_{\omega}}}$ must be constant.
\end{proof}

Next, we give a decay property of a positive ground state.
\begin{proposition} \label{14/06/29/14:03}
Assume $d\ge 3$ and $1+\frac{4}{d}<p< 2^{*}-1$. Let $\omega>0$, and let $\Phi_{\omega}\in C^{2}(\mathbb{R}^{d})\cap H^{1}(\mathbb{R}^{d})$ be a positive radial ground state of 
\eqref{12/03/23/18:09}. Then, there exist positive constants $C(\omega)>0$ 
and $\delta(\omega) >0$ depending on $\omega$ such that  
\begin{equation}
|\Phi_{\omega}(x)|+|\nabla \Phi_{\omega}(x)|
+|\Delta \Phi_{\omega}(x)| 
\le  
C(\omega) e^{- \delta(\omega) |x|}
\end{equation}
for any $x\in \mathbb{R}^{d}$. In particular, $\Phi_{\omega} \in H^{2}(\mathbb{R}^{d})$. 
\end{proposition}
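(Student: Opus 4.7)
The plan is to exploit the fact that $\Phi_\omega$ is a bounded positive solution of a linear elliptic equation with a potential that vanishes at infinity, and then to compare it with an exponentially decaying barrier via the maximum principle.

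First I would rewrite \eqref{12/03/23/18:09} as a linear equation for $\Phi_\omega$:
\begin{equation*}
-\Delta \Phi_\omega + \bigl(\omega - V(x)\bigr)\Phi_\omega = 0, \qquad V(x):=\Phi_\omega(x)^{p-1}+\Phi_\omega(x)^{\frac{4}{d-2}}.
\end{equation*}
By Proposition \ref{13/03/16/15:33}, $\Phi_\omega\in C^{2}(\mathbb{R}^{d})$ with $\Phi_\omega(x)\to 0$ as $|x|\to\infty$ (the latter follows from $\Phi_\omega\in H^{1}$ together with radial symmetry and $C^{2}$-regularity). Therefore there exists $R=R(\omega)>0$ such that $V(x)\le \omega/2$ for all $|x|\ge R$, so that
\begin{equation*}
-\Delta \Phi_\omega + \tfrac{\omega}{2}\Phi_\omega \le 0 \quad \text{on } \{|x|\ge R\}.
\end{equation*}

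Next I would construct a supersolution. Fix $\delta=\delta(\omega)\in(0,\sqrt{\omega/2})$ and set $\phi_\delta(x):=e^{-\delta|x|}$. A direct computation in radial variables gives
\begin{equation*}
-\Delta \phi_\delta + \tfrac{\omega}{2}\phi_\delta
= \Bigl(\tfrac{\omega}{2}-\delta^{2}+\tfrac{(d-1)\delta}{|x|}\Bigr)\phi_\delta > 0 \quad \text{for } |x|\ge R.
\end{equation*}
Choose $M=M(\omega)>0$ large enough that $M\phi_\delta \ge \Phi_\omega$ on $\{|x|=R\}$ (possible since $\Phi_\omega$ is continuous and bounded on this sphere). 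Then $w:=M\phi_\delta-\Phi_\omega$ satisfies $(-\Delta +\omega/2)w\ge 0$ on $\{|x|\ge R\}$, $w\ge 0$ on $\{|x|=R\}$, and $w\to 0$ at infinity. The maximum principle on the exterior domain yields $w\ge 0$, i.e.\ $\Phi_\omega(x)\le M e^{-\delta|x|}$ for $|x|\ge R$. Together with the $L^\infty$-bound on $|x|\le R$ this gives the pointwise exponential decay of $\Phi_\omega$.

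For the derivatives, I would first observe that the equation itself gives
\begin{equation*}
|\Delta \Phi_\omega(x)| \le \omega \Phi_\omega(x) + \Phi_\omega(x)^{p} + \Phi_\omega(x)^{\frac{d+2}{d-2}} \lesssim \Phi_\omega(x),
\end{equation*}
so that $|\Delta \Phi_\omega|$ inherits the exponential bound. For $|\nabla \Phi_\omega|$, I would use interior elliptic regularity: on the unit ball $B_{1}(x_{0})$ centered at any $x_{0}$ with $|x_{0}|\ge R+2$, the Calder\'on--Zygmund estimate gives
\begin{equation*}
\|\Phi_\omega\|_{W^{2,q}(B_{1}(x_{0}))}
\lesssim
\|\Phi_\omega\|_{L^{q}(B_{2}(x_{0}))}+\|V\Phi_\omega\|_{L^{q}(B_{2}(x_{0}))}
\lesssim e^{-\delta(|x_{0}|-2)}
\end{equation*}
for $q$ large, and Sobolev embedding then upgrades this to $\|\nabla \Phi_\omega\|_{L^{\infty}(B_{1}(x_{0}))}\lesssim e^{-\delta|x_{0}|}$ after absorbing the constant into $C(\omega)$ (and slightly shrinking $\delta$ if needed). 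The asserted estimate then holds globally by combining this with the $C^{2}$-bound on the compact set $\{|x|\le R+2\}$. Finally, $\Phi_\omega\in H^{2}(\mathbb{R}^{d})$ is immediate from the exponential decay of $\Phi_\omega$ and $\Delta \Phi_\omega$.

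The main (mild) obstacle is purely bookkeeping: choosing $\delta<\sqrt{\omega/2}$ and $M$ large enough to make the comparison function dominate $\Phi_\omega$ on $\{|x|=R\}$, while verifying that the maximum principle indeed applies on the exterior domain. No sharp constants are needed, so the argument is robust.
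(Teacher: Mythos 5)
Your argument is correct and is exactly the classical comparison-function/maximum-principle proof of exponential decay that the paper invokes by citing Berestycki--Lions (the paper gives no details, only the reference). All the steps check out: the barrier computation requires only $\delta^{2}<\omega/2$, the exterior maximum principle applies because $w\to 0$ at infinity, and the derivative bounds via the equation and interior $W^{2,q}$ estimates are standard.
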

\begin{proof}[Proof of Proposition \ref{14/06/29/14:03}]
We can prove the proposition in a way similar to \cite{Berestycki-Lions}.
\end{proof}

The following convergence result justifies the intuition that the equation \eqref{12/03/23/18:09} looks like \eqref{13/03/03/9:51} for a sufficiently small $\omega$.
\begin{proposition}\label{13/03/03/9:43}
Assume $d\ge 3$ and $1+\frac{4}{d}<p < 2^{*}-1$. Let $\omega>0$, and let $\Phi_{\omega}$ be a positive radial ground state of \eqref{12/03/23/18:09}. Moreover, let $U$ be the unique positive radial ground state of \eqref{13/03/03/9:51}. Then, we have 
\begin{equation}\label{13/03/03/9:01}
\lim_{\omega \downarrow 0}
\|T_{\omega}\Phi_{\omega}-U \|_{H^{1}}
=0.
\end{equation}
\end{proposition}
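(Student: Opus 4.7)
Write $v_{\omega}:=T_{\omega}\Phi_{\omega}$. A direct scaling computation shows that $v_{\omega}$ is a positive, radial solution of the rescaled equation \eqref{13/03/17/18:45}, with the critical coefficient $\omega^{\alpha}$, $\alpha:=(2^{*}-(p+1))/(p-1)>0$, and, via the identity $m_{\omega}=\omega^{1-s_{p}}\widetilde{\mathcal{I}}_{\omega}(v_{\omega})$, the function $v_{\omega}$ minimizes $\widetilde{\mathcal{I}}_{\omega}$ (see \eqref{13/03/17/16:25}) subject to the constraint $\widetilde{\mathcal{K}}_{\omega}(v_{\omega})=0$. Since $\omega^{\alpha}\downarrow 0$ as $\omega\downarrow 0$, the formal limit equation is \eqref{13/03/03/9:51}, whose unique positive radial solution is $U$. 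The plan is to prove $v_{\omega}\to U$ in $H^{1}$ by a compactness argument based on radial symmetry.

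First I would establish uniform $H^{1}$-bounds on $v_{\omega}$. Because $\mathcal{K}^{\dagger}(U)=0$, one has $\widetilde{\mathcal{K}}_{\omega}(U)=-\omega^{\alpha}\|U\|_{L^{2^{*}}}^{2^{*}}<0$, so $U$ is admissible as a competitor in the variational characterization $\inf\{\widetilde{\mathcal{I}}_{\omega}(u):\widetilde{\mathcal{K}}_{\omega}(u)\le 0\}$ of $\omega^{s_{p}-1}m_{\omega}$ (inherited from \eqref{12/03/23/18:17} by scaling). Since every term of $\widetilde{\mathcal{I}}_{\omega}$ is non-negative, this comparison yields
\[
\tfrac{1}{2}\|v_{\omega}\|_{L^{2}}^{2}+\tfrac{s_{p}}{d}\|\nabla v_{\omega}\|_{L^{2}}^{2}+\omega^{\alpha}\tfrac{1-s_{p}}{d}\|v_{\omega}\|_{L^{2^{*}}}^{2^{*}}\le \widetilde{\mathcal{I}}_{\omega}(U)\le C,
\]
controlling $\|v_{\omega}\|_{H^{1}}$ and $\omega^{\alpha}\|v_{\omega}\|_{L^{2^{*}}}^{2^{*}}$ uniformly in small $\omega$.

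Next I would extract a subsequence $v_{\omega}\rightharpoonup v_{0}$ weakly in $H^{1}$. The compact embedding of the radial subspace of $H^{1}(\mathbb{R}^{d})$ into $L^{p+1}(\mathbb{R}^{d})$ (valid since $p+1$ is strictly subcritical by \eqref{09/05/13/15:03}) upgrades this to strong $L^{p+1}$-convergence; $v_{0}$ is non-negative and radial by Proposition \ref{13/03/16/15:33}. Passing to the limit in the rescaled equation is routine: the subcritical nonlinearity converges in $H^{-1}$, while the critical term satisfies $\|\omega^{\alpha}|v_{\omega}|^{4/(d-2)}v_{\omega}\|_{L^{(2^{*})'}}=\omega^{\alpha}\|v_{\omega}\|_{L^{2^{*}}}^{2^{*}-1}\to 0$ by Sobolev. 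Hence $v_{0}$ solves \eqref{13/03/03/9:51}.

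The main obstacle will be to rule out $v_{0}\equiv 0$, i.e.\ to prevent vanishing or concentration of $v_{\omega}$ into a Sobolev bubble. The plan is to exploit $\widetilde{\mathcal{K}}_{\omega}(v_{\omega})=0$ in the form
\[
\|\nabla v_{\omega}\|_{L^{2}}^{2}-\omega^{\alpha}\|v_{\omega}\|_{L^{2^{*}}}^{2^{*}}=\tfrac{d(p-1)}{2(p+1)}\|v_{\omega}\|_{L^{p+1}}^{p+1}.
\]
Sobolev bounds the critical term by $C\|\nabla v_{\omega}\|_{L^{2}}^{2^{*}}$, which can be absorbed into the left-hand side once $\omega$ is small thanks to the uniform $\dot{H}^{1}$-bound; this gives $\|\nabla v_{\omega}\|_{L^{2}}^{2}\lesssim \|v_{\omega}\|_{L^{p+1}}^{p+1}$. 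Combining this with Gagliardo--Nirenberg yields $\|\nabla v_{\omega}\|_{L^{2}}^{2}\le C\|v_{\omega}\|_{L^{2}}^{(p+1)(1-\theta)}\|\nabla v_{\omega}\|_{L^{2}}^{d(p-1)/2}$, and the subcriticality $p>2_{*}$ forces $d(p-1)/2>2$, so one obtains a uniform positive lower bound on $\|\nabla v_{\omega}\|_{L^{2}}$ and hence on $\|v_{\omega}\|_{L^{p+1}}$. Strong $L^{p+1}$-convergence then gives $v_{0}\not\equiv 0$; the strong maximum principle upgrades this to $v_{0}>0$, and uniqueness of the positive radial solution of \eqref{13/03/03/9:51} forces $v_{0}=U$. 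A final step combining the upper bound $\widetilde{\mathcal{I}}_{\omega}(v_{\omega})\le \widetilde{\mathcal{I}}_{\omega}(U)$ with weak lower semi-continuity of $\|\cdot\|_{L^{2}}$ and $\|\cdot\|_{\dot H^{1}}$ pins down $\|v_{\omega}\|_{L^{2}}\to \|U\|_{L^{2}}$ and $\|\nabla v_{\omega}\|_{L^{2}}\to \|\nabla U\|_{L^{2}}$. In the Hilbert space $H^{1}$, weak convergence plus norm convergence gives strong convergence, and uniqueness of $U$ removes the need for subsequences.
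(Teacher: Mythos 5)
Your argument is correct, and it is exactly the route the paper intends: the paper's own ``proof'' is a one-line citation to \cite{Fukuizumi}, whose method is precisely this variational compactness scheme (comparison with $U$ in the rescaled minimization for $\omega^{s_p-1}m_\omega$ to get uniform bounds, radial compactness and vanishing of the $\omega^{\alpha}$-weighted critical term to pass to the limit equation, the constraint $\widetilde{\mathcal{K}}_{\omega}(v_\omega)=0$ plus Gagliardo--Nirenberg to exclude the trivial limit, uniqueness of $U$ to identify it, and norm convergence to upgrade weak to strong). Your write-up supplies the details the paper leaves to the reference, and each step checks out.
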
 
\begin{proof}[Proof of Proposition \ref{13/03/03/9:43}]
We can prove the proposition in a way similar to \cite{Fukuizumi}.
\end{proof}

\begin{remark}\label{13/03/03/10:04}
We see from Proposition \ref{13/03/03/9:43} that for any $1\le q \le 2^{*}-1$,
\begin{equation}\label{13/03/03/10:05}
\lim_{\omega \downarrow 0}\omega^{s_{p}-\frac{q-1}{p-1}}\| \Phi_{\omega}\|_{L^{q+1}}^{q+1}
=\|U\|_{L^{q+1}}^{q+1} 
\end{equation}
and 
\begin{equation}\label{18/01/09/09:35}
\lim_{\omega \downarrow 0}\omega^{s_{p}-1} \| \nabla \Phi_{\omega}\|_{L^{2}}^{2}=\|\nabla U\|_{L^{2}}^{2} 
.
\end{equation}
\end{remark}

\begin{proposition}\label{13/03/29/15:30}
Assume that $d\ge 3$ and $1+\frac{4}{d}<p <2^{*}-1$. Then, there exists $\omega_{1}>0$ with the following properties: 
\\
{\rm (i)} For any $\omega \in (0,\omega_{1})$, a positive radial ground state of \eqref{12/03/23/18:09} uniquely exists.   
\\
{\rm (ii)} The mapping $\omega \in (0, \omega_{1}) \mapsto \Phi_{\omega} \in H^{1}(\mathbb{R}^{d})$ is continuously differentiable. 
\\
{\rm (iii)} For any $\omega \in (0,\omega_{1})$, 
\begin{equation}\label{13/03/30/10:40}
\frac{d}{d\omega}\mathcal{M}(\Phi_{\omega}) 
=
\big(\Phi_{\omega}, \partial_{\omega}\Phi_{\omega} \big)_{L_{real}^{2}}<0.
\end{equation}
Here, $\Phi_{\omega}$ denotes the unique positive radial ground state of \eqref{12/03/23/18:09}.  
\end{proposition}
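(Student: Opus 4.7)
The plan is to establish all three claims simultaneously via the Implicit Function Theorem (IFT) applied in rescaled variables, exploiting the fact that at small $\omega$ the rescaled equation \eqref{13/03/17/18:45} is a subcritical perturbation of the limit equation $v - \Delta v - |v|^{p-1}v = 0$. Since $p+1 < 2^{*}$ by \eqref{09/05/13/15:03}, the coefficient $\omega^{(2^{*}-(p+1))/(p-1)}$ tends to $0$ as $\omega \downarrow 0$, so the map
\[
F(\omega, v) := v - \Delta v - |v|^{p-1}v - \omega^{\frac{2^{*}-(p+1)}{p-1}} |v|^{\frac{4}{d-2}} v
\]
extends to a $C^{1}$ map $F : [0,\infty) \times H^{1}_{rad}(\mathbb{R}^{d}) \to H^{-1}_{rad}(\mathbb{R}^{d})$ with $F(0, U) = 0$. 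Its Fr\'echet derivative at $(0,U)$ is $D_{v}F(0, U) = L_{+}^{\dagger}$, which is an isomorphism on the radial subspace because, by the classical non-degeneracy of the subcritical positive ground state (Kwong), $\ker L_{+}^{\dagger}$ in $H^{1}(\mathbb{R}^{d})$ is spanned by the non-radial functions $\partial_{x_{j}} U$.

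The IFT then provides $\omega_{1} > 0$, an $H^{1}_{rad}$-neighborhood $\mathcal{U}$ of $U$, and a $C^{1}$ branch $\omega \in [0, \omega_{1}) \mapsto v_{\omega} \in \mathcal{U}$ with $F(\omega, v_{\omega}) = 0$ and $v_{0} = U$. By Proposition \ref{13/03/03/9:43}, $T_{\omega}\Phi_{\omega} \to U$ in $H^{1}$, so $T_{\omega}\Phi_{\omega} \in \mathcal{U}$ for $\omega$ small, and local uniqueness in $\mathcal{U}$ forces $v_{\omega} = T_{\omega}\Phi_{\omega}$; transferring back via $T_{\omega}^{-1}$ yields the $C^{1}$ dependence asserted in (ii). For the uniqueness claim (i) among \emph{all} positive radial solutions of \eqref{12/03/23/18:09}, I would add a compactness step: for any positive radial solution $Q$ of \eqref{12/03/23/18:09}, the Pohozaev and Nehari identities applied to \eqref{13/03/17/18:45} yield an $\omega$-uniform $H^{1}$-bound on $T_{\omega} Q$; any $H^{1}$-weak limit along a sequence $\omega_{n} \downarrow 0$ solves $v - \Delta v - |v|^{p-1}v = 0$ and, being positive and radial, must equal $U$ by Kwong's uniqueness; hence $T_{\omega}Q$ eventually enters $\mathcal{U}$ and coincides with $v_{\omega}$.

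For (iii), differentiating \eqref{12/03/23/18:09} in $\omega$ gives $L_{\omega,+}\Phi_{\omega}' = -\Phi_{\omega}$ with
\[
L_{\omega,+} := \omega - \Delta - p\Phi_{\omega}^{p-1} - (2^{*}-1)\Phi_{\omega}^{\frac{4}{d-2}},
\]
so $\frac{d}{d\omega}\mathcal{M}(\Phi_{\omega}) = (\Phi_{\omega}, \Phi_{\omega}')_{L_{real}^{2}} = -(\Phi_{\omega}, L_{\omega,+}^{-1}\Phi_{\omega})_{L_{real}^{2}}$. Rescaling by $T_{\omega}$ relates this (up to a positive power of $\omega$) to $(v_{\omega}, \widetilde{L}_{\omega,+}^{-1}v_{\omega})_{L_{real}^{2}}$, which by (ii) and the convergence $v_{\omega} \to U$ in $H^{1}$ tends to $(U, (L_{+}^{\dagger})^{-1}U)_{L^{2}}$. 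To evaluate this limit I introduce the scaling generator $\Lambda := \frac{1}{p-1} + \frac{1}{2}x\cdot \nabla$: differentiating $\mu u_{\mu} - \Delta u_{\mu} - u_{\mu}^{p} = 0$ for $u_{\mu}(x) := \mu^{1/(p-1)}U(\sqrt{\mu}\,x)$ at $\mu = 1$ yields $L_{+}^{\dagger}(\Lambda U) = -U$, and integration by parts gives
\[
(U, \Lambda U)_{L^{2}} = \Big(\frac{1}{p-1} - \frac{d}{4}\Big)\|U\|_{L^{2}}^{2} = -\frac{s_{p}}{2}\|U\|_{L^{2}}^{2}.
\]
Since $s_{p} > 0$ by \eqref{09/05/13/15:03}, we obtain $(U, (L_{+}^{\dagger})^{-1}U)_{L^{2}} = -(U, \Lambda U)_{L^{2}} > 0$, and continuity concludes $\frac{d}{d\omega}\mathcal{M}(\Phi_{\omega}) < 0$ after shrinking $\omega_{1}$ if necessary.

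The main obstacle will be the global part of (i): Proposition \ref{13/03/03/9:43} only controls the ground-state branch, so to rule out other positive radial solutions one must separately establish an $\omega$-uniform $H^{1}$-bound together with strong $H^{1}$-compactness of $T_{\omega}Q$ for an arbitrary positive radial solution $Q$ of \eqref{12/03/23/18:09}, in order to funnel it into the IFT neighborhood $\mathcal{U}$. The remainder of the proof is a fairly mechanical application of the IFT together with the scaling identity $L_{+}^{\dagger}\Lambda U = -U$.
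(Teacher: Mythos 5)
Your proposal is correct in substance and rests on the same two pillars as the paper's argument --- the non-degeneracy of $L_{+}^{\dagger}$ restricted to radial functions (Lemma \ref{13/03/36/15:35}) and the scaling identity $L_{+}^{\dagger}U'=-U$ with $(U,U')_{L^{2}}=-\tfrac{s_{p}}{2}\|U\|_{L^{2}}^{2}$ --- but it packages them differently. The paper proves (i) by a direct contradiction: normalize the difference of two putative solutions, pass to a weak limit, show the limit lies in the radial kernel of $L_{+}^{\dagger}$ and hence vanishes, then contradict the normalization by testing the difference equation against itself. For (ii) it simply cites Shatah--Strauss, and for (iii) it first establishes the uniform coercivity $\|\widetilde{L}_{\omega,+}f\|_{L^{2}}\ge C\|f\|_{H^{1}}$ (Lemma \ref{13/03/31/16:12}) and the convergence $\omega T_{\omega}\Phi_{\omega}'\to U'$ in $H^{1}$ (Lemma \ref{14/06/27/15:03}), then computes $\lim_{\omega\to 0}\omega^{s_{p}+1}(\Phi_{\omega},\Phi_{\omega}')_{L^{2}_{real}}=(U,U')_{L^{2}}<0$. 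Your IFT route delivers (ii) and local uniqueness in one stroke (rather than outsourcing (ii) to a reference), and it makes the resolvent convergence needed in (iii) essentially automatic; the price is that the uniform invertibility of the rescaled linearized operator, which the paper proves by hand in Lemma \ref{13/03/31/16:12}, is hidden inside the IFT hypotheses and must still hold.

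Two caveats. First, your claim that $F$ extends to a $C^{1}$ map on $[0,\infty)\times H^{1}_{rad}$ is false as stated: the exponent $\alpha=\tfrac{2^{*}-(p+1)}{p-1}$ satisfies $\alpha<\tfrac{2}{d-2}\le 1$ for every $d\ge 4$ and every $p$ in the range \eqref{09/05/13/15:03}, so $\partial_{\omega}F\sim\omega^{\alpha-1}$ blows up at $\omega=0$. This is reparable --- take $\epsilon=\omega^{\alpha}$ as the parameter, or use mere continuity at $\omega=0$ to land in the IFT neighborhood and then apply the IFT at each $\omega_{0}>0$ to get the $C^{1}$ regularity claimed in (ii), which is only asserted on the open interval --- but the statement should be corrected. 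Second, the obstacle you flag for the global part of (i) is genuine: one must know that $T_{\omega}Q\to U$ for an \emph{arbitrary} positive radial solution $Q$, not just for the ground-state branch, and your sketch (uniform Pohozaev/Nehari bounds, a nonvanishing lower bound, radial compactness, Kwong's uniqueness) is the right way to fill it. It is worth noting that the paper's own proof of (i) quietly assumes the same convergence when it invokes Proposition \ref{13/03/03/9:43} for both $\widetilde{U}_{k}$ and $\widetilde{V}_{k}$, so this is a shared gap rather than a defect specific to your approach.
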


In order to prove the claim {\rm (i)} in Proposition \ref{13/03/29/15:30}, we need the following result (see, e.g., Theorem 0.2 in \cite{Kabeya-Tanaka}, \cite{Ni-Takagi} and Proposition 2.8 in \cite{Weinstein}): 
\begin{lemma}\label{13/03/36/15:35}
Assume either $d=1, 2$ and $1<p< \infty$, or $d\ge 3$ and $1<p <2^{*}-1$. Then, we have 
\begin{equation}\label{13/03/29/15:40}
{\rm Ker}\,L_{+}^{\dagger}={\rm span}\big\{\partial_{1}U,\ldots, \partial_{d}U \big\},
\end{equation}  
where $L_{+}^{\dagger}$ is the operator defined by \eqref{13/03/30/12:01}, and  $\partial_{1}U,\ldots, \partial_{d}U$ denote the partial derivatives of $U$.
\end{lemma}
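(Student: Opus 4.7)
The plan is to follow the classical spectral argument of Weinstein. The inclusion $\mathrm{span}\{\partial_{1}U,\ldots,\partial_{d}U\}\subset \mathrm{Ker}\,L_{+}^{\dagger}$ is immediate: differentiating the ground state equation $U-\Delta U-U^{p}=0$ in $x_{i}$ yields $L_{+}^{\dagger}\partial_{i}U=0$. Each $\partial_{i}U$ lies in $L^{2}$ by Proposition~\ref{14/06/29/14:03} (applied to $U$) and they are linearly independent because $\partial_{i}U(x)=\frac{x_{i}}{r}U'(r)$ with $U'(r)<0$ for $r>0$.

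For the reverse inclusion I would use a spherical-harmonic decomposition. Because $U$ is radial, $L_{+}^{\dagger}$ commutes with $SO(d)$ and therefore splits along $L^{2}(\mathbb{R}^{d})=\bigoplus_{\ell\ge 0}\mathcal{H}_{\ell}$, acting on $\mathcal{H}_{\ell}$ as the radial operator
\[
L_{+,\ell}^{\dagger}=1-\partial_{r}^{2}-\tfrac{d-1}{r}\partial_{r}+\tfrac{\ell(\ell+d-2)}{r^{2}}-pU^{p-1}.
\]
It then suffices to compute the kernel of each $L_{+,\ell}^{\dagger}$ on radial $L^{2}$ functions.

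For $\ell=1$: the radial function $U'(r)$ solves $L_{+,1}^{\dagger}U'=0$ (this is just $\partial_{i}U\in\mathrm{Ker}\,L_{+}^{\dagger}$ read off on the first spherical harmonic). Since $U'(r)<0$ on $(0,\infty)$, it is a nowhere-vanishing solution; by standard Sturm--Liouville theory the space of regular $L^{2}$ solutions is one-dimensional. Hence the $\ell=1$ kernel contributes exactly $d$ dimensions, giving precisely $\mathrm{span}\{\partial_{1}U,\ldots,\partial_{d}U\}$. For $\ell\ge 2$: since $L_{+,\ell}^{\dagger}=L_{+,1}^{\dagger}+\frac{\ell(\ell+d-2)-(d-1)}{r^{2}}$ and the extra potential term is strictly positive, a Sturm comparison with the $\ell=1$ ground state $U'$ shows that the lowest eigenvalue of $L_{+,\ell}^{\dagger}$ is strictly positive, so its kernel is trivial.

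The main obstacle is the radial sector $\ell=0$. Here the approach is to introduce the scaling generator $\Lambda U:=\frac{1}{p-1}U+\frac{1}{2}x\cdot\nabla U$, obtained by differentiating the rescaled family $U_{\omega}(x)=\omega^{1/(p-1)}U(\sqrt{\omega}x)$ in $\omega$ at $\omega=1$; the relation $\omega U_{\omega}-\Delta U_{\omega}-U_{\omega}^{p}=0$ then yields the key identity $L_{+}^{\dagger}(\Lambda U)=-U$. Given any radial $\phi\in\mathrm{Ker}\,L_{+}^{\dagger}$, self-adjointness gives
\[
(\phi,U)_{L^{2}}=-(\phi,L_{+}^{\dagger}\Lambda U)_{L^{2}}=-(L_{+}^{\dagger}\phi,\Lambda U)_{L^{2}}=0.
\]
On the other hand, linear ODE theory reduces the radial kernel to at most a one-dimensional space of solutions regular at the origin; combining this uniqueness with the orthogonality $(\phi,U)_{L^{2}}=0$, together with a Sturm oscillation argument exploiting that $U>0$ has no zeros (so the radial quadratic form has a single negative direction, realised by $\Lambda U$, and no zero-energy $L^{2}$ direction transverse to $U$), forces $\phi\equiv 0$. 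The delicate part of the argument is this last step: ruling out a nontrivial radial zero mode requires combining the Pohozaev-type identity encoded in $\Lambda U$ with the sign/decay information on the regular ODE solution, and this is where one must invoke the subcriticality hypothesis $0<p-1<\tfrac{4}{d-2}$ to ensure the regular solution fails to lie in $L^{2}$.
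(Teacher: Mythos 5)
The paper does not prove this lemma at all: it is quoted as a known result of Weinstein (Proposition 2.8 of \cite{Weinstein}), so there is no in-paper argument to compare against. Judged on its own, your outline is the standard one (spherical-harmonic decomposition; the $\ell=1$ sector carried by $\partial_{i}U=\tfrac{x_{i}}{r}\partial_{r}U$ with $\partial_{r}U<0$; positivity of the sectors $\ell\ge 2$ by adding the strictly positive centrifugal increment to $L_{+,1}^{\dagger}\ge 0$), and those parts are correct.

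The genuine gap is the radial sector, which is precisely the mathematical content of the lemma, and your treatment of it does not close. First, the orthogonality $(\phi,U)_{L^{2}}=-(L_{+}^{\dagger}\phi,\Lambda U)_{L^{2}}=0$ is automatically satisfied by \emph{every} element of the kernel, so it cannot by itself rule anything out; it only becomes useful inside a specific Wronskian/oscillation scheme that you do not actually set up. Second, the assertion that the single negative direction of the radial quadratic form is ``realised by $\Lambda U$'' is false: since $L_{+}^{\dagger}\Lambda U=-U$ one has $\langle L_{+}^{\dagger}\Lambda U,\Lambda U\rangle=-(U,\Lambda U)_{L^{2}}=\tfrac{s_{p}}{2}\|U\|_{L^{2}}^{2}$, which is \emph{positive} throughout the range \eqref{09/05/13/15:03} used in this paper (and changes sign across $p=1+\tfrac{4}{d}$); the negative direction is instead witnessed by $U$ itself, because $\langle L_{+}^{\dagger}U,U\rangle=(1-p)\|U\|_{L^{p+1}}^{p+1}<0$. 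Third, and most importantly, the step you yourself flag as ``delicate'' — showing that the unique radial ODE solution regular at the origin fails to lie in $L^{2}$ (equivalently, that $0$ is not an eigenvalue of $L_{+,0}^{\dagger}$) — is never carried out. This is exactly the nondegeneracy statement, and it is not a soft consequence of subcriticality; the known proofs go through Kwong-type uniqueness/shooting analysis or Weinstein's argument, which tracks the number of zeros of the zero-energy solution against the Morse index and exploits the explicit relation $L_{+}^{\dagger}\Lambda U=-U$ in a Wronskian computation. Without supplying that argument (or citing it, as the paper does), the reverse inclusion ${\rm Ker}\,L_{+}^{\dagger}\subset{\rm span}\{\partial_{1}U,\dots,\partial_{d}U\}$ remains unproved in the radial sector.
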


Now, we give a proof of the claim {\rm (i)}: 
\begin{proof}[Proof of {\rm (i)} in Proposition \ref{13/03/29/15:30}]
It suffices to prove the uniqueness of the ground state. Suppose for contradiction that for any $k\in \mathbb{N}$, there exists $\omega_{k}\in (0, \frac{1}{k})$ such that the equation  \eqref{12/03/23/18:09} has two different positive radial ground states, say $U_{\omega_{k}}$ and $V_{\omega_{k}}$. Put $\widetilde{U}_{k}:=T_{\omega_{k}}U_{\omega_{k}}$, 
 $\widetilde{V}_{k}:=T_{\omega_{k}}V_{\omega_{k}}$
and  
\begin{equation}\label{13/03/29/11:24}
\widetilde{u}_{k}:=\frac{\widetilde{U}_{k}-\widetilde{V}_{k}}{\|\widetilde{U}_{k}-\widetilde{V}_{k}\|_{H^{1}}}.
\end{equation}
Then, $\widetilde{u}_{k}$ satisfies the equation  
\begin{equation}\label{13/03/29/11:26}
\widetilde{u}_{k}-\Delta \widetilde{u}_{k}
-
\frac{\widetilde{U}_{k}^{p}-\widetilde{V}_{k}^{p}}{\|\widetilde{U}_{k}
-\widetilde{V}_{k}\|_{H^{1}}}
-
\omega_{k}^{\frac{2^{*}-(p+1)}{p-1}}
\frac{\widetilde{U}_{k}^{2^{*}-1}-\widetilde{V}_{k}^{2^{*}-1}}{\|\widetilde{U}_{k}-\widetilde{V}_{k}\|_{H^{1}}}
=0
.
\end{equation}
Moreover, it follows from $\widetilde{u}_{k}\in H_{rad}^{1}(\mathbb{R}^{d})$ and $\|\widetilde{u}_{k}\|_{H^{1}}=1$ for any $k\in \mathbb{N}$ that there exists a radial function $\widetilde{u} \in H^{1}(\mathbb{R}^{d})$ such that, passing to a subsequence, we have 
\begin{equation}\label{13/03/29/11:36}
\lim_{k\to \infty}\widetilde{u}_{k}=\widetilde{u} 
\qquad 
\mbox{weakly in $H^{1}(\mathbb{R}^{d})$},
\end{equation}
and for any $2< q <2^{*}$, 
\begin{equation}\label{13/03/29/14:43}
\lim_{k\to \infty}\widetilde{u}_{k}=\widetilde{u} 
\qquad 
\mbox{strongly in $L^{q}(\mathbb{R}^{d})$}.
\end{equation}
We shall show that $\widetilde{u}\equiv 0$. First note that the fundamental theorem of calculus together with Proposition \ref{13/03/03/9:43} and \eqref{13/03/29/14:43} implies that for any $\phi \in C_{c}^{\infty}(\mathbb{R}^{d})$, 
\begin{equation}\label{13/03/29/12:11}
\begin{split}
&\lim_{k\to \infty}\bigm\langle \frac{\widetilde{U}_{k}^{p}-\widetilde{V}_{k}^{p}}{\big\|\widetilde{U}_{k}-\widetilde{V}_{k}\big\|_{H^{1}}},  \phi \bigm\rangle_{H^{-1},H^{1}}  
\\[6pt]
&= 
\lim_{k\to \infty}
p \int_{0}^{1}
\bigm\langle 
\big\{ \widetilde{V}_{k}+\theta (\widetilde{U}_{k}-\widetilde{V}_{k})\big\}^{p-1} 
\widetilde{u}_{k},  \phi \bigm\rangle_{H^{-1},H^{1}}
d\theta 
\\[6pt]
&=
p\int_{\mathbb{R}^{d}}U^{p-1} \widetilde{u} \phi \,dx
.
\end{split}
\end{equation}
We can also verify that for a given function $\phi \in C_{c}^{\infty}(\mathbb{R}^{d})$, 
\begin{equation}\label{13/03/29/14:10}
\lim_{k\to \infty}
\omega_{k}^{\frac{2^{*}-(p+1)}{p-1}}\bigm\langle \frac{\widetilde{U}_{k}^{2^{*}-1}-\widetilde{V}_{k}^{2^{*}-1}}{\|\widetilde{U}_{k}-\widetilde{V}_{k}\|_{H^{1}}},  \phi \bigm\rangle_{H^{-1},H^{1}}  
=0.
\end{equation}
Combining \eqref{13/03/29/11:26} with \eqref{13/03/29/11:36}, \eqref{13/03/29/12:11} and \eqref{13/03/29/14:10}, we find that for any $\phi \in C_{c}^{\infty}(\mathbb{R}^{d})$, 
\begin{equation}\label{13/03/29/14:15}
\bigm\langle 
L_{+}^{\dagger}\widetilde{u}, \,\phi  
\bigm\rangle_{H^{-1},H^{1}} 
=
\bigm\langle 
\widetilde{u} -\Delta \widetilde{u}
-p U^{p-1}\widetilde{u}, \, \phi  
\bigm\rangle_{H^{-1},H^{1}} 
=
0.
\end{equation}
Furthermore, since $\widetilde{u}$ is a radial $H^{1}$-solution of $L_{+}^{\dagger}\widetilde{u}=0$, we find that $\widetilde{u} \in H_{rad}^{2}(\mathbb{R}^{d})$. Hence, 
 it follows from Lemma \ref{13/03/36/15:35} (there is no radial function in the kernel of $L_{+}^{\dagger}$) that  $\widetilde{u} \equiv 0$. 
\par 
Now, multiplying the equation \eqref{13/03/29/11:26} by $\widetilde{u}_{k}$ and then integrating the resulting equation over $\mathbb{R}^{d}$, we obtain 
\begin{equation}\label{13/03/29/14:30}
\|\widetilde{u}_{k}\|_{H^{1}}^{2}
=
\int_{\mathbb{R}^{d}}
\frac{\widetilde{U}_{k}^{p}-\widetilde{V}_{k}^{p}}{\|\widetilde{U}_{k}
-\widetilde{V}_{k}\|_{H^{1}}}\widetilde{u}_{k}\,dx 
+
\omega_{k}^{\frac{2^{*}-(p+1)}{p-1}}
\int_{\mathbb{R}^{d}}
\frac{\widetilde{U}_{k}^{2^{*}-1}-\widetilde{V}_{k}^{2^{*}-1}}{\|\widetilde{U}_{k}-\widetilde{V}_{k}\|_{H^{1}}}
\widetilde{u}_{k}\,dx .
\end{equation}
Then, it follows from \eqref{13/03/29/14:43} and $\widetilde{u}\equiv 0$ that the right-hand side of \eqref{13/03/29/14:30} tends to $0$ as $k\to \infty$, whereas the left-hand side is identically  $1$ (see \eqref{13/03/29/11:24}). This is a contradiction. Thus, we have completed the proof of the claim {\rm (i)} in Proposition \ref{13/03/29/15:30}. 
\end{proof}

Next, we mention how to prove the claim {\rm (ii)} in Proposition \ref{13/03/29/15:30}. 
\begin{proof}[Proof of {\rm (ii)} in Proposition \ref{13/03/29/15:30}]
The claim {\rm (ii)} follows from the result by Shatah and Strauss \cite{Shatah-Strauss} (see also \cite{Killip-Oh-Poco-Visan}).
\end{proof}

Finally, we move on to the proof of the claim {\rm (iii)} in Proposition \ref{13/03/29/15:30}. Let us notice that the claim {\rm (iii)} is an analogy to that 
\begin{equation}\label{15/01/28/10:05}
\frac{d}{d \omega}\mathcal{M}(T_{\omega^{-1}} U) 
=
\frac{1}{2}\frac{d}{d\omega}\big(\omega^{-s_{p}} \|U \|_{L^{2}}^{2}
\big)
=-\frac{s_{p}}{2}\omega^{-s_{p}-1} \|U \|_{L^{2}}^{2}<0. 
\end{equation}
Hence, it is convenient to introduce   a function $\Lambda U$ defined by    
\begin{equation}\label{13/03/31/16:03}
\Lambda U(x):=
\partial_{\omega}\{T_{\omega^{-1}}U(x)\} \big|_{\omega=1}
=
\frac{1}{p-1}U(x)+ \frac{1}{2}x\cdot \nabla U(x).
\end{equation}
Then, we can verify that $\Lambda U$ obeys  
\begin{equation}\label{13/03/31/16:08}
L_{+}^{\dagger}\Lambda U
=
-U.
\end{equation}
Differentiation of the both sides of the equation \eqref{12/03/23/18:09} with respect to $\omega$ yields   
\begin{equation}\label{13/03/30/10:50}
\omega \partial_{\omega}\Phi_{\omega}-\Delta \partial_{\omega} \Phi_{\omega}
-p\Phi_{\omega}^{p-1} \partial_{\omega}\Phi_{\omega}
-(2^{*}-1)\Phi_{\omega}^{\frac{4}{d-2}} \partial_{\omega}\Phi_{\omega}
=
-\Phi_{\omega}.
\end{equation}
Furthermore, we see from 
 \eqref{13/03/30/10:50} that  
\begin{equation}\label{13/03/30/10:57}
\widetilde{L}_{\omega,+}(\omega T_{\omega} \partial_{\omega}\Phi_{\omega})
=
-T_{\omega}\Phi_{\omega},
\end{equation}
where $\widetilde{L}_{\omega,+}$ is the operator defined by \eqref{13/03/30/11:40}. We state a property of the operator $\widetilde{L}_{\omega,+}$. 
\begin{lemma}\label{13/03/31/16:12}
Assume $d\ge 3$ and $1+\frac{4}{d}<p< 2^{*}-1$. Then, there exist $\omega_{0}>0$ and $C>0$ 
such that for any $\omega \in (0, \omega_{0})$ and any $f \in H_{rad}^{2}(\mathbb{R}^{d})$, 
\begin{equation}\label{15/04/22/11:53}
\| \widetilde{L}_{\omega,+}f \|_{L^{2}}
\ge C \| f\|_{H^{1}}. 
\end{equation}
\end{lemma}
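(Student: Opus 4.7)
The plan is to argue by contradiction, combining Strauss' radial compactness, the limit identity $T_\omega \Phi_\omega \to U$ in $H^1$ from Proposition \ref{13/03/03/9:43}, and the nondegeneracy of $L_+^\dagger$ on radial functions coming from Lemma \ref{13/03/36/15:35}. Since \eqref{09/05/13/15:03} gives $p+1 < 2^*$, the exponent $(2^*-(p+1))/(p-1)$ is positive, so the critical prefactor $\omega^{(2^*-(p+1))/(p-1)}$ tends to $0$ and $\widetilde{L}_{\omega,+}$ formally degenerates to $L_+^\dagger = 1 - \Delta - pU^{p-1}$ as $\omega \downarrow 0$.

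Suppose the conclusion fails: there exist sequences $\omega_k \downarrow 0$ and radial $f_k \in H^2_{rad}(\mathbb{R}^d)$ with $\|f_k\|_{H^1} = 1$ and $\|\widetilde{L}_{\omega_k,+} f_k\|_{L^2} \to 0$. Splitting real and imaginary parts (the coefficients of $\widetilde{L}_{\omega_k,+}$ are real), one may assume $f_k$ is real-valued. By the Strauss radial Sobolev embedding, along a subsequence $f_k \rightharpoonup f$ weakly in $H^1$ and strongly in $L^q$ for every $q \in (2,2^*)$, with $f$ radial. Testing $\widetilde{L}_{\omega_k,+} f_k \to 0$ against $\phi \in C_c^\infty(\mathbb{R}^d)$, the quadratic terms pass to the limit by weak $H^1$-convergence; the subcritical nonlinearity satisfies
\begin{equation*}
\int (T_{\omega_k}\Phi_{\omega_k})^{p-1} f_k \phi \, dx \;\longrightarrow\; \int U^{p-1} f \phi \, dx,
\end{equation*}
using that $T_{\omega_k}\Phi_{\omega_k} \to U$ strongly in $L^{p+1}$ implies $(T_{\omega_k}\Phi_{\omega_k})^{p-1} \to U^{p-1}$ in $L^{(p+1)/(p-1)}$ by standard Nemytskii continuity, together with the strong $L^{p+1}$-convergence of $f_k$; the critical contribution
\begin{equation*}
\omega_k^{\frac{2^*-(p+1)}{p-1}} \int (T_{\omega_k}\Phi_{\omega_k})^{\frac{4}{d-2}} f_k \phi \, dx
\end{equation*}
is killed by the vanishing prefactor, since H\"older and Sobolev bound the integral uniformly. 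Hence $L_+^\dagger f = 0$ distributionally; elliptic regularity yields $f \in H^2_{rad}(\mathbb{R}^d)$, and Lemma \ref{13/03/36/15:35} together with the fact that no nontrivial linear combination of $\partial_1 U, \ldots, \partial_d U$ is radial forces $f \equiv 0$.

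To close the contradiction I would pair $\widetilde{L}_{\omega_k,+} f_k$ with $f_k$ in $L^2_{real}$ and integrate by parts:
\begin{equation*}
\|f_k\|_{H^1}^2 = (\widetilde{L}_{\omega_k,+} f_k, f_k)_{L^2_{real}} + p \int (T_{\omega_k}\Phi_{\omega_k})^{p-1} f_k^2 \, dx + (2^*-1)\omega_k^{\frac{2^*-(p+1)}{p-1}} \int (T_{\omega_k}\Phi_{\omega_k})^{\frac{4}{d-2}} f_k^2 \, dx.
\end{equation*}
The first term is $o(1)$ by Cauchy--Schwarz. The second is bounded by $\|T_{\omega_k}\Phi_{\omega_k}\|_{L^{p+1}}^{p-1} \|f_k\|_{L^{p+1}}^2 \to \|U\|_{L^{p+1}}^{p-1}\cdot 0 = 0$ because $f \equiv 0$ and $f_k \to f$ strongly in $L^{p+1}$. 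The third is bounded by $\omega_k^{(2^*-(p+1))/(p-1)} \|T_{\omega_k}\Phi_{\omega_k}\|_{L^{2^*}}^{4/(d-2)} \|f_k\|_{L^{2^*}}^2 = o(1)$ since the prefactor vanishes while the other norms stay bounded uniformly. Therefore $\|f_k\|_{H^1}^2 \to 0$, contradicting $\|f_k\|_{H^1} = 1$.

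The main obstacle is the critical term. Since the embedding $H^1_{rad} \hookrightarrow L^{2^*}$ is \emph{not} compact, one cannot extract strong $L^{2^*}$-convergence of $f_k$ and therefore cannot eliminate the critical nonlinearity by the same compactness mechanism as in the purely subcritical case. The resolution is to absorb this lack of compactness into the smallness of the low-frequency prefactor $\omega^{(2^*-(p+1))/(p-1)}$, which is exactly the structural reason the lemma is stated only for small $\omega$.
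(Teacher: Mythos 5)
Your proposal is correct and follows essentially the same route as the paper's proof: a contradiction argument with a normalized sequence $f_k$, radial compactness to pass to a weak limit, the distributional identification $L_+^\dagger f=0$ combined with the absence of radial elements in $\operatorname{Ker}L_+^\dagger$ to force $f\equiv 0$, and then the quadratic-form identity $(\widetilde{L}_{\omega_k,+}f_k,f_k)=\|f_k\|_{H^1}^2-p\int(T_{\omega_k}\Phi_{\omega_k})^{p-1}f_k^2-\cdots$ to contradict $\|f_k\|_{H^1}=1$. Your final step is just the paper's computation \eqref{14/06/27/14:48}--\eqref{15/01/24/16:21} rearranged, with the critical term handled identically via the vanishing prefactor $\omega_k^{(2^*-(p+1))/(p-1)}$.
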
 
\begin{proof}[Proof of Lemma \ref{13/03/31/16:12}]
Suppose for contradiction that for any $k\in \mathbb{N}$, there exist $\omega_{k}\in (0, \frac{1}{k})$ and $f_{k} \in H_{rad}^{2}(\mathbb{R}^{d})$ with $\|f_{k}\|_{H^{1}}=1$ such that 
\begin{equation}\label{13/03/31/16:24}
\| \widetilde{L}_{\omega_{k},+}f_{k} \|_{L^{2}}
< \frac{1}{k}. 
\end{equation}
Then, we can take $f_{\infty} \in H_{rad}^{1}(\mathbb{R}^{d})$ such that, passing to some subsequence,  
\begin{equation}\label{13/03/31/16:40}
\lim_{k\to \infty}f_{k}=f_{\infty}
\qquad 
\mbox{weakly in $H^{1}(\mathbb{R}^{d})$},
\end{equation}
and for any $2<q<2^{*}$, 
\begin{equation}\label{13/03/31/21:52}
\lim_{k\to \infty}f_{k}=f_{\infty}
\qquad 
\mbox{strongly in $L^{q}(\mathbb{R}^{d})$}.
\end{equation}
Furthermore, for any $\phi \in C_{c}^{\infty}(\mathbb{R}^{d})$, we have 
\begin{equation}\label{13/03/31/16:45}
\begin{split}
&\big| 
\langle L_{+}^{\dagger}f_{\infty}, \phi \rangle_{H^{-1},H^{1}}
\big| 
\\[6pt]
&\le 
\big| \bigm\langle \widetilde{L}_{\omega_{k},+}f_{k}, \phi 
\bigm\rangle_{H^{-1},H^{1}}  \big|
+
\big| 
\bigm\langle \widetilde{L}_{\omega_{k},+}\big(f_{\infty}-f_{k} \big), \phi 
\bigm\rangle_{H^{-1},H^{1}} 
\big|
\\[6pt]
&\quad 
+
p\big| 
\big\langle \big\{ (T_{\omega_{k}}\Phi_{\omega_{k}})^{p-1}-U^{p-1}\big\} f_{\infty}, \, \phi 
\big\rangle_{H^{-1},H^{1}}  \big|
\\[6pt]
&\quad +
\omega_{k}^{\frac{2^{*}-(p+1)}{p-1}} (2^{*}-1)
\big| 
\big\langle (T_{\omega_{k}} \Phi_{\omega_{k}})^{\frac{4}{d-2}}f_{\infty}, \, \phi \big\rangle_{H^{-1},H^{1}}  \big| 
.
\end{split}
\end{equation}
This together with the hypothesis \eqref{13/03/31/16:24}, \eqref{13/03/31/16:40}, \eqref{13/03/31/21:52}, Proposition \ref{13/03/03/9:43} and $\lim_{k\to \infty}\omega_{k}=0$ shows that $L_{+}^{\dagger}f_{\infty}=0$ in the distribution sense. Since $({\rm Ker}\,L_{+}^{\dagger})\big|_{H_{rad}^{1}}=\{0\}$ (see Lemma \ref{13/03/36/15:35}), we conclude that $f_{\infty}=0$. Furthermore, this fact $f_{\infty}=0$ together with $\|f_{k}\|_{H^{1}} = 1$, \eqref{13/03/31/21:52} and $\lim_{k\to \infty}\omega_{k}=0$ gives us that 
\begin{equation} \label{14/06/27/14:48}
\lim_{k \to \infty} 
( \widetilde{L}_{\omega_{k},+} f_{k}, f_{k} )_{L^{2}} 
= 1.
\end{equation}
However, it follows from the hypothesis \eqref{13/03/31/16:24} that 
\begin{equation}\label{15/01/24/16:21}
\lim_{k\to \infty}\big| ( \widetilde{L}_{\omega_{k},+} f_{k}, f_{k} )_{L^{2}}\big|
\le 
\lim_{k\to \infty}\|\widetilde{L}_{\omega_{k},+} f_{k}\|_{L^{2}}=0. 
\end{equation}
This is a contradiction. Thus, we have proved that \eqref{15/04/22/11:53} holds.\end{proof}

\begin{lemma}\label{14/06/27/15:03}
Assume $d\ge 3$ and $1+\frac{4}{d}<p < 2^{*}-1$. Then, we have 
\begin{equation} \label{15/04/22/11:50}
\lim_{\omega \downarrow 0} \| \omega T_{\omega} \partial_{\omega}\Phi_{\omega} - \Lambda U \|_{H^{1}}=0.
\end{equation}
\end{lemma}
\begin{proof}[Proof of Lemma \ref{14/06/27/15:03}]
Let $\omega_{0}>0$ and $C>0$ be constants given in Lemma \ref{13/03/31/16:12}. Furthermore, let $\{\omega_{n}\}$ be a sequence in $(0, \omega_{0})$ with $\lim_{n \to \infty}\omega_{n} = 0$, and set 
\begin{equation}\label{17/10/25/10:50}
 \Lambda \Phi_{\omega_{n}}:=\partial_{\omega}\Phi_{\omega}|_{\omega=\omega_{n}}
.
\end{equation}
Then,  we find from Proposition \ref{13/03/31/16:12}, \eqref{13/03/30/10:57} and Proposition \ref{13/03/03/9:43} that there exists a number $N$ such that for any $n\ge N$, 
\begin{equation} \label{14/06/27/15:09}
C\|\omega_{n} T_{\omega_{n}}\Lambda \Phi_{\omega_{n}} \|_{H^{1}} 
\le 
\big\|\widetilde{L}_{\omega_{n}, +} (\omega_{n} T_{\omega_{n}} 
\Lambda \Phi_{\omega_{n}}) \big\|_{L^{2}}
= 
\|T_{\omega_{n}} \Phi_{\omega_{n}}\|_{H^{1}} 
\le 
2 \|U\|_{H^{1}}.
\end{equation}
Thus, $\{\omega_{n} T_{\omega_{n}}\Lambda \Phi_{\omega_{n}} \}$ is bounded in $H^{1}(\mathbb{R}^{d})$ and therefore we can take $g_{\infty} \in H^{1}(\mathbb{R}^{d})$ such that, passing to some subsequence,  
\begin{equation}\label{14/10/07/11:23}
\lim_{n \to \infty} \omega_{n} T_{\omega_{n}} \Lambda \Phi_{\omega_{n}}
= 
g_{\infty} 
\qquad \mbox{weakly in $H^{1}(\mathbb{R}^{d})$}.
\end{equation} 
This together with Proposition \ref{13/03/03/9:43} also shows that for any 
$\phi \in C^{\infty}_{0}(\mathbb{R}^{d})$, 
\begin{equation}\label{14/06/27/15:16}
\lim_{n \to \infty} \langle \widetilde{L}_{\omega_{n},+} (\omega_{n} T_{\omega_{n}} \Lambda \Phi_{\omega_{n}}), \phi \rangle_{H^{-1},H^{1}} 
= 
\langle L^{\dagger}_{+}g_{\infty}, \phi \rangle_{H^{-1},H^{1}}
. 
\end{equation}
On the other hand, we see from \eqref{13/03/30/10:57} and Proposition \ref{13/03/03/9:43} that 
\begin{equation}\label{14/10/07/11:37}
\lim_{n \to \infty} \widetilde{L}_{\omega_{n},+} (\omega_{n} T_{\omega_{n}} 
\Lambda \Phi_{\omega_{n}}) = -U \qquad \mbox{strongly in $H^{1}(\mathbb{R}^{d})$}. 
\end{equation}
Putting \eqref{14/06/27/15:16} and \eqref{14/10/07/11:37} together, we find that  $L^{\dagger}_{+} g_{\infty} = -U$. Furthermore, since $({\rm Ker}\,L_{+}^{\dagger})\big|_{H_{rad}^{1}}=\{0\}$ (see Lemma \ref{13/03/36/15:35}), this identity together with  \eqref{13/03/31/16:08} shows that $g_{\infty} = \Lambda U$. It remains to show that 
\begin{equation}\label{15/06/16/16:02}
\lim_{n \to \infty} \| \omega_{n} T_{\omega_{n}} \Lambda \Phi_{\omega_{n}} \|_{H^{1}}
= 
\|\Lambda U \|_{H^{1}}
.
\end{equation}
We see from \eqref{13/03/30/10:57}, Proposition \ref{14/06/29/14:03}, Proposition \ref{13/03/03/9:43}, \eqref{14/10/07/11:23} with $g_{\infty}=\Lambda U$ and  \eqref{13/03/31/16:08} that 
\begin{equation}\label{15/07/11/16:15}
\begin{split}
&\lim_{n \to \infty} \| \omega_{n} T_{\omega_{n}} \Lambda \Phi_{\omega_{n}} \|_{H^{1}}^{2}
\\[6pt]
&=
-\lim_{n \to \infty}( T_{\omega_{n}} \Phi_{\omega_{n}},
\, \omega_{n} T_{\omega_{n}} \Lambda \Phi_{\omega_{n}}
)_{L_{real}^{2}}
\\[6pt]
&\quad +
\lim_{n \to \infty} p ((T_{\omega_{n}}\Phi_{\omega_{n}})^{p-1}\omega_{n} T_{\omega_{n}} \Lambda \Phi_{\omega_{n}}, \omega_{n} T_{\omega_{n}} \Lambda \Phi_{\omega_{n}} )_{L_{real}^{2}}
\\[6pt]
&\quad +
\lim_{n \to \infty} \omega_{n}^{\frac{2^{*}-(p+1)}{p-1}}(2^{*}-1)
((T_{\omega_{n}}\Phi_{\omega_{n}})^{\frac{4}{d}}\omega_{n} T_{\omega_{n}} 
\Lambda \Phi_{\omega_{n}}, 
\omega_{n} T_{\omega_{n}} \Lambda \Phi_{\omega_{n}} )_{L_{real}^{2}}
\\[6pt]
&= 
-( U, \Lambda U)_{L_{real}^{2}}+p( U^{p-1}\Lambda U, \Lambda U)_{L_{real}^{2}}
\\[6pt]
&=
( L_{+}^{\dagger}\Lambda U, \Lambda U)_{L_{real}^{2}}+p( U^{p-1}\Lambda U, \Lambda U)_{L_{real}^{2}}
=
\|\Lambda U\|_{H^{1}}^{2}.
\end{split}
\end{equation}
Thus, we have completed the proof. 
\end{proof}

Now, we are in a position to prove the claim {\rm (iii)} in Proposition \ref{13/03/29/15:30}. 
\begin{proof}[Proof of {\rm (iii)} in Proposition \ref{13/03/29/15:30}]
We see from Proposition \ref{13/03/03/9:43}, \eqref{14/06/27/15:03} and \eqref{13/03/31/16:03} that 
\begin{equation} \label{14/06/27/15:34}
\begin{split}
\lim_{\omega \to 0} 
\omega^{s_{p} +1} 
\frac{d}{d \omega} \mathcal{M}(\Phi_{\omega})  
&= 
\lim_{\omega \to 0} \omega^{s_{p} +1} 
(\Phi_{\omega}, \Phi'_{\omega})_{L^{2}_{real}} 
= 
\lim_{\omega \to 0}(T_{\omega} \Phi_{\omega}, \omega T_{\omega}\partial_{\omega}\Phi_{\omega}
)_{L^{2}_{real}} \\ 
&= 
(U, \Lambda U)_{L^{2}_{real}} 
= -\frac{s_{p}}{2} \|U\|^{2}_{L^{2}} < 0,
\end{split}
\end{equation}
which gives us the desired result. 
\end{proof}

\section{Proof of Theorem \ref{15/03/22/11:28}}\label{15/03/22/11:50}

In order to prove Theorem \ref{15/03/22/11:28}, for a given $\varepsilon \ge  0$,  we consider the set 
\begin{equation}\label{13/03/02/20:21}
A_{\omega}^{\varepsilon}
:=
\big\{ 
u \in H^{1}(\mathbb{R}^{d}) \colon 
\mathcal{S}_{\omega}(u) < m_{\omega}+\varepsilon,
\, \mathcal{M}(u)=\mathcal{M}(\Phi_{\omega})
\big\}.
\end{equation}
Then, a key fact to prove Theorem \ref{15/03/22/11:28} is the following: 
\begin{theorem}\label{13/03/16/12:01}
Assume either $d=3$ and $3\le p < 5$, or $d\ge 4$ and $1+\frac{4}{d-1}<p< 2^{*}-1$. Then, there exists $\omega_{*}>0$ with the following property: for any $\omega \in (0,\omega_{*})$, there exists a positive constant $\varepsilon(\omega)$ such that  all radial solutions starting from $A_{\omega}^{\varepsilon(\omega)}$ exhibit one of the following scenarios: 
\\
{\rm (i)} Scattering both forward and backward in time;
\\
{\rm (ii)} Finite time blowup both forward and backward in time; 
\\
{\rm (iii)} Scattering forward in time, and finite time blowup backward 
in time;
\\
{\rm (iv)} Finite time blowup forward in time, and scattering backward 
in time;
\\
{\rm (v)} Trapped by $\mathscr{O}(\Phi_{\omega})$ forward in time, and 
scattering backward in time;
\\
{\rm (vi)} Scattering forward in time, and trapped by 
$\mathscr{O}(\Phi_{\omega})$ backward in time;
\\
{\rm (vii)} Trapped by $\mathscr{O}(\Phi_{\omega})$ forward in time, and 
finite time blowup backward in time;
\\
{\rm (viii)} Finite time blowup forward in time, and trapped by 
 $\mathscr{O}(\Phi_{\omega})$ backward in time;
\\ 
{\rm (ix)} Trapped by $\mathscr{O}(\Phi_{\omega})$ both forward and 
backward in time. 
\end{theorem}
We give a proof of this theorem in Section \ref{14/01/21/14:18}. 
\par 
In order to prove Theorem \ref{15/03/22/11:28}, we also use the following fact:
\begin{lemma}\label{15/03/22/14:59}
Assume $d\ge 3$ and $1+\frac{4}{d}<p< 2^{*}-1$. Let $\omega_{1}$ be the frequency given by Proposition \ref{13/03/29/15:30}. Then, we have the following:
\\ 
{\rm (i)} $m_{\omega}$ is differentiable on $(0,\omega_{1})$, and 
\begin{equation}\label{15/03/22/15:04}
\frac{dm_{\omega}}{d\omega}=\mathcal{M}(\Phi_{\omega}) 
\end{equation} 
for all $\omega \in (0,\omega_{1})$. In particular, $m_{\omega}$ is strictly increasing on  $(0, \omega_{1})$.
\\
{\rm (ii)} $\frac{m_{\omega}}{\omega}$ is differentiable and strictly decreasing  on $(0, \omega_{1})$. 
\\[6pt]
{\rm (iii)} Let $0<\alpha<\beta <\omega_{1}$. Then, we have that 
\begin{equation}\label{15/03/22/15:00}
\mathcal{M}(\Phi_{\beta})
<
\frac{m_{\beta}-m_{\alpha}}{\beta-\alpha}
<
\mathcal{M}(\Phi_{\alpha}). 
\end{equation}
\end{lemma}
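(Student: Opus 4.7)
The unifying observation is that the functional $\mathcal{K}$ in \eqref{13/03/30/12:13} is independent of $\omega$, so for every $\alpha > 0$ the ground state $\Phi_\alpha$ (which satisfies $\mathcal{K}(\Phi_\alpha)=0$) is an admissible competitor in the variational problem \eqref{12/03/24/17:51} at \emph{every} other frequency $\beta$. Everything will be extracted from this single test-function input.

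First I would set up the sandwich inequality. For any $\alpha,\beta>0$, plugging $\Phi_\alpha$ into \eqref{12/03/24/17:51} and using the identity $\mathcal{S}_\beta = \mathcal{S}_\alpha + (\beta-\alpha)\mathcal{M}$ gives
\[
m_\beta \;\le\; \mathcal{S}_\beta(\Phi_\alpha) \;=\; m_\alpha + (\beta-\alpha)\,\mathcal{M}(\Phi_\alpha),
\]
and, by symmetry, $m_\alpha \le m_\beta + (\alpha-\beta)\,\mathcal{M}(\Phi_\beta)$. Together, for $\alpha<\beta$,
\[
(\beta-\alpha)\,\mathcal{M}(\Phi_\beta) \;\le\; m_\beta - m_\alpha \;\le\; (\beta-\alpha)\,\mathcal{M}(\Phi_\alpha). \tag{$\ast$}
\]
To upgrade ($\ast$) to the strict inequalities in (iii), I would argue by contradiction: if equality held on the right, then $\Phi_\alpha$ would be a minimizer of $m_\beta$, hence (by the variational characterization from \cite{AIKN, AIKN2} recalled after \eqref{12/03/24/17:51}) a ground state of \eqref{12/03/23/18:09} at frequency $\beta$; subtracting the equations \eqref{12/03/23/18:09} at $\alpha$ and at $\beta$ yields $(\beta-\alpha)\Phi_\alpha \equiv 0$, which is absurd. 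The left inequality is handled identically. This proves (iii).

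Claim (i) is then essentially immediate. Strict monotonicity follows from $\mathcal{M}(\Phi_\omega)>0$ applied to the left half of ($\ast$). For differentiability at $\omega\in(0,\omega_1)$, the sandwich ($\ast$) (applied with $\alpha=\omega,\beta=\omega+h$ if $h>0$, and reversed if $h<0$) traps $(m_{\omega+h}-m_\omega)/h$ between $\mathcal{M}(\Phi_\omega)$ and $\mathcal{M}(\Phi_{\omega+h})$; the $H^1$-continuity of $\omega\mapsto\Phi_\omega$ from Proposition \ref{13/03/29/15:30}(ii) forces both bounds to converge to $\mathcal{M}(\Phi_\omega)$, giving $dm_\omega/d\omega=\mathcal{M}(\Phi_\omega)$.

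Finally, for (ii) I would simply compute, using (i),
\[
\frac{d}{d\omega}\!\left(\frac{m_\omega}{\omega}\right) \;=\; \frac{\omega\,\mathcal{M}(\Phi_\omega) - m_\omega}{\omega^2} \;=\; -\frac{\mathcal{H}(\Phi_\omega)}{\omega^2},
\]
since $m_\omega = \mathcal{S}_\omega(\Phi_\omega) = \omega\mathcal{M}(\Phi_\omega)+\mathcal{H}(\Phi_\omega)$. The coercivity \eqref{14/09/19/16:12} combined with $\mathcal{K}(\Phi_\omega)=0$ and $\Phi_\omega\not\equiv 0$ yields $\mathcal{H}(\Phi_\omega)>0$, so the derivative is strictly negative and $m_\omega/\omega$ strictly decreases on $(0,\omega_1)$. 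The only nontrivial point in the whole argument is the strict inequality in (iii); once that is in hand, the rest is a direct unwinding of the sandwich ($\ast$) together with the continuity of $\omega\mapsto\Phi_\omega$ and the coercivity \eqref{14/09/19/16:12}.
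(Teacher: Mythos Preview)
Your proof is correct, and it takes a genuinely different route from the paper's in two places.

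For (iii), the paper first establishes (i) and then applies the mean value theorem to $m_\omega$, obtaining $\frac{m_\beta-m_\alpha}{\beta-\alpha}=\mathcal{M}(\Phi_{\alpha+\theta(\beta-\alpha)})$ for some $\theta\in(0,1)$; strictness then comes from Proposition~\ref{13/03/29/15:30}(iii), i.e.\ the fact that $\omega\mapsto\mathcal{M}(\Phi_\omega)$ is strictly decreasing on $(0,\omega_1)$. You instead extract strictness directly from the variational characterization: equality in the sandwich would force $\Phi_\alpha$ to be a minimizer for $m_\beta$, hence a ground state at frequency $\beta$, contradicting the equation. This avoids Proposition~\ref{13/03/29/15:30}(iii) entirely and in fact works for all $0<\alpha<\beta$, not just in $(0,\omega_1)$.

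For the differentiability in (i), the paper differentiates $m_\omega=\mathcal{S}_\omega(\Phi_\omega)$ via the chain rule, using $\mathcal{S}_\omega'(\Phi_\omega)=0$ together with the $C^1$-dependence of $\Phi_\omega$ on $\omega$ from Proposition~\ref{13/03/29/15:30}(ii). Your squeeze argument needs only continuity of $\omega\mapsto\Phi_\omega$ in $H^1$, which is weaker. For (ii) you arrive at $-\mathcal{H}(\Phi_\omega)/\omega^2$ and invoke \eqref{14/09/19/16:12}; the paper's displayed computation \eqref{15/04/16/16:20} actually drops the $\mathcal{M}(\Phi_\omega)$ term coming from the explicit $\omega$-dependence of $\mathcal{S}_\omega$ and writes $-\mathcal{S}_\omega(\Phi_\omega)/\omega^2$ instead, though the sign conclusion is unaffected since $m_\omega>0$. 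Your formula is the correct one.
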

\begin{proof}[Proof of Lemma \ref{15/03/22/14:59}]
We shall prove the claim {\rm (i)}. Since $\mathcal{S}'(\Phi_{\omega})=0$ and $\Phi_{\omega}$ is differentiable with respect to $\omega$ on $(0,\omega_{1})$ (see {\rm (ii)} of Proposition \ref{13/03/29/15:30}), we see that for any $\omega \in (0,\omega_{1})$, 
\begin{equation}\label{15/03/24/17:01}
\frac{dm_{\omega}}{d\omega}
=
\frac{d}{d\omega}\mathcal{S}_{\omega}(\Phi_{\omega})
=
\mathcal{M}(\Phi_{\omega})
+
\mathcal{S}_{\omega}'(\Phi_{\omega})\partial_{\omega} \Phi_{\omega} 
=
\mathcal{M}(\Phi_{\omega}). 
\end{equation}
Thus, we have proved the first claim. 
\par 
Next, we shall prove the second claim {\rm (ii)}. The differentiability of $\frac{m_{\omega}}{\omega}$ follows from the first claim. Furthermore, we conclude from {\rm (ii)} of Proposition \ref{13/03/29/15:30} that  for any $\omega \in (0,\omega_{1})$, 
\begin{equation}\label{15/04/16/16:20}
\frac{d}{d\omega}\Big( \frac{m_{\omega}}{\omega} \Big)
=\frac{d}{d\omega}\Big( \frac{\mathcal{S}_{\omega}(\Phi_{\omega})}{\omega} \Big)
=
\dfrac{ \mathcal{S}_{\omega}'(\Phi_{\omega}) \partial_{\omega}\Phi_{\omega}
 \omega -\mathcal{S}_{\omega}(\Phi_{\omega})}{\omega^{2}}
=
-\frac{\mathcal{S}_{\omega}(\Phi_{\omega})}{\omega^{2}}
<0.
\end{equation}
Thus, we find that $\frac{m_{\omega}}{\omega}$ is strictly decreasing on $(0,\omega_{1})$. 
\par 
Finally, we shall prove the last claim {\rm (iii)}. It follows from the mean value theorem 
 and \eqref{15/03/22/15:04} that for any $0<\alpha <\beta <\omega_{1}$, there exists $\theta \in (0,1)$ such that 
\begin{equation}\label{15/04/16/16:54}
\frac{m_{\beta}-m_{\alpha}}{\beta-\alpha}
= 
\mathcal{M}(\Phi_{\alpha+\theta(\beta -\alpha)}).
\end{equation}
On the other hand  we see from {\rm (iii)} of Proposition \ref{13/03/29/15:30} that for any $\theta \in (0,1)$, 
\begin{equation}\label{15/04/16/17:34}
\mathcal{M}(\Phi_{\beta})<\mathcal{M}(\Phi_{\alpha+\theta(\beta -\alpha)})
<\mathcal{M}(\Phi_{\alpha}). 
\end{equation}
Putting \eqref{15/04/16/16:54} and \eqref{15/04/16/17:34} together, we obtain the desired result \eqref{15/03/22/15:00}. 
\end{proof}

We see from \eqref{13/03/03/10:05} with $q=1$ and {\rm (iii)} of Proposition \ref{13/03/29/15:30} that  there is a strictly decreasing function $\alpha \colon (\mathcal{M}(\Phi_{\omega_{1}}), \infty) \to (0,\omega_{1})$ such that $M=\mathcal{M}(\Phi_{\alpha(M)})$ for any $M\in (\mathcal{M}(\Phi_{\omega_{1}}), \infty)$; $\alpha$ is the inverse of the function $\omega \mapsto \mathcal{M}(\Phi_{\omega})$. Let $\omega_{*}$ be the frequency given by Theorem \ref{13/03/16/12:01}. Then, for each $\omega \in (0,\omega_{*})$, we define a positive function $\varepsilon_{\omega} \colon [0, \infty) \to (0,\infty)$ to be that for any $M\ge 0$,
\begin{equation}\label{15/05/06/09:50}
\varepsilon_{\omega}(M)
:= 
\left\{ \begin{array}{lll} 
m_{\omega_{*}}-m_{\omega}-(\omega_{*}-\omega) \mathcal{M}(\Phi_{\omega_{*}}) &\mbox{if}& M\le \mathcal{M}(\Phi_{\omega_{*}}),
\\[9pt]
\varepsilon(\alpha(M))
+
m_{\alpha(M)}-m_{\omega}- ( \alpha(M)-\omega ) M
&\mbox{if}& \mathcal{M}(\Phi_{\omega_{*}})<M<\mathcal{M}(\Phi_{\omega}),
\\[9pt]
\varepsilon(\omega) &\mbox{if}& M=\mathcal{M}(\Phi_{\omega}), 
\\[9pt]
\varepsilon(\alpha(M))
+
(\omega -\alpha(M) ) M - (m_{\omega}-m_{\alpha(M)}) &\mbox{if}& M>\mathcal{M}(\Phi_{\omega}),
\end{array} \right.
\end{equation}
where $\varepsilon(\alpha(M))$ and $\varepsilon(\omega)$ are the positive constants given by Theorem \ref{13/03/16/12:01}.  

\begin{remark}\label{18/02/04/16:42}
It follows from {\rm (iii)} of Lemma \ref{13/03/16/12:01} that $\varepsilon_{\omega}(M)$ is positive for all $M\ge  0$. Furthermore, the continuity of $\varepsilon(\omega)$ with respect to $\omega$ implies that 
\begin{equation}\label{18/02/04/16:43}
\inf_{M\ge 0}\varepsilon_{\omega}(M)>0.
\end{equation}
\end{remark}

Now, we are in a position to prove Theorem \ref{15/03/22/11:28}. 
\begin{proof}[Proof of Theorem \ref{15/03/22/11:28}]
Let $\omega_{*}$ be the frequency found in Theorem \ref{13/03/16/12:01}, and let $\omega \in (0,\omega_{*})$. Furthermore, let $\varepsilon_{\omega}$ be the positive function define by \eqref{15/05/06/09:50}, and let $\widetilde{PW}_{\omega}$ be the set defined by \eqref{15/03/22/11:23}. 

\begin{figure}[H]\label{15/05/17/16:33}
\input{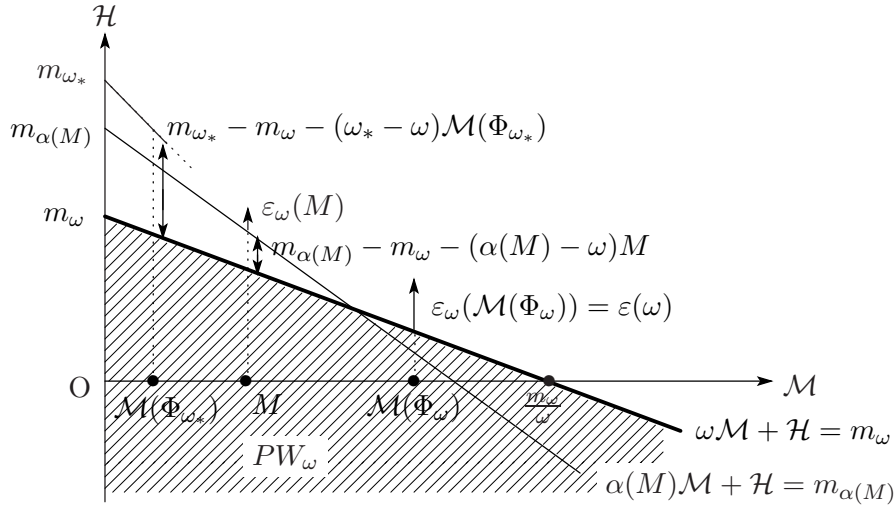}
\caption{How to extend the potential well $PW_{\omega}$}
\end{figure}

We consider a solution $\psi$ to \eqref{12/03/23/17:57} starting from $\widetilde{PW}_{\omega}$. If $\mathcal{M}(\psi) \le \mathcal{M}(\Phi_{\omega_{*}})$, then we can verify that $\psi \in PW_{\omega_{*}}$ (see Figure 2 above).  Hence, it follows from Theorem \ref{15/03/24/16:05} that $\psi$ exhibits the scenario {\rm (i)} or {\rm (ii)} in Theorem \ref{15/03/22/11:28}.  On the other hand, if $\mathcal{M}(\Phi_{\omega_{*}})< \mathcal{M}(\psi)<\mathcal{M}(\Phi_{\omega})$, then we can verify that $\psi \in A_{\alpha}^{\varepsilon(\alpha)}$, where $\alpha:=\alpha(\mathcal{M}(\psi))$. Similarly, if $\mathcal{M}(\psi)\ge \mathcal{M}(\Phi_{\omega})$, then $\psi \in A_{\alpha}^{\varepsilon(\alpha)}$. Hence, it follows from Theorem \ref{13/03/16/12:01} that $\psi$ exhibits one of the nine scenarios in Theorem \ref{15/03/22/11:28}. 
\end{proof}


\section{Decomposition around ground state}
\label{12/03/25/16:46}

In this section, for a positive radial ground state $\Phi_{\omega}$ to \eqref{12/03/23/18:09} and a solution $\psi$ to \eqref{12/03/23/17:57} with the maximal existence-interval $I_{\max}$, we consider the decomposition of the form 
\begin{equation}\label{13/04/27/9:38}
\psi(x,t)=e^{i\theta(t)}\big(\Phi_{\omega}(x)+\eta(x,t)\big),
\end{equation} 
where $\theta(t)$ is some function of $t\in I_{\max}$ to be chosen later, and $\eta$ is the remainder. In this decomposition, we do not assume the orthogonality between $\Phi_{\omega}$ and $\eta(t)$ in $L_{real}^{2}$ (see \eqref{13/04/27/9:57} below).

\subsection{Linearized operator}\label{17/10/28/16:29} 
 The decomposition \eqref{13/04/27/9:38} leads us to
the linearized operator $\mathscr{L}_{\omega}$ around $\Phi_{\omega}$ which is defined by   
\begin{equation}\label{15/02/02/10:25}
\begin{split}
\mathscr{L}_{\omega}u
&:=
\omega u -\Delta u 
-
\frac{p+1}{2}\Phi_{\omega}^{p-1}u 
-\frac{p-1}{2}\Phi_{\omega}^{p-1}\overline{u}
-\frac{2^{*}}{2}\Phi_{\omega}^{\frac{4}{d-2}}u 
-\frac{2^{*}-2}{2}\Phi_{\omega}^{\frac{4}{d-2}}\overline{u}
\\[6pt]
&=
\big( \omega -\Delta  
-
p\Phi_{\omega}^{p-1}-(2^{*}-1)\Phi_{\omega}^{\frac{4}{d-2}}
\big)\Re[u] 
+
i\big( 
\omega -\Delta  
-
\Phi_{\omega}^{p-1}
-
\Phi_{\omega}^{\frac{4}{d-2}}
\big) \Im[u].
\end{split}
\end{equation}
The operator $\mathscr{L}_{\omega}$ is self-adjoint in $L_{real}^{2}(\mathbb{R}^{d})$, and for any $u,v \in H^{1}(\mathbb{R}^{d})$, 
\begin{equation}\label{13/01/04/14:46}
\begin{split}
\big[ \mathcal{S}_{\omega}''(\Phi_{\omega})u \big]v 
&= 
\langle \mathscr{L}_{\omega}u, v \rangle_{H^{-1},H^{1}}
. 
\end{split}
\end{equation} 
It is convenient to introduce the operators $L_{\omega,+}$ and $L_{\omega,-}$: 
\begin{align}\label{13/02/02/17:56}
L_{\omega, +} 
&:= 
\omega  -\Delta  
-p \Phi_{\omega}^{p-1} -(2^{*}-1)\Phi_{\omega}^{\frac{4}{d-2}}, 
\\[6pt]
\label{13/02/02/17:57}
L_{\omega, -} 
&:= 
\omega  -\Delta  - \Phi_{\omega}^{p-1} - \Phi_{\omega}^{\frac{4}{d-2}}.
\end{align}
Then, we have  
\begin{equation}\label{13/02/20/9:45}
\mathscr{L}_{\omega}u
=
L_{\omega, +}\Re[u]+ i L_{\omega,-}\Im[u] .
\end{equation}
Moreover, since $\Phi_{\omega}$ is a solution to \eqref{12/03/23/18:09}, we can verify that 
\begin{align}
\label{13/12/28/12:07}
&\mathscr{L}_{\omega}\Phi_{\omega}=L_{\omega,+}\Phi_{\omega}=-(p-1)\Phi_{\omega}^{p}-(2^{*}-2)\Phi_{\omega}^{2^{*}-1},
\\[6pt]
\label{13/12/28/15:36}
&\mathscr{L}_{\omega} (i\Phi_{\omega})=L_{\omega,-}\Phi_{\omega}=0,
\\[6pt]
\label{13/02/20/10:40}
&
\mathscr{L}_{\omega} \partial_{\omega}\Phi_{\omega}
=
L_{\omega,+}\partial_{\omega}\Phi_{\omega} =- \Phi_{\omega}.
\end{align}
Inserting the decomposition $\psi(t)=e^{i\theta(t)}(\Phi_{\omega}+\eta(t))$ into the equation \eqref{12/03/23/17:57}, we obtain the equation for $\eta$: 
\begin{equation}\label{13/05/06/13:59}
\begin{split}
\frac{\partial \eta}{\partial t}(t)
&=
-i\mathscr{L}_{\omega} \eta(t)
-
i\Big\{ \frac{d \theta}{dt}(t)- \omega  \Big\}(\Phi_{\omega}+\eta(t)) 
+
iN_{\omega}(\eta(t)),
\end{split}
\end{equation}  
where $N_{\omega}(\eta)$ denotes the higher order term of $\eta$, i.e., 
\begin{equation}\label{13/05/06/10:27}
\begin{split}
N_{\omega}(\eta)&:=
 |\Phi_{\omega}+\eta|^{p-1}\big( \Phi_{\omega}+\eta \big) 
- 
|\Phi_{\omega}|^{p-1}\Phi_{\omega}
-\frac{p+1}{2}\Phi_{\omega}^{p-1}\eta 
-\frac{p-1}{2}\Phi_{\omega}^{p-1}\overline{\eta}
\\[6pt]
&\quad 
+
 |\Phi_{\omega}+\eta|^{\frac{4}{d-2}}\big( \Phi_{\omega}+\eta \big) 
- |\Phi_{\omega}|^{\frac{4}{d-2}}\Phi_{\omega}
-\frac{2^{*}}{2}\Phi_{\omega}^{\frac{4}{d-2}}\eta 
-\frac{2^{*}-2}{2}\Phi_{\omega}^{\frac{4}{d-2}}\overline{\eta}
\\[6pt]
&=
O\big(|\eta|^{\min\{2,\, p\}}\big). 
\end{split}
\end{equation}
We see from \eqref{13/05/06/13:59} that the operator $-i\mathscr{L}_{\omega}$ relates to the behavior of the remainder $\eta(t)$. Unfortunately, $-i \mathscr{L}_{\omega}$ is not symmetric in $L_{real}^{2}(\mathbb{R}^{d})$, and therefore we do not have the orthogonality of eigenfunctions in this space. Thus, we need to work in the symplectic space $(L^{2}(\mathbb{R}^{d}), \Omega)$ instead of $L_{real}^{2}(\mathbb{R}^{2})$, where $\Omega$ is the symplectic form defined by    
\begin{equation}\label{13/02/23/10:45}
\Omega(f,g)
:=
\Im \int_{\mathbb{R}^{d}}f(x)\overline{g(x)}\,dx
=\big( f, i g\big)_{L_{real}^{2}}.
\end{equation}

\begin{proposition}\label{13/02/20/9:41}
Assume $d\ge 3$ and $1+\frac{4}{d}<p< 2^{*}-1$. Let $\omega_{1}$ be the frequency given by Proposition \ref{13/03/29/15:30}. Then, there exists $\omega_{2}\in (0,\omega_{1})$ such that for any $\omega \in (0,\omega_{2})$, $-i\mathcal{L}_{\omega}$ has a positive eigenvalue $\mu$, as an operator in $L_{real}^{2}(\mathbb{R}^{d})$. Furthermore, the eigenvalue $\mu$ satisfies that 
\begin{equation}\label{15/07/02/12:36}
-\mu^{2}= 
\inf\Bigm\{  
\dfrac{\langle L_{\omega, +} u, u \rangle_{H^{-1},H^{1}}}
{((L_{\omega, -})^{-1} u, u )_{L_{real}^{2}}}
\colon 
u \in H^{1}(\mathbb{R}^{d}),\ (u, \Phi_{\omega})_{L^{2}_{real}}=0
\Bigm\}. 
\end{equation}
\end{proposition}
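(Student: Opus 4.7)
The plan is to realize the eigenvalue equation $-i\mathscr{L}_\omega u=\mu u$ via the scalar second-order equation $L_{\omega,-}L_{\omega,+}u_1=-\mu^2 u_1$ obtained by eliminating the imaginary part, and then to identify $-\mu^2$ with the infimum of the symmetrized Rayleigh quotient. Splitting $u=u_1+iu_2$ in the eigenvalue equation and using \eqref{13/02/20/9:45} gives the coupled system $L_{\omega,-}u_2=\mu u_1$ and $-L_{\omega,+}u_1=\mu u_2$, so eliminating $u_2=-\mu^{-1}L_{\omega,+}u_1$ leads to $L_{\omega,-}L_{\omega,+}u_1=-\mu^2 u_1$, which after pairing with $u_1$ becomes $\langle L_{\omega,+}u_1,u_1\rangle=-\mu^2(L_{\omega,-}^{-1}u_1,u_1)$, provided $u_1\perp\ker L_{\omega,-}=\operatorname{span}\{\Phi_\omega\}$.

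First I would establish the following spectral picture for small $\omega$: (a) $L_{\omega,-}\ge 0$ with $\ker L_{\omega,-}=\operatorname{span}\{\Phi_\omega\}$, using Perron--Frobenius applied to the Schr\"odinger operator $L_{\omega,-}$ together with the identity \eqref{13/12/28/15:36} and positivity of $\Phi_\omega$; (b) $L_{\omega,+}$ has exactly one simple negative eigenvalue, trivial radial kernel, and is invertible on radial functions. Item (b) follows by transplanting the Weinstein non-degeneracy (Lemma \ref{13/03/36/15:35}) from $L_+^\dagger$ to $L_{\omega,+}$ through the rescaling $T_\omega$, using the $H^1$-convergence $T_\omega\Phi_\omega\to U$ of Proposition \ref{13/03/03/9:43}, in the same contradiction-and-weak-limit style already used in Lemma \ref{13/03/31/16:12}. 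Combining \eqref{13/02/20/10:40} with Proposition \ref{13/03/29/15:30}(iii) then yields the crucial positivity
\begin{equation*}
(L_{\omega,+}^{-1}\Phi_\omega,\Phi_\omega)_{L^2_{real}}=-(\Phi_\omega',\Phi_\omega)_{L^2_{real}}=-\frac{d\mathcal{M}(\Phi_\omega)}{d\omega}>0.
\end{equation*}

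Next I would set up the variational problem on the constraint space $V:=\{u\in H^1_{real}(\mathbb{R}^d):(u,\Phi_\omega)_{L^2_{real}}=0\}$, on which $L_{\omega,-}^{-1}$ is well-defined and positive. Denote the quotient on the right-hand side of \eqref{15/07/02/12:36} by $\mathcal{R}(u)$ and let $-\mu^2:=\inf_{V\setminus\{0\}}\mathcal{R}$. Strict negativity is a standard Courant--Fischer / Krein--Birman type consequence of the positivity just established: since $L_{\omega,+}$ has a single negative direction and $(L_{\omega,+}^{-1}\Phi_\omega,\Phi_\omega)>0$, the function $\mu\mapsto ((L_{\omega,+}-\mu)^{-1}\Phi_\omega,\Phi_\omega)$ is monotone with a zero strictly below $0$, so $u\mapsto\langle L_{\omega,+}u,u\rangle$ still admits a negative direction within $V$. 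Attainment of the infimum is where the work lies: take a minimizing sequence normalized by $(L_{\omega,-}^{-1}u_n,u_n)=1$; coercivity from the $-\Delta+\omega$ part of both operators yields $H^1$-boundedness, weak convergence along a subsequence gives $u_n\rightharpoonup u_*\in V$, and the exponential decay of $\Phi_\omega$ from Proposition \ref{14/06/29/14:03} turns the potentials $\Phi_\omega^{p-1}$ and $\Phi_\omega^{4/(d-2)}$ into relatively compact perturbations, so the integrals $\int \Phi_\omega^{p-1}|u_n|^2\,dx$ and $\int \Phi_\omega^{4/(d-2)}|u_n|^2\,dx$ pass to the limit; the normalization prevents $u_*\equiv 0$, and weak lower semicontinuity of $\|\nabla\cdot\|_{L^2}^2$ delivers the minimum.

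Finally I would extract the eigenvalue by a Lagrange-multiplier computation: the Euler--Lagrange equation for the constrained minimization takes the form $L_{\omega,+}u_*+\mu^2 L_{\omega,-}^{-1}u_*=c\,\Phi_\omega$ for some $c\in\mathbb{R}$. Applying $L_{\omega,-}$ and using $L_{\omega,-}\Phi_\omega=0$ kills the Lagrange term and gives $L_{\omega,-}L_{\omega,+}u_*=-\mu^2 u_*$. Setting $u_2:=-\mu^{-1}L_{\omega,+}u_*$, a direct check recovers both $L_{\omega,-}u_2=\mu u_*$ and $-L_{\omega,+}u_*=\mu u_2$, so $u:=u_*+i u_2$ is an eigenfunction of $-i\mathscr{L}_\omega$ with eigenvalue $\mu>0$, proving both existence and the variational formula. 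The main obstacle I anticipate is the attainment step: one must verify that the candidate limit $u_*$ really satisfies the constraint and does not lose mass to the dense part of the spectrum of $L_{\omega,-}^{1/2}L_{\omega,+}L_{\omega,-}^{1/2}$, which requires controlling the essential spectrum of this symmetrized operator uniformly in $\omega$ and using the exponential decay of $\Phi_\omega$ to rule out vanishing in the weak limit.
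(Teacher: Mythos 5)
Your proposal is correct and follows the same overall architecture as the paper's proof: reduce the eigenvalue equation for $-i\mathscr{L}_{\omega}$ to the scalar problem $L_{\omega,-}L_{\omega,+}u=-\mu^{2}u$, realize $-\mu^{2}$ as the infimum $\nu_{\omega}$ of the Rayleigh quotient on $\{u:(u,\Phi_{\omega})_{L^{2}_{real}}=0\}$, prove $-\infty<\nu_{\omega}<0$, obtain a minimizer by weak compactness (with the exponential decay of $\Phi_{\omega}$ making the potential terms compact, and a rescaling $\theta\ge 1$ handling the possible drop of the denominator in the weak limit), and reconstruct the complex eigenfunction via $u_{2}=-\mu^{-1}L_{\omega,+}u_{1}$. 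The one step where you genuinely diverge is the proof that $\nu_{\omega}<0$. The paper tests the quotient with $\Pi_{\omega}\Phi_{\omega}'$ and passes to the limit $\omega\downarrow 0$, reducing matters to the explicit computation $\langle L_{+}^{\dagger}\Pi U',\Pi U'\rangle_{H^{-1},H^{1}}=\frac{d(p-1)s_{p}}{4}(\frac{1}{2}-\frac{1}{p+1})\|U\|^{p}_{L^{p}}<0$ for the limiting operator; this is an asymptotic argument, but it has the side benefit that the quantitative limit $\omega^{-2}\nu_{\omega}\to -\nu_{*}^{2}<0$ is exactly what is reused later in the proof of Lemma \ref{13/12/19/09:42} (see \eqref{15/05/24/15:45}). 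You instead run the standard index/slope argument for each fixed $\omega$: since $L_{\omega,+}$ has a single simple negative eigenvalue on radial functions (Lemma \ref{13/01/01/16:22}) and $(L_{\omega,+}^{-1}\Phi_{\omega},\Phi_{\omega})_{L^{2}_{real}}=-(\Phi_{\omega}',\Phi_{\omega})_{L^{2}_{real}}=-\frac{d}{d\omega}\mathcal{M}(\Phi_{\omega})>0$ by \eqref{13/02/20/10:40} and Proposition \ref{13/03/29/15:30}(iii), the monotone function $s\mapsto((L_{\omega,+}-s)^{-1}\Phi_{\omega},\Phi_{\omega})$ vanishes at some $s_{*}<0$ and $u=(L_{\omega,+}-s_{*})^{-1}\Phi_{\omega}$ is a negative direction inside the constraint set. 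This is cleaner and non-asymptotic; note that it is not more general in $\omega$, since Proposition \ref{13/03/29/15:30}(iii) is itself a small-$\omega$ statement. One small imprecision: the normalization $((L_{\omega,-})^{-1}u_{n},u_{n})=1$ does not by itself prevent $u_{*}\equiv 0$ in the weak limit; nontriviality comes from $\langle L_{\omega,+}u_{*},u_{*}\rangle\le\liminf\langle L_{\omega,+}u_{n},u_{n}\rangle=\nu_{\omega}<0$, which is how the paper argues in \eqref{15/05/30/11:48} — you correctly flag this as the delicate point, and the mechanism is available to you.
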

\begin{proof}[Proof of Proposition \ref{13/02/20/9:41}]
We prove the proposition following the exposition in Section 2 of \cite{CGNT}. What we need to prove is that there exist a function $u \in H^{1}(\mathbb{R}^{d})$ and $\nu <0$ such that   
\begin{equation}\label{15/05/27/14:45}
L_{\omega, -} L_{\omega, +} u = \nu u. 
\end{equation}
Indeed, putting $f_{1} =-\Re[u]$ and $f_{2} = -\sqrt{- \nu} (L_{\omega, -})^{-1} \Re[u]$, 
we see from \eqref{13/02/20/9:45} and \eqref{15/05/27/14:45} that 
\begin{equation}\label{15/05/27/15:59}
\begin{split}
-i\mathcal{L}_{\omega} (f_{1}+if_{2})
&=
-iL_{\omega,+}f_{1}+L_{\omega,-}f_{2}
\\[6pt]
&=
i(L_{\omega,-})^{-1}\nu \Re[u]
-
L_{\omega,-}\sqrt{- \nu} (L_{\omega, -})^{-1} \Re[u]
=
\sqrt{- \nu}(f_{1}+if_{2}). 
\end{split}
\end{equation}
Furthermore, we can verify that the problem \eqref{15/05/27/14:45} is equivalent to that there exist $u\in H^{1}(\mathbb{R}^{d})$, $\nu <0$ and $\alpha \in \mathbb{R}$ such that 
\begin{equation} \label{14/06/27/16:35}
\left\{ \begin{array}{rl}
L_{\omega, +} u &= \nu (L_{\omega, -})^{-1}u + \alpha \Phi_{\omega}, 
\\[6pt]
(u,\Phi_{\omega})_{L_{real}^{2}}&=0.
\end{array} \right.
\end{equation}
Note here that ${\rm Ker}\, L_{\omega,-}={\rm span}\, \{\Phi_{\omega} \}$ (see Lemma \ref{13/03/18/15:50}) and $-\Phi_{\omega}=L_{\omega,+}\partial_{\omega}\Phi_{\omega}$ (see \eqref{13/02/20/10:40}).
The problem \eqref{14/06/27/16:35} leads us to the following minimizing problem
\begin{equation} \label{14/06/27/16:27}
\nu_{\omega} 
:= 
\inf\Bigm\{  
\dfrac{\langle L_{\omega, +} u, u \rangle_{H^{-1},H^{1}}}
{((L_{\omega, -})^{-1} u, u )_{L_{real}^{2}}}
\colon 
u \in H^{1}(\mathbb{R}^{d}),\ (u, \Phi_{\omega})_{L^{2}_{real}}=0
\Bigm\}. 
\end{equation}
We can verify that any minimizer of \eqref{14/06/27/16:27} satisfies the equation \eqref{14/06/27/16:35} with $\nu =\nu_{\omega}$ for some $\alpha \in \mathbb{R}$. Thus, it suffices to show that $-\infty < \nu_{\omega}<0$ and the existence of a minimizer. 
\par 
First, we shall show that $-\infty < \nu_{\omega}$. We take any $u \in  H^{1}(\mathbb{R}^{d})$ with $(u, \Phi_{\omega})_{L^{2}_{real}}=0$. Then, we see from Proposition \ref{14/06/29/14:03} that 
\begin{equation}\label{15/06/06/10:18}
\begin{split}
&\dfrac{\langle L_{\omega, +} u, u \rangle_{H^{-1},H^{1}}}
{((L_{\omega, -})^{-1} u, u )_{L_{real}^{2}}}
\\[6pt]
&=
\dfrac{\langle L_{\omega, -} u, u \rangle_{H^{-1},H^{1}}}
{\| (L_{\omega, -})^{-\frac{1}{2}} u\|_{L^{2}}^{2}}
-
\dfrac{\langle \{ (p-1)\Phi_{\omega}^{p-1}+(2^{*}-2)\Phi_{\omega}^{\frac{4}{d-2}}\} u, u \rangle_{H^{-1},H^{1}}}
{\| (L_{\omega, -})^{-\frac{1}{2}} u\|_{L^{2}}^{2}}
\\[6pt]
&\ge 
\dfrac{\|(L_{\omega, -})^{\frac{1}{2}} u \|_{H^{1}}^{2}}
{\| (L_{\omega, -})^{-\frac{1}{2}} u\|_{L^{2}}^{2}}
-
C(\omega) \dfrac{\| u\|_{L^{2}}^{2}}{\| (L_{\omega, -})^{-\frac{1}{2}} u\|_{L^{2}}^{2}},
\end{split}
\end{equation}
where $C(\omega)$ is some positive constant depending on $\omega$. Moreover, it is easy to see that   
\begin{equation}\label{15/06/06/10:55}
\begin{split}
\| u \|_{L^{2}}^{2}
&=
( (L_{\omega, -})^{\frac{1}{2}} u, (L_{\omega, -})^{-\frac{1}{2}} u )_{L_{real}^{2}}
\le  
\|(L_{\omega, -})^{\frac{1}{2}}u \|_{L^{2}} \|(L_{\omega, -})^{-\frac{1}{2}} u\|_{L^{2}}
\\[6pt]
&\le 
\dfrac{1}{2 C(\omega)}\|(L_{\omega, -})^{\frac{1}{2}}u \|_{L^{2}}^{2} 
+
\dfrac{C(\omega)}{2}\|(L_{\omega, -})^{-\frac{1}{2}} u\|_{L^{2}}^{2}.
\end{split}
\end{equation}
Putting the estimates \eqref{15/06/06/10:18} and \eqref{15/06/06/10:55} together, we find that $\nu_{\omega}>-C(\omega)^{2}>-\infty$. 
\par 
Next, we shall show that $\nu_{\omega}$ is negative for any sufficiently small $\omega > 0$. To this end, we introduce projections $\Pi_{\omega}$ and $\Pi$:  
\begin{equation}\label{15/05/26/17:50}
\Pi_{\omega}u :=  u - \dfrac{(u, \Phi_{\omega})_{L_{real}^{2}}}
{\|\Phi_{\omega}\|^{2}_{L^{2}}} \Phi_{\omega}, 
\quad 
\Pi u := u - \dfrac{(u, U)_{L^{2}_{real}}}{\|U\|^{2}_{L^{2}}} U .
\end{equation}
Then, we see from substitution of variables, Proposition \ref{13/03/03/9:43} and Lemma \ref{14/06/27/15:03} that  
\begin{equation} \label{14/07/13/14:35}
\begin{split}
&\omega^{-2} \nu_{\omega} 
=  
\omega^{-2} \inf\Big\{  
\dfrac{\langle L_{\omega, +} u, u \rangle_{H^{-1},H^{1}}}
{((L_{\omega, -})^{-1}u, u)_{L_{real}^{2}}}
\colon 
u \in H^{1}(\mathbb{R}^{d}),\  (u, \Phi_{\omega})_{L^{2}_{real}} = 0 
\Big\}
\\[6pt]
&\le 
\omega^{-2} 
\dfrac{\langle L_{\omega, +} \Pi_{\omega}\partial_{\omega}\Phi_{\omega}, \Pi_{\omega} \partial_{\omega}\Phi_{\omega} \rangle_{H^{-1},H^{1}}}
{((L_{\omega, -})^{-1}\Pi_{\omega} \partial_{\omega}\Phi_{\omega} , 
 \Pi_{\omega} \partial_{\omega}\Phi_{\omega} )_{L_{real}^{2}}}
=  
\dfrac{\langle \widetilde{L}_{\omega, +} (\omega T_{\omega}\Pi_{\omega}
\partial_{\omega}\Phi_{\omega}), \omega T_{\omega}\Pi_{\omega}\partial_{\omega}\Phi_{\omega} \rangle_{H^{-1},H^{1}}}
{((\widetilde{L}_{\omega, -})^{-1}(\omega T_{\omega}\Pi_{\omega}\partial_{\omega}\Phi_{\omega} ), \omega T_{\omega}\Pi_{\omega}\partial_{\omega}\Phi_{\omega})_{L_{real}^{2}}}
\\[6pt]
&\to 
\dfrac{\langle L^{\dagger}_{+} \Pi \Lambda U, \Pi \Lambda U \rangle_{H^{-1},H^{1}}}
{\langle (L_{-}^{\dagger})^{-1} \Pi \Lambda U, \Pi \Lambda U )_{L_{real}^{2}}}
\qquad 
\mbox{as $\omega \downarrow 0$},  
\end{split}
\end{equation}
where $\widetilde{L}_{\omega, +}$, $\widetilde{L}_{\omega, -}$, $L_{+}^{\dagger}$ and $L_{-}^{\dagger}$ are operators defined by \eqref{13/03/30/11:40}, \eqref{13/04/12/14:40}, \eqref{13/03/30/12:01} and \eqref{13/04/12/12:14}, respectively. Thus, it suffices to show that 
\begin{equation} \label{14/07/13/14:34}
\dfrac{\langle L^{\dagger}_{+} \Pi \Lambda U, \Pi \Lambda U \rangle_{H^{-1},H^{1}}}
{\langle (L^{\dagger})^{-1}_{-} \Pi \Lambda U, \Pi \Lambda U )_{L_{real}^{2}}} < 0.
\end{equation}
Note here that we see from \eqref{13/03/31/16:03} and integration by parts that \begin{equation}\label{14/06/27/17:09}
\Pi \Lambda U= \Lambda U + \frac{s_{p}}{2} U.
\end{equation}
Moreover, it follows from \eqref{13/03/31/16:03} and \eqref{13/03/31/16:08} that\begin{equation} \label{14/07/13/14:33}
L^{\dagger}_{+} \Pi \Lambda U = 
L^{\dagger}_{+} \Lambda U - \dfrac{(\Lambda U, U)_{L^{2}_{real}}}{\|U\|^{2}_{L^{2}}} L^{\dagger}_{+}U 
= 
-U -\frac{(p-1)s_{p}}{2} U^{p}.
\end{equation}
Using \eqref{14/06/27/17:09}, \eqref{14/07/13/14:33}, \eqref{13/03/31/16:03} and integration by parts,   we obtain that 
\begin{equation} \label{14/07/14/14:37}
\begin{split}
&\langle 
L^{\dagger}_{+} \Pi \Lambda U, \Pi \Lambda U  
\rangle_{H^{-1},H^{1}}
= 
\langle  
-U - \frac{(p-1)s_{p}}{2}U^{p},
\Lambda U+\frac{s_{p}}{2} U 
\rangle_{H^{-1},H^{1}} 
\\[6pt]
& = 
-
(U, \Lambda U)_{L_{real}^{2}}
-\frac{(p-1)s_{p}}{2}
(U^{p}, \Lambda U)_{L_{real}^{2}} 
-
\frac{s_{p}}{2} \|U\|^{2}_{L^{2}} 
-
\frac{(p-1)s_{p}^{2}}{4}\|U\|^{p+1}_{L^{p+1}} 
\\[6pt]
&=
\frac{d(p-1)s_{p}}{4}\Big(\frac{1}{2}-\frac{1}{p+1}\Big) \|U\|^{p}_{L^{p}}<0.  
\end{split}
\end{equation}
Since Lemma \ref{13/03/18/15:50} implies that $( \Pi \Lambda U, (L_{-}^{\dagger})^{-1} \Pi \Lambda U )_{L_{real}^{2}} > 0$, we find from \eqref{14/07/14/14:37} that \eqref{14/07/13/14:34} holds. 
\par 
Finally, we shall show the existence of a minimizer $u$ for $\nu_{\omega}$. We can  take a sequence $\{u_{n} \}$ in $H^{1}(\mathbb{R}^{d})$ such that  
\begin{align}
\label{15/05/29/14:21}
&(u_{n}, \Phi_{\omega})_{L_{real}^{2}}\equiv 0,
\\[6pt]
\label{15/05/29/14:22}
&
((L_{\omega,-})^{-1} u_{n}, u_{n} )_{L_{real}^{2}}\equiv 1,
\\[6pt] 
\label{15/05/29/14:23}
& 
\lim_{n\to \infty}\langle L_{\omega,+} u_{n}, u_{n} \rangle_{H^{-1},H^{1}}=\nu_{\omega}.
\end{align}
Note here that Lemma \ref{13/03/18/15:50} shows that the square root of $L_{\omega,-}$ is well-defined. We see from \eqref{15/05/29/14:23}, Proposition \ref{14/06/29/14:03}, the Cauchy-Schwarz inequality, \eqref{13/02/19/10:42} and \eqref{15/05/29/14:22} that for any sufficiently large number $n$,
\begin{equation}\label{15/05/29/14:28}
\begin{split}
\|u_{n}\|_{H^{1}}^{2}
&\le (1+\omega^{-1})
\langle L_{\omega,+}u_{n},u_{n} \rangle_{H^{-1},H^{1}}
\\[6pt]
&\quad +
\int_{\mathbb{R}^{d}} 
\big\{ p(\Phi_{\omega}(x))^{p-1} 
+ (2^{*}-1)(\Phi_{\omega}(x))^{\frac{4}{d-2}} 
\big\} 
|u_{n}(x)|^{2}\,dx 
\\[6pt]
&\lesssim 
(1+\omega^{-1}) \nu_{\omega} 
+ 
C_{1}(\omega) ( (L_{\omega,-})^{\frac{1}{2}}u_{n}, (L_{\omega,-})^{-\frac{1}{2}}u_{n})_{L_{real}^{2}}
\\[6pt]
&\le 
(1+\omega^{-1}) \nu_{\omega}
+
C_{1}(\omega)
\| (L_{\omega,-})^{\frac{1}{2}}u_{n}\|_{L^{2}} 
\| (L_{\omega,-})^{-\frac{1}{2}}u_{n}\|_{L^{2}} 
\\[6pt]
&=
(1+\omega^{-1}) \nu_{\omega}
+
C_{1}(\omega)
(L_{\omega,-} u_{n}, u_{n} )_{L_{real}^{2}}^{\frac{1}{2}} 
((L_{\omega,-})^{-1}u_{n}, u_{n} )_{L_{real}^{2}}^{\frac{1}{2}} 
\\[6pt]
&\le 
(1+\omega^{-1}) \nu_{\omega} +C_{2}(\omega)\|u_{n}\|_{H^{1}},
\end{split}
\end{equation}
where $C_{1}(\omega)$ and $C_{2}(\omega)$ are some constants depending only on $d$, $p$ and $\omega$. This implies that $\{u_{n}\}$ is bounded in $H^{1}(\mathbb{R}^{d})$ and therefore we can extract a subsequence of $\{u_{n}\}$ (still denoted by the same symbol) and a function $u_{\infty} \in H^{1}(\mathbb{R}^{d})$ such that 
\begin{align}
\label{15/05/30/10:47}
&\lim_{n\to \infty}u_{n}=u_{\infty} \qquad \mbox{weakly in $H^{1}(\mathbb{R}^{d})$},
\\[6pt]
\label{15/05/30/15:01}
&(u_{\infty},\Phi_{\omega})_{L_{real}^{2}}=0,
\\[6pt]
\label{15/05/30/10:54}
&\lim_{n\to \infty}\int_{\mathbb{R}^{d}} \Phi_{\omega}(x)^{q} |u_{n}(x)|^{2}\,dx=
\int_{\mathbb{R}^{d}} \Phi_{\omega}(x)^{q} |u_{\infty}(x)|^{2}\,dx,
\end{align}
where $q$ indicates $p-1$ or $\frac{4}{d-2}$: Here, \eqref{15/05/30/15:01} follows from \eqref{15/05/29/14:21} and \eqref{15/05/30/10:47}, and \eqref{15/05/30/10:54} follows from \eqref{15/05/30/10:47} and Propositionre \ref{14/06/29/14:03}. Furthermore, we can verify that 
\begin{equation}\label{15/05/30/11:48}
\langle L_{\omega,+}u_{\infty}, u_{\infty} \rangle_{H^{-1},H^{1}}
\le 
\liminf_{n\to \infty} \langle L_{\omega,+}u_{n}, u_{n} \rangle_{H^{-1},H^{1}}
=
\nu_{\omega}<0.
\end{equation} 
In particular, the limit $u_{\infty}$ is non-trivial.
We also see from Lemma \ref{13/03/18/15:50}, \eqref{13/12/28/15:36}, \eqref{15/05/29/14:21} and \eqref{15/05/29/14:22} that for any number $n$, 
\begin{equation}\label{15/05/30/11:00}
\|(L_{\omega,-})^{-1} u_{n}\|_{H^{1}}^{2} 
\lesssim  
(u_{n},  (L_{\omega,-})^{-1} u_{n})_{L_{real}^{2}}\le 1, 
\end{equation}
where the implicit constant depends on $\omega$. Hence, there exists a subsequence of $\{u_{n}\}$ (still denoted by the same symbol) and a function $v_{\infty} \in H^{1}(\mathbb{R}^{d})$ such that 
\begin{equation}\label{15/05/30/11:29}
\lim_{n\to \infty}(L_{\omega,-})^{-1}u_{n}=v_{\infty}
\qquad \mbox{weakly in $H^{1}(\mathbb{R}^{d})$}.
\end{equation}
Furthermore, we see from \eqref{13/12/28/15:36} that for any test function $\phi \in C_{c}^{\infty}(\mathbb{R}^{d})$, 
\begin{equation}\label{15/05/30/11:35}
\begin{split}
\langle L_{\omega,-}v_{\infty}, \phi \rangle_{H^{-1},H^{1}}
&=
\langle  L_{\omega,-}\phi, v_{\infty} \rangle_{H^{-1},H^{1}}
\\[6pt]
&= 
\lim_{n\to \infty} ( (L_{\omega,-})^{-1}u_{n}, L_{\omega,-}\phi)_{L_{real}^{2}}=( u_{\infty}, \phi)_{L_{real}^{2}}.
\end{split}
\end{equation}
This together with \eqref{15/05/30/15:01} shows that 
\begin{equation}\label{15/05/30/11:45}
v_{\infty}=(L_{\omega,-})^{-1}u_{\infty}. 
\end{equation}
We also see from \eqref{15/05/30/11:45} and   Lemma \ref{13/03/18/15:50}  that 
\begin{equation}\label{15/05/31/22:09}
( (L_{\omega,-})^{-1}u_{\infty}, u_{\infty} )_{L_{real}^{2}}
=
( v_{\infty}, L_{\omega,-}v_{\infty})_{L_{real}^{2}}>0. 
\end{equation}
Furthermore, it follows from \eqref{15/05/30/11:29}, \eqref{15/05/30/11:45}, \eqref{15/05/29/14:21} and \eqref{15/05/29/14:22} that 
\begin{equation}\label{15/05/30/11:54}
( (L_{\omega,-})^{-1}u_{\infty}, u_{\infty} )_{L_{real}^{2}}
\le 
\liminf_{n\to \infty}
((L_{\omega,-})^{-1}u_{n}, u_{n} )_{L_{real}^{2}}
=1,
\end{equation}
so that we can take $\rho_{1} \ge 1$ such that $((L_{\omega,-})^{-1}\rho_{1} u_{\infty}, \rho_{1} u_{\infty} )_{L_{real}^{2}}=1$. Suppose here that $\rho_{1} >1$. Then, we see from \eqref{15/05/30/15:01} and \eqref{15/05/30/11:48} that 
\begin{equation}
\label{15/05/30/12:03}
\nu_{\omega}
\le 
\langle L_{\omega,+} \rho_{1} u_{\infty}, \rho_{1} u_{\infty} \rangle_{H^{-1},H^{1}}<
\langle L_{\omega,+} u_{\infty}, u_{\infty} \rangle_{H^{-1},H^{1}}
\le \nu_{\omega}.
\end{equation}
This is a contradiction. Thus, we have shown that  
\begin{equation}\label{15/05/30/15:57}
((L_{\omega,-})^{-1}u_{\infty}, u_{\infty} )_{L_{real}^{2}}
=1.
\end{equation} Then, the same argument as \eqref{15/05/30/12:03} shows that 
\begin{equation}
\label{15/05/30/15:28}
\nu_{\omega}
=
\langle L_{\omega,+} u_{\infty}, u_{\infty} \rangle_{H^{-1},H^{1}}.
\end{equation}
This together with \eqref{15/05/30/15:01} and \eqref{15/05/30/15:57} shows that  $u_{\infty}$ is a minimizer for the problem \eqref{14/06/27/16:27}. 
\end{proof}

\subsection{Symplectic decomposition}\label{17/10/28/16:32}

In this subsection, we assume that $d\ge 3$, $1+\frac{4}{d}<p< 2^{*}-1$ and $0<\omega <\omega_{2}$, where $\omega_{2}$ is the frequency given in Proposition \ref{13/02/20/9:41}. Note that $-i\mathscr{L}_{\omega}$ has a positive eigenvalue $\mu>0$ as an operator in $L_{real}^{2}(\mathbb{R}^{d})$. 
\par 
We will apply the ``symplectic decomposition'' corresponding to the discrete modes of $-i\mathscr{L}_{\omega}$ to the remainder $\eta$ in \eqref{13/04/27/9:38} (see \eqref{13/02/22/15:27} below). Moreover, we determine the function $\theta(t)$ in \eqref{13/04/27/9:38}.  
\par 
Let $\mathscr{U}_{+}$ be an eigenfunction corresponding to the positive eigenvalue $\mu$, and put   
\begin{equation}\label{13/04/29/11:30}
\mathscr{U}_{-}:=\overline{\mathscr{U}_{+}}.
\end{equation} 
Then, we have 
\begin{equation}\label{15/02/03/13:17}
-i\mathcal{L}_{\omega}\mathscr{U}_{-}
=
\overline{i\mathcal{L}_{\omega}\mathscr{U}_{+}}=-\overline{ \mu \mathscr{U}_{+}}=-\mu \mathscr{U}_{-}. 
\end{equation} 
Hence, $\mathscr{U}_{-}$ is an eigenfunction of $-i\mathscr{L}_{\omega}$ corresponding to $-\mu$. We assume that $\mathscr{U}_{+}$ and $\mathscr{U}_{-}$ are normalized in the following sense: 
\begin{equation}
\label{13/03/04/12:30}
\Omega\big( \mathscr{U}_{+}, \mathscr{U}_{-} \big)
=1,
\quad 
\Omega\big( \mathscr{U}_{-}, \mathscr{U}_{+} \big)
=-1.
\end{equation}
It is obvious that 
\begin{align}
\label{13/04/27/15:37}
&
\Omega\big( \mathscr{U}_{+}, \mathscr{U}_{+} \big)
=
\Omega\big( \mathscr{U}_{-}, \mathscr{U}_{-} \big)
=0.
\end{align} 
Furthermore, it follows from \eqref{13/12/28/15:36}, \eqref{13/02/20/10:40} and $\mathcal{L}_{\omega}\mathscr{U}_{\pm}=\pm i \mu \mathscr{U}_{\pm}$ that 
\begin{equation}\label{13/02/22/16:30}
\Omega\big( i\Phi_{\omega}, \mathscr{U}_{\pm} \big)
=
\Omega\big( \partial_{\omega}\Phi_{\omega}, \mathscr{U}_{\pm} \big)
=0.
\end{equation}

Now, we expand the remainder $\eta(t)$ in the decomposition \eqref{13/04/27/9:38} by the discrete modes of $-i\mathscr{L}_{\omega}$:  
\begin{equation}\label{13/02/22/15:27}
\eta(t)
=\lambda_{+}(t)\mathscr{U}_{+}+\lambda_{-}(t)\mathscr{U}_{-} 
+a(t)i\Phi_{\omega}+b(t)\partial_{\omega}\Phi_{\omega}
+\gamma(t) ,
\end{equation}
where  
\begin{equation}\label{13/02/22/15:38}
\Omega\big( \gamma(t), \mathscr{U}_{+}  \big)
=
\Omega\big( \gamma(t), \mathscr{U}_{-}  \big)
=
\Omega\big( \gamma(t), i\Phi_{\omega}  \big)
=
\Omega\big( \gamma(t), \partial_{\omega}\Phi_{\omega} \big)
=0.
\end{equation}
We see from \eqref{13/03/04/12:30}, \eqref{13/04/27/15:37} and \eqref{13/02/22/16:30} that the coefficients are as follows: 
\begin{align}\label{13/05/04/9:35}
&\lambda_{+}(t)
=
\Omega(\eta(t), \mathscr{U}_{-}), 
\qquad 
\lambda_{-}(t)
=
-\Omega(\eta(t), \mathscr{U}_{+}), 
\\[6pt]
\label{13/05/04/10:42}
&a(t)=
\frac{\Omega(\eta(t), \partial_{\omega}\Phi_{\omega})}{(\Phi_{\omega},
\partial_{\omega}\Phi_{\omega})_{L_{real}^{2}}}
,
\qquad 
b(t)=
-\frac{\Omega(\eta(t),i\Phi_{\omega})}{(\Phi_{\omega},
\partial_{\omega}\Phi_{\omega})_{L_{real}^{2}}}.
\end{align}
Note here that the denominators in \eqref{13/05/04/10:42} are non-zero (see {\rm (iii)} of Proposition \ref{13/03/29/15:30}). Moreover, it follows from H\"older's inequality that   
\begin{equation}\label{13/05/11/17:19}
\lambda_{\pm}^{2}(t)
=
\Omega(\eta(t),\mathscr{U}_{\mp})^{2}
\le
\|\mathscr{U}_{+}\|_{L^{2}}^{2} \|\eta(t)\|_{L^{2}}^{2}.
\end{equation}

We require that the mode $i \Phi_{\omega}$ does not appear in the decomposition \eqref{13/02/22/15:27}, that is, $a(t)\equiv 0$. To this end, we choose the function $\theta(t)$ in \eqref{13/04/27/9:38} so that
\begin{equation}\label{13/05/04/13:54}
\Omega(e^{-i\theta(t)}\psi(t),\partial_{\omega}\Phi_{\omega})\equiv 0.  
\end{equation}
Then, it follows from $\Omega(\Phi_{\omega},\partial_{\omega}\Phi_{\omega})=0$ that $\Omega(\eta(t),\partial_{\omega}\Phi_{\omega})\equiv 0$, and therefore $a(t)\equiv 0$. Furthermore, the choice of $\theta(t)$ has room of an integer multiple of $\pi$. Hence, in addition to \eqref{13/05/04/13:54}, we can choose $\theta(t)$ so that  
\begin{equation}\label{13/05/09/10:46}
( e^{-i\theta(t)}\psi(t), \partial_{\omega}\Phi_{\omega} )_{L_{real}^{2}}<0.
\end{equation}
This choice of $\theta(t)$ plays an important role in the argument below (see \eqref{13/05/09/11:03}). 

\subsection{Modulation equations}\label{17/10/28/20:52}

We continue the discussion about the decompositions \eqref{13/04/27/9:38} and \eqref{13/02/22/15:27} for a solution $\psi$ to \eqref{12/03/23/17:57}. Throughout this subsection, as well as Section \ref{17/10/28/16:32}, we assume that $d\ge 3$, $1+\frac{4}{d}<p<2^{*}-1$ and $0<\omega < \omega_{2}$. Furthermore, we assume that $\psi$ satisfies \eqref{13/05/04/13:54}, \eqref{13/05/09/10:46} and 
\begin{equation}\label{13/04/21/11:42}
\mathcal{M}(\psi)= \mathcal{M}(\Phi_{\omega})
.
\end{equation}
Since 
\begin{equation}\label{13/04/27/9:43}
\mathcal{M}(\psi)
=\mathcal{M}(\Phi_{\omega})+\mathcal{M}(\eta(t))
+
\big(\Phi_{\omega}, \eta(t)\big)_{L_{real}^{2}},
\end{equation}
the condition \eqref{13/04/21/11:42} implies that for any $t\in I_{\max}$, 
\begin{equation}\label{13/04/27/9:57}
\big(\Phi_{\omega},\eta(t) \big)_{L_{real}^{2}}
= 
-\mathcal{M}(\eta(t)).
\end{equation}
In particular, there is no orthogonality between $\Phi_{\omega}$ and $\eta(t)$ in $L_{real}^{2}(\mathbb{R}^{d})$.

Our aim here is to derive ordinary differential equations for $\theta$, $\lambda_{+}$ and $\lambda_{-}$ under the condition \eqref{13/04/21/11:42}.
\par 
First, we shall derive an equation for $\theta$. We see from \eqref{13/05/04/13:54} that 
\begin{equation}\label{15/02/07/10:14}
0
=
\frac{d}{dt}\Omega( \Phi_{\omega}+\eta(t), \partial_{\omega}\Phi_{\omega})
=
(\frac{d \eta}{dt}(t), i \partial_{\omega}\Phi_{\omega})_{L_{real}^{2}}.
\end{equation}
Putting \eqref{13/05/06/13:59} and \eqref{15/02/07/10:14} together, we obtain  
\begin{equation}\label{13/04/27/16:16}
\begin{split}
0
&=
(
-\mathscr{L}_{\omega} \eta(t)
-
\Big\{ \frac{d \theta}{dt}(t)- \omega  \Big\}(\Phi_{\omega}+\eta(t)) 
+
N_{\omega}(\eta(t))
,\,
\partial_{\omega}\Phi_{\omega} 
)_{L_{real}^{2}}
\\[6pt]
&=
-
\big( \eta(t), \mathscr{L}_{\omega} \partial_{\omega}\Phi_{\omega} \big)_{L_{real}^{2}}
-\Big\{ \frac{d \theta}{dt}(t)- \omega  \Big\}
\big( \Phi_{\omega}+\eta(t), \partial_{\omega}\Phi_{\omega} \big)_{L_{real}^{2}}\\[6pt]
&\qquad 
+
\big( N_{\omega}(\eta(t)), \partial_{\omega}\Phi_{\omega}\big)_{L_{real}^{2}}.
\end{split}
\end{equation}
Furthermore, using \eqref{13/02/20/10:40} and \eqref{13/04/27/9:57}, 
 we obtain the equation for $\theta$:  
\begin{equation}\label{13/05/06/14:06}
\Big\{ \frac{d \theta}{dt}(t)- \omega  \Big\}
\big( \Phi_{\omega}+\eta(t), \partial_{\omega}\Phi_{\omega} \big)_{L_{real}^{2}}
=-\mathcal{M}(\eta(t))
+
\big( N_{\omega}(\eta(t)), \partial_{\omega}\Phi_{\omega} \big)_{L_{real}^{2}}.
\end{equation}
Next, we shall derive equations for $\lambda_{+}$ and $\lambda_{-}$. It follows from \eqref{13/05/06/13:59}, \eqref{13/02/22/16:30}, \eqref{13/05/04/9:35} and 
 $\mathscr{L}_{\omega}\mathscr{U}_{\pm}=\pm i \mu \mathscr{U}_{\pm}$ that 
\begin{equation}\label{15/02/07/10:46}
\begin{split}
\frac{d \lambda_{\pm}}{dt}(t)
&=
\pm \Omega( \frac{d \eta}{d t}(t), \mathscr{U}_{\mp})
\\[6pt]
&=
\pm \Omega( 
-i\mathscr{L}_{\omega} \eta(t)
-
i\Big\{ \frac{d \theta}{dt}(t)- \omega  \Big\}\eta(t) 
+
iN_{\omega}(\eta(t))
,\,
\mathscr{U}_{\mp}
)
\\[6pt]
&=
\pm \big( 
\eta(t)
,
i\mu \mathscr{U}_{\mp}
\big)_{L_{real}^{2}}
\mp 
\big( 
\Big\{ \frac{d \theta}{dt}(t)- \omega  \Big\}\eta(t) 
-
N_{\omega}(\eta(t))
,\,
\mathscr{U}_{\mp}
\big)_{L_{real}^{2}}
\\[6pt]
&=
\pm \mu \lambda_{\pm}(t)
\mp \big( 
\Big\{ \frac{d \theta}{dt}(t)- \omega  \Big\}\eta(t) 
-
N_{\omega}(\eta(t))
,\,
\mathscr{U}_{\mp}
\big)_{L_{real}^{2}}.
\end{split}
\end{equation}
Thus, we have obtained equations for $\lambda_{+}$ and $\lambda_{-}$: 
\begin{align}
\label{13/05/06/14:03}
&
\frac{d \lambda_{+}}{d t}(t)
=
\mu \lambda_{+}(t)
- 
\big( \Big\{ \frac{d \theta}{dt}(t)- \omega  \Big\} \eta(t) 
-N_{\omega}(\eta(t)), \, \mathscr{U}_{-} \big)_{L_{real}^{2}},
\\[6pt]
\label{13/05/06/14:04}
&\frac{d \lambda_{-}}{d t}(t)
=
-\mu \lambda_{-}(t) 
+
\big(
\Big\{ \frac{d \theta}{dt}(t)- \omega  \Big\}\eta(t) 
-N_{\omega}(\eta(t)), \, \mathscr{U}_{+} \big)_{L_{real}^{2}}. 
\end{align}
Note here that it follows from $i\mathscr{L}_{\omega}\mathscr{U}_{\pm}=\pm \mu \mathscr{U}_{\pm}$, \eqref{13/05/06/14:03}, \eqref{13/05/06/14:04} and \eqref{13/03/04/12:30} that  
\begin{equation}\label{15/06/06/11:39}
\begin{split}
&\langle \mathscr{L}_{\omega} \big(\lambda_{+}(t)\mathscr{U}_{+}+\lambda_{-}(t)\mathscr{U}_{-}\big), \, 
\frac{d\lambda_{+}}{dt}(t) \mathscr{U}_{+}
+\frac{d\lambda_{-}}{dt}(t) \mathscr{U}_{-}
\rangle_{H^{-1},H^{1}}
\\[6pt]
&=
-\mu \lambda_{-}(t) \big( \Big\{ \frac{d \theta}{dt}(t)- \omega  \Big\} \eta(t) 
-N_{\omega}(\eta(t)), \, \mathscr{U}_{-} \big)_{L_{real}^{2}}
\\[6pt]
&\quad 
+\mu \lambda_{+}(t) 
\big( \Big\{ \frac{d \theta}{dt}(t)- \omega  \Big\} \eta(t) 
-N_{\omega}(\eta(t)), \, \mathscr{U}_{+} \big)_{L_{real}^{2}}
.
\end{split}
\end{equation}


\subsection{Linearized energy norm}\label{17/10/28/17:38}

Our aim here it to introduce the ``linearized energy norm'' for the remainder $\eta$ in the decomposition \eqref{13/04/27/9:38}. Throughout this subsection, as well as Section \ref{17/10/28/20:52}, we assume that $d\ge 3$, $1+\frac{4}{d}<p<2^{*}-1$, $0<\omega < \omega_{2}$ and $\psi$ is a solution to \eqref{12/03/23/17:57} satisfying \eqref{13/05/04/13:54} through \eqref{13/04/21/11:42}. 
\par 
Put   
\begin{equation}\label{13/05/04/14:30}
\Gamma(t):=b(t)\partial_{\omega}\Phi_{\omega}+\gamma(t).
\end{equation}
Then, since $a(t)\equiv 0$ (see \eqref{13/05/04/13:54}), the decomposition \eqref{13/02/22/15:27} is rewritten by 
\begin{equation}\label{13/05/04/14:34}
\eta(t)
=
\lambda_{+}(t)\mathscr{U}_{+}+\lambda_{-}(t)\mathscr{U}_{-}
+
\Gamma(t).
\end{equation}
We see from \eqref{13/02/22/16:30} and \eqref{13/02/22/15:38} that  
\begin{equation}\label{13/05/03/19:37}
\Omega(\Gamma(t), \mathscr{U}_{+})
=
\Omega(\Gamma(t), \mathscr{U}_{-})
=
\Omega\big( \Gamma(t), \partial_{\omega}\Phi_{\omega} \big)
=
0. 
\end{equation}
As a consequence of these orthogonalities, we have the following relationship:
\begin{lemma}\label{13/05/03/19:30}
The function $\Gamma$ in the decomposition \eqref{13/05/04/14:34} satisfies 
\begin{equation}\label{13/04/29/14:47}
\langle \mathscr{L}_{\omega} \Gamma(t), \Gamma(t)\rangle_{H^{-1},H^{1}}
\sim
\|\Gamma (t) \|_{H^{1}}^{2} 
\end{equation}
for all $t\in I_{\max}$, where the implicit constant depends on $\omega$ as well as $d$ and $p$.  
\end{lemma}
\begin{proof}[Proof of Lemma \ref{13/05/03/19:30}]
Lemma \ref{13/01/02/12:08} together with \eqref{13/05/03/19:37} gives the desired result. 
\end{proof}

Now, we see from \eqref{13/04/21/11:42}, Taylor's expansion around $\Phi_{\omega}$,  $\mathcal{S}_{\omega}'(\Phi_{\omega})=0$, \eqref{13/01/04/14:46},  $\mathscr{L}_{\omega}\mathscr{U}_{\pm}=\pm i\mu \mathscr{U}_{\pm}$ and \eqref{13/03/04/12:30} that   
\begin{equation}\label{13/04/29/12:12}
\begin{split}
&\mathcal{H}(\psi)-\mathcal{H}(\Phi_{\omega})
=
\mathcal{S}_{\omega}(\psi)-\mathcal{S}_{\omega}(\Phi_{\omega})
\\[6pt]
&=
\mathcal{S}_{\omega}(e^{-i\theta(t)}\psi)-\mathcal{S}_{\omega}(\Phi_{\omega})
=
\mathcal{S}_{\omega}(\Phi_{\omega}+\eta(t))-\mathcal{S}_{\omega}(\Phi_{\omega})
\\[6pt]
&=
\frac{1}{2}
\langle 
\mathscr{L}_{\omega}\eta(t), 
\eta(t) \rangle_{H^{-1},H^{1}}
+
O(\, \|\eta(t)\|_{H^{1}}^{\min\{3,p+1\}}\,)
\\[6pt]
&=
\frac{1}{2}\lambda_{+}(t)\lambda_{-}(t)
( 
-i\mu \mathscr{U}_{+}, \mathscr{U}_{-}
)_{L_{real}^{2}}
+
\frac{1}{2}\lambda_{+}(t)\lambda_{-}(t)
( 
i\mu \mathscr{U}_{-}, \, 
\mathscr{U}_{+}
)_{L_{real}^{2}}
\\[6pt]
&\quad 
+
\frac{1}{2}
\langle 
\mathscr{L}_{\omega}\Gamma(t), 
\Gamma(t) \rangle_{H^{-1},H^{1}}
+
O(\, \|\eta(t)\|_{H^{1}}^{\min\{3,p+1\}} \,)
\\[6pt]
&= 
-\mu \lambda_{+}(t)\lambda_{-}(t)
+
\frac{1}{2}
\langle 
\mathscr{L}_{\omega}\Gamma(t), 
\Gamma(t) \rangle_{H^{-1},H^{1}}
+ O(\, \|\eta(t)\|_{H^{1}}^{\min\{3,p+1\}} \,).
\end{split}
\end{equation}
We define the linearized energy norm $\|\eta(t)\|_{E}$ by   
\begin{equation}\label{13/04/29/15:42}
\|\eta(t) \|_{E}^{2}
:=
\frac{\mu}{2} \big( \lambda_{+}^{2}(t)+\lambda_{-}^{2}(t)\big)
+
\frac{1}{2}
\langle 
\mathscr{L}_{\omega}\Gamma(t), 
\Gamma(t) \rangle_{H^{-1},H^{1}}.
\end{equation}
Then, we see from \eqref{13/04/29/12:12} that   
\begin{equation}\label{13/05/05/12:22}
\mathcal{H}(\psi)-\mathcal{H}(\Phi_{\omega})
+
\frac{\mu}{2} \big(\lambda_{+}(t) + \lambda_{-}(t)\big)^{2}
-
\|\eta(t)\|_{E}^{2}
=O( \, \|\eta(t)\|_{H^{1}}^{\min\{3,p+1\}} \,).
\end{equation}

\begin{lemma}\label{13/05/11/11:52}
The function $\Gamma$ in the decomposition \eqref{13/05/04/14:34} satisfies
\begin{equation}\label{13/05/11/14:13}
\|\Gamma(t)\|_{H^{1}}
\lesssim 
\|\eta(t) \|_{H^{1}}
\end{equation} 
for all $t\in I_{\max}$. Moreover, we have 
\begin{equation}\label{17/12/31/15:31}
\|\eta(t) \|_{H^{1}}
\sim
\|\eta(t) \|_{E}.
\end{equation}
Here, the implicit constants in \eqref{13/05/11/14:13} and \eqref{17/12/31/15:31} depend on $\omega$ as well as $d$ and $p$.
\end{lemma}
\begin{proof}[Proof of Lemma \ref{13/05/11/11:52}]
We see from \eqref{13/05/11/17:19} that 
\begin{equation}\label{13/05/11/21:31}
\begin{split}
\|\Gamma(t)\|_{H^{1}}^{2}
&= \|\eta(t)-\lambda_{+}(t)\mathscr{U}_{+}-\lambda_{-}(t)\mathscr{U}_{-} \|_{H^{1}}^{2}
\\[6pt]
&\lesssim  
\|\eta(t)\|_{H^{1}}^{2}+ \lambda_{+}^{2}(t) \|\mathscr{U}_{+}\|_{H^{1}}^{2}
+
\lambda_{-}^{2}(t) \| \mathscr{U}_{-} \|_{H^{1}}^{2}
\\[6pt]
&\lesssim (1+2\|\mathscr{U}_{+}\|_{H^{1}}^{4}) \|\eta(t)\|_{H^{1}}^{2},
\end{split}
\end{equation}
which proves \eqref{13/05/11/14:13}. 
\par 
Next, we shall prove \eqref{17/12/31/15:31}. We see from \eqref{13/05/04/14:34}, $\mathscr{U}_{-}=\overline{\mathscr{U}_{+}}$ (see \eqref{13/04/29/11:30}), Lemma \ref{13/05/03/19:30} and the definition \eqref{13/04/29/15:42} that 
\begin{equation}\label{13/05/04/15:15}
\begin{split}
\|\eta(t) \|_{H^{1}}^{2}
&=  
\|\lambda_{+}(t)\mathscr{U}_{+}+\lambda_{-}(t)\mathscr{U}_{-}+\Gamma(t) \|_{H^{1}}^{2}
\\[6pt]
&\lesssim   
\lambda_{+}^{2}(t) \|\mathscr{U}_{+}\|_{H^{1}}^{2}
+
\lambda_{-}^{2}(t) \|\mathscr{U}_{-}\|_{H^{1}}^{2}
+
\|\Gamma(t)\|_{H^{1}}^{2}
\\[6pt]
&\lesssim
\| \mathscr{U}_{+}\|_{H^{1}}^{2} \big( \lambda_{+}^{2}(t) + \lambda_{-}^{2}(t) \big)
+
\langle \mathscr{L}_{\omega}\Gamma(t), \Gamma(t) \rangle_{H^{-1},H^{1}}
\\[6pt]
&\lesssim 
\max\{1,\mu^{-1} \}(1+\| \mathscr{U}_{+}\|_{H^{1}}^{2})
\|\eta(t) \|_{E}^{2}.
\end{split}
\end{equation}
Moreover, it follows from \eqref{13/05/11/17:19}, Lemma \ref{13/05/03/19:30} 
and \eqref{13/05/11/21:31} that 
\begin{equation}\label{13/05/11/18:12}
\begin{split}
\|\eta(t)\|_{E}^{2}
&\lesssim 
\mu \|\mathscr{U}_{+}\|_{L^{2}}^{2}\|\eta(t) \|_{L^{2}}^{2}+\|\Gamma(t) \|_{H^{1}}^{2}
\\[6pt]
&\lesssim 
\mu \|\mathscr{U}_{+}\|_{L^{2}}^{2}\|\eta(t) \|_{H^{1}}^{2}
+
\big( 1+2\| \mathscr{U}_{+}\|_{H^{1}}^{4}\big) \|\eta(t)\|_{H^{1}}^{2}.
\end{split}
\end{equation} 
Putting \eqref{13/05/04/15:15} and \eqref{13/05/11/18:12} together, we find that  \eqref{17/12/31/15:31} holds.  
\end{proof}

\subsection{Distance function from the ground state}\label{17/10/28/22:11}
 Our aim here is to introduce a distance function from the ground state $\Phi_{\omega}$ by using the linearized energy norm \eqref{13/04/29/15:42} (cf. \cite{Nakanishi-Schlag2}). Throughout this subsection, we assume that  $d\ge 3$, $1+\frac{4}{d}<p<2^{*}-1$ and $0< \omega <\omega_{2}$, where $\omega_{2}$ is the frequency given by Proposition \ref{13/02/20/9:41}.
\par 
We see from \eqref{13/05/05/12:22}, and \eqref{17/12/31/15:31} in Lemma \ref{13/05/11/11:52} that there exists a constant $\delta_{E}(\omega) >0$ with the following property: for any solution $\psi$ to \eqref{12/03/23/17:57} satisfying 
 $\mathcal{M}(\psi)=\mathcal{M}(\Phi_{\omega})$ and any $t \in I_{\max}(\psi)$ for which $\|\eta(t)\|_{E} \le 4\delta_{E}(\omega)$,  
\begin{equation}\label{13/05/05/13:33}
\Bigm| 
\mathcal{H}(\psi(t))-\mathcal{H}(\Phi_{\omega})
+
\frac{\mu}{2} \big( \lambda_{+}(t)+\lambda_{-}(t) \big)^{2}
-
\|\eta(t)\|_{E}^{2}
\Bigm|
\le 
\frac{\|\eta(t)\|_{E}^{2}}{10}, 
\end{equation}
where $\eta(t)$, $\lambda_{+}(t)$ and $\lambda_{-}(t)$ are the functions appearing in the decomposition for $\psi$ of the form \eqref{13/04/27/9:38} with \eqref{13/05/04/14:34} (see also \eqref{13/02/22/15:27}, \eqref{13/05/04/13:54} and \eqref{13/05/09/10:46}). Regarding the initial data of a solution to \eqref{12/03/23/17:57} as a general function in $H^{1}(\mathbb{R}^{d})$, we find that the following fact holds: 
\begin{proposition}\label{18/01/27/13:12} 
There exists a constant $\delta_{E}(\omega) >0$ with the following property: let $u$ be a function in $H^{1}(\mathbb{R}^{d})$ satisfying $\mathcal{M}(u)=\mathcal{M}(\Phi_{\omega})$. Consider the decomposition of the form  
\begin{align}
\label{18/01/27/13:20}
&u=e^{i\theta[u]} (\Phi_{\omega} +\eta[u] ),
\quad 
\Omega(e^{-i\theta[u]} u, \partial_{\omega}\Phi_{\omega})\equiv 0,
\quad 
(e^{-i\theta[u]}u, \partial_{\omega}\Phi_{\omega})_{L_{real}^{2}}<0,
\\[6pt] 
\label{18/01/27/11:52} 
& 
\eta[u]
=\lambda_{+}[u] \mathscr{U}_{+}+\lambda_{-}[u] \mathscr{U}_{-} 
+\Gamma[u].
\end{align}
Define $\|\eta[u]\|_{E}$ by 
\begin{equation}\label{18/01/27/13:29}
\|\eta[u]\|_{E}^{2}
:=
\frac{\mu}{2} \big( \lambda_{+}^{2}[u]+\lambda_{-}^{2}[u]\big)
+
\frac{1}{2}
\langle 
\mathscr{L}_{\omega}\Gamma[u], 
\Gamma[u] \rangle_{H^{-1},H^{1}}
.
\end{equation}
Furthermore, assume that  
\begin{equation}\label{18/01/27/11:53}
\|\eta[u]\|_{E}
\le 
4\delta_{E}(\omega).
\end{equation}
Then,  
\begin{equation}\label{18/01/27/11:57}
\Bigm| 
\mathcal{H}(u)-\mathcal{H}(\Phi_{\omega})
+
\frac{\mu}{2} \big( \lambda_{+}[u]+\lambda_{-}[u] \big)^{2}
-
\|\eta[u]\|_{E}^{2}
\Bigm|
\le 
\frac{\|\eta[u]\|_{E}^{2}}{10}
. 
\end{equation}
\end{proposition}

\vspace{1pt}
Now, we introduce a distance function $d_{\omega}$. To this end, fix a non-increasing smooth function $\chi$ on $[0,\infty)$ such that 
\begin{equation}\label{13/05/12/10:19}
\chi(r)=\left\{ \begin{array}{ccc}
1  &\mbox{if}& 0\le r\le 1,
\\ 
0 &\mbox{if}&  r\ge 2. 
\end{array} \right.
\end{equation}
Moreover, we define 
\begin{equation}\label{18/01/26/22:15}
H_{\omega}^{1}(\mathbb{R}^{d}):=
\big\{
u \in H^{1}(\mathbb{R}^{d}) \colon \mathcal{M}(u)= \mathcal{M}(\Phi_{\omega})
\big\}
.
\end{equation}
Then, we define a function $d_{\omega}\colon H_{\omega}^{1}(\mathbb{R}^{d})\to [0,\infty)$ by    
\begin{equation}\label{13/05/05/13:27}
d_{\omega}(u)^{2}
:=
\|\eta[u]\|_{E}^{2}
+
\chi\Bigm( \frac{\|\eta[u]\|_{E}}{2\delta_{E}(\omega)}
\Bigm)
C_{\omega}(u), 
\end{equation}
where $\delta_{E}(\omega)$ is the constant given by Proposition \ref{18/01/27/13:12}, and 
\begin{equation}\label{13/05/05/13:55}
C_{\omega}(u)
:=\mathcal{H}(u)-\mathcal{H}(\Phi_{\omega})
+
\frac{\mu}{2} \big( \lambda_{+}[u]+\lambda_{-}[u] \big)^{2}
-
\|\eta[u]\|_{E}^{2}.
\end{equation}
We rephrase \eqref{18/01/27/11:57} as follows: if $\|\eta[u]\|_{E}\le 4\delta_{E}(\omega)$, then 
\begin{equation}\label{15/02/07/15:10}
|C_{\omega}(u) |\le \frac{\|\eta[u]\|_{E}^{2}}{10}. 
\end{equation}
 
Now, we consider a solution $\psi$ to \eqref{12/03/23/17:57} satisfying 
 $\mathcal{M}(\psi)=\mathcal{M}(\Phi_{\omega})$. In the decomposition \eqref{13/04/27/9:38} with \eqref{13/05/04/14:34}, it is convenient to introduce new parameters $\lambda_{1}(t)$ and $\lambda_{2}(t)$ defined by 
\begin{equation}\label{13/05/07/16:17}
\lambda_{1}(t):=\frac{\lambda_{+}(t)+\lambda_{-}(t)}{2},
\qquad 
\lambda_{2}(t):=\frac{\lambda_{+}(t)-\lambda_{-}(t)}{2}.
\end{equation}
We see from  \eqref{13/05/06/14:03} and  \eqref{13/05/06/14:04} that 
\begin{align}
\label{14/01/13/13:53}
\frac{d\lambda_{1}}{dt}(t)
&=
\mu \lambda_{2}(t)
+
\frac{1}{2}( \Big\{ \frac{d\theta}{dt}(t)-\omega \Big\}\eta(t)
-N_{\omega}(\eta(t)), \, \mathscr{U}_{+}-\mathscr{U}_{-} )_{L_{real}^{2}},
\\[6pt]
\label{15/03/30/16:37}
\frac{d\lambda_{2}}{dt}(t)
&=
\mu \lambda_{1}(t)
-
\frac{1}{2}( \Big\{ \frac{d\theta}{dt}(t)-\omega \Big\}\eta(t)
-N_{\omega}(\eta(t)), \, \mathscr{U}_{+}+\mathscr{U}_{-} )_{L_{real}^{2}}
.
\end{align}

An important property of the distance function $d_{\omega}(\psi(t))$ is the following:
\begin{lemma}\label{13/05/05/15:47}
Assume that there exists an interval $I$ on which        
\begin{equation}
\label{14/04/03/19:22}
\sup_{t\in I}d_{\omega}(\psi(t))\le \delta_{E}(\omega).
\end{equation}
Then, all of the following hold for all $t \in I$:  
\begin{align}
\label{13/05/05/17:08}
&\frac{1}{2}\|\eta(t)\|_{E}^{2} \le d_{\omega}(\psi(t))^{2} 
\le \frac{3}{2}\|\eta(t)\|_{E}^{2},
\\[6pt]
&d_{\omega}(\psi(t))^{2}
= 
\mathcal{H}(\psi)-\mathcal{H}(\Phi_{\omega})
+
2\mu \lambda_{1}^{2}(t),
\label{13/05/05/17:51}
\\[6pt]
\label{13/05/05/19:55}
&\frac{d}{dt}d_{\omega}(\psi(t))^{2}
=
4\mu^{2} \lambda_{1}(t) \lambda_{2}(t) 
+
4\mu \lambda_{1}(t) 
\Omega\big( \Big\{ \frac{d \theta}{d t}(t)-\omega \Big\}\eta(t) 
-N_{\omega}(\eta(t)), f_{2} \big).
\end{align}
Furthermore, if   
\begin{equation}
\label{13/05/05/15:49}
\mathcal{S}_{\omega}(\psi)<m_{\omega}+\frac{1}{2}d_{\omega}(\psi(t))^{2}
\end{equation} 
holds for all $t \in I$, then  
\begin{equation}
\label{13/05/05/15:53}
d_{\omega}(\psi(t)) \sim  | \lambda_{1}(t) |
\end{equation}
for all $t \in I$, where the implicit constant depends on $\omega$ as well as $d$ and $p$.  
\end{lemma}
\begin{proof}[Proof of Lemma \ref{13/05/05/15:47}] 
First, we shall show that for any $t \in I$, 
 \begin{equation}\label{15/02/07/15:16}
\|\eta(t)\|_{E}\le 4\delta_{E}(\omega).
\end{equation}
Suppose for contradiction that $\|\eta(t_{0})\|_{E}> 4\delta_{E}(\omega)$ for some $t_{0}\in I$. Then, it follows from the definition \eqref{13/05/05/13:27} that $d_{\omega}(\psi(t_{0}))=\|\eta(t_{0})\|_{E} \ge 4\delta_{E}(\omega)$. However, this contradicts the assumption \eqref{14/04/03/19:22}. 
Thus, we have proved \eqref{15/02/07/15:16}. Using \eqref{15/02/07/15:10} and \eqref{15/02/07/15:16}, we can verify \eqref{13/05/05/17:08}. Indeed,    
\begin{equation}\label{13/05/05/17:20}
\begin{split}
\frac{1}{2}\|\eta(t)\|_{E}^{2}
&\le 
\|\eta(t)\|_{E}^{2}-\big|C_{\omega}(\psi(t)) \big|
\\[6pt]
&\le 
d_{\omega}(\psi(t))^{2}
\le 
\|\eta(t)\|_{E}^{2}+\big|C_{\omega}(\psi(t)) \big|
 \le  \frac{3}{2}\|\eta(t)\|_{E}^{2}. 
\end{split}
\end{equation}
 
Next, we shall derive the equation \eqref{13/05/05/17:51}. We see from  \eqref{14/04/03/19:22} and \eqref{13/05/05/17:08}  that 
\begin{equation}\label{15/02/07/14:40}
\|\eta(t)\|_{E}^{2} 
\le 
2 d_{\omega}(\psi(t))^{2} \le 2\delta_{E}(\omega)^{2}.
\end{equation} 
Furthermore, it follows from the definition of $d_{\omega}(\psi(t))$ (see \eqref{13/05/05/13:27} and \eqref{13/05/05/13:55}) that  
\begin{equation}\label{13/05/05/17:57}
d_{\omega}(\psi(t))^{2}
= 
\|\eta(t)\|_{E}^{2}+C_{\omega}(\psi(t))
=
\mathcal{H}(\psi)-\mathcal{H}(\Phi_{\omega})
+
\frac{\mu}{2} \big( \lambda_{+}(t)+\lambda_{-}(t) \big)^{2}
,
\end{equation}
so that \eqref{13/05/05/17:51} holds. 
\par 
The equation \eqref{13/05/05/19:55} follows from \eqref{13/05/05/17:51} and \eqref{14/01/13/13:53}. 
\par 
Finally, we shall prove \eqref{13/05/05/15:53}. We see from the definition of $C_{\omega}(\psi(t))$ (see \eqref{13/05/05/13:55}), \eqref{15/02/07/15:16} and \eqref{15/02/07/15:10} that
\begin{equation}\label{13/05/05/17:40}
\begin{split}
\|\eta(t)\|_{E}^{2}
&= 
\mathcal{H}(\psi)-\mathcal{H}(\Phi_{\omega})
+
\frac{\mu}{2} \big( \lambda_{+}(t)+\lambda_{-}(t) \big)^{2} 
-C_{\omega}(\psi(t))
\\[6pt]
&\le 
\mathcal{H}(\psi)-\mathcal{H}(\Phi_{\omega})+\frac{\mu}{2} 
\big( \lambda_{+}(t)+\lambda_{-}(t) \big)^{2}
+
\frac{\|\eta(t)\|_{E}^{2}}{10},
\end{split}
\end{equation}
so that 
\begin{equation}\label{13/05/05/17:45}
\frac{9}{10}\|\eta(t)\|_{E}^{2} 
\le   
\mathcal{H}(\psi)-\mathcal{H}(\Phi_{\omega})+2 \mu \lambda_{1}(t)^{2}
.
\end{equation}
Moreover, it follows from the assumptions \eqref{13/04/21/11:42} and \eqref{13/05/05/15:49}, and $m_{\omega}=\mathcal{S}_{\omega}(\Phi_{\omega})$ that
\begin{equation}\label{13/05/05/16:09}
\mathcal{H}(\psi)-\mathcal{H}(\Phi_{\omega})
=
\mathcal{S}_{\omega}(\psi)-\mathcal{S}_{\omega}(\Phi_{\omega})
< \frac{1}{2}d_{\omega}(\psi(t))^{2}.
\end{equation}
Putting \eqref{13/05/05/17:08}, \eqref{13/05/05/17:45} and \eqref{13/05/05/16:09} together, we obtain that   
\begin{equation}\label{13/05/05/15:59}
d_{\omega}(\psi(t))^{2}
\le \frac{3}{2} 
\|\eta(t)\|_{E}^{2}
<  
\frac{5}{6}d_{\omega}(\psi(t))^{2} 
+
\frac{10}{3} \mu \lambda_{1}(t)^{2}
.
\end{equation}
Hence, we have 
\begin{equation}\label{13/05/05/16:13}
d_{\omega}(\psi(t))^{2}<200 \mu \lambda_{1}(t)^{2}.
\end{equation}
On the other hand, we see from \eqref{13/05/11/17:19}, Lemma \ref{13/05/11/11:52} and \eqref{13/05/05/17:08} that 
\begin{equation}\label{13/05/05/16:42}
\mu \lambda_{1}(t)^{2}
\le 
2 \mu \big( \lambda_{+}^{2}(t)+\lambda_{-}^{2}(t)\big)
\le 
4 \mu \|\mathscr{U}_{+}\|_{L^{2}}^{2} \|\eta(t)\|_{L^{2}}^{2}
\lesssim  d_{\omega}(\psi(t))^{2},
\end{equation}
where the implicit constant depends on $\mu$ and $\|\mathscr{U}_{+}\|_{L^{2}}^{2}$. Combining \eqref{13/05/05/16:13} and \eqref{13/05/05/16:42}, we obtain \eqref{13/05/05/15:53}.
\end{proof}

\subsection{Fundamental properties of the eigenfunctions}\label{17/10/28/23:01}
 In connection with $\lambda_{1}$ and $\lambda_{2}$ (see \eqref{13/05/07/16:17}), we introduce the real-valued functions $f_{1}$ and $f_{2}$:  
\begin{equation}\label{15/02/07/13:35}
f_{1}
:=\frac{\mathscr{U}_{+}+\mathscr{U}_{-}}{2}
=\Re[\mathscr{U}_{+}],
\qquad 
f_{2}
:=
\frac{\mathscr{U}_{+}-\mathscr{U}_{-}}{2i}
=
\Im[\mathscr{U}_{+}].
\end{equation}
We shall observe the properties of $f_{1}$ and $f_{2}$. Throughout this subsection, we assume that  $d\ge 3$, $1+\frac{4}{d}<p<2^{*}-1$, $0< \omega <\omega_{2}$ and \eqref{13/04/21/11:42}, where $\omega_{2}$ is the constant given by Proposition \ref{13/02/20/9:41}. 
\par 
First, we note that the decomposition \eqref{13/05/04/14:34} of $\eta(t)$ is expressed as follows in terms of the functions $f_{1}$ and $f_{2}$ defined by \eqref{15/02/07/13:35}:
\begin{equation}\label{13/12/28/10:44}
\eta(t)
=
2 \lambda_{1}(t)f_{1}+ 2 i \lambda_{2}(t)f_{2}+\Gamma(t).
\end{equation}

We see from \eqref{13/02/20/9:45} and $\mathscr{L}_{\omega}\mathscr{U}_{+}= i\mu \mathscr{U}_{+}$ that  
\begin{equation}\label{13/01/02/16:25}
L_{\omega,+}f_{1}
=
-\mu f_{2},
\qquad 
L_{\omega,-}f_{2}=\mu f_{1}.
\end{equation}
Furthermore, it follows from the elliptic regularity for \eqref{13/01/02/16:25}  that 
\begin{equation}\label{15/02/11/17:02}
\| f_{1} \|_{L^{\infty}}+ \| f_{2} \|_{L^{\infty}} <\infty. 
\end{equation}

\begin{lemma}\label{13/03/04/9:00}
We have that $f_{2} \not\in {\rm span}\,\{\Phi_{\omega}\}={\rm Ker}\,L_{\omega,-}$.
\end{lemma}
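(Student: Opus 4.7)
My plan is to argue by contradiction, exploiting the identities \eqref{13/01/02/16:25} together with the symplectic normalization \eqref{13/03/04/12:30}. Since $f_{2}$ and $\Phi_{\omega}$ are both real-valued, the assumption $f_{2}\in {\rm Ker}\, L_{\omega,-}={\rm span}\{\Phi_{\omega}\}$ would mean $f_{2}=c\Phi_{\omega}$ for some real constant $c$.

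First I would feed this assumed relation into the second identity in \eqref{13/01/02/16:25}, namely $L_{\omega,-}f_{2}=\mu f_{1}$. Since $L_{\omega,-}\Phi_{\omega}=0$ by \eqref{13/12/28/15:36}, this forces $\mu f_{1}=0$, and because $\mu>0$ (Proposition \ref{13/02/20/9:41}), we would conclude $f_{1}\equiv 0$. Consequently, using the definitions \eqref{15/02/07/13:35} and \eqref{13/04/29/11:30},
\[
\mathscr{U}_{+}=f_{1}+if_{2}=ic\Phi_{\omega},
\qquad
\mathscr{U}_{-}=\overline{\mathscr{U}_{+}}=-ic\Phi_{\omega}.
\]

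The contradiction would then come from the symplectic pairing: since $\Phi_{\omega}$ is real-valued,
\[
\Omega(\mathscr{U}_{+},\mathscr{U}_{-})
=\Im\int_{\mathbb{R}^{d}}(ic\Phi_{\omega})\overline{(-ic\Phi_{\omega})}\,dx
=\Im\!\left(-c^{2}\|\Phi_{\omega}\|_{L^{2}}^{2}\right)=0,
\]
which directly conflicts with the normalization $\Omega(\mathscr{U}_{+},\mathscr{U}_{-})=1$ from \eqref{13/03/04/12:30}. This handles every $c\in\mathbb{R}$, including $c=0$ (for which $\mathscr{U}_{+}\equiv 0$ and the normalization fails trivially), so the contradiction is complete. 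No step here is delicate; the only real ingredients are the two spectral identities \eqref{13/01/02/16:25}, the kernel characterization ${\rm Ker}\, L_{\omega,-}={\rm span}\{\Phi_{\omega}\}$ from Lemma \ref{13/03/18/15:50}, the positivity $\mu>0$, and the non-degeneracy of the symplectic normalization.
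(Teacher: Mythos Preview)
Your proof is correct and follows essentially the same contradiction strategy as the paper: assume $f_{2}=c\Phi_{\omega}$, use $L_{\omega,-}\Phi_{\omega}=0$ together with \eqref{13/01/02/16:25} to force $f_{1}=0$, and then obtain a contradiction. The only cosmetic difference is the closing step: the paper applies the first identity in \eqref{13/01/02/16:25} once more to get $-\mu f_{2}=L_{\omega,+}f_{1}=0$, hence $f_{1}=f_{2}\equiv 0$ (so $\mathscr{U}_{+}\equiv 0$, impossible for an eigenfunction), whereas you instead compute $\Omega(\mathscr{U}_{+},\mathscr{U}_{-})=0$ directly and contradict the normalization \eqref{13/03/04/12:30}; both routes are equally valid.
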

\begin{proof}[Proof of Lemma \ref{13/03/04/9:00}]
Suppose for a contradiction that $f_{2}=k\Phi_{\omega}$ for some $k\neq 0$. Then, it follows from \eqref{13/12/28/15:36} and \eqref{15/02/07/13:35} that 
\begin{equation}\label{13/03/04/9:25}
0=k \mathscr{L}_{\omega}(i\Phi_{\omega})
=
\mathscr{L}_{\omega}(if_{2})
=
\frac{1}{2}\mathscr{L}_{\omega}(\mathscr{U}_{+}-\mathscr{U}_{-})
=\frac{1}{2}i \mu \mathscr{U}_{+}+ \frac{1}{2}i\mu \mathscr{U}_{-}
=i\mu f_{1}. 
\end{equation}
Furthermore, we see from \eqref{13/01/02/16:25} and \eqref{13/03/04/9:25} that  \begin{equation}\label{13/03/04/9:26}
-\mu f_{2}=L_{\omega,+}f_{1}=0. 
\end{equation}
Thus, $f_{1}=f_{2}\equiv 0$. This is a contradiction. 
\end{proof}

\begin{lemma}\label{13/03/03/14:50}
We have the following orthogonalities:
\begin{equation} \label{14/04/01/20:19}
\big(\Phi_{\omega},f_{1} \big)_{L^{2}}
=
\big( \partial_{\omega}\Phi_{\omega}, f_{2} \big)_{L^{2}}
=0.
\end{equation}
Furthermore, we have 
\begin{equation}\label{13/03/02/9:56}
\big( f_{1},f_{2} \big)_{L^{2}}>0
\end{equation}
and 
\begin{equation}\label{13/03/03/15:19}
\big(\Phi_{\omega},f_{2} \big)_{L^{2}}\neq 0.
\end{equation}
\end{lemma}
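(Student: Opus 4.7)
The plan is to derive the orthogonalities \eqref{14/04/01/20:19} and the sign \eqref{13/03/02/9:56} directly from the eigenvalue identities \eqref{13/01/02/16:25} and the normalization \eqref{13/03/04/12:30}, and to settle the non-vanishing \eqref{13/03/03/15:19} by a contradiction argument that passes through the variational construction of $\mu$ in Proposition~\ref{13/02/20/9:41} together with a low-frequency limit.

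For \eqref{14/04/01/20:19}, I would test $L_{\omega,-}f_2=\mu f_1$ against $\Phi_\omega$ in $L^2_{real}$ and use self-adjointness with $L_{\omega,-}\Phi_\omega=0$ (see \eqref{13/12/28/15:36}) to conclude $\mu(\Phi_\omega,f_1)_{L^2}=(L_{\omega,-}\Phi_\omega,f_2)_{L^2}=0$; analogously, pairing $L_{\omega,+}f_1=-\mu f_2$ with $\Phi_\omega'$ and using $L_{\omega,+}\Phi_\omega'=-\Phi_\omega$ (from \eqref{13/02/20/10:40}) together with the first orthogonality yields $(\Phi_\omega',f_2)_{L^2}=0$. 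For \eqref{13/03/02/9:56}, since $\overline{\mathscr{U}_-}=\mathscr{U}_+$, direct expansion gives
\[
\Omega(\mathscr{U}_+,\mathscr{U}_-)=\Im\int(f_1+if_2)^2\,dx=2(f_1,f_2)_{L^2},
\]
so the normalization \eqref{13/03/04/12:30} forces $(f_1,f_2)_{L^2}=1/2>0$.

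For \eqref{13/03/03/15:19}, I would argue by contradiction. Following the construction in the proof of Proposition~\ref{13/02/20/9:41}, I realize $\mathscr{U}_+=f_1+if_2$ with $f_1=-u_*$, where the real minimizer $u_*$ of $\nu_\omega$ satisfies
\[
L_{\omega,+}u_* = \nu_\omega L_{\omega,-}^{-1}u_*+\alpha\,\Phi_\omega
\]
for some $\alpha\in\mathbb{R}$, with the pseudo-inverse $L_{\omega,-}^{-1}$ chosen to take values in $\Phi_\omega^\perp$. Combining with $L_{\omega,+}f_1=-\mu f_2$ and $\nu_\omega=-\mu^2$ yields $f_2=-\mu L_{\omega,-}^{-1}u_*+(\alpha/\mu)\Phi_\omega$, whence $(\Phi_\omega,f_2)_{L^2}=(\alpha/\mu)\|\Phi_\omega\|_{L^2}^2$, so the assumption forces $\alpha=0$. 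Testing the Euler--Lagrange equation against $\Phi_\omega$, invoking self-adjointness together with \eqref{13/12/28/12:07}, then gives
\[
0 = (L_{\omega,+}\Phi_\omega,u_*)_{L^2} = -(p-1)(\Phi_\omega^p,u_*)_{L^2}-(2^*-2)(\Phi_\omega^{2^*-1},u_*)_{L^2}.
\]
Rescaling by $T_\omega$ and passing to the limit $\omega\downarrow 0$ via Proposition~\ref{13/03/03/9:43} and Lemma~\ref{14/06/27/15:03}, the second term (carrying the vanishing factor $\omega^{(2^*-(p+1))/(p-1)}$) drops out while the first converges, up to a positive factor, to $(U^p,u_\infty)_{L^2}$, where $u_\infty\in U^\perp$ is the pure-power limit minimizer of the analogous variational problem. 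Thus $(U^p,u_\infty)_{L^2}=0$, so $u_\infty\in\{U,U^p\}^\perp$. However, the explicit test function $\Pi U'=U'+(s_p/2)U$ used in the proof of Proposition~\ref{13/02/20/9:41} lies in $U^\perp$, satisfies $\langle L_+^\dagger\Pi U',\Pi U'\rangle_{H^{-1},H^1}<0$, and a direct integration by parts shows $(\Pi U',U^p)_{L^2}=\tfrac{d(p-1)}{4(p+1)}\|U\|_{L^{p+1}}^{p+1}>0$; a Rayleigh-quotient comparison in the plane ${\rm span}\{u_\infty,\Pi U'\}$ then produces a competitor in $U^\perp$ with strictly smaller ratio, contradicting the minimality of $u_\infty$.

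The hardest step will be this concluding Rayleigh-quotient comparison, where one has to combine the strict negativity of $\langle L_+^\dagger\Pi U',\Pi U'\rangle_{H^{-1},H^1}$ with the fact that $\Pi U'\notin\{U,U^p\}^\perp$ (ensured by $(\Pi U',U^p)_{L^2}\ne 0$) to construct a direction of strict decrease off the assumed minimizer; controlling the rescaled minimizer $u_*$ uniformly as $\omega\downarrow 0$ in order to justify the passage to $u_\infty$ is the other delicate ingredient.
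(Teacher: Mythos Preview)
Your argument for the orthogonalities \eqref{14/04/01/20:19} is identical to the paper's. Your route to \eqref{13/03/02/9:56} via the normalization \eqref{13/03/04/12:30} is actually cleaner than the paper's: the paper instead writes $(f_1,f_2)_{L^2}=\mu^{-1}\langle L_{\omega,-}f_2,f_2\rangle_{H^{-1},H^1}$ and appeals to the positivity of $L_{\omega,-}$ on $\Phi_\omega^\perp$ (Lemma~\ref{13/03/18/15:50}) together with $f_2\notin{\rm Ker}\,L_{\omega,-}$ (Lemma~\ref{13/03/04/9:00}); your computation bypasses both lemmas and even gives the exact value $(f_1,f_2)_{L^2}=1/2$.

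For \eqref{13/03/03/15:19}, however, there is a genuine gap. The statement is for a \emph{fixed} $\omega\in(0,\omega_2)$, yet your contradiction requires passing to the limit $\omega\downarrow 0$: from the single identity $(p-1)(\Phi_\omega^p,f_1)_{L^2}+(2^*-2)(\Phi_\omega^{2^*-1},f_1)_{L^2}=0$ at one particular $\omega$ you cannot send $\omega\to 0$ and conclude anything about a limiting minimizer $u_\infty$. At best your scheme would prove the claim for all sufficiently small $\omega$ (via a sequence-of-counterexamples argument), and even then the convergence of the rescaled minimizers and the final Rayleigh-quotient comparison are only sketched.

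You are in fact one line away from the paper's finish. The identity you derive (with $u_*=-f_1$) is precisely the orthogonality hypothesis \eqref{13/01/01/21:13} of Lemma~\ref{13/01/01/16:58}, which gives $\langle L_{\omega,+}f_1,f_1\rangle_{H^{-1},H^1}\ge 0$ directly at the fixed $\omega$. But from \eqref{13/01/02/16:25} and your own \eqref{13/03/02/9:56},
\[
\langle L_{\omega,+}f_1,f_1\rangle_{H^{-1},H^1}=-\mu(f_2,f_1)_{L^2}<0,
\]
which is the desired contradiction. No low-frequency limit or minimizer compactness is needed.
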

\begin{proof}[Proof of Lemma \ref{13/03/03/14:50}]
Since $L_{\omega,-}\Phi_{\omega}=0$ and $L_{\omega,-}$ is self-adjoint in $L^{2}(\mathbb{R}^{d})$, we find from \eqref{13/01/02/16:25} that 
\begin{equation}\label{13/03/03/14:43}
\big(\Phi_{\omega},f_{1} \big)_{L^{2}}
=
\mu^{-1}\langle L_{\omega,-}f_{2}, \Phi_{\omega} \rangle_{H^{-1},H^{1}}
=
\mu^{-1} \big(L_{\omega,-}\Phi_{\omega}, f_{2} \big)_{L^{2}}
=
0.
\end{equation} 
On the other hand, it follows from \eqref{13/01/02/16:25},  \eqref{13/02/20/10:40} and the self-adjointness of $L_{\omega,+}$ in $L^{2}(\mathbb{R}^{d})$ that  
\begin{equation}\label{13/03/03/15:06}
\big(\partial_{\omega}\Phi_{\omega},f_{2} \big)_{L^{2}}
=
-\mu^{-1}\langle L_{\omega,+}f_{1} , \partial_{\omega}\Phi_{\omega} \rangle_{H^{-1},H^{1}}
=
-\mu^{-1}\big( L_{\omega,+}\partial_{\omega}\Phi_{\omega} , f_{1} \big)_{L^{2}}
=
\mu^{-1}\big( \Phi_{\omega} , f_{1} \big)_{L^{2}},
\end{equation} 
which together with \eqref{13/03/03/14:43} proves \eqref{14/04/01/20:19}. 
\par 
Next, we shall prove \eqref{13/03/02/9:56}. It follows from \eqref{13/01/02/16:25} that 
\begin{equation}\label{13/03/02/9:26}
\big( f_{1},f_{2} \big)_{L^{2}}
=
\mu^{-1} \langle L_{\omega,-}f_{2},f_{2}\rangle_{H^{-1},H^{1}}. 
\end{equation}
Since $f_{2} \not\in {\rm Ker}\, L_{\omega,-}$ (see Lemma \ref{13/03/04/9:00}),  Lemma \ref{13/03/18/15:50} together with \eqref{13/03/02/9:26} shows the desired result.
\par 
Finally, we prove \eqref{13/03/03/15:19}. Suppose for contradiction that 
$(\Phi_{\omega}, f_{2})_{L^{2}}=0$. Then, we see from \eqref{13/12/28/12:07}, the self-adjointness of $L_{\omega,+}$ and \eqref{13/01/02/16:25} 
that  
\begin{equation}\label{13/03/03/14:56}
\big((p-1)\Phi_{\omega}^{p}+(2^{*}-2)\Phi_{\omega}^{2^{*}-1}, f_{1} \big)_{L^{2}}
=
-\big(L_{\omega,+}\Phi_{\omega}, f_{1} \big)_{L^{2}}
=
\mu \big(\Phi_{\omega},f_{2} \big)_{L^{2}}
=0.
\end{equation} 
Hence, it follows from Lemma \ref{13/01/01/16:58} that $\langle L_{\omega,+}f_{1},f_{1} \rangle_{H^{-1},H^{1}} \ge 0$. However, it must follow from \eqref{13/01/02/16:25} and \eqref{13/03/02/9:56} that 
\begin{equation}\label{13/03/04/9:57}
\langle L_{\omega,+}f_{1},f_{1} \rangle_{H^{-1},H^{1}}
=
-\mu \big(f_{2},f_{1}\big)_{L^{2}}<0.
\end{equation}
This is a contradiction. Thus, we find that \eqref{13/03/03/15:19} holds. 
\end{proof} 

The relation \eqref{13/03/03/15:19} in Lemma \ref{13/03/03/14:50} allows us to choose $f_{2}$ so that 
\begin{equation}\label{13/12/28/17:22}
 (\Phi_{\omega},f_{2})_{L^{2}}<0.
\end{equation}

In our analysis, the frequency $\omega$ varies. In particular, we need to take $\omega \to 0$. Hence, we have to pay attention to the dependence of $\omega$. Such a bother does not appear in the scale invariant cases such as \eqref{15/05/06/11:39} (see \cite{Nakanishi-Schlag2}). The following lemma plays an important role to prove the ejection lemma (Lemma \ref{13/05/05/22:31}):
\begin{lemma}\label{13/12/19/09:42}
For any constant $C>0$, there exists $\omega(C)>0$ such that for any $\omega \in (0,\omega(C))$, 
\begin{equation}\label{13/12/19/09:47}
\mu \big| ( \Phi_{\omega}, f_{2} )_{L^{2}}\big|
\ge C
\big| (\Phi_{\omega}^{2^{*}-1}, f_{1} )_{L^{2}}\big|.
\end{equation}
\end{lemma}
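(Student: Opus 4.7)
The plan is to reduce the estimate \eqref{13/12/19/09:47} to a comparison between $(f_1, \Phi_\omega^p)_{L^2}$ and $(f_1, \Phi_\omega^{2^*-1})_{L^2}$, and then rescale by $T_\omega$ in order to pass to the pure-power limit and exploit the scaling gap $2^* - (p+1) > 0$ provided by \eqref{09/05/13/15:03}.

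First I would derive the key algebraic identity. Pairing $L_{\omega,+} f_1 = -\mu f_2$ (see \eqref{13/01/02/16:25}) with $\Phi_\omega$ in $L^2_{real}$, using the self-adjointness of $L_{\omega,+}$ together with \eqref{13/12/28/12:07}, gives
\[
\mu (f_2, \Phi_\omega)_{L^2} = -(f_1, L_{\omega,+}\Phi_\omega)_{L^2} = (p-1)(f_1,\Phi_\omega^p)_{L^2} + (2^*-2)(f_1, \Phi_\omega^{2^*-1})_{L^2}.
\]
Hence it suffices to prove $|(f_1, \Phi_\omega^p)_{L^2}|/|(f_1, \Phi_\omega^{2^*-1})_{L^2}| \to +\infty$ as $\omega \downarrow 0$.

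Next I would rescale. Setting $\widetilde\Phi_\omega := T_\omega \Phi_\omega$ and $\widetilde f_{i,\omega}(y) := f_i(y/\sqrt{\omega})$, the substitution $y = \sqrt\omega x$ together with $\Phi_\omega(x) = \omega^{1/(p-1)}\widetilde\Phi_\omega(\sqrt{\omega}x)$ yields $(f_1, \Phi_\omega^r)_{L^2} = \omega^{r/(p-1) - d/2}(\widetilde f_{1,\omega}, \widetilde\Phi_\omega^r)_{L^2}$ for any $r>0$, hence
\[
\frac{(f_1, \Phi_\omega^p)_{L^2}}{(f_1, \Phi_\omega^{2^*-1})_{L^2}} = \omega^{-\frac{2^*-(p+1)}{p-1}} \cdot \frac{(\widetilde f_{1,\omega}, \widetilde\Phi_\omega^p)_{L^2}}{(\widetilde f_{1,\omega}, \widetilde\Phi_\omega^{2^*-1})_{L^2}}.
\]
The prefactor diverges as $\omega \downarrow 0$ since $2^* - (p+1) > 0$, so what remains is to bound the rescaled ratio below by a positive constant.

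To carry out the latter, I would pass to the pure-$p$ limit. A direct computation shows that $\widetilde f_{i,\omega}$ solves the rescaled system $\widetilde L_{\omega,+} \widetilde f_{1,\omega} = -\tilde\mu_\omega \widetilde f_{2,\omega}$, $\widetilde L_{\omega,-} \widetilde f_{2,\omega} = \tilde\mu_\omega \widetilde f_{1,\omega}$, with $\tilde\mu_\omega := \mu_\omega/\omega$. Since the ratio of interest is invariant under scalar rescaling of $(\widetilde f_{1,\omega}, \widetilde f_{2,\omega})$, I may normalize $\|\widetilde f_{1,\omega}\|_{H^1}^2 + \|\widetilde f_{2,\omega}\|_{H^1}^2 = 1$. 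The asymptotics $\omega^{-2}\nu_\omega \to \nu_0^\dagger < 0$ established in the proof of Proposition \ref{13/02/20/9:41} yield $\tilde\mu_\omega \to \tilde\mu_0 := \sqrt{-\nu_0^\dagger} > 0$. A compactness argument analogous to the one in that proof, combined with Proposition \ref{13/03/03/9:43} ($\widetilde\Phi_\omega \to U$ in $H^1$), then produces a subsequential weak $H^1$-limit $\widetilde f_{i,\omega} \rightharpoonup f_i^\dagger$ satisfying $L_+^\dagger f_1^\dagger = -\tilde\mu_0 f_2^\dagger$, $L_-^\dagger f_2^\dagger = \tilde\mu_0 f_1^\dagger$ with nontrivial $f_i^\dagger$. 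An argument parallel to the proof of \eqref{13/03/03/15:19} (the critical term drops out in the limit) shows $(f_2^\dagger, U)_{L^2} \neq 0$; pairing $L_+^\dagger U = -(p-1)U^p$ with $f_1^\dagger$ then forces $(f_1^\dagger, U^p)_{L^2} \neq 0$. Uniform-in-$\omega$ exponential decay of $\widetilde\Phi_\omega$ (from Proposition \ref{14/06/29/14:03} applied with decay constants independent of small $\omega$) together with strong $L^q_{loc}$ convergence of $\widetilde f_{1,\omega}$ for $2 < q < 2^*$ allow passing to the limit inside both integrals, giving $(\widetilde f_{1,\omega}, \widetilde\Phi_\omega^p)_{L^2}/(\widetilde f_{1,\omega}, \widetilde\Phi_\omega^{2^*-1})_{L^2} \to (f_1^\dagger, U^p)_{L^2}/(f_1^\dagger, U^{2^*-1})_{L^2} \ne 0$, as required.

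The main obstacle will be the compactness step: extracting a nontrivial subsequential weak limit of the $H^1$-normalized rescaled eigenfunctions with the right properties, and controlling tail contributions to $(\widetilde f_{1,\omega}, \widetilde\Phi_\omega^r)_{L^2}$ uniformly in $\omega$. Both reduce to exponential decay of $\widetilde\Phi_\omega$ uniform for small $\omega$, which requires a mild strengthening of the Berestycki--Lions-type bootstrap indicated in Proposition \ref{14/06/29/14:03}, with decay constants independent of (small) $\omega$.
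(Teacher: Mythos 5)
Your proposal is correct and follows essentially the same route as the paper: both rest on the identity $\mu(\Phi_\omega,f_2)_{L^2}=(p-1)(\Phi_\omega^p,f_1)_{L^2}+(2^*-2)(\Phi_\omega^{2^*-1},f_1)_{L^2}$ (the paper uses it in the rescaled form $-\omega^{-1}\mu(\Phi_\omega,f_2)=\langle\widetilde L_{\omega,+}T_\omega\Phi_\omega,f_{1,n}\rangle$), on passing via $T_\omega$ to a nontrivial limiting eigenpair of $(L_+^\dagger,L_-^\dagger)$ with $(U^p,\cdot)_{L^2}\neq 0$, and on the scaling gap $\omega^{-\frac{2^*-(p+1)}{p-1}}\to\infty$. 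The "main obstacle" you flag is resolved in the paper without any uniform-in-$\omega$ exponential decay: the convergence $(\widetilde f_{1,\omega},\widetilde\Phi_\omega^r)_{L^2}\to(f_1^\dagger,U^r)_{L^2}$ already follows from $T_\omega\Phi_\omega\to U$ strongly in $H^1$ (hence in $L^q$, $2\le q\le 2^*$) paired against weak $H^1$ convergence of the normalized eigenfunctions, and nontriviality of the limit is obtained from the coercivity of $\widetilde L_{\omega,-}$ and the sign $(f_1,f_2)_{L^2}>0$ rather than from decay estimates.
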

\begin{proof}[Proof of Lemma \ref{13/12/19/09:42}]
 We use the notation $\mu_{\omega}$, $f_{1,\omega}$ and $f_{2,\omega}$ instead of $\mu$, $f_{1}$ and $f_{2}$ in order to emphasise the dependence on $\omega$. 
\par 
Suppose for contradiction that there exists a constant $C_{0}>0$ with the following property: for any number $n$, there exists $\omega_{n}\in (0,\frac{1}{n})$ such that 
\begin{equation}\label{15/02/16/11:03}
\mu_{\omega_{n}} \big| ( \Phi_{\omega_{n}}, f_{2,\omega_{n}} )_{L^{2}}\big|
< C_{0}
\big| (\Phi_{\omega_{n}}^{2^{*}-1}, f_{1,\omega_{n}} )_{L^{2}}\big|,
\end{equation}
where $f_{1,\omega_{n}}$ and $f_{2,\omega_{n}}$ are functions given by \eqref{15/02/07/13:35} for $\omega_{n}$. 
We consider the functions $f_{1,n}$ and $f_{2,n}$ defined by 
\begin{equation}\label{13/12/19/09:19}
f_{1,n}:=\omega_{n}^{-s_{p}}T_{\omega_{n}}f_{1,\omega_{n}}
,\qquad 
f_{2,n}:=\omega_{n}^{-s_{p}}T_{\omega_{n}}f_{2,\omega_{n}}.
\end{equation}
It follows from \eqref{13/01/02/16:25} that 
\begin{equation}\label{13/12/19/09:29}
\widetilde{L}_{\omega_{n},+}f_{1,n}
=
- \mu_{\omega_{n}} \omega_{n}^{-1}f_{2,n},
\quad 
\widetilde{L}_{\omega_{n},-}f_{2,n}
=
\mu_{\omega_{n}} \omega_{n}^{-1}f_{1,n},
\end{equation}
where $\widetilde{L}_{\omega,+}$ and $\widetilde{L}_{\omega,-}$ are the 
 operators defined by \eqref{13/03/30/11:40} and \eqref{13/04/12/14:40}, respectively. We put 
\begin{equation}\label{15/02/16/10:21}
g_{n}:=\frac{f_{1,n}}{\|f_{1,n}\|_{H^{1}}},
\quad 
h_{n}:=\frac{f_{2,n}}{\|f_{1,n}\|_{H^{1}}}.
\end{equation}
Then, we can take a real-valued function $g$ in $H^{1}(\mathbb{R}^{d})$  such that 
\begin{equation}
\label{13/12/19/09:53}
\lim_{n\to \infty}g_{n}=g \qquad \mbox{weakly in $H^{1}(\mathbb{R}^{d})$}
.
\end{equation}
We shall show that $g$ is non-trivial. Suppose for contradiction that $g$ was trivial. Then, it follows from Lemma \ref{13/03/03/14:50}, \eqref{13/12/19/09:29}, Proposition \ref{13/03/03/9:43} and \eqref{13/12/19/09:53} that 
\begin{equation}\label{13/12/19/09:55}
\begin{split}
0&
\ge 
\lim_{n\to \infty}\frac{- \omega_{n}^{-s_{p}-1} \mu_{\omega_{n}}}{\|f_{1,n}\|_{H^{1}}^{2}}
( f_{1,\omega_{n}}, f_{2,\omega_{n}})_{L^{2}}
= 
\lim_{n\to \infty}\frac{-\omega_{n}^{-1}\mu_{\omega_{n}}}{\|f_{1,n}\|_{H^{1}}^{2}} 
( f_{1,n}, f_{2,n})_{L^{2}}
\\[6pt]
&=
\lim_{n\to \infty} \frac{1}{\|f_{1,n}\|_{H^{1}}^{2}}
\langle \widetilde{L}_{\omega_{n},+}f_{1,n}, f_{1,n}\rangle _{H^{-1},H^{1}}
\\[6pt]
&=
1- \lim_{n\to \infty}
\int_{\mathbb{R}^{d}}
\bigm\{ p(T_{\omega_{n}}\Phi_{\omega_{n}})^{p-1}
|g_{n}|^{2}+
\omega_{n}^{\frac{2^{*}-(p+1)}{p-1}}(2^{*}-1)
(T_{\omega_{n}}\Phi_{\omega_{n}})^{\frac{4}{d-2}}|g_{n}|^{2}
\big\}\,dx 
\\[6pt]
&=1.
\end{split}
\end{equation}
This is a contradiction, and therefore $g$ is non-trivial. 
\par 
Next, we shall show that there exists a constant $C_{0}>0$ such that  
 for any number $n$ and any real-valued function $f \in H^{1}(\mathbb{R}^{d})$ with 
$(f, T_{\omega_{n}}\Phi_{\omega_{n}})_{L^{2}} = 0$, 
\begin{equation} \label{15/03/15/14:03}
\langle \widetilde{L}_{\omega_{n}, -}f, f\rangle_{H^{-1},H^{1}} 
\ge 
C_{0} \|f\|^{2}_{H^{1}}. 
\end{equation}
Suppose for contradiction that we could take a subsequence of $\{\omega_{n}\}$ (still denoted by the same symbol) and a sequence $\{f_{n}\}$ in $H^{1}(\mathbb{R}^{d})$ such that 
\begin{equation} \label{15/03/15/15:13}
\lim_{n \to \infty} 
\langle \widetilde{L}_{\omega_{n}, -}f_{n}, f_{n}\rangle_{H^{-1},H^{1}}=0, 
\qquad
\|f_{n}\|_{H^{1}} = 1, \qquad   
(f_{n}, T_{\omega_{n}}\Phi_{\omega_{n}})_{L^{2}} = 0.
\end{equation}
Furthermore, we can take $f_{0} \in H^{1}(\mathbb{R}^{d})$
 such that $\lim_{n \to \infty}f_{n} = f_{0}$ weakly in $H^{1}(\mathbb{R}^{d})$. Then, we see from the weak lower semicontinuity and Proposition \ref{13/03/03/9:43} that 
\begin{equation} \label{15/03/15/15:18}
\begin{split}
0&= 
\lim_{n \to \infty} 
\langle \widetilde{L}_{\omega_{n}, -}f_{n}, f_{n}\rangle_{H^{-1},H^{1}} 
\\[6pt]
&=
\lim_{n \to \infty}
\bigm\{ 
\|f_{n}\|^{2}_{H^{1}} - 
\int(T_{\omega_{n}} \Phi_{\omega_{n}})^{p-1}|f_{n}|^{2} dx 
- \omega_{n}^{\frac{2^{*}-(p+1)}{p-1}}
\int(T_{\omega_{n}} \Phi_{\omega_{n}})^{\frac{4}{d-2}}|f_{n}|^{2} dx 
\bigm\}
\\[6pt]
&\ge 
\langle L_{-}^{\dagger}f_{0}, f_{0} \rangle_{H^{-1},H^{1}}.  
\end{split}
\end{equation}
Moreover, it follows from Proposition \ref{13/03/03/9:43} and the hypothesis \eqref{15/03/15/15:13} that 
\begin{equation}\label{15/03/30/11:42}
(f_{0}, U)_{L^{2}} = \lim_{n \to \infty}
(f_{n}, T_{\omega_{n}}\Phi_{\omega_{n}})_{L^{2}} = 0.
\end{equation}
Hence, we conclude from \eqref{15/03/15/15:18} and the positivity of $L_{-}^{\dagger}$ together with \eqref{15/03/30/11:42} that 
\begin{equation} \label{15/03/15/15:22}
0 \ge \langle L_{-}^{\dagger} f_{0}, f_{0} \rangle \gtrsim \|f_{0}\|^{2}_{H^{1}},
\end{equation}
so that $f_{0}$ is trivial. However, the same argument as \eqref{13/12/19/09:55} yields that $f_{0}$ is non-trivial. Thus, we arrive at a contradiction and therefore \eqref{15/03/15/14:03} holds.  
\par 
We shall show that $\{h_{n}\}$ is bounded in $H^{1}(\mathbb{R}^{d})$. We see from \eqref{15/03/15/14:03} and \eqref{13/12/19/09:29} that  
\begin{equation} \label{15/03/15/14:07}
\begin{split}
C_{0} \|f_{2, n}\|^{2}_{H^{1}} 
&\le 
\langle \widetilde{L}_{\omega_{n}, -}f_{2, n}, f_{2, n}\rangle_{H^{-1},H^{1}} 
= 
\mu_{\omega_{n}} \omega^{-1}_{n} (f_{1, n}, f_{2, n})_{L^{2}} 
\\[6pt]
&= 
- \langle \widetilde{L}_{\omega_{n}, +}f_{1, n}, f_{1, n} \rangle_{H^{-1},H^{1}}
\le C \|f_{1, n}\|^{2}_{H^{1}}. 
\end{split}
\end{equation}
Dividing the both sides above by $\|f_{1, n}\|_{H^{1}}^{2}$, we find that  
 $\{h_{n}\}$ is bounded in $H^{1}(\mathbb{R}^{d})$. 
\par 
We shall show that the sequence $\{\mu_{\omega_{n}} \omega^{-1}_{n}\}$ is bounded. It follows from \eqref{13/12/19/09:29} that 
\begin{equation} \label{15/03/15/14:16}
\big| \mu_{\omega_{n}} \omega^{-1}_{n} |
= 
\dfrac{|\langle \widetilde{L}_{\omega_{n}, -}f_{2, n}, f_{1, n}\rangle_{H^{-1},H^{1}}|}
{\|f_{1, n}\|^{2}_{L^{2}}} . 
\end{equation}
Furthermore, it follows from \eqref{13/12/19/09:53} and the boundedness of $\{g_{n}\}$ and $\{h_{n}\}$ in $H^{1}(\mathbb{R}^{d})$ that 
\begin{equation} \label{15/03/15/14:21}
\big| \mu_{\omega_{n}} \omega^{-1}_{n} \big|
\le \dfrac{\sup_{n \in \mathbb{N}} 
|\langle \widetilde{L}_{\omega_{n}, -}h_{n}, g_{n} \rangle_{H^{-1},H^{1}}|}
{\liminf_{n \to \infty}\|g_{n}\|^{2}_{L^{2}}} 
\lesssim \dfrac{\sup_{n \in \mathbb{N}} 
(\|g_{n}\|^{2}_{H^{1}} + \|h_{n}\|^{2}_{H^{1}})}
{\|g \|^{2}_{L^{2}}} \lesssim 1. 
\end{equation}
Thus, we find that  $\{\mu_{\omega_{n}} \omega^{-1}_{n}\}$ is bounded. 
\par 
Since $\{h_{n}\}$ is bounded in $H^{1}(\mathbb{R}^{d})$, there exists $h \in H^{1}(\mathbb{R}^{d})$ such that 
\begin{equation}\label{15/03/30/12:54}
\lim_{n \to \infty} h_{n} = h \qquad \mbox{weakly in 
$H^{1}(\mathbb{R}^{d})$}. 
\end{equation}
Moreover, we can take a subsequence of $\{\mu_{\omega_{n}} \omega^{-1}_{n}\}$ (still denoted by the same symbol) and $\nu_{*} \in [0,\infty)$ such that $\lim_{n \to \infty}\mu_{\omega_{n}} \omega^{-1}_{n} = \nu_{*}$. We shall show that $\nu_{*} \neq 0$ and $h$ is non-trivial. Recall here that $\mu_{\omega_{n}}$ is a positive eigenvalue of $-i \mathcal{L}_{\omega_{n}}$. We find from the proof of Proposition \ref{13/02/20/9:41} (see \eqref{15/05/27/15:59} and \eqref{14/06/27/16:27}) that 
\begin{equation}\label{15/06/26/14:27}
-\mu_{\omega_{n}}^{2}
=
\inf\Bigm\{ 
\frac{\langle L_{\omega_{n},+}u,u \rangle_{H^{-1},H^{1}}}{((L_{\omega_{n},-})^{-1}u, u)_{L_{real}^{2}}}
\colon u \in H^{1}(\mathbb{R}^{d}), (u,\Phi_{\omega_{n}})_{L_{real}^{2}}=0 
\Bigm\}.
\end{equation}
Furthermore, it follows from the estimates \eqref{14/07/13/14:35} and \eqref{14/07/13/14:34} in the proof of Proposition \ref{13/02/20/9:41} shows that 
\begin{equation}\label{15/05/24/15:45}
-\nu_{*}^{2}
=
\lim_{n\to \infty}\omega_{n}^{-2}(-\mu_{\omega_{n}}^{2})
=
\frac{\langle L_{+}^{\dagger} \Pi \Lambda U, \Pi \Lambda U \rangle_{H^{-1},H^{1}}}{( 
 (L_{-}^{\dagger})^{-1} \Pi \Lambda U, \Pi \Lambda U)_{L_{real}^{2}}}
<0,
\end{equation}
where $\Pi$ is the projection given by \eqref{15/05/26/17:50}. 
Thus, we find that $\nu_{*}\neq 0$. Next, suppose for contradiction that $h$ was trivial. Then, we see from \eqref{13/12/19/09:29} and \eqref{13/12/19/09:53} that for any $\varphi \in C^{\infty}_{0}(\mathbb{R}^{d})$, 
\begin{equation} \label{15/03/15/14:55}
0=
\lim_{n\to \infty} \langle \widetilde{L}_{\omega_{n}, -}h_{n}, \varphi \rangle_{H^{-1},H^{1}} 
=
\lim_{n\to \infty} ( \mu_{\omega_{n}}\omega_{n}^{-1} g_{n}, \varphi )_{L_{real}^{2}}
=
( \nu_{*}g, \varphi )_{L_{real}^{2}}.
\end{equation}
However, this contradicts that $\nu_{*}\neq 0$ and $g$ is non-trivial. Thus, we find that $h$ is non-trivial.
\par 
Note that $g$ and $h$ satisfies that 
\begin{equation} \label{15/03/15/15:06}
L_{+}^{\dagger}g = -\nu_{*} h, \qquad 
L_{-}^{\dagger}h = \nu_{*}g. 
\end{equation}
Then, applying the same proof of \eqref{13/03/03/15:19} (use Lemma 2.2 in \cite{CGNT} instead of Lemma \ref{13/01/01/16:58}), we find that 
\begin{equation}\label{15/03/30/13:13}
\langle 
L_{+}^{\dagger} U, h
\rangle_{H^{-1},H^{1}} 
=
(p-1)(U^{p},h)_{L^{2}}
\neq 0.
\end{equation}
We see from \eqref{13/01/02/16:25} and the scaling that 
\begin{equation}\label{13/12/28/17:07}
-\omega_{n}^{-1}\mu_{\omega_{n}}
( \Phi_{\omega_{n}}, f_{2,\omega_{n}} )_{L^{2}}
=
\langle \widetilde{L}_{\omega_{n},+}T_{\omega_{n}}\Phi_{\omega_{n}}, 
f_{1,n}\rangle_{H^{-1},H^{1}}.
\end{equation}
Moreover, it follows  from Proposition \ref{13/03/03/9:43} that 
\begin{equation}\label{13/12/19/13:56}
\lim_{n \to \infty} \widetilde{L}_{\omega_{n},+}T_{\omega_{n}}\Phi_{\omega_{n}}
=
L_{+}^{\dagger} 
U \qquad \mbox{strongly in $H^{-1}(\mathbb{R}^{d})$}.
\end{equation}
This together with \eqref{15/03/30/12:54} yields 
\begin{equation}\label{13/12/19/13:56}
\lim_{n\to \infty} \langle 
\widetilde{L}_{\omega_{n},+}T_{\omega_{n}}
\Phi_{\omega_{n}},\, h_{n}
\rangle_{H^{-1},H^{1}}
=
\langle 
L_{+}^{\dagger} U, h \rangle_{H^{-1},H^{1}} .
\end{equation}
We find from \eqref{13/12/28/17:07}, \eqref{13/12/19/13:56} and \eqref{15/03/30/13:13} that 
\begin{equation}\label{13/12/28/17:12}
\begin{split}
\lim_{n \to \infty}\|f_{1,n}\|_{H^{1}}^{-1}\omega_{n}^{-1}\mu_{\omega_{n}}
\big| ( \Phi_{\omega_{n}}, f_{2,\omega_{n}} )_{L^{2}}\big|
&=
\lim_{n\to \infty}  
\big|
\langle 
\widetilde{L}_{\omega_{n},+}T_{\omega_{n}}
\Phi_{\omega_{n}},\, h_{n}
\rangle_{H^{-1},H^{1}}
\big|
\\[6pt]
&=
\big| \langle 
L_{+}^{\dagger} U, h
\rangle_{H^{-1},H^{1}}
\big|  
\gtrsim 1,
\end{split}
\end{equation}
On the other hand, it follows from Proposition \ref{13/03/03/9:43} and \eqref{13/12/19/09:53} that 
\begin{equation}\label{13/12/28/16:59}
\begin{split}
\lim_{n\to 0} \|f_{1,n}\|_{H^{1}}^{-1} \omega_{n}^{-\frac{2^{*}-2}{p-1}}
\big| (\Phi_{\omega_{n}}^{2^{*}-1}, f_{1,,\omega_{n}} )_{L^{2}}\big|
&=
\lim_{n\to 0}
\big| (\big(T_{\omega_{n}} \Phi_{\omega_{n}}\big)^{2^{*}-1}, g_{n})_{L^{2}}\big|\\[6pt]
&=
\big| (U^{2^{*}-1},g)_{L^{2}} \big| \lesssim 1.
\end{split}
\end{equation} 
Since $\frac{2^{*}-2}{p-1}>1$, we conclude from \eqref{13/12/28/17:12} and \eqref{13/12/28/16:59} that for any sufficiently large number $n$, 
\begin{equation}\label{15/02/16/16:48}
\mu_{\omega_{n}}
\big| ( \Phi_{\omega_{n}}, f_{2,,\omega_{n}} )_{L^{2}}\big|
\gtrsim 
 \omega_{n}^{-\frac{2^{*}-2}{p-1}+1}
\big| (\Phi_{\omega_{n}}^{2^{*}-1}, f_{1,,\omega_{n}} )_{L^{2}}\big|
\gg C_{0}
\big| (\Phi_{\omega_{n}}^{2^{*}-1}, f_{1,\omega_{n}} )_{L^{2}}\big|,
\end{equation}
where $C_{0}$ is the constant given in the hypothesis \eqref{15/02/16/11:03}. 
However, this contradicts \eqref{15/02/16/11:03}. Thus, the desired result \eqref{13/12/19/09:47} holds.  
\end{proof}

Now, we see from \eqref{13/02/22/16:30} and \eqref{13/04/27/9:57} that 
\begin{equation}\label{13/12/29/12:12}
(\Phi_{\omega}, \Gamma(t))_{L_{real}^{2}}
=
-\mathcal{M}(\eta(t))
.
\end{equation}
Moreover, we see from \eqref{13/12/28/12:07} and \eqref{13/12/28/15:36} that  
\begin{equation}\label{15/02/15/16:08}
\begin{split}
&2\omega \Phi_{\omega}
-2\Delta \Phi_{\omega} 
-\frac{d(p-1)}{2} \Phi_{\omega}^{p}
-2^{*}\Phi_{\omega}^{2^{*}-1}
\\[6pt]
&=
(2-s_{p})L_{\omega,-} \Phi_{\omega}
+
s_{p} L_{\omega,+}\Phi_{\omega} 
-
(1-s_{p})(2^{*}-2)
\Phi_{\omega}^{2^{*}-1}
\\[6pt]
&=
s_{p}L_{\omega,+}\Phi_{\omega}
-
(1-s_{p})(2^{*}-2) \Phi_{\omega}^{2^{*}-1}
=
-s_{p}(p-1)\Phi_{\omega}^{p}
-
(2^{*}-2) \Phi_{\omega}^{2^{*}-1}.
\end{split}
\end{equation}
Furthermore, this together with \eqref{13/12/28/10:44}, \eqref{13/01/02/16:25} and \eqref{13/12/29/12:12} shows that  
\begin{equation}\label{14/02/15/15:25}
\begin{split}
&\mathcal{K}'(\Phi_{\omega}) \eta(t) 
\\[6pt]
&=
2\langle 
\omega \Phi_{\omega} -\Delta \Phi_{\omega} 
-\frac{d(p-1)}{4} \Phi_{\omega}^{p}
-\frac{2^{*}}{2}\Phi_{\omega}^{2^{*}-1}, \eta(t) 
\rangle_{H^{-1},H^{1}}
-
2\omega (\Phi_{\omega},\eta(t))_{L_{real}^{2}}
\\[6pt]
&=
2 s_{p} \lambda_{1}(t)
(
L_{\omega,+}\Phi_{\omega}, f_{1} 
)_{L^{2}}
-
2(1-s_{p})(2^{*}-2)
\lambda_{1}(t)
(\Phi_{\omega}^{2^{*}-1}, f_{1} 
)_{L^{2}}
\\[6pt]
&\quad -
s_{p}(  
(p-1)\Phi_{\omega}^{p}
+(2^{*}-2)\Phi_{\omega}^{2^{*}-1}, 
\Gamma(t) 
)_{L_{real}^{2}}
\\[6pt]
&\quad -
(1-s_{p})(2^{*}-2)
(\Phi_{\omega}^{2^{*}-1}, \Gamma(t) 
)_{L_{real}^{2}}
+ \omega \mathcal{M}(\eta(t)) 
\\[6pt]
&=
-2\mu s_{p}\lambda_{1}(t)( \Phi_{\omega}, f_{2})_{L^{2}}
-
2(1-s_{p})(2^{*}-2)\lambda_{1}(t)
(\Phi_{\omega}^{2^{*}-1}, f_{1} 
)_{L^{2}}
\\[6pt]
&\quad 
-s_{p}(p-1)(\Phi_{\omega}^{p}, \Gamma(t))_{L_{real}^{2}}
-(2^{*}-2) (\Phi_{\omega}^{2^{*}-1},\Gamma(t))_{L_{real}^{2}}
+
\omega \mathcal{M}(\eta(t)) 
.
\end{split}
\end{equation}

\section{Ejection lemma}\label{15/01/07/14:31}
Let $\omega_{2}$ denote the frequency given by Proposition \ref{13/02/20/9:41} throughout this section. Furthermore, for a given $\omega \in (0,\omega_{2})$, $\delta_{E}(\omega)$ denotes the constant given by Proposition \ref{18/01/27/13:12}. 
\par 
We see from the argument in the previous section that any solution $\psi$ satisfying $\mathcal{M}(\psi)=\mathcal{M}(\Phi_{\omega})$ has the decomposition of the form \eqref{13/04/27/9:38} with \eqref{13/02/22/15:27}, \eqref{13/05/04/13:54} and \eqref{13/05/09/10:46}. Recall that $d_{\omega} \colon H_{\omega}^{1}(\mathbb{R}^{d})\to [0,\infty)$ denotes the distance function defined by \eqref{13/05/05/13:27}. 
\par 
The following lemma is a criterion of continuation for the solutions to \eqref{12/03/23/17:57} in terms of the ground state:
\begin{lemma}\label{15/11/08/11:45}
Assume $d\ge 3$ and $1+\frac{4}{d}<p < 2^{*}-1$. Then, for any $\omega \in (0,\omega_{2})$, there exists $\delta_{0}(\omega) \in (0,\delta_{E}(\omega))$ such that if $\psi$ is a solution to \eqref{12/03/23/17:57} satisfying 
\begin{equation}
\label{15/11/08/08:17}
\mathcal{M}(\psi)=\mathcal{M}(\Phi_{\omega}),
\end{equation}
then $\psi$ extends as long as $d_{\omega}(\psi(t))\le \delta_{0}(\omega)$.
\end{lemma}
\begin{proof}[Proof of Lemma \ref{15/11/08/11:45}]
We prove the claim by contradiction. Hence, suppose to the contrary that there exists $\omega \in (0,\omega_{2})$ with the following property: for any $\delta_{0}\in (0,\delta_{E}(\omega))$, there exists a solution $\psi$ to \eqref{12/03/23/17:57} such that: $\mathcal{M}(\psi)=\mathcal{M}(\Phi_{\omega})$; $T_{\max}:=\sup{I_{\max}(\psi)}<\infty$; and $\sup_{t\in [t_{0}, T_{\max})}d_{\omega}( \psi(t) )\le \delta_{0}$ for some $t_{0}\in I_{\max}$. Then, it follows from Lemma \ref{13/05/11/11:52} and Lemma \ref{13/05/05/15:47} that 
\begin{equation}\label{15/11/08/06:00}
\sup_{t\in [t_{0},T_{\max})}\|\eta(t)\|_{H^{1}}
\lesssim
\sup_{t\in [t_{0},T_{\max})}d_{\omega}\big(\psi(t)\big)
< 
\delta_{0},
\end{equation}
where the implicit constant depends only on $d$, $p$ and $\omega$. In particular, we have \begin{equation}\label{15/07/21/14:41}
\sup_{t\in [t_{0},T_{\max})}\|\psi(t)\|_{H^{1}}
\lesssim   
\|\Phi_{\omega}\|_{H^{1}}+\delta_{0}.
\end{equation}
Let $t_{1} \in (t_{0},T_{\max})$. Then, Strichartz' estimate together with 
 \eqref{15/11/08/06:00} shows that 
\begin{equation}\label{14/09/12/01:47}
\begin{split}
\|\langle \nabla \rangle e^{i(t-t_{1})\Delta}\psi(t_{1})\|_{St([t_{1},T_{\max}])}
&\lesssim
\|\langle \nabla \rangle e^{i(t-t_{1})\Delta}\Phi_{\omega}\|_{St([t_{1},T_{\max}])}
+\| \eta(t_{1})\|_{H^{1}}
\\[6pt]
&\lesssim  
\|\langle \nabla \rangle e^{i(t-t_{1})\Delta}\Phi_{\omega}\|_{St([t_{1},T_{\max}])}
+
\delta_{0}.
\end{split}
\end{equation}
Furthermore, the small-data theory (Lemma \ref{14/01/30/10:19}) together with \eqref{15/07/21/14:41} shows that there exists $\delta(\omega)>0$  such that if
\begin{equation}\label{15/02/08/21:37}
\|\langle \nabla \rangle e^{i(t-t_{1})\Delta}\psi(t_{1})\|_{St([t_{1},T_{\max}])}
\le \delta(\omega),
\end{equation} 
then the solution $\psi$ exists on $[t_{1},T_{\max}]$. Since 
\begin{equation}\label{15/02/08/21:41}
\lim_{t_{1} \uparrow T_{\max}} \|\langle \nabla \rangle e^{i(t-t_{1})\Delta}\Phi_{\omega}\|_{St([t_{1},T_{\max}])}= 0 ,
\end{equation}  
we see from \eqref{14/09/12/01:47} that if $\delta_{0}\ll \delta(\omega)$, then $\psi$ extends beyond $T_{\max}$. However, this is a contradiction. Thus, we have proved the lemma. 
\end{proof}

Note here that if follows from Lemma \ref{13/12/19/09:42} that  there exists a frequency $\omega(2^{*}) \in (0,\omega_{2})$ such that for any $\omega \in (0,\omega(2^{*}))$,  
\begin{equation}\label{18/02/04/16:52}
\mu s_{p} \big| ( \Phi_{\omega}, f_{2} )_{L^{2}}\big|
\ge 2(1-s_{p})(2^{*}-2)
\big| (\Phi_{\omega}^{2^{*}-1}, f_{1} )_{L^{2}}\big|
.
\end{equation}
We use this fact in the proof of the ejection lemma below. 

\begin{lemma}[Ejection lemma]\label{13/05/05/22:31}
Assume $d\ge 3$ and $1+\frac{4}{d}<p< 2^{*}-1$. Then, for any $\omega \in (0,\omega(2^{*}))$, there exist constants $\delta_{X}\in (0,\delta_{0}(\omega))$ ($\delta_{0}(\omega)$ denotes the constant given by Lemma \ref{15/11/08/11:45}),  
 $A_{*}>0$, $B_{*}>0$, $C_{*}>0$ and $T_{*}>0$ with the following properties: for any $t_{0}\in \mathbb{R}$ and any solution $\psi$ to \eqref{12/03/23/17:57} defined around $t_{0}$ satisfying   
\begin{align}
\label{13/05/05/21:03}
&\mathcal{M}(\psi)=\mathcal{M}(\Phi_{\omega}),
\\[6pt]
\label{13/05/05/22:47}
&0< R_{0}:=d_{\omega}(\psi(t_{0}))< \delta_{X},
\\[6pt]
\label{13/05/05/22:48}
&\mathcal{S}_{\omega}(\psi)<m_{\omega}+\frac{R_{0}^{2}}{2},
\end{align}
we can extend $\psi$ as long as $d_{\omega}(\psi(t))\le \delta_{X}$. Furthermore, assume that there exists $T>t_{0}$ such that 
\begin{equation}
\label{13/05/05/22:35}
R_{0}\le \min_{t\in [t_{0}, T]}d_{\omega}(\psi(t)) ,
\end{equation}
and define 
\begin{equation}\label{13/06/29/10:51}
T_{X}:=
\inf\big\{t \in [t_{0}, T]  \colon d_{\omega}(\psi(t))= \delta_{X} 
\big\},
\end{equation}
where we interpret $T_{X}=T$ if $d_{\omega}(\psi)<\delta_{X}$ on $[t_{0},T]$. Then,  for any $t\in [t_{0},T_{X}]$, 
\begin{align}
\label{15/03/31/11:04}
&A_{*}e^{\mu (t-t_{0})}R_{0} \le d_{\omega}(\psi(t)) \le B_{*}e^{\mu (t-t_{0})}R_{0}, 
\\[6pt]
\label{13/05/05/22:53}
&
\|\eta(t)\|_{H^{1}}
\sim \mathfrak{s}\lambda_{1}(t) 
\sim \mathfrak{s}\lambda_{+}(t)
\sim e^{\mu (t-t_{0})}R_{0},
\\[6pt] 
\label{13/05/05/22:54}
&|\lambda_{-}(t)| + \|\Gamma(t) \|_{H^{1}} 
\lesssim 
R_{0} +\big( e^{\mu (t-t_{0})}R_{0} \big)^{\frac{\min\{3,p+1\}}{2}},
\\[6pt]
\label{13/05/06/14:30}
&\mathfrak{s}\mathcal{K}(\psi(t))
\gtrsim \big(e^{\mu (t-t_{0})}-C_{*}\big)R_{0},
\end{align}
where $\mathfrak{s}$ is either $\mathfrak{s}=1$ or $\mathfrak{s}=-1$. Moreover,  $d_{\omega}(\psi(t))$ is increasing on the region $\{ t \in [t_{0}, T_{X}] \colon t_{0}+T_{*}R_{0}^{\min\{1,p-1\}}\le t \}$; and 
\begin{equation}\label{13/05/06/14:37}
\big|d_{\omega}(\psi(t))-R_{0} \big|
\lesssim 
R_{0}^{\min\{2,p\}}
\end{equation}
on the region $\{ t \in [t_{0}, T_{X}] \colon  t_{0}\le t \le t_{0}+T_{*}R_{0}^{\min\{1,p-1 \}}\}$.
\end{lemma}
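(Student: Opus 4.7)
The plan is to linearize around $\Phi_{\omega}$ and track the three components $(\lambda_{+}, \lambda_{-}, \Gamma)$ of $\eta(t)$ under the ODE system \eqref{13/05/06/14:03}--\eqref{13/05/06/14:04} together with the conservation of $\mathcal{S}_{\omega}$. Since $\pm \mu$ are simple real eigenvalues of $-i\mathscr{L}_{\omega}$ with eigenfunctions $\mathscr{U}_{\pm}$, I expect $\lambda_{+}$ to grow like $e^{\mu(t-t_{0})}$ while $\lambda_{-}$ and $\Gamma$ stay of lower order. The hypotheses \eqref{13/05/05/21:03} and \eqref{13/05/05/22:48} combined with the minimality \eqref{13/05/05/22:35} give $\mathcal{S}_{\omega}(\psi) < m_{\omega} + R_{0}^{2}/2 \le m_{\omega} + d_{\omega}(\psi(t))^{2}/2$ on $[t_{0},T]$, so the hypothesis \eqref{13/05/05/15:49} of Lemma \ref{13/05/05/15:47} is met; choosing $\delta_{X} \le \delta_{E}(\omega)$ small then yields the workhorse estimate $d_{\omega}(\psi(t)) \sim |\lambda_{1}(t)| \sim \|\eta(t)\|_{E} \sim \|\eta(t)\|_{H^{1}}$ on the extension interval $[t_{0}, T_{X}]$.

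The first step is to identify the sign $\mathfrak{s}$ and reduce to a pure ODE system. From the modulation equation \eqref{13/05/06/14:06} together with the non-degeneracy $-(\Phi_{\omega}, \Phi_{\omega}')_{L^{2}_{real}} > 0$ (Proposition \ref{13/03/29/15:30}(iii)), I extract $|\theta'(t) - \omega| \lesssim \|\eta(t)\|_{H^{1}}^{\min\{2,p\}}$, so \eqref{13/05/06/14:03}--\eqref{13/05/06/14:04} reduce to $\frac{d\lambda_{\pm}}{dt} = \pm \mu \lambda_{\pm} + O(\|\eta\|_{H^{1}}^{\min\{2,p\}})$. Evaluating \eqref{13/05/05/19:55} at $t_{0}$ and using the minimality $\frac{d}{dt}d_{\omega}^{2}|_{t_{0}} \ge 0$ yields $\mu^{2}\lambda_{1}(t_{0})\lambda_{2}(t_{0}) \gtrsim -R_{0}^{\min\{3,p+1\}}$; setting $\mathfrak{s} := \operatorname{sgn}\lambda_{1}(t_{0})$ and using $|\lambda_{1}(t_{0})| \sim R_{0}$ from \eqref{13/05/05/15:53}, I conclude $\mathfrak{s}\lambda_{+}(t_{0}) = \mathfrak{s}(\lambda_{1}+\lambda_{2})(t_{0}) \gtrsim R_{0}$, while $|\lambda_{-}(t_{0})| \lesssim R_{0}$.

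The core step is a Grönwall-type bootstrap on $[t_{0}, T_{X}]$. Assuming for bootstrap $\|\eta(t)\|_{H^{1}} \le C_{0} e^{\mu(t-t_{0})} R_{0}$, I multiply the $\lambda_{+}$ equation by $e^{-\mu t}$ and the $\lambda_{-}$ equation by $e^{\mu t}$ and integrate from $t_{0}$ using the initial data above. This yields $\mathfrak{s}\lambda_{+}(t) \sim e^{\mu(t-t_{0})} R_{0}$ and $|\lambda_{-}(t)| \lesssim R_{0} + (e^{\mu(t-t_{0})}R_{0})^{\min\{3,p+1\}/2}$, establishing \eqref{13/05/05/22:53} and \eqref{13/05/05/22:54} modulo $\Gamma$. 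For $\Gamma$, rearrange \eqref{13/05/05/17:51} as $\langle \mathscr{L}_{\omega}\Gamma,\Gamma\rangle = 2(\mathcal{H}(\psi)-\mathcal{H}(\Phi_{\omega})) + 4\mu\lambda_{1}^{2} - 2\|\eta\|_{E}^{2} + O(\|\eta\|_{H^{1}}^{\min\{3,p+1\}})$ and apply Lemma \ref{13/05/03/19:30} to extract $\|\Gamma(t)\|_{H^{1}}^{2} \lesssim R_{0}^{2} + (e^{\mu(t-t_{0})}R_{0})^{\min\{3,p+1\}}$. Shrinking $\delta_{X}$ closes the bootstrap and proves \eqref{15/03/31/11:04} via Lemma \ref{13/05/05/15:47}.

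For the virial bound \eqref{13/05/06/14:30}, Taylor expand $\mathcal{K}(\psi) = \mathcal{K}'(\Phi_{\omega})\eta + O(\|\eta\|_{H^{1}}^{2})$ and apply \eqref{14/02/15/15:25}. By Lemma \ref{13/12/19/09:42}, for $\omega$ small the $(\Phi_{\omega}^{2^{*}-1}, f_{1})_{L^{2}}$ term is dominated by the principal $-2\mu s_{p}\lambda_{1}(\Phi_{\omega}, f_{2})_{L^{2}}$ term; the sign convention \eqref{13/12/28/17:22} makes the latter of sign $\mathfrak{s}$ with size $\gtrsim \mathfrak{s}\lambda_{1} \sim e^{\mu(t-t_{0})} R_{0}$, while the $\Gamma$ and $\mathcal{M}(\eta)$ contributions are $O(R_{0})$ by the previous step, giving \eqref{13/05/06/14:30} after absorbing lower-order terms into $-C_{*}R_{0}$. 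The short-time bound \eqref{13/05/06/14:37} follows from $|\frac{d}{dt}d_{\omega}^{2}| = O(R_{0}^{1+\min\{1,p-1\}})$ via \eqref{13/05/05/19:55}, and the eventual monotonicity follows once $\mathfrak{s}\lambda_{2}$ dominates the nonlinear remainder in \eqref{13/05/05/19:55}. The main obstacle is obtaining the improved exponent $\min\{3,p+1\}/2$ in the $\lambda_{-}$-estimate rather than the naive $\min\{2,p\}$ coming from direct integration: this requires a careful structural analysis of the projection $(N_{\omega}(\eta), \mathscr{U}_{+})_{L^{2}_{real}}$, showing that the pure $(\lambda_{+}\mathscr{U}_{+})^{k}$ contributions cancel due to the complex-conjugate symmetry $\mathscr{U}_{-} = \overline{\mathscr{U}_{+}}$, so that only \emph{cross} terms involving the smaller pieces $\lambda_{-}\mathscr{U}_{-}+\Gamma$ survive, gaining the required extra factor.
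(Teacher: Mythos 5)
Your overall strategy (sign identification, Gr\"onwall bootstrap for $\lambda_{\pm}$, Taylor expansion of $\mathcal{K}$ with Lemma \ref{13/12/19/09:42} for the virial bound) is the same as the paper's, and those parts are sound. However, there is a genuine gap in your treatment of $\Gamma$, which is the one component where the lemma requires extra work. You propose to read off $\langle \mathscr{L}_{\omega}\Gamma(t),\Gamma(t)\rangle$ pointwise in $t$ from the energy expansion \eqref{13/04/29/12:12} together with the bound $\mathcal{H}(\psi)-\mathcal{H}(\Phi_{\omega})<R_{0}^{2}/2$. But that expansion reads $\tfrac12\langle \mathscr{L}_{\omega}\Gamma,\Gamma\rangle = \mathcal{H}(\psi)-\mathcal{H}(\Phi_{\omega})+\mu\lambda_{+}(t)\lambda_{-}(t)+O(\|\eta\|_{H^{1}}^{\min\{3,p+1\}})$, and the cross term $\mu\lambda_{+}(t)\lambda_{-}(t)$ has no favorable sign and can be as large as $e^{\mu(t-t_{0})}R_{0}\cdot R_{0}$. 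Writing $x=e^{\mu(t-t_{0})}R_{0}$, the quantity $xR_{0}$ is \emph{not} $\lesssim R_{0}^{2}+x^{\min\{3,p+1\}}$ (take $x=R_{0}^{\alpha}$ with $\alpha$ slightly above $1/(\min\{3,p+1\}-1)$), so your pointwise argument only yields $\|\Gamma(t)\|_{H^{1}}\lesssim e^{\mu(t-t_{0})/2}R_{0}+\cdots$, which is too weak: the improved bound \eqref{13/05/05/22:54} is exactly what is needed later to make the $\Gamma$-contribution in \eqref{14/01/15/21:20} and \eqref{14/01/16/12:00} a \emph{small} multiple of $|\lambda_{1}(t)|$. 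The paper's fix is to introduce the projected energy $E_{\{\mathscr{U}_{+},\mathscr{U}_{-}\}}(t)=\mathcal{S}_{\omega}(\Phi_{\omega}+\lambda_{+}\mathscr{U}_{+}+\lambda_{-}\mathscr{U}_{-})-\mathcal{S}_{\omega}(\Phi_{\omega})\approx-\mu\lambda_{+}\lambda_{-}$, observe that its time derivative is of cubic order (the quadratic terms cancel against the linear flow, see \eqref{13/05/08/10:15}), and then compare $t$ with $t_{0}$ using conservation of $\mathcal{S}_{\omega}$; since $\lambda_{\pm}(t_{0})=O(R_{0})$ and $\int_{t_{0}}^{t}(R_{0}e^{\mu s})^{\min\{3,p+1\}}\,ds\lesssim(R_{0}e^{\mu t})^{\min\{3,p+1\}}$, this gives the stated bound.

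A secondary point: you identify the "main obstacle" as upgrading the $\lambda_{-}$ exponent from $\min\{2,p\}$ to $\min\{3,p+1\}/2$ via a cancellation in $(N_{\omega}(\eta),\mathscr{U}_{+})_{L^{2}_{real}}$. No such cancellation is needed (nor is it clear one exists, since $N_{\omega}$ is not holomorphic in $\eta$): because $\min\{2,p\}\ge\min\{3,p+1\}/2$ and $R_{0}e^{\mu(t-t_{0})}\le\delta_{X}<1$, the naive bound $(e^{\mu(t-t_{0})}R_{0})^{\min\{2,p\}}$ obtained by directly integrating \eqref{13/05/06/14:04} is already at least as strong as what \eqref{13/05/05/22:54} asserts for $\lambda_{-}$. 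The component that genuinely requires the finer argument is $\Gamma$, as above.
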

\begin{proof}[Proof of Lemma \ref{13/05/05/22:31}] Let $\delta_{X}\in (0, \delta_{0}(\omega))$ be a small constant to be chosen later, and let $\psi$ be a solution satisfying \eqref{13/05/05/21:03}, \eqref{13/05/05/22:47} and \eqref{13/05/05/22:48}. Since the equation \eqref{12/03/23/17:57} is invariant under the time translations, it suffices to consider the case where $t_{0}=0$. 
\par 
First, we find from Lemma \ref{15/11/08/11:45} that $\psi$ extends as long as $d_{\omega}(\psi(t))\le \delta_{X}$.  
\par
Next, we assume \eqref{13/05/05/22:35} as well as \eqref{13/05/05/21:03}, \eqref{13/05/05/22:47} and \eqref{13/05/05/22:48}. Then, it follows from the definition of $T_{X}$ and $t_{0}=0$ that for any $t \in [0, T_{X}]$, 
\begin{equation}\label{13/06/29/14:31}
d_{\omega}(\psi(t))\le \delta_{X}<\delta_{E}(\omega).
\end{equation}
Hence, we see from Lemma \ref{13/05/11/11:52} and Lemma \ref{13/05/05/15:47} that for any $t \in [0, T_{X}]$, 
\begin{align}
\label{13/05/06/15:02}
&\|\Gamma(t)\|_{H^{1}}
\lesssim 
\|\eta(t)\|_{H^{1}}
\sim 
\|\eta(t)\|_{E} 
\sim  
d_{\omega}(\psi(t)) 
\sim |\lambda_{1}(t)|,
\\[6pt]
\label{13/05/06/14:46}
&\frac{d}{dt} d_{\omega}\big(\psi(t)\big)^{2}
=
4\mu^{2} \lambda_{1}(t) \lambda_{2}(t) 
+
4\mu \lambda_{1}(t) 
\Omega\big( \Big\{ \frac{d \theta}{d t}(t)-\omega \Big\}\eta(t) -N_{\omega}(\eta(t)), f_{2} \big).
\end{align}
In particular, we deduce from \eqref{13/05/11/17:19}, \eqref{13/06/29/14:31} and \eqref{13/05/06/15:02} that for any $t \in [0,  T_{X}]$, 
\begin{equation}\label{14/01/13/11:51}
|\lambda_{-}(t)|+\|\Gamma(t)\|_{H^{1}}
\lesssim 
\|\eta(t)\|_{H^{1}}
\sim |\lambda_{1}(t)| \lesssim \delta_{X}\ll 1. 
\end{equation}
Furthermore, we find from \eqref{13/05/05/22:35} and \eqref{13/05/06/15:02} that for any $t\in [0,  T_{X}]$.
\begin{equation}\label{13/12/30/11:11}
0<R_{0}\lesssim |\lambda_{1}(t)|,
\end{equation}
which together with the continuity of $\lambda_{1}(t)$ shows that for any 
 $t\in [0,T_{X}]$    
\begin{equation}\label{13/12/30/11:13}
\mathfrak{s}:={\rm sgn}[\lambda_{1}(t)] \equiv  {\rm sgn}[\lambda_{1}(0)]
.
\end{equation} 

Now, we further choose $\delta_{X}$ so small that 
\begin{equation}\label{15/02/08/22:02}
\delta_{X} \ll \frac{|( \Phi_{\omega}, \partial_{\omega}\Phi_{\omega})_{L^{2}}|}{2\|\partial_{\omega}\Phi_{\omega} \|_{L^{2}}}.
\end{equation} 
Then, we find from \eqref{14/01/13/11:51} that for any $t \in [0, T_{X}]$,     
\begin{equation}\label{13/05/06/15:27}
\|\eta(t)\|_{L^{2}} \le \|\eta(t)\|_{H^{1}}
\ll 
\min\Big\{ 1, 
\frac{|( \Phi_{\omega},\partial_{\omega}\Phi_{\omega} )_{L^{2}}|}{2\|\partial_{\omega}\Phi_{\omega} \|_{L^{2}}}
\Big\}.
\end{equation}
Furthermore,  \eqref{13/05/06/15:27} implies that for any $t\in [0,T_{X}]$, 
\begin{equation}\label{13/06/12/12:37}
\big|( \Phi_{\omega}+\eta(t), \partial_{\omega}\Phi_{\omega} )_{L^{2}}\big|
\ge 
\big|( \Phi_{\omega}, \partial_{\omega}\Phi_{\omega} )_{L^{2}}\big|
-
\| \eta(t)\|_{L^{2}} \| \partial_{\omega}\Phi_{\omega} \|_{L^{2}}
\ge 
\frac{1}{2}\big|( \Phi_{\omega},\partial_{\omega}\Phi_{\omega} )_{L^{2}}\big|.
\end{equation}
We see from H\"older's inequality and $N_{\omega}(\eta)=O(|\eta|^{\min\{2,p\}})$ (see \eqref{13/05/06/10:27}) that for any function $u \in L^{1}(\mathbb{R}^{d})\cap L^{\infty}(\mathbb{R}^{d})$, 
\begin{equation}\label{15/02/09/10:26}
\begin{split}
\big| (N_{\omega}(\eta(t)),u)_{L_{real}^{2}}\big|
&\le 
\|N_{\omega}(\eta(t))\|_{L^{\frac{p+1}{\min\{2,p\}}}}
\|u \|_{L^{\frac{p+1}{\max\{1,p-1 \}}}}
\\[6pt]
&\lesssim 
\|\eta(t)\|_{L^{p+1}}^{\min\{2,p\}}\|u \|_{L^{1}\cap L^{\infty}}.
\end{split}
\end{equation}
We deduce from the equation \eqref{13/05/06/14:06} for $\theta(t)$,  \eqref{13/06/12/12:37}, \eqref{15/02/09/10:26} and \eqref{13/05/06/15:27} that for any $t \in [0, T_{X}]$, 
\begin{equation}\label{13/05/06/15:19}
\begin{split}
\Big| \frac{d\theta }{dt}(t) -\omega \Big|
&
\lesssim 
\frac{\|\eta(t)\|_{L^{2}}^{2}+\|\eta(t)\|_{L^{p+1}}^{\min\{2, p\}}
\|\partial_{\omega}\Phi_{\omega}\|_{L^{1}\cap L^{\infty}}}{|( \Phi_{\omega}, \partial_{\omega}\Phi_{\omega})_{L^{2}}|}
\lesssim  
\|\eta(t)\|_{H^{1}}^{\min\{2,p\}}, 
\end{split}
\end{equation}
where the implicit constant depends on $\omega$. Furthermore, it follows from \eqref{13/05/06/15:19}, \eqref{15/02/09/10:26} and \eqref{13/05/06/15:27} that  for any $t \in [0, T_{X}]$, 
\begin{equation}\label{13/12/07/14:46}
\begin{split}
&\bigg|\Omega\big( \Big\{ \frac{d \theta}{d t}(t)-\omega \Big\} \eta(t) 
-N_{\omega}(\eta(t)), f_{2} \big) 
\bigg|
\\[6pt]
&\le 
 \Big| \frac{d \theta}{d t}(t) -\omega \Big|  
\|\eta(t) \big\|_{L^{2}}\|f_{2}\|_{L^{2}}
+ 
\|\eta(t)\|_{L^{p+1}}^{\min\{2,p \}}
\|f_{2}\|_{L^{2}\cap L^{\infty}}
\\[6pt]
&\lesssim
\|\eta(t) \|_{H^{1}}^{\min\{2,p\}+1} + \|\eta(t)\|_{H^{1}}^{\min\{2,p\}}
\lesssim  
\|\eta(t)\|_{H^{1}}^{\min\{2,p\}},
\end{split}
\end{equation}
where the implicit constant depends on $\omega$. 
 Hence, we find form \eqref{13/12/07/14:46} and \eqref{13/05/06/15:02} that for any $t\in [0, T_{X}]$, 
\begin{equation}\label{13/06/12/13:58}
\bigg|\Omega\big( \Big\{ \frac{d \theta}{d t}(t)-\omega \Big\}
 \eta(t) -N_{\omega}(\eta(t)), f_{2} \big) 
\bigg|
\lesssim
\big|\lambda_{1}(t)\big|^{\min\{2,\, p\}},
\end{equation}
where the implicit constant depends on $\omega$. Similarly, we can verify that for any $t\in [0, T_{X}]$, 
\begin{equation}\label{15/02/29/14:23}
\bigg|\big( \Big\{ \frac{d \theta}{d t}(t)-\omega \Big\}
 \eta(t) -N_{\omega}(\eta(t)), \mathscr{U}_{\pm} \big)_{L_{real}^{2}} 
\bigg|
\lesssim
\big|\lambda_{1}(t)\big|^{\min\{2,\, p\}},
\end{equation}
where the implicit constant depends on $\omega$.
\par 
We shall show that 
\begin{equation}\label{13/12/07/13:44}
\lambda_{1}(0) \sim \lambda_{+}(0) \sim \mathfrak{s} R_{0},
\end{equation}
where the implicit constants depend on $\omega$. We see from \eqref{13/05/05/22:47} and \eqref{13/05/05/22:35} that  
\begin{equation}\label{13/05/06/14:52}
\frac{d}{dt} d_{\omega}\big(\psi(t)\big)^{2}\bigg|_{t=t_{0}=0} 
=
2R_{0} \frac{d}{dt} d_{\omega}\big(\psi(t)\big) \bigg|_{t=0} 
\ge 0, 
\end{equation}
which together with \eqref{13/05/06/14:46} yields 
\begin{equation}\label{13/06/12/20:28}
0\le 
\mu^{2} \lambda_{1}(0) \lambda_{2}(0) 
+
\mu  \lambda_{1}(0) 
\bigg|
\Omega\big( \Big\{ \frac{d \theta}{d t}(0)-\omega \Big\}\eta(0) -N_{\omega}(\eta(0)), f_{2} \big)
\bigg|.
\end{equation}
Combining \eqref{13/06/12/20:28} with \eqref{13/06/12/13:58}, we obtain that 
\begin{equation}\label{13/05/06/16:05}
0\le 
\mu^{2} \, {\rm sgn}[\lambda_{1}(0)]|\lambda_{1}(0)| \lambda_{2}(0)
+ C\mu |\lambda_{1}(0)|^{\min\{3, p+1\}} 
\end{equation} 
for some constant $C>0$ depending on $\omega$. Furthermore, it follows from \eqref{13/05/06/16:05} that  
\begin{equation}\label{13/05/07/16:48}
- C |\lambda_{1}(0)|^{\min\{2,p\}}  
\le  
\mu \mathfrak{s} \lambda_{2}(0).
\end{equation} 
Suppose here that $\mathfrak{s}={\rm sgn}[\lambda_{1}(0)]=1$. Then, we see from \eqref{13/12/30/11:11}, \eqref{13/05/07/16:48} and \eqref{14/01/13/11:51} that  
\begin{equation}\label{13/05/07/17:01}
0<\lambda_{1}(0) \lesssim \lambda_{1}(0)- \frac{C}{\mu}\big| \lambda_{1}(0)\big|^{\min\{2,p\}}
\lesssim 
\lambda_{1}(0) + \lambda_{2}(0) 
= \lambda_{+}(0),
\end{equation} 
so that ${\rm sgn}[\lambda_{+}(0)]=1$. Suppose next that ${\rm sgn}[\lambda_{1}(0)]=-1$. Then, \eqref{13/05/07/16:48} becomes 
$\mu \lambda_{2}(0) \le C |\lambda_{1}(0)|^{\min\{2,p\}}$. This together with \eqref{14/01/13/11:51} shows that 
\begin{equation}\label{13/05/07/17:06}
\lambda_{+}(0)
=
\lambda_{1}(0)+\lambda_{2}(0)
\le 
\lambda_{1}(0)+ \frac{C}{\mu}|\lambda_{1}(0)|^{\min\{2,p\}} 
\sim \lambda_{1}(0). 
\end{equation} 
Thus, we conclude that 
\begin{equation}\label{13/06/12/22:45}
{\rm sgn}[\lambda_{1}(0)]={\rm sgn}[\lambda_{+}(0)]
.
\end{equation}
Since $\lambda_{1}(0)$ and $\lambda_{+}(0)$ are independent of $t$, \eqref{13/06/12/22:45} together with \eqref{13/05/06/15:02} and $d_{\omega}(\psi(0))=R_{0}$ implies \eqref{13/12/07/13:44}. 

Next, we shall prove \eqref{13/05/05/22:53} and \eqref{15/03/31/11:04}. Since we have \eqref{13/05/06/15:02}, it suffices for \eqref{13/05/05/22:53} to show that for any $t \in [0, T_{X}]$, 
\begin{equation}\label{13/12/30/11:47}
\lambda_{1}(t) 
\sim \lambda_{+}(t) 
\sim \mathfrak{s}R_{0}e^{\mu t}.
\end{equation}
Let $\alpha >1$ be a constant satisfying $|\lambda_{1}(0)|\le \frac{1}{2}\alpha R_{0}$, and consider 
\begin{equation}\label{15/02/09/15:52}
T_{\alpha}
:=
\sup\bigm\{ 
T \in [0,T_{X}] \colon |\lambda_{1}(t)| \le \alpha R_{0}e^{\mu t} \mbox{ for all $t \in [0,T]$} \bigm\}.
\end{equation}
Then, we have $T_{\alpha}>0$. We shall show that $T_{\alpha}=T_{X}$. Suppose for contradiction that $T_{\alpha} <T_{X}$. Then,  
\begin{equation}\label{15/02/09/20:29}
\alpha R_{0}e^{\mu T_{\alpha}}=|\lambda_{1}(T_{\alpha})|  \lesssim \delta_{X},
\end{equation}
where we have used \eqref{14/01/13/11:51} to obtain the inequality. 
Using the equation \eqref{13/05/06/14:03} for $\lambda_{+}$, we have that  
\begin{equation}\label{15/02/09/14:04}
\begin{split}
\frac{d}{dt}\big( e^{-\mu t}\lambda_{+}(t) \big)
&=
e^{-\mu t} 
\Bigm( \frac{d\lambda_{+}}{dt}(t)
- \mu \lambda_{+}(t) 
\Bigm)
\\[6pt]
&=
-e^{-\mu t}
\big( \Big\{ \frac{d \theta}{d t}(t) -
\omega \Big\} \eta(t) -N_{\omega}\big(\eta(t)\big), \, \mathscr{U}_{-} \big)_{L_{real}^{2}}.
\end{split}
\end{equation}
Furthermore, integrating the equation \eqref{15/02/09/14:04}, and then using \eqref{15/02/29/14:23} and \eqref{14/01/13/11:51}, we find that for any $t\in [0,T_{X}]$,  
\begin{equation}\label{13/05/06/16:28}
\begin{split}
\big| \lambda_{+}(t) -e^{\mu t}\lambda_{+}(0) \big|
&=
\bigg|e^{\mu t}
\int_{0}^{t} e^{-\mu s} 
\big(\Big\{ \frac{d \theta}{d t}(s) -\omega \Big\} \eta(s) -N_{\omega}(\eta(s)),\,  \mathscr{U}_{-} \big)_{L_{real}^{2}}\,ds
\bigg|
\\[6pt]
&\lesssim
\int_{0}^{t} e^{\mu (t-s)} 
\big| \lambda_{1}(s)\big|^{\min\{2,p\}}\,ds, 
\end{split}
\end{equation}
where the implicit constant depends on $\omega$. Similarly, we have 
\begin{equation}\label{15/02/09/14:45}
\big| \lambda_{-}(t) -e^{-\mu t}\lambda_{-}(0) \big|
\lesssim
\int_{0}^{t} e^{-\mu(t-s)} 
\big| \lambda_{1}(s)\big|^{\min\{2,p\}}\,ds.
\end{equation}
Then, we find from \eqref{13/05/06/16:28}, \eqref{15/02/09/14:45}, \eqref{13/12/07/13:44} and \eqref{14/01/13/11:51} that for any $t \in [0,T_{X}]$, 
\begin{equation}\label{15/02/09/16:31}
\begin{split}
|\lambda_{1}(t)| 
& \le 
|\lambda_{+}(t)| + |\lambda_{-}(t)|
\\[6pt]
&\lesssim   
e^{\mu t}|\lambda_{+}(0)| + e^{-\mu t}|\lambda_{-}(0)|
+ 
\int_{0}^{t} e^{\mu (t-s)} 
\big| \lambda_{1}(s)\big|^{\min\{2,p\}}\,ds
\\[6pt]
&\lesssim
e^{\mu t}R_{0} +R_{0}
+
e^{\mu t}
\int_{0}^{t} e^{-\mu s} 
\big| \lambda_{1}(s)\big|^{\min\{2,p\}}\,ds.
\end{split}
\end{equation}
This together with the definition of $T_{\alpha}$ (see \eqref{15/02/09/15:52}) and \eqref{15/02/09/20:29} shows that 
\begin{equation}\label{15/02/13/14:35}
\begin{split}
|\lambda_{1}(T_{\alpha})| 
&\lesssim 
e^{\mu T_{\alpha}}R_{0}
+
\frac{1}{ \mu \min\{1,p-1\}} (\alpha R_{0}e^{\mu T_{\alpha}})^{\min\{2,p\}} 
\\[6pt]
&\le 
e^{\mu T_{\alpha}}R_{0}
+
\frac{1}{ \mu \min\{1,p-1\}} \delta_{X}^{\min\{1,p-1\}}\alpha R_{0}e^{\mu T_{\alpha}} .
\end{split}
\end{equation}
Hence, if $\frac{1}{ \mu \min\{1,p-1\}} \delta_{X}^{\min\{1,p-1\}}\ll 1$, then \eqref{15/02/13/14:35} implies that $|\lambda_{1}(T_{\alpha})|\le \frac{1}{2}\alpha R_{0}e^{\mu T_{\alpha}}$. However, this contradicts \eqref{15/02/09/20:29}. Thus, we have proved that $T_{\alpha}=T_{X}$, and therefore for any $t \in [0,T_{X}]$, 
\begin{equation}\label{15/02/13/15:07}
|\lambda_{1}(t)|\lesssim R_{0}e^{\mu t}. 
\end{equation}
We also see from \eqref{13/05/06/16:28}, \eqref{15/02/09/14:45} and \eqref{15/02/13/15:07} that for any $t\in [0,T_{X}]$, 
\begin{align}
\label{13/12/30/11:48}
&\big| \lambda_{+}(t) -e^{\mu t}\lambda_{+}(0)\big| 
\lesssim (R_{0}e^{\mu t})^{\min\{2,p\}},
\\[6pt]
\label{15/02/09/14:32}
&\big| \lambda_{-}(t) -e^{- \mu t}\lambda_{-}(0)\big| 
\lesssim (R_{0}e^{\mu t})^{\min\{2,p\}}.
\end{align}
Furthermore, we find  from \eqref{13/12/07/13:44}, \eqref{13/12/30/11:48} and \eqref{15/02/09/14:32} that \eqref{13/12/30/11:47} holds. Combining \eqref{13/05/06/15:02} and \eqref{13/12/30/11:47}, we also obtain \eqref{15/03/31/11:04}. 
\par 
We shall prove \eqref{13/05/05/22:54}. It follows  from \eqref{13/12/30/11:47} that for any $t\in [0,T_{X}]$, 
\begin{equation}\label{14/01/13/11:59}
|\lambda_{+}(t)|+|\lambda_{-}(t)|
\lesssim 
|\lambda_{+}(t)|+|\lambda_{1}(t)|
\sim R_{0}e^{\mu t}.
\end{equation}
Furthermore, \eqref{15/02/09/14:32} together with  \eqref{14/01/13/11:59} shows  that 
\begin{equation}\label{13/12/31/17:49}
\big|\lambda_{-}(t)\big|
\lesssim 
e^{-\mu t}\big|\lambda_{-}(0)\big|
+(R_{0}e^{\mu t})^{\min\{2,p\}}
\lesssim 
R_{0}+(R_{0}e^{\mu t})^{\min\{2,p\}}.
\end{equation}
In order to complete the proof of \eqref{13/05/05/22:54}, we employ the ``nonlinear energy projected onto $\mathscr{U}_{\pm}$ plane'':
\begin{equation}\label{13/12/30/15:40}  
 E_{\{\mathscr{U}_{+},\mathscr{U}_{-}\}}(t):=
\mathcal{S}_{\omega}\big(\Phi_{\omega}+\lambda_{+}(t)\mathscr{U}_{+}+\lambda_{-}(t)\mathscr{U}_{-}\big)
-
\mathcal{S}_{\omega}\big(\Phi_{\omega}\big).
\end{equation}  
The second order Taylor's expansion around $\Phi_{\omega}$ together with $\mathcal{S}_{\omega}'(\Phi_{\omega})=0$, \eqref{13/01/04/14:46}, $\mathscr{L}_{\omega}\mathscr{U}_{\pm}=\pm i\mu \mathscr{U}_{\pm}$ and \eqref{13/03/04/12:30} shows that  
\begin{equation}\label{13/06/16/17:15}
\begin{split}
E_{\{\mathscr{U}_{+},\mathscr{U}_{-}\}}(t)
&=
\frac{1}{2}\big[ 
\mathcal{S}_{\omega}''(\Phi_{\omega}) \big\{ \lambda_{+}(t)\mathscr{U}_{+}+\lambda_{-}(t)\mathscr{U}_{-}\big\}\big]
\big\{ \lambda_{+}(t)\mathscr{U}_{+}+\lambda_{-}(t)\mathscr{U}_{-}\big\}
\\[6pt]
&\quad 
+
O(\big\| \lambda_{+}(t)\mathscr{U}_{+}+\lambda_{-}(t)\mathscr{U}_{-} 
\big\|_{H^{1}}^{\min\{3,p+1\}})
\\[6pt]
&=
\frac{1}{2}\langle 
\mathscr{L}_{\omega} \big( \lambda_{+}(t)\mathscr{U}_{+}+\lambda_{-}(t)\mathscr{U}_{-} \big), \, 
 \lambda_{+}(t)\mathscr{U}_{+}+\lambda_{-}(t)\mathscr{U}_{-}
\rangle_{H^{-1}, H^{1}}
\\[6pt]
&\quad 
+
O(\big\| \lambda_{+}(t)\mathscr{U}_{+}+\lambda_{-}(t)\mathscr{U}_{-} 
\big\|_{H^{1}}^{\min\{3,p+1\}})
\\[6pt]
&=
\frac{1}{2}\big( 
i\mu \lambda_{+}(t)\mathscr{U}_{+}-i\mu \lambda_{-}(t)\mathscr{U}_{-}, \,\lambda_{+}(t)\mathscr{U}_{+}+\lambda_{-}(t)\mathscr{U}_{-}
\big)_{L_{real}^{2}}
\\[6pt]
&\quad 
+
O(\big\| \lambda_{+}(t)\mathscr{U}_{+}+\lambda_{-}(t)\mathscr{U}_{-} 
\big\|_{H^{1}}^{\min\{3,p+1\}})
\\[6pt]
&=
-\mu \lambda_{+}(t) \lambda_{-}(t) 
+
O(\big\| \lambda_{+}(t)\mathscr{U}_{+}+\lambda_{-}(t)\mathscr{U}_{-} 
\big\|_{H^{1}}^{\min\{3,p+1\}}).
\end{split}
\end{equation}
We find from  \eqref{13/04/29/12:12}, \eqref{13/05/05/21:03}, \eqref{13/06/16/17:15} and Lemma \ref{13/05/03/19:30} that 
\begin{equation}\label{13/12/10/10:28}
\begin{split}
&\mathcal{S}_{\omega}(\psi)-\mathcal{S}_{\omega}(\Phi_{\omega})
-E_{\{ \mathscr{U}_{+},\mathscr{U}_{-}\}}(t)
\\[6pt]
&=
\frac{1}{2}
\langle \mathscr{L}_{\omega}
\Gamma(t), \Gamma(t)\rangle_{H^{-1},H^{1}}
\\[6pt]
&\quad +
O(\| \eta(t) \|_{H^{1}}^{\min\{3,p+1\}})
+
O(\big\| \lambda_{+}(t)\mathscr{U}_{+}+\lambda_{-}(t)\mathscr{U}_{-} 
\big\|_{H^{1}}^{\min\{3,p+1\}})
\\[6pt]
&\sim  
\|\Gamma(t)\|_{H^{1}}^{2}
+
O(\| \eta(t) \|_{H^{1}}^{\min\{3,p+1\}})
+
O(\big\| \lambda_{+}(t)\mathscr{U}_{+}+\lambda_{-}(t)\mathscr{U}_{-} 
\big\|_{H^{1}}^{\min\{3,p+1\}}).
\end{split}
\end{equation}
Moreover, it follows from $\mathcal{S}_{\omega}'(\Phi_{\omega})=0$, \eqref{13/01/04/14:46}, \eqref{15/06/06/11:39} and \eqref{15/02/29/14:23} that 
\begin{equation}\label{13/05/08/10:15}
\begin{split}
&
\Big| 
\frac{d}{dt}E_{\{\mathscr{U}_{+},\mathscr{U}_{-}\}}(t)  
\Big|
\\[6pt]
&=
\Big|
\mathcal{S}_{\omega}'( \Phi_{\omega}+\lambda_{+}(t)\mathscr{U}_{+}+\lambda_{-}(t)\mathscr{U}_{-})
\Big\{ 
\frac{d\lambda_{+}}{dt}(t) \mathscr{U}_{+}
+\frac{d\lambda_{-}}{dt}(t) \mathscr{U}_{-}
\Big\}
\Big|
\\[6pt]
&\lesssim  
\bigg|
\Big[
\mathcal{S}_{\omega}''\big( \Phi_{\omega} \big) 
\big(\lambda_{+}(t)\mathscr{U}_{+}+\lambda_{-}(t)\mathscr{U}_{-}\big)
\Big]
\Big\{ 
\frac{d\lambda_{+}}{dt}(t) \mathscr{U}_{+}
+\frac{d\lambda_{-}}{dt}(t) \mathscr{U}_{-}
\Big\}
\bigg|
\\[6pt]
&\quad 
+
\big\| \lambda_{+}(t)\mathscr{U}_{+}+\lambda_{-}(t)\mathscr{U}_{-} \big\|_{H^{1}}^{\min\{2,p\}}
\Big\| 
\frac{d\lambda_{+}}{dt}(t) \mathscr{U}_{+}
+\frac{d\lambda_{-}}{dt}(t) \mathscr{U}_{-}
\Big\|_{H^{1}}
\\[6pt]
&= 
\bigg|
\langle \mathscr{L}_{\omega} \big(\lambda_{+}(t)\mathscr{U}_{+}+\lambda_{-}(t)\mathscr{U}_{-}\big), \, 
\frac{d\lambda_{+}}{dt}(t) \mathscr{U}_{+}
+\frac{d\lambda_{-}}{dt}(t) \mathscr{U}_{-}
\rangle_{H^{-1},H^{1}}
\bigg|
\\[6pt]
&\quad 
+
\big\| \lambda_{+}(t)\mathscr{U}_{+}+\lambda_{-}(t)\mathscr{U}_{-} \big\|_{H^{1}}^{\min\{2,p\}}
\Big\| 
\frac{d\lambda_{+}}{dt}(t) \mathscr{U}_{+}
+\frac{d\lambda_{-}}{dt}(t) \mathscr{U}_{-}
\Big\|_{H^{1}}
\\[6pt]
&\lesssim 
\mu ( | \lambda_{-}(t)|+| \lambda_{+}(t)|)^{\min\{3,p+1\}}
\\[6pt]
&\quad + 
\big( |\lambda_{+}(t)| + |\lambda_{-}(t)| )^{\min\{2,p\}}
\Big( 
\Big|
\frac{d\lambda_{+}}{dt}(t)\Big|
+
\Big|
\frac{d\lambda_{-}}{dt}(t)\Big|
\Big).
\end{split}
\end{equation}
Here, the equations \eqref{13/05/06/14:03} and \eqref{13/05/06/14:04} together with \eqref{13/05/06/15:19} and \eqref{15/02/09/10:26} show that for any $t\in [0,T_{X}]$,
\begin{equation}\label{15/02/11/14:01}
\begin{split}
\Big| \frac{d\lambda_{\pm}}{dt}(t)\Big|
&\le 
\mu |\lambda_{\pm}(t)|
+
\Big| \frac{d \theta}{d t}(t) -
\omega \Big| 
\|\eta(t)\|_{L^{2}}\|\mathscr{U}_{\pm}\|_{L^{2}} 
+ 
\big|
\big(N_{\omega}\big(\eta(t) \big), \, 
\mathscr{U}_{\mp} \big)_{L_{real}^{2}}
\big|
\\[6pt]
&\lesssim 
|\lambda_{+}(t)|+|\lambda_{-}(t)|
+
\| \eta(t) \|_{H^{1}}^{\min\{3,p+1\}} 
+
\| \eta(t) \|_{H^{1}}^{\min\{2,p\}} 
\\[6pt]
&\lesssim 
|\lambda_{+}(t)|+|\lambda_{-}(t)|
+
\big\|
\lambda_{+}(t)\mathscr{U}_{+}+\lambda_{-}(t)\mathscr{U}_{-}+\Gamma(t)
\big\|_{H^{1}}^{\min\{2,p\}} 
\\[6pt]
&\lesssim 
|\lambda_{+}(t)|+|\lambda_{-}(t)|
+
\big\|\Gamma(t)\|_{H^{1}}^{\min\{2,p\}}
,
\end{split}
\end{equation}
where the implicit constant depends on $\omega$.
 Putting \eqref{13/05/08/10:15} and \eqref{15/02/11/14:01} together, we find that for any $t\in [0,T_{X}]$, 
\begin{equation}\label{13/06/16/16:53}
\begin{split}
&\Big| \frac{d}{dt}E_{\{\mathscr{U}_{+},\mathscr{U}_{-}\}}(t) \Big|
\\[6pt]
&\lesssim 
\big\|\Gamma(t)\|_{H^{1}}^{\min\{2,p\}}
\big( |\lambda_{+}(t)|+|\lambda_{-}(t)\big| \big)
 + \big( |\lambda_{+}(t)|+|\lambda_{-}(t)\big| \big)^{\min\{3,p+1\}},
\end{split}
\end{equation}
where the implicit constant depends on $\omega$. Furthermore, we find from 
 \eqref{13/12/10/10:28}, the decomposition \eqref{13/05/04/14:34} and \eqref{13/06/16/16:53} that for any $t \in [0, T_{X}]$, 
\begin{equation}\label{13/12/08/16:50}
\begin{split}
&\|\Gamma(t)\|_{H^{1}}^{2}
\\[6pt]
&\lesssim
\mathcal{S}_{\omega}(\psi)-\mathcal{S}_{\omega}(\Phi_{\omega})-E_{\{\mathscr{U}_{+},\mathscr{U}_{-}\}}(0)
-
\big( 
E_{\{\mathscr{U}_{+},\mathscr{U}_{-}\}}(t)
-
E_{\{\mathscr{U}_{+},\mathscr{U}_{-}\}}(0)
\big)
\\[6pt]
&\quad +
O(\big\| \eta(t) 
\big\|_{H^{1}}^{\min\{3,p+1\}})
+
O(\big\| \lambda_{+}(t)\mathscr{U}_{+}+\lambda_{-}(t)\mathscr{U}_{-} 
\big\|_{H^{1}}^{\min\{3,p\}})
\\[6pt]
&\lesssim  
\|\Gamma(0)\|_{H^{1}}^{2}
+
\int_{0}^{t}
\bigg| \frac{d}{dt}E_{\{\mathscr{U}_{+},\mathscr{U}_{-}\}}(t') \bigg|\,dt' 
\\[6pt]
&\quad +
\sup_{0\le t'\le t}
\big\| \Gamma(t') +\lambda_{+}(t')\mathscr{U}_{+}
+\lambda_{-}(t') \mathscr{U}_{-} \big\|_{H^{1}}^{\min\{3,p+1\}}
\\[6pt]
&\quad 
+
\sup_{0\le t'\le t}
\big(
|\lambda_{+}(t')|+|\lambda_{-}(t')|
\big)^{\min\{3,p+1\}}
\\[6pt]
&\lesssim 
\|\Gamma(0)\|_{H^{1}}^{2}
+
\int_{0}^{t} \|\Gamma(t')\|_{H^{1}}^{\min\{2,p\}}
\big( |\lambda_{+}(t')|+|\lambda_{-}(t')| \big)\,dt'
\\[6pt]
&\quad +
\int_{0}^{t} \big( |\lambda_{+}(t')|+|\lambda_{-}(t')| \big)^{\min\{3,p+1\}}\,dt' \\[6pt]
&\quad +
\sup_{0\le t'\le t}\| \Gamma(t') \|_{H^{1}}^{\min\{3,p+1\}}
+
\sup_{0\le t'\le t}
\big( |\lambda_{+}(t')| + |\lambda_{-}(t')| \big)^{\min\{3,p+1\}}
.
\end{split}
\end{equation}
Suppose here that $p<2$, so that $\min\{3,p+1\}=p+1$. Note that $p+1<\frac{2}{2-p}$. Then, it follows from \eqref{13/12/08/16:50}, Young's inequality, \eqref{13/05/06/15:02},  \eqref{14/01/13/11:59} and \eqref{13/12/30/11:47} that for any $t \in [0, T_{X}]$,  
\begin{equation}\label{14/01/13/11:25}
\begin{split}
\sup_{0\le t'\le t}\|\Gamma(t')\|_{H^{1}}^{2}
&\lesssim 
\|\Gamma(0)\|_{H^{1}}^{2}
+
\int_{0}^{t} \big( |\lambda_{+}(t')|+|\lambda_{-}(t')| \big)^{\frac{2}{2-p}}
\,dt'
\\[6pt]
&\quad +
\int_{0}^{t} \big( |\lambda_{+}(t')|+|\lambda_{-}(t')| \big)^{p+1}\,dt' 
\\[6pt]
&\quad +
\sup_{0\le t' \le t}\big\{
|\lambda_{+}(t')|+|\lambda_{-}(t')|\big\}^{p+1}
\\[6pt]
&\lesssim 
|\lambda_{1}(0)|^{2}
+
\int_{0}^{t} |\lambda_{1}(t')|^{p+1}\,dt' 
+
\sup_{0\le t' \le t}|\lambda_{1}(t')|^{p+1}
\\[6pt]
&\lesssim 
R_{0}^{2}
+
\int_{0}^{t} \big( R_{0}e^{\mu t'} \big)^{p+1}\,dt
+\sup_{0\le t' \le t}\big( R_{0}e^{\mu t'} \big)^{p+1}
\\[6pt]
&\lesssim 
R_{0}^{2}+ (R_{0}e^{\mu t})^{p+1}.
\end{split}
\end{equation}
This together with \eqref{13/12/31/17:49} gives us the desired result \eqref{13/05/05/22:54} for $p<2$. 
Similarly, we can prove \eqref{13/05/05/22:54} for $p\ge 2$.  
\par 
We shall prove that there exists $T_{*}>0$ such that $d_{\omega}(\psi(t))$ is strictly increasing on $[T_{*}R_{0}^{\min\{1,p-1 \}}, T_{X}]$. It follows from \eqref{13/05/06/15:02}, \eqref{13/05/06/14:46} and \eqref{13/06/12/13:58}  that for any $t \in [0,T_{X}]$, 
\begin{equation}\label{15/03/17/11:41}
\begin{split}
|\lambda_{1}(t)|\frac{d}{dt}d_{\omega}(\psi(t))
&\sim 
d_{\omega}(\psi(t))\frac{d}{dt}d_{\omega}(\psi(t))
=
\frac{1}{2}\frac{d}{dt}d_{\omega}(\psi(t))^{2}
\\[6pt]
&\ge 
2\mu^{2} |\lambda_{1}(t)| \mathfrak{s} \lambda_{2}(t) 
-C_{1}\mu |\lambda_{1}(t)|^{\min\{3,p+1\}},
\end{split}
\end{equation}
so that  
\begin{equation}\label{14/01/13/20:02}
\begin{split}
\frac{d}{dt}d_{\omega}(\psi(t))
&\gtrsim 
\mu^{2} \mathfrak{s} \lambda_{2}(t)-C_{1}\mu |\lambda_{1}(t)|^{\min\{2,p\}},
\end{split}
\end{equation}
where $C_{1}>0$ is constant depending only on $d$, $p$ and $\omega$. Here,  multiplying the equation \eqref{15/03/30/16:37} by $\mathfrak{s}$, integrating the resulting equation, and using \eqref{15/02/29/14:23} and \eqref{13/12/30/11:47}, we obtain that 
\begin{equation}\label{14/01/13/14:12}
\begin{split}
\mathfrak{s}\lambda_{2}(t)
&=
\mathfrak{s}\lambda_{2}(0)
+
\mu \int_{0}^{t}\mathfrak{s} \lambda_{1}(t')\,dt' 
\\[6pt]
&\quad 
-\mathfrak{s}
\frac{1}{2}\int_{0}^{t}( \Big\{ \frac{d\theta}{dt}(t')-\omega \Big\}\eta(t')
-N_{\omega}(\eta(t')), \, \mathscr{U}_{+}+\mathscr{U}_{-} )_{L_{real}^{2}}\,dt'
\\[6pt]
&\ge 
\mathfrak{s}\lambda_{2}(0)
+C_{2}R_{0} (e^{\mu t}-1)
-C_{3}
\int_{0}^{t} (R_{0}e^{\mu t'})^{\min\{2,p\}} \,dt' ,
\end{split}
\end{equation}
where $C_{2}$ and $C_{3}$ are some positive constants depending only on $d$, $p$ and $\omega$. Here, we see from \eqref{13/05/07/16:48} and \eqref{13/12/07/13:44} that 
\begin{equation}\label{14/01/13/16:22}
\mu \mathfrak{s}\lambda_{2}(0)
\gtrsim  
- |\lambda_{1}(0)|^{\min\{2,p\}}
\sim 
-R_{0}^{\min\{2,p\}},
\end{equation}
where the implicit constants depend on $\omega$. 
Moreover, it follows from \eqref{13/12/30/11:47} and \eqref{14/01/13/11:51} that for any $t \in [0, T_{X}]$, 
\begin{equation}\label{14/01/13/17:30}
|\lambda_{1}(t)| \sim R_{0}e^{\mu t} \lesssim \delta_{X},
\end{equation}
so that, taking $\delta_{X}$ sufficiently small dependently on $\omega$, we have 
\begin{equation}\label{14/01/13/17:33}
C_{3}\int_{0}^{t} (R_{0}e^{\mu t'})^{\min\{2,p\}}\,dt'
\le 
\frac{C_{2}}{2} \mu \int_{0}^{t} R_{0}e^{\mu t'}\,dt' 
= 
\frac{C_{2}}{2}R_{0}(e^{\mu t}-1).
\end{equation}
Putting the estimates \eqref{14/01/13/14:12}, \eqref{14/01/13/16:22} and \eqref{14/01/13/17:33} together, we find that for any $t \in [0, T_{X}]$, 
\begin{equation}\label{14/01/13/17:05}
\mathfrak{s}\lambda_{2}(t) 
\ge \frac{C_{2}}{2}R_{0} (e^{\mu t}-1)-C_{4}R_{0}^{\min\{2,p\}}
\ge \frac{C_{2}}{2}R_{0} \mu t -C_{4}R_{0}^{\min\{2,p\}}
\end{equation}
for some constants $C_{4}>0$ depending only on $d$, $p$ and $\omega$. Hence, if we choose $T_{*}\ge  \frac{4 C_{4}}{\mu C_{2}}$, then for any $t\in [T_{*}R_{0}^{\min\{1,p-1\}},T_{X}]$, 
\begin{equation}\label{14/01/13/15:57}
\mathfrak{s}\lambda_{2}(t) 
\ge 
T_{*}C_{2}\mu R_{0}^{\min\{2,p\}}. 
\end{equation}

Furthermore, it follows from \eqref{15/02/09/14:32}, \eqref{13/12/30/11:47} and  \eqref{14/01/13/11:59} that 
\begin{equation}\label{15/05/02/15:25}
\begin{split}
|\lambda_{2}(t)| 
&\ge 
|\lambda_{1}(t)|-|\lambda_{-}(t)| 
\\[6pt]
&\ge 
|\lambda_{1}(t)|-e^{-\mu t}\lambda_{-}(0)- C_{5}(e^{\mu t}R_{0})^{\min\{2,p\}}
\\[6pt]
&\ge 
c_{1}R_{0}e^{\mu t}-C_{6}R_{0}e^{-\mu t}
-
C_{5}(R_{0}e^{\mu t})^{\min\{2,p\}}
\end{split}
\end{equation}
for some constants $c_{1}\in (0,1)$, $C_{5}>0$ and $C_{6}>0$ depending only on $d$, $p$ and $\omega$. Hence, if $t\ge \frac{1}{2\mu}\log{(\frac{4C_{6}}{c_{1}})}$, then we find from \eqref{15/05/02/15:25} and \eqref{13/12/30/11:47} that  
\begin{equation}\label{15/05/03/11:23}
|\lambda_{2}(t)| \ge \frac{c_{1}}{10}R_{0}e^{\mu t} \sim |\lambda_{1}(t)|.
\end{equation}
Furthermore, we see from  \eqref{14/01/13/20:02}, \eqref{15/05/03/11:23} and \eqref{14/01/13/17:30} that for any  $t\ge \frac{1}{2\mu}\log{(\frac{4C_{6}}{c_{1}})}$, 
\begin{equation}\label{15/05/03/11:32}
\frac{d}{dt}d_{\omega}(\psi(t)) >0.
\end{equation}
On the other hand, if $t\le \frac{1}{2\mu}\log{(\frac{4C_{6}}{c_{1}})}$, then we have 
\begin{equation}\label{15/05/03/11:28}
|\lambda_{1}(t)| \sim R_{0}e^{\mu t} \lesssim R_{0}. 
\end{equation}
Hence, choosing $T_{*}$ suitably, and using \eqref{14/01/13/20:02}, \eqref{14/01/13/15:57} and \eqref{15/05/03/11:28}, we conclude that for any  $t \in [T_{*}R_{0}^{\min\{1,p-1\}}, T_{X}]$ with $t\le \frac{1}{2\mu}\log{(\frac{4C_{6}}{c_{1}})}$, 
\begin{equation}\label{14/01/15/10:26}
\frac{d}{dt}d_{\omega}(\psi(t))
\ge 
\frac{\mu^{3}C_{2}T_{*}}{2} R_{0}^{\min\{2,p\}}
-C_{1}|\lambda_{1}(t)|^{\min\{2,p\}}
>0.
\end{equation}
Putting \eqref{15/05/03/11:32} and \eqref{14/01/15/10:26} together, we obtain the desired result. 
\par  
We shall prove \eqref{13/05/06/14:37}. It follows from $d_{\omega}(\psi(0))=R_{0}$ and the fundamental theorem of calculus that 
\begin{equation}\label{15/02/11/20:42}
\big|d_{\omega}(\psi(t))-R_{0}\big|
\le 
\int_{0}^{t}
\Big|
\frac{d}{ds}d_{\omega}(\psi(s))\Big|
\,ds.
\end{equation}
Here, it follows from \eqref{13/05/06/15:02}, \eqref{13/05/06/14:46},  \eqref{13/06/12/13:58} and \eqref{13/12/30/11:47} that  
\begin{equation}\label{14/01/15/10:20}
\begin{split}
|\lambda_{1}(t)| \Big| \frac{d}{dt}d_{\omega}(\psi(t))\Big|
&\sim 
\Big|
2d_{\omega}(\psi(t))\frac{d}{dt}d_{\omega}(\psi(t))\Big|
=
\Big|\frac{d}{dt} d_{\omega}(\psi(t))^{2}\Big|
\\[6pt]
&\lesssim  |\lambda_{1}(t)| |\lambda_{2}(t)|+ |\lambda_{1}(t)|^{\min\{3,p+1\}}
\\[6pt]
&=
|\lambda_{1}(t)| |\lambda_{+}(t)-\lambda_{1}(t)|+ |\lambda_{1}(t)|^{\min\{3,p+1\}}
\lesssim |\lambda_{1}(t)|^{2},
\end{split}
\end{equation}
where the implicit constants depend on $\omega$. Furthermore, this together with \eqref{13/12/30/11:47} gives us that for any $t\in [0, T_{*}R_{0}^{\min\{1,p-1\}}]$ 
\begin{equation}\label{14/01/15/12:11}
\Big| \frac{d}{dt}d_{\omega}(\psi(t)) \Big|
\lesssim 
R_{0}e^{\mu t}.
\end{equation}
Putting \eqref{15/02/11/20:42} and \eqref{14/01/15/12:11} together, we find that for any $t \in [0,  T_{*}R_{0}^{\min\{1,p-1\}}]$, 
\begin{equation}\label{14/01/15/12:07}
\big| d_{\omega}(\psi(t))-R_{0} \big|
\lesssim 
\int_{0}^{t} R_{0}e^{\mu s}\,ds  
\lesssim 
T_{*}R_{0}^{\min\{2,p\}}
e^{\mu T_{*}R_{0}^{\min\{1,p-1\}}}
\lesssim 
R_{0}^{\min\{2,p\}},
\end{equation}
where the implicit constants depend on $\omega$. Thus, we have proved \eqref{13/05/06/14:37}.
\par 
Finally, we shall prove \eqref{13/05/06/14:30}.  Taylor's expansion of $\mathcal{K}$ around $\Phi_{\omega}$ together with $\mathcal{K}(\Phi_{\omega})=0$ and \eqref{14/02/15/15:25} shows that 
\begin{equation}\label{14/01/15/20:42}
\begin{split}
\mathcal{K}(\psi(t))
&=
\mathcal{K}(\Phi_{\omega}+\eta(t))
=
\mathcal{K}'(\Phi_{\omega})\eta(t)+O(\|\eta(t)\|_{H^{1}}^{2})
\\[6pt]
&=
-2\mu s_{p}\lambda_{1}(t)( \Phi_{\omega}, f_{2})_{L^{2}}
-
2(1-s_{p})(2^{*}-2)
\lambda_{1}(t)
(\Phi_{\omega}^{2^{*}-1}
, 
f_{1} )_{L^{2}}
\\[6pt]
&\quad 
-s_{p}(p-1)(\Phi_{\omega}^{p}, \Gamma(t))_{L_{real}^{2}}
-(2^{*}-2) (\Phi_{\omega}^{2^{*}-1},\Gamma(t))_{L_{real}^{2}}
\\[6pt]
&\quad +
\omega \mathcal{M}(\eta(t)) 
+
O(\| \eta(t) \|_{H^{1}}^{2})
.
\end{split}
\end{equation}
Furthermore, multiplying the both sides above by $\mathfrak{s}={\rm sgn}[ \lambda_{1}]$, and using $(\Phi_{\omega},f_{2})_{L^{2}}<0$ (see \eqref{13/12/28/17:22}), \eqref{18/02/04/16:52} and \eqref{13/05/06/15:02}, we find that 
 for any $t \in [0, T_{X}]$, 
\begin{equation}\label{14/01/15/21:20}
\begin{split}
\mathfrak{s} \mathcal{K}(\psi(t))
&\ge 2s_{p} \mu 
\big|(\Phi_{\omega},f_{2})_{L^{2}}\big| |\lambda_{1}(t)|
-
2(1-s_{p})(2^{*}-2)\big|(\Phi_{\omega}^{2^{*}-1},f_{1})_{L^{2}}\big| |\lambda_{1}(t)|
\\[6pt]
&\quad -
(p-1) \| \Phi_{\omega}\|_{L^{p+1}}^{p}\|\Gamma(t)\|_{L^{p+1}}
-
(2^{*}-2)\| \Phi_{\omega}\|_{L^{2^{*}}}^{2^{*}-1}\|\Gamma(t)\|_{L^{2^{*}}}
\\[6pt]
&\quad -
O(| \lambda_{1}(t) |^{2})
\\[6pt]
&\ge  s_{p} \mu 
\big|(\Phi_{\omega},f_{2})_{L^{2}}\big| |\lambda_{1}(t)|
\\[6pt]
&\quad -
(p-1) \| \Phi_{\omega}\|_{L^{p+1}}^{p}\|\Gamma(t)\|_{L^{p+1}}
-
(2^{*}-2)\| \Phi_{\omega}\|_{L^{2^{*}}}^{2^{*}-1}\|\Gamma(t)\|_{L^{2^{*}}}
\\[6pt]
&\quad -
O(| \lambda_{1}(t) |^{2}).
\end{split}
\end{equation}
Recall here that $|\lambda_{1}(t)|\lesssim \delta_{X} \ll 1$ for $t\in [0, T_{X}]$ (see \eqref{13/05/06/15:02}).  Then, we conclude from \eqref{14/01/15/21:20}, \eqref{13/12/30/11:47} and the proved result \eqref{13/05/05/22:54} that there exists a constant $C_{*}>0$ depending only on $d$, $p$ and $\omega$ such that 
\begin{equation}\label{14/01/15/21:16}
\begin{split}
\mathfrak{s}\mathcal{K}(\psi(t))
&\gtrsim  
R_{0}e^{\mu t} -C_{*} R_{0},
\end{split}
\end{equation}
where the implicit constant depends on $\omega$. 
Thus, we have completed the proof. 
\end{proof}


\section{Modified distance function}\label{15/02/08/14:28}
In this section, we continue to study the decomposition of the form \eqref{13/04/27/9:38} with \eqref{13/02/22/15:27}, \eqref{13/05/04/13:54} and \eqref{13/05/09/10:46}. Our aim here is to introduce the ``modified distance function'' $\widetilde{d}_{\omega}$ (see Proposition \ref{18/01/26/21:25}). To this end, we introduce two distances between a function $u \in H^{1}(\mathbb{R}^{d})$ and the orbit of $\Phi_{\omega}$: 
\begin{align}
\label{13/05/05/14:05}
{\rm dist}_{H^{1}}\big(u, \mathscr{O}(\Phi_{\omega})\big)
&:=
\inf_{\theta \in \mathbb{R}}\|u -e^{i\theta}\Phi_{\omega}\|_{H^{1}},
\\[6pt]
\label{15/06/25/11:33}
{\rm dist}_{L^{2}}\big(u, \mathscr{O}(\Phi_{\omega})\big)
&:=
\inf_{\theta \in \mathbb{R}}\|u-e^{i\theta}\Phi_{\omega}\|_{L^{2}}.
\end{align}

\begin{lemma}\label{15/06/25/11:15}
Assume $d\ge 3$, $1+\frac{4}{d}<p<2^{*}-1$, and let $\omega_{2}$ be the frequency given by Proposition \ref{13/02/20/9:41}. Then, for any $\omega \in (0, \omega_{2})$, there exists $\gamma_{1}(\omega)>0$ with the following property: let $\psi$ be a solution to \eqref{12/03/23/17:57} satisfying $\mathcal{M}(\psi)=\mathcal{M}(\Phi_{\omega})$. If ${\rm dist}_{H^{1}}\big(\psi(t), \mathscr{O}(\Phi_{\omega})\big)\le \gamma_{1}(\omega)$, then 
\begin{equation}\label{13/05/14/16:47}
\|\eta(t)\|_{E} \lesssim 
{\rm dist}_{H^{1}}\big(\psi(t), \mathscr{O}(\Phi_{\omega})\big)
\le \|\eta(t)\|_{E},
\end{equation} 
and if ${\rm dist}_{L^{2}}\big(\psi(t), \mathscr{O}(\Phi_{\omega})\big)\le \gamma_{1}(\omega)$, then 
\begin{equation}\label{14/01/23/20:23}
\|\eta(t)\|_{L^{2}} \sim 
{\rm dist}_{L^{2}}\big(\psi(t), \mathscr{O}(\Phi_{\omega})\big).
\end{equation} 
Here, $\eta$ is the function appearing in the decomposition for $\psi$ of the form \eqref{13/04/27/9:38} with \eqref{13/02/22/15:27}, \eqref{13/05/04/13:54} and \eqref{13/05/09/10:46}. Moreover, the implicit constants in \eqref{13/05/14/16:47} and \eqref{14/01/23/20:23} depend on $\omega$ as well as $d$ and $p$. 
\end{lemma}
\begin{proof}[Proof of Lemma \ref{15/06/25/11:15}]
Note first that Lemma \ref{13/05/11/11:52} shows   
\begin{equation}\label{13/05/14/16:45}
{\rm dist}_{H^{1}}\big(\psi(t), \mathscr{O}(\Phi_{\omega})\big)
\le \|\psi(t)-e^{i\theta(t)}\Phi_{\omega}\|_{H^{1}} = \|\eta(t)\|_{H^{1}}
\lesssim 
\|\eta(t)\|_{E}.
\end{equation}
Moreover, we can take a continuous function $\theta_{0}(t)$ of $t$ such that  
\begin{equation}\label{13/05/05/15:02}
\|\psi(t)-e^{i\theta_{0}(t)}\Phi_{\omega}\|_{H^{1}}
=
{\rm dist}_{H^{1}}\big(\psi(t), \mathscr{O}(\Phi_{\omega})\big)
.
\end{equation}
Put $\eta_{0}(t):=\psi(t)-e^{i\theta_{0}(t)}\Phi_{\omega}$, so that $\|\eta_{0}(t)\|_{H^{1}}={\rm dist}_{H^{1}}\big(\psi(t), \mathscr{O}(\Phi_{\omega})\big)$. Then, it follows from $\Omega(\Phi_{\omega},\partial_{\omega}\Phi_{\omega})=(\Phi_{\omega},i\partial_{\omega}\Phi_{\omega})_{L_{real}^{2}}=0$ and \eqref{13/05/04/13:54} that  
\begin{equation}\label{13/05/05/15:08}
\begin{split}
\Omega(e^{-i\theta(t)}\eta_{0}(t),\partial_{\omega}\Phi_{\omega})
&=
\Omega(e^{-i\theta(t)}\psi(t), \partial_{\omega}\Phi_{\omega})
-
\Omega(e^{i(\theta_{0}(t)-\theta(t))}\Phi_{\omega}, \partial_{\omega}\Phi_{\omega})
\\[6pt]
&=
-
\big( ie^{i(\theta_{0}(t)-\theta(t))} \Phi_{\omega},  \partial_{\omega}\Phi_{\omega}\big)_{L_{real}^{2}}
\\[6pt]
&=
\sin{(\theta_{0}(t)-\theta(t))}(\Phi_{\omega},\partial_{\omega}\Phi_{\omega})_{L_{real}^{2}}.
\end{split}
\end{equation}
Hence, we have 
\begin{equation}\label{13/05/09/10:08}
\begin{split}
\big| \sin{(\theta_{0}(t)-\theta(t))} \big| 
\big| (\Phi_{\omega},\partial_{\omega}\Phi_{\omega})_{L_{real}^{2}}\big|
&\le \|\eta_{0}(t) \|_{L^{2}}\|\partial_{\omega}\Phi_{\omega} \|_{L^{2}}
\\[6pt]
&\le 
{\rm dist}_{H^{1}}\big(\psi(t), \mathscr{O}(\Phi_{\omega})\big)\| \partial_{\omega}\Phi_{\omega}\|_{L^{2}} .
\end{split}
\end{equation}
We see from \eqref{13/05/09/10:08} and ${\rm dist}_{H^{1}}\big(\psi(t), \mathscr{O}(\Phi_{\omega})\big)\ll 1$ that 
\begin{equation}\label{13/05/09/10:21}
\inf_{k\in \mathbb{Z}}\big| \theta_{0}(t)-\theta(t) +k\pi \big|
\lesssim 
{\rm dist}_{H^{1}}\big(\psi(t), \mathscr{O}(\Phi_{\omega})\big),
\end{equation}
where the implicit constant depends on $\omega$. 
Here, we can eliminate the case where $k$ is odd in the above infimum. Indeed, it follows from the choice of $\theta(t)$ (see \eqref{13/05/09/10:46}) that 
\begin{equation}\label{13/05/09/11:03}
\begin{split}
0&>(e^{-i\theta(t)}\psi(t),\partial_{\omega}\Phi_{\omega})_{L_{real}^{2}}
\\[6pt]
&=
\cos{(\theta_{0}(t)-\theta(t))}(\Phi_{\omega},\partial_{\omega}\Phi_{\omega})_{L_{real}^{2}}
+
(e^{-i\theta(t)}\eta_{0}(t), \partial_{\omega}\Phi_{\omega})_{L_{real}^{2}},
\end{split}
\end{equation}
so that 
\begin{equation}\label{13/05/09/11:13}
-\cos{(\theta_{0}(t)-\theta(t))}(\Phi_{\omega},\partial_{\omega}\Phi_{\omega})_{L_{real}^{2}}
>
(e^{-i\theta(t)}\eta_{0}(t), \partial_{\omega}\Phi_{\omega})_{L_{real}^{2}}.
\end{equation}
Thus, supposing for contradiction that $\theta_{0}(t)-\theta(t)$ lay near an odd multiple of $\pi$, we have 
\begin{equation}\label{13/05/09/11:41}
\frac{1}{2}(\Phi_{\omega},\partial_{\omega}\Phi_{\omega})_{L_{real}^{2}}
>
(e^{-i\theta(t)}\eta_{0}(t), \partial_{\omega}\Phi_{\omega})_{L_{real}^{2}}.
\end{equation}
However, this together with {\rm (iii)} of Proposition \ref{13/03/29/15:30} 
 shows   
\begin{equation}\label{13/05/09/11:45}
1\lesssim 
\frac{1}{2}\big|(\Phi_{\omega},\partial_{\omega}\Phi_{\omega})_{L_{real}^{2}}\big|
<
\|\eta_{0}(t)\|_{L^{2}}\| \partial_{\omega}\Phi_{\omega} \|_{L^{2}}
\le 
{\rm dist}_{H^{1}}\big(\psi(t), \mathscr{O}(\Phi_{\omega})\big)\| \partial_{\omega}\Phi_{\omega} \|_{L^{2}}
,
\end{equation}
which contradicts that ${\rm dist}_{H^{1}}\big(\psi(t), \mathscr{O}(\Phi_{\omega})\big)\ll 1$.  Since ${\rm dist}_{H^{1}}\big(\psi(t), \mathscr{O}(\Phi_{\omega})\big)\ll 1$ implies that $\theta_{0}(t)-\theta(t)$ lies near an even multiple of $\pi$, we see from Lemma \ref{13/05/11/11:52} and \eqref{13/05/09/10:21} that  
\begin{equation}\label{13/05/09/11:16}
\begin{split}
\|\eta(t)\|_{E}
&\lesssim
\|\eta(t)\|_{H^{1}}
=
\| e^{-i\theta(t)}\psi(t)-\Phi_{\omega}\|_{H^{1}}
\\[6pt]
&\le 
\big| e^{i(\theta_{0}(t)-\theta(t))} -1 \big| 
\| \Phi_{\omega} \|_{H^{1}} + \|\eta_{0}(t)\|_{H^{1}}
\\[6pt]
&\lesssim  
\big| \cos{(\theta_{0}(t)-\theta(t))}-1\big| + \big| \sin{(\theta_{0}(t)-\theta(t))}\big| + \|\eta_{0}(t)\|_{H^{1}}
\\[6pt]
&\lesssim \inf_{k \in \mathbb{Z}}|\theta_{0}(t)-\theta(t)- 2k\pi |+ \|\eta_{0}(t)\|_{H^{1}}
\\[6pt]
&\lesssim 
{\rm dist}_{H^{1}}\big(\psi(t), \mathscr{O}(\Phi_{\omega})\big)
.
\end{split}
\end{equation}
Putting \eqref{13/05/14/16:45} and \eqref{13/05/09/11:16} together, we obtain the desired result \eqref{13/05/14/16:47}. The same argument as the above shows \eqref{14/01/23/20:23}.
\end{proof}

Now, we recall that $H_{\omega}^{1}(\mathbb{R}^{d})$ denotes the set $\big\{
u \in H^{1}(\mathbb{R}^{d}) \colon \mathcal{M}(u)= \mathcal{M}(\Phi_{\omega})
\big\}$ (see \eqref{18/01/26/22:15}). The following proposition is an immediate  consequence of Lemma \ref{15/06/25/11:15}:
\begin{proposition}\label{18/01/26/21:25}
Assume $d\ge 3$, $1+\frac{4}{d}<p<2^{*}-1$, and let $\omega_{2}$ be the frequency given by Proposition \ref{13/02/20/9:41}. Then, for any $\omega \in (0, \omega_{2})$, there exist a constant $\widetilde{\gamma}(\omega)\in (0, \delta_{E}(\omega))$ ($\delta_{E}(\omega)$ denotes the constant given by Proposition \ref{18/01/27/13:12}) and a continuous function $\widetilde{d}_{\omega} \colon H_{\omega}^{1}(\mathbb{R}^{d}) \to [0,\infty)$ such that: 
\begin{equation}\label{13/05/14/21:30}
\widetilde{d}_{\omega}(u)
\sim 
{\rm dist}_{H^{1}}\big(u, \mathscr{O}(\Phi_{\omega})\big),
\end{equation}
where the implicit constant depends only on $d$, $p$ and $\omega$; and if $\widetilde{d}_{\omega}(u)\le \widetilde{\gamma}(\omega)$, then 
\begin{equation}\label{13/05/14/21:37}
\widetilde{d}_{\omega}(u)= d_{\omega}(u), 
\end{equation}
where $d_{\omega}$ is the distance function defined by \eqref{13/05/05/13:27}.
\end{proposition} 
\begin{remark}\label{18/01/27/14:03}
 We can take the constant $\delta_{X}$ given by the ejection lemma (Lemma \ref{13/05/05/22:31}) so small that $\delta_{X} \le \widetilde{\gamma}(\omega)$. Hence, we may assume that $\delta_{X} < \widetilde{\gamma}(\omega)$ in what follows.
\end{remark}

\begin{lemma}\label{13/05/15/9:32}
Assume $d\ge 3$ and $1+\frac{4}{d}<p< 2^{*}-1$, and let $\omega_{2}$ be the frequency given by Proposition \ref{13/02/20/9:41}. Then, for any $\omega \in (0, \omega_{2})$ and any $\delta>0$, there exist $\varepsilon_{0}(\delta)>0$ and $\kappa_{1}(\delta)>0$ with the following properties: $\varepsilon_{0}(\delta)$ and $\kappa_{1}(\delta)$ are non-decreasing with respect to $\delta$; and for any  radial function $u \in H^{1}(\mathbb{R}^{d})$ satisfying  
\begin{align}
\label{13/05/14/17:16}
&\mathcal{M}(u)=\mathcal{M}(\Phi_{\omega}),
\\[6pt]
\label{13/05/14/17:17}
&\mathcal{S}_{\omega}(u)<m_{\omega}+\varepsilon_{0}(\delta),
\\[6pt]
\label{13/05/14/17:18}
& \widetilde{d}_{\omega}(u)
\ge \delta,
\end{align}
we have either 
\begin{equation}\label{13/05/14/17:21}
\mathcal{K}(u) \le -\kappa_{1}(\delta)
\end{equation}
or 
\begin{equation}\label{13/05/14/17:22}
\mathcal{K}(u) 
\ge 
\min{\big\{ \kappa_{1}(\delta), \, \frac{1}{2}\|\nabla u\|_{L^{2}}^{2} \big\}}.
\end{equation} 
Here, $\widetilde{d}_{\omega}$ denotes the function given by Proposition \ref{18/01/26/21:25}. 
\end{lemma}
\begin{proof}[Proof of Lemma \ref{13/05/15/9:32}]
We see from H\"older's inequality that if $u \in H^{1}(\mathbb{R}^{d})$ satisfies $\mathcal{M}(u)=\mathcal{M}(\Phi_{\omega})$, then 
\begin{equation}\label{13/05/16/09:01}
\|u\|_{L^{p+1}}^{p+1}
\le 
\|u\|_{L^{2}}^{p+1-\frac{d(p-1)}{2}} \|u\|_{L^{2^{*}}}^{\frac{d(p-1)}{2}}
=
\mathcal{M}(\Phi_{\omega})^{\frac{p+1}{2}-\frac{d(p-1)}{4}}  
\|u\|_{L^{2^{*}}}^{\frac{d(p-1)}{2}}.
\end{equation} 
Since $2< \frac{d(p-1)}{2}< 2^{*}$, we see from \eqref{13/05/16/09:01} and Sobolev's embedding that there exists $c_{0}>0$ depending only on $d$, $p$ and $\omega$ such that if $u\in H^{1}(\mathbb{R}^{d})$ satisfies $\mathcal{M}(u)=\mathcal{M}(\Phi_{\omega})$ and $\|\nabla u\|_{L^{2}}^{2}\le c_{0}$, then $\mathcal{K}(u)\ge\frac{1}{2}\|\nabla u \|_{L^{2}}^{2}$. Thus, for the desired result, it is sufficient to prove that for any $\delta>0$, there exist $\varepsilon_{0}(\delta)>0$ and $\kappa_{0}(\delta)>0$ with the following property: for any radial function $u \in H^{1}(\mathbb{R}^{d})$ satisfying \eqref{13/05/14/17:16} through \eqref{13/05/14/17:18} and  $\|\nabla u\|_{L^{2}}^{2}\ge c_{0}$, we have $|\mathcal{K}(u)| > \kappa_{1}(\delta)$. We prove this by contradiction argument. Hence, we suppose to the contrary that there exists $\delta_{0}>0$ with the following property: for any number $n\ge 1$, there exists a radial function $u_{n} \in H^{1}(\mathbb{R}^{d})$ satisfying 
\begin{align}
\label{13/05/17/15:06}
&\mathcal{M}(u_{n})=\mathcal{M}(\Phi_{\omega}),
\\[6pt]
\label{13/05/15/20:39}
&\mathcal{S}_{\omega}(u_{n})<m_{\omega}+\frac{1}{n},
\\[6pt]
\label{13/05/15/20:40}
& \widetilde{d}_{\omega}(u_{n})
\ge \delta_{0},
\\[6pt]
\label{17/12/08/15:05}
&
\inf_{n\ge 1}\|\nabla u_{n}\|_{L^{2}}\ge c_{0},
\\[6pt]
\label{13/05/15/20:41}
& |\mathcal{K}(u_{n})| \le \frac{1}{n}
.
\end{align}
Then, we see from \eqref{13/05/15/20:39} and \eqref{13/05/15/20:41}  that  
\begin{equation}\label{13/05/15/21:17}
\frac{s_{p}}{d}\|\nabla u_{n}\|_{L^{2}}^{2}
\le 
\mathcal{I}_{\omega}(u_{n})
=\mathcal{S}_{\omega}(u_{n})-\frac{2}{d(p-1)}\mathcal{K}(u_{n})
\le m_{\omega}+2,
\end{equation}
so that  
\begin{equation}\label{13/05/15/21:27}
\|u_{n}\|_{H^{1}}^{2} 
\le 
2 \mathcal{M}(\Phi_{\omega}) 
+
\frac{d}{s_{p}}(m_{\omega}+2)
.
\end{equation}
Hence, we can take a radial function $u_{\infty}\in H^{1}(\mathbb{R}^{d})$ and a subsequence of $\{u_{n}\}$ (still denoted by the same symbol $\{u_{n}\}$) such that 
\begin{equation}\label{14/12/26/16:27}
\lim_{n\to \infty}u_{n}=u_{\infty} \qquad \mbox{weakly in $H^{1}(\mathbb{R}^{d})$}
\end{equation}
and for any $2<q<2^{*}$, 
\begin{equation}\label{14/12/26/16:30}
\lim_{n\to \infty}u_{n}=u_{\infty} \qquad \mbox{strongly in $L^{q}(\mathbb{R}^{d})$}.
\end{equation}
We shall show that $u_{\infty}$ is non-trivial by using the contradiction argument. Hence, suppose to the contrary that $u_{\infty}\equiv 0$. Then, we see from \eqref{14/12/26/16:30} that 
\begin{equation}\label{13/05/15/21:29}
\lim_{n\to \infty}\|u_{n}\|_{L^{p+1}}= 0 .
\end{equation}
Moreover, \eqref{13/05/15/20:41} together with \eqref{13/05/15/21:29} shows that, passing to some subsequence, 
\begin{equation}\label{13/05/17/15:13}
\lim_{n\to \infty}\|\nabla u_{n}\|_{L^{2}}^{2}
=\lim_{n\to \infty}\|u_{n}\|_{L^{2^{*}}}^{2^{*}}.
\end{equation} 
We see from the definition of $\sigma$ (see \eqref{12/03/24/17:52}) and \eqref{13/05/17/15:13} that 
\begin{equation}\label{13/05/17/15:18}
\lim_{n\to \infty}\|\nabla u_{n}\|_{L^{2}}^{2}
\ge \sigma \lim_{n\to \infty}\|u_{n}\|_{L^{2^{*}}}^{2}
=\sigma \lim_{n\to \infty}\|\nabla u_{n}\|_{L^{2}}^{\frac{4}{2^{*}}},
\end{equation} 
which together with \eqref{13/05/17/15:13} and \eqref{17/12/08/15:05} yields   
\begin{equation}\label{13/05/17/15:24}
\lim_{n\to \infty}\|u_{n}\|_{L^{2^{*}}}^{2^{*}}
\ge \sigma^{\frac{d}{2}}.
\end{equation} 
Furthermore, we see from the definition of $\mathcal{J}_{\omega}$ (see \eqref{13/03/08/10:38} and \eqref{15/12/31/11:02}),  \eqref{13/05/15/20:39}, \eqref{13/05/15/20:41} and \eqref{13/05/17/15:24}, we find that  
\begin{equation}\label{13/05/17/15:28}
\frac{1}{d}\sigma^{\frac{d}{2}}
\le 
\lim_{n\to \infty}\frac{1}{d}\|u_{n}\|_{L^{2^{*}}}^{2^{*}}
\le 
\lim_{n\to \infty}\mathcal{J}_{\omega}(u_{n})
\le
\lim_{n\to \infty}\big\{ \mathcal{S}_{\omega}(u_{n})+\frac{1}{2}|\mathcal{K}(u_{n})| \big\}
 \le m_{\omega}.
\end{equation} 
However, this contradicts \eqref{12/03/23/17:48} and therefore $u_{\infty}$ is non-trivial. 
\par 
Next, we shall prove $u_{\infty}=e^{i\theta_{0}}\Phi_{\omega}$ for some $\theta_{0}$. We see from \eqref{13/05/15/20:39} and \eqref{13/05/15/20:41} that  
\begin{equation}\label{13/05/17/15:43}
\mathcal{I}_{\omega}(u_{\infty})
\le \lim_{n\to \infty} \mathcal{I}_{\omega}(u_{n})
\le \lim_{n\to \infty} \big\{ \mathcal{S}_{\omega}(u_{n})+\frac{2}{d(p-1)}|\mathcal{K}(u_{n})| \big\} \le m_{\omega}. 
\end{equation}
Hence, from the point of view of \eqref{12/03/23/18:17}, what we need to prove   is that $\mathcal{K}(u_{\infty})\le 0$. We prove this by contradiction. Hence, suppose to the contrary that $\mathcal{K}(u_{\infty})>0$. 
Then, the Brezis-Lieb lemma together with \eqref{13/05/15/20:41} shows  
\begin{equation}\label{13/05/17/16:09}
\lim_{n\to \infty} \mathcal{K}(u_{n}-u_{\infty})
=
-\lim_{n\to \infty}
\big\{ \mathcal{K}(u_{n})-
\mathcal{K}(u_{n}-u_{\infty})
\big\}
=-\mathcal{K}(u_{\infty})<0,
\end{equation}
so that for any sufficiently large $n\in \mathbb{N}$, 
\begin{equation}\label{13/05/17/16:09}
\mathcal{K}(u_{n}-u_{\infty})<0. 
\end{equation}
Hence, we can take $\lambda_{n} < 1$ such that $\mathcal{K}(\lambda_{n}^{\frac{d}{2}}\big\{u_{n}(\lambda_{n}\cdot)-u_{\infty}(\lambda_{n}\cdot)\big\})=0$ (cf. Lemma 2.1 in \cite{AIKN2}). Moreover, this implies that 
\begin{equation}\label{13/05/17/16:17}
\begin{split}
m_{\omega}
&\le \mathcal{I}_{\omega}\big(\lambda_{n}^{\frac{d}{2}}\big\{ 
u_{n}(\lambda_{n}\cdot)-u_{\infty}(\lambda_{n}\cdot)\big\} \big)
\le \mathcal{I}_{\omega}(u_{n}-u_{\infty})
\\[6pt]
&= \mathcal{I}_{\omega}(u_{n})
-\mathcal{I}_{\omega} (u_{\infty})+o_{n}(1) 
=\mathcal{S}_{\omega}(u_{n})-\frac{2}{d(p-1)}\mathcal{K}(u_{n})
-\mathcal{I}_{\omega} (u_{\infty})+o_{n}(1) ,
\end{split}
\end{equation}
which together with \eqref{13/05/15/20:39} and \eqref{13/05/15/20:41} shows 
\begin{equation}\label{13/05/17/16:17}
m_{\omega}\le 
m_{\omega}-\mathcal{I}_{\omega}(u_{\infty}).
\end{equation}
Since $\mathcal{I}_{\omega}(u_{\infty})>0$, this is a contradiction. Hence, $\mathcal{K}(u_{\infty})\le 0$. In particular, we find from \eqref{13/05/17/15:43} 
 that $u_{\infty}$ is a minimizer of the variational problem \eqref{12/03/23/18:17}. Since any minimizer of $\mathcal{I}_{\omega}$ is also a ground state (see Proposition 1.2 in \cite{AIKN}), we conclude from the radial symmetry and non-triviality of $u_{\infty}$, and Proposition \ref{13/03/16/15:33} that $u_{\infty}=e^{i\theta_{0}}\Phi_{\omega}$ for some $\theta_{0} \in \mathbb{R}$. In particular, $\mathcal{K}(u_{\infty})=0$ and $\mathcal{I}(u_{\infty})=m_{\omega}$. However, the condition \eqref{13/05/15/20:40} prevents $u_{\infty}$ from existing. Indeed, it follows from the weak convergence \eqref{14/12/26/16:27} and  $\lim_{n\to \infty}\mathcal{I}_{\omega}(u_{n})\le m_{\omega}= \mathcal{I}(u_{\infty})$ that 
\begin{equation}\label{13/05/17/17:02}
\lim_{n\to \infty}\|u_{n}\|_{L^{2}}=\|u_{\infty}\|_{L^{2}},
\qquad 
\lim_{n\to \infty}\|\nabla u_{n}\|_{L^{2}}=\|\nabla u_{\infty}\|_{L^{2}}.
\end{equation}
Hence, we find that 
\begin{equation}\label{13/05/17/16:57}
\lim_{n\to \infty}u_{n} =u_{\infty}
\qquad \mbox{strongly in $H^{1}(\mathbb{R}^{d})$}.
\end{equation}
Using the strong convergence \eqref{13/05/17/16:57}, we have 
\begin{equation}\label{13/05/17/16:50}
\delta_{0}<\inf_{\theta}\|u_{n}-e^{i\theta}\Phi_{\omega}\|_{H^{1}}
\le 
\|u_{n}-e^{i\theta_{0}}\Phi_{\omega}\|_{H^{1}}
=
\|u_{n}-u_{\infty}\|_{H^{1}}  
=o_{n}(1). 
\end{equation}
However, this is a contradiction. Thus, we cannot take a sequence $\{u_{n}\}$ satisfying \eqref{13/05/17/15:06} through \eqref{13/05/15/20:41} and therefore the lemma holds. 
\end{proof}

\begin{lemma}\label{13/05/14/17:12}
Assume $d\ge 3$ and $1+\frac{4}{d}<p < 2^{*}-1$, and let $\omega_{2}$ be the frequency given by Proposition \ref{13/02/20/9:41}. Then, for any $\omega \in (0, \omega_{2})$ and any $\delta>0$, there exists $\kappa_{2}(\delta)>0$ such that for any radial function $u\in H^{1}(\mathbb{R}^{d})$ satisfying $\mathcal{J}_{\omega}(u) \le m_{\omega}-\delta$, the following holds:  
\begin{equation}\label{13/05/14/17:31}
\mathcal{K}(u) \ge \min\big\{ \kappa_{2}(\delta), \,  \frac{1}{2}\|\nabla u \|_{L^{2}}^{2} \big\}.
\end{equation}
\end{lemma}
\begin{proof}[Proof of Lemma \ref{13/05/14/17:12}]
Let $u$ be a function satisfying $\mathcal{J}_{\omega}(u) \le m_{\omega}-\delta$. Then, we see from \eqref{15/12/31/11:02} and \eqref{12/03/23/18:17} that 
\begin{align}\label{13/05/14/20:48}
&\|u\|_{L^{2}}^{2} \le \frac{2}{\omega}\mathcal{J}_{\omega}(u) \le \frac{2}{\omega}m_{\omega}, 
\\[6pt]
\label{13/05/15/9:38}
&\mathcal{K}(u)>0. 
\end{align}
Furthermore, it follows from H\"older's inequality and \eqref{13/05/14/20:48}
 that 
\begin{equation}\label{13/05/17/09:01}
\|u\|_{L^{p+1}}^{p+1}
\le 
\|u\|_{L^{2}}^{p+1-\frac{d(p-1)}{2}} \|u\|_{L^{2^{*}}}^{\frac{d(p-1)}{2}}
\le
\Bigm( \frac{2}{\omega^{2}}m_{\omega}
\Bigm)^{\frac{p+1}{2}-\frac{d(p-1)}{4}}\|u\|_{L^{2^{*}}}^{\frac{d(p-1)}{2}}
.
\end{equation} 
Hence, we find from \eqref{13/05/17/09:01}, $2<\frac{d(p-1)}{2}< 2^{*}$ and Sobolev's embedding that there exists $c_{0}>0$ depending only on $d$, $p$ and $\omega$ such that if $u\in H^{1}(\mathbb{R}^{d})$ satisfies $\mathcal{J}_{\omega}(u) \le m_{\omega}-\delta$ and $\|\nabla u\|_{L^{2}}^{2}\le c_{0}$,  then $\mathcal{K}(u)\ge\frac{1}{2}\|\nabla u \|_{L^{2}}^{2}$. Thus, for the desired result \eqref{13/05/14/17:31}, it is sufficient to prove that for any $\delta>0$, there exists $\kappa_{2}(\delta)>0$ with the following property: if $u \in H^{1}(\mathbb{R}^{d})$ is radial and satisfies that $\mathcal{J}_{\omega}(u) \le m_{\omega}-\delta$ and $\|\nabla u\|_{L^{2}}^{2}\ge c_{0}$, then $\mathcal{K}(u) > \kappa_{2}(\delta)$. 
We prove this by contradiction. Hence, suppose to the contrary that there exists  $\delta_{0}>0$ with the following property: for any $n\in \mathbb{N}$, there exists a radial function $u_{n} \in H^{1}(\mathbb{R}^{d})$ such that 
\begin{align}
\label{17/12/08/17:14}
& \mathcal{J}_{\omega}(u_{n})\le m_{\omega}-\delta_{0}, 
\\[6pt]
\label{17/12/08/17:15}
&\inf_{n\ge 1}\|\nabla u_{n} \|_{L^{2}}\ge c_{0},
\\[6pt]
\label{17/12/08/17:16}
&0<\mathcal{K}(u_{n}) \le \frac{1}{n}.
\end{align}
Furthermore, it follows from \eqref{13/03/08/10:38}, \eqref{17/12/08/17:14} and \eqref{17/12/08/17:16} that  
\begin{equation}\label{17/12/08/17:21}
\mathcal{S}_{\omega}(u_{n})
\le
\mathcal{J}_{\omega}(u_{n}) +\frac{2}{n} \le m_{\omega} +\frac{2}{n}-\delta_{0}.\end{equation}

Now, using \eqref{15/12/31/11:02}, \eqref{17/12/08/17:14} and \eqref{17/12/08/17:16}, we find that for any $n \ge 1$, 
\begin{equation}\label{17/12/08/15:42}
\begin{split}
\|u\|_{L^{2}}^{2}+\|\nabla u_{n}\|_{L^{2}}^{2}  
&\le
\mathcal{K}(u_{n})+\|u\|_{L^{2}}^{2}+ \|u\|_{L^{p+1}}^{p+1}+\|u\|_{L^{2^{*}}}^{2^{*}}
\\[6pt]
&\lesssim 1+ \mathcal{J}_{\omega}(u_{n})
\le 1+m_{\omega}. 
\end{split} 
\end{equation}
Hence, we can extract a subsequence of $\{u_{n}\}$ (still denoted by the same symbol $\{u_{n}\}$) and a radial function $u_{\infty}\in H^{1}(\mathbb{R}^{d})$ such that 
\begin{equation}\label{17/12/08/15:58}
\lim_{n\to \infty}u_{n}=u_{\infty} \qquad \mbox{weakly in $H^{1}(\mathbb{R}^{d})$}
\end{equation}
and for any $2<q<2^{*}$, 
\begin{equation}\label{17/12/08/15:59}
\lim_{n\to \infty}u_{n}=u_{\infty} \qquad \mbox{strongly in $L^{q}(\mathbb{R}^{d})$}.
\end{equation}
Then, we see from the same argument as the proof of Lemma \ref{13/05/15/9:32} that $u_{\infty}$ is non-trivial. 
Furthermore, the lower semi-continuity of the weak limit together with \eqref{17/12/08/15:58}, \eqref{17/12/08/15:59} and \eqref{17/12/08/17:14} implies that 
\begin{equation}\label{17/12/08/17:00}
\mathcal{J}_{\omega}(u_{\infty}) \le \liminf_{n\to \infty}\mathcal{J}_{\omega}(u_{n}) < m_{\omega}. 
\end{equation}
Hence, we find from \eqref{12/03/23/18:17} and $u_{\infty}\not \equiv 0$ that $\mathcal{K}(u_{\infty})>0$. However, the same argument as the proof of  Lemma \ref{13/05/15/9:32} shows that $\mathcal{K}(u_{\infty})\le 0$. This is a contradiction. Thus, we have completed the proof. 
\end{proof}

\section{One-pass theorem}\label{13/06/15/20:31}

In this section, we derive the one-pass theorem for our equation \eqref{12/03/23/17:57} (cf. Theorem 4.1 in \cite{Nakanishi-Schlag2}). To this end, we need the following inequalities for radial functions: 
\begin{lemma}\label{18/01/03/11:27}
The following hold for all $M>0$ and all radial functions $g$ in $H^{1}(\mathbb{R}^{d})$: if $d=3$ and $3\le q \le 5=2^{*}-1$, then 
\begin{equation}\label{18/01/08/14:06}
\int_{\mathbb{R}^{3}}
\frac{M|x|}{(M+|x|)^{2}}|g(x)|^{q+1}\,dx 
\lesssim
\frac{1}{M}
\|g\|_{H^{1}}^{q-1}
\Big\|
\nabla\Big( \frac{M}{M+|x|} g \Big)
\Big\|_{L^{2}}^{2},
\end{equation}
and if $d\ge 4$ and $1+\frac{4}{d-1}\le q \le 2^{*}-1$, then 
\begin{equation}\label{18/01/03/23:08}
\begin{split}
\int_{\mathbb{R}^{d}}
\frac{M|x|}{(M+|x|)^{2}}|g(x)|^{q+1}\,dx 
&\lesssim
M^{-\frac{(d-2)q-d}{4}}
\|\nabla g\|_{L^{2}}^{q-1}
\Big\|
\nabla\Big( \frac{M}{M+|x|} g \Big)
\Big\|_{L^{2}}^{2}
\\[6pt]
&\quad +
M^{-\frac{(d-1)(q-1)-4}{4}}
\|g\|_{H^{1}}^{q-1}
\int_{\mathbb{R}^{d}}
\frac{M}{|x|(M+|x|)^{2}}|g(x)|^{2} \,dx
.
\end{split}
\end{equation}
Here, the implicit constants depend only on $d$ and $q$. 
\end{lemma}
\begin{remark}\label{18/01/08/13:52}
We rely on the radial Sobolev inequalities to prove Lemma \ref{18/01/03/11:27}, which causes the restriction of $p$ in the one-pass theorem (Theorem \ref{13/06/15/20:32}) and Theorem \ref{15/03/22/11:28}.
\end{remark}
\begin{proof}[Proof of Lemma \ref{18/01/03/11:27}]
First, we prove the inequality \eqref{18/01/08/14:06}. We see from \eqref{13/12/26/21:32} and Sobolev's embedding that for any $3\le q \le 5$, 
\begin{equation}\label{18/01/08/15:58}
\begin{split}
\int_{\mathbb{R}^{3}}\frac{M|x|}{(M+|x|)^{2}}|g(x)|^{q+1}\,dx
&\le 
\frac{1}{M}
\sup_{x\in \mathbb{R}^{3}} 
|x| \Big| \frac{M}{M+|x|} g(x) \Big|^{2} 
\int_{\mathbb{R}^{3}}
|g(x)|^{q-1}
\,dx 
\\[6pt]
&\lesssim 
\frac{1}{M}
\|g\|_{H^{1}}^{q-1} \Big\|
\nabla\Big( \frac{M}{M+|x|} g \Big)
\Big\|_{L^{2}}^{2}.
\end{split}
\end{equation}

Next, we prove \eqref{18/01/03/23:08}. We see from \eqref{13/12/26/21:32}, H\"older's inequality and Hardy's inequality that for any $d\ge 4$ and $1+\frac{4}{d}\le q \le 2^{*}-1$ (hence $\frac{d}{d-2} \le q \le 3$), 
\begin{equation}\label{13/12/23/19:03}
\begin{split}
&\int_{|x|\le \sqrt{M}}\frac{M|x|}{(M+|x|)^{2}}|g(x)|^{q+1}\,dx
\\[6pt]
&\le 
\frac{1}{M}
\Big\|
|x|^{\frac{d-2}{2}} \frac{M}{M+|x|} g  
\Big\|_{L^{\infty}}^{2}
\int_{|x|\le \sqrt{M}}
\frac{|g(x)|^{q-1}}{|x|^{d-3}}
\,dx 
\\[6pt]
&\lesssim  
\frac{1}{M}
\Big\|
\nabla \Big( \frac{M}{M+|x|} g \Big) 
\Big\|_{L^{2}}^{2}
\bigg(
\int_{|x|\le \sqrt{M}}
|x|^{-\frac{2(d-2-q)}{3-q}} \,dx 
\bigg)^{\frac{3-q}{2}}
\bigg( \int_{\mathbb{R}^{d}}
\frac{|g|^{2}}{|x|^{2}}\,dx \bigg)^{\frac{q-1}{2}}
\\[6pt]
&\lesssim 
M^{-\frac{(d-2)q-d}{4}}
\|\nabla g\|_{L^{2}}^{q-1}
\Big\|
\nabla\Big( \frac{M}{M+|x|} g \Big)
\Big\|_{L^{2}}^{2}.
\end{split}
\end{equation}
On the other hand, we see from \eqref{14/01/18/14:30} that for any $1+\frac{4}{d-1}\le q \le 2^{*}-1$, 
\begin{equation}\label{18/01/07/20:12}
\begin{split}
&
\int_{\sqrt{M} \le |x|}
\frac{M|x|}{(M+|x|)^{2}} 
|g(x)|^{q+1}\,dx 
\\[6pt]
&\le 
\big\| |x|^{d-1} |g|^{2} \big\|_{L^{\infty}}^{\frac{q-1}{2}} 
\int_{\sqrt{M}\le |x|} 
\frac{|x|^{-\frac{(d-1)(q-1)}{2}+2}M}{|x|(M+|x|)^{2}}|g(x)|^{2} \,dx
\\[6pt]
&\lesssim  
M^{-\frac{(d-1)(q-1)-4}{4}}
\|g\|_{H^{1}}^{q-1}
\int_{\mathbb{R}^{d}}
\frac{M}{|x|(M+|x|)^{2}}|g(x)|^{2} \,dx
.
\end{split}
\end{equation}
Putting \eqref{13/12/23/19:03} and \eqref{18/01/07/20:12} together, we obtain 
 the desired estimate \eqref{18/01/03/23:08}. 
\end{proof}

Next, we recall that: $\omega_{2}$ denotes the frequency given by Proposition \ref{13/02/20/9:41}; $\omega(2^{*}) \in (0,\omega_{2})$ is the frequency satisfying \eqref{18/02/04/16:52}; and for a given $\omega \in (0, \omega(2^{*}))$, $\delta_{X}$, $A_{*}$, $B_{*}$ and $C_{*}$ denote the constants given by the ejection lemma (Lemma \ref{13/05/05/22:31}).   Moreover, for given $\omega \in (0, \omega_{2})$ and $\delta>0$, $\varepsilon_{0}(\delta)$ denotes the constant determined by Lemma \ref{13/05/15/9:32}. Here, we may assume that 
\begin{equation}\label{18/01/17/10:45}
\delta_{X} \ll \mu | (\Phi_{\omega} , f_{2})_{L^{2}}|
.
\end{equation}
Furthermore, we define $\delta_{S}>0$ as a constant satisfying  
\begin{equation}\label{15/07/11/11:18}
\delta_{S}\le \frac{A_{*}\delta_{X}}{2B_{*}C_{*}} \quad \mbox{and} \quad 
\delta_{S} <\delta_{X}. 
\end{equation} 
Note that $\delta_{S}<\delta_{X} <\delta_{0}(\omega) <\delta_{E}(\omega)$ and $\delta_{X}<\widetilde{\gamma}(\omega)$, where $\delta_{E}(\omega)$, $\delta_{0}(\omega)$ and $\widetilde{\gamma}(\omega)$ are the constants given by Proposition \ref{18/01/27/13:12}, Lemma \ref{15/11/08/11:45} and Proposition \ref{18/01/26/21:25}, respectively.  
\par 
In the proof of the one-pass theorem (Theorem \ref{13/06/15/20:32}) below, we use the following sign function:  
\begin{equation}\label{18/01/01/01:01}
{\rm sign}[\alpha] 
=
\left\{ \begin{array}{ccc}
1 &\mbox{if}& \alpha\ge 0,
\\[6pt]
-1 &\mbox{if}& \alpha< 0.
\end{array} \right. 
\end{equation}
In particular, ${\rm sign}[0]=1$. 

\begin{theorem}[One-pass theorem]\label{13/06/15/20:32}
Assume that either $d=3$ and $3\le p <5$, or $d\ge 4$ and $1+\frac{4}{d-1}< p <2^{*}-1$. Then, there exists $\omega_{*}>0$ such that for any $\omega \in (0,\omega_{*})$, there exist positive constants $\varepsilon_{*}$, $\delta_{*}$ and $R_{*}$ with the following properties: 
\begin{equation}\label{18/01/31/18:02}
\sqrt{\varepsilon_{*}} \ll R_{*} \ll \delta_{*} \ll \delta_{S}, 
\quad 
\varepsilon_{*} \le \varepsilon_{0}(\delta_{*}), 
\end{equation} 
and for any $\varepsilon \in (0, \varepsilon_{*}]$, any $R \in (\sqrt{2\varepsilon}, R_{*})$ and any radial solution $\psi$ to \eqref{12/03/23/17:57} satisfying\begin{align}
\label{13/06/15/20:40}
&\mathcal{M}(\psi)=\mathcal{M}(\Phi_{\omega}),
\\[6pt]
\label{13/06/15/20:41}
&\mathcal{S}_{\omega}(\psi)< m_{\omega}+\varepsilon,
\\[6pt]
\label{13/06/15/20:42}
&\widetilde{d}_{\omega}(\psi(0))<R ,
\end{align}
we have either 
\\[6pt]
{\rm (i)} $\widetilde{d}_{\omega}(\psi(t))< R+R^{\frac{\min\{3,p+1\}}{2}}$ 
 for all $t\in [0, T_{\max}]$; or 
\\[6pt]
{\rm (ii)} there exists $t_{*}>0$ such that $\widetilde{d}_{\omega}(\psi(t))\ge R+R^{\frac{\min\{3,p+1\}}{2}}$ for all $t\in [t_{*}, T_{\max}]$. 
\\
Here, $T_{\max}$ denotes the maximal lifespan of $\psi$. 
\end{theorem}
\begin{proof}[Proof of Theorem \ref{13/06/15/20:32}]
We prove the claim by contradiction. Hence, we suppose to the contrary 
 that for any $0< \omega_{*}\ll 1$, there exists $0< \omega <\omega_{*}$ such that for any $\varepsilon_{*}, \delta_{*}, R_{*}>0$ with $\sqrt{\varepsilon_{*}} \ll R_{*}\ll \delta_{*} \ll \delta_{S}$ and $\varepsilon_{*}<\varepsilon_{0}(\delta_{*})$, there exist $\varepsilon\in (0, \varepsilon_{*}]$, $R\in (\sqrt{2 \varepsilon}, R_{*})$ and a radial solution $\psi$ such that for these $\omega$, $\varepsilon$, $R$ and $\psi$, the conditions \eqref{13/06/15/20:40}, \eqref{13/06/15/20:41} and \eqref{13/06/15/20:42} hold, while both {\rm (i)} and {\rm (ii)} fail. Here, we may assume that $\delta_{*}$ is so small that 
\begin{equation}\label{18/01/15/10:11}
\log{\bigg( \frac{\delta_{X}}{\delta_{*}}\bigg)} 
\le 
\frac{1}{M_{0}}\frac{\delta_{X}}{\delta_{*}}
\end{equation}
for some positive constant $M_{0}$ to be specified later dependently only on $d$, $p$ and $\omega$.
\par 
The failure of {\rm (i)} together with \eqref{18/01/31/18:02} and \eqref{13/06/15/20:42} shows that there exist times $t_{2}>t_{1}>0$ such that 
\begin{align}
\label{13/06/15/22:27}
&\widetilde{d}_{\omega}(\psi(t_{1}))=R<\delta_{*},
\\[6pt]
\label{13/06/15/22:29}
&\widetilde{d}_{\omega}(\psi(t_{2}))
= 
R+R^{\frac{\min\{3,p+1\}}{2}}
<\delta_{*}\ll \delta_{S}<\delta_{X},
\\[6pt]
\label{13/12/11/12:26}
R< 
&\widetilde{d}_{\omega}(\psi(t))
<R+R^{\frac{\min\{3,p+1\}}{2}}
<
\delta_{*} \ll \delta_{S} <\delta_{X}
\quad 
\mbox{for all $t\in (t_{1},t_{2})$}.
\end{align}
Moreover, the failure of {\rm (ii)} shows that there exists $t_{3}>t_{2}$ such that 
\begin{align}
\label{13/12/11/16:11}
R< &\widetilde{d}_{\omega}(\psi(t_{3}))
< R+R^{\frac{\min\{3,p+1\}}{2}},
\\[6pt]
\label{13/12/11/16:40}
&\widetilde{d}_{\omega}(\psi(t_{3})) < \widetilde{d}_{\omega}(\psi(t))
\quad \mbox{for all $t \in (t_{2},t_{3})$}.
\end{align}
We see from \eqref{13/06/15/22:27} through \eqref{13/12/11/16:40} that 
\begin{equation}\label{15/04/02/16:47}
R
=
\widetilde{d}_{\omega}(\psi(t_{1})) 
= 
\min_{t\in [t_{1},t_{3}]}\widetilde{d}_{\omega}(\psi(t))
.
\end{equation}

Note here that the ejection lemma (Lemma \ref{13/05/05/22:31}) together with \eqref{13/06/15/22:29} shows that 
\begin{equation}\label{17/12/31/17:41}
t_{1}+T_{*}R^{\min\{1,p-1\}}<t_{2},
\end{equation} 
where $T_{*}$ is the constant given by the ejection lemma. Put 
\begin{equation}\label{15/04/02/17:07}
t_{2}':=\inf\{t\in [t_{1},t_{3}] \colon \widetilde{d}_{\omega}(\psi(t))= \delta_{X}\}.
\end{equation}
Then, \eqref{13/12/11/12:26} shows $t_{2}<t_{2}'$. Moreover, we see from \eqref{13/05/14/21:37} and the ejection lemma that  
\begin{align}
\label{15/03/31/17:13}
& A_{*}e^{\mu (t-t_{1})}R \le d_{\omega}(\psi(t))\le B_{*}e^{\mu (t-t_{1})}R, 
\\[6pt]
\label{18/01/01/11:55}
&
\|\eta(t)\|_{H^{1}}
\sim |\lambda_{1}(t)| 
\sim e^{\mu(t-t_{1})}R 
\sim d_{\omega}(\psi(t)),
\\[6pt]
\label{18/01/01/11:56}
&\|\Gamma(t)\|_{H^{1}} 
\lesssim 
R+ \{ e^{\mu(t-t_{1})}R\}^{\frac{\min\{3,p+1\}}{2}}
\sim \widetilde{d}_{\omega}(\psi(t_{1}))
+|\lambda_{1}(t)|^{\frac{\min\{3,p+1\}}{2}},
\\[6pt]
\label{15/03/31/14:52}
&{\rm sign}[\lambda_{1}(t)] \mathcal{K}(\psi(t))
\gtrsim (e^{\mu (t-t_{1})}-C_{*})R 
\end{align} 
for all $t\in [t_{1},t_{2}']$,  and $\widetilde{d}_{\omega}(\psi(t))$ increases on  $[t_{1}+T_{*}R^{\min\{1,p-1\}}, t_{2}']$. In particular, $\widetilde{d}_{\omega}(\psi(t))$ must reach $\delta_{X}$ before $t_{3}$ and therefore
\begin{equation}\label{17/12/31/18:03}
 t_{2}<t_{2}'<t_{3}.
\end{equation} 
Furthermore, we see from \eqref{13/12/11/12:26} that there exists $t_{S} \in (t_{2},t_{2}')$ such that 
 $\widetilde{d}_{\omega}(\psi(t_{S}))=\delta_{S}$.

Now, note that \eqref{13/06/15/20:41} together with $\sqrt{2\varepsilon} <R$ and \eqref{15/04/02/16:47} shows that 
\begin{equation}\label{15/04/02/20:50}
\mathcal{S}_{\omega}(\psi)<m_{\omega}+\frac{1}{2}\widetilde{d}_{\omega}(\psi(t))^{2}
\quad 
\mbox{for all $t\in [t_{1},t_{3}]$}.
\end{equation}
Hence, we find from Lemma \ref{13/05/05/15:47}, \eqref{13/05/14/21:37} and \eqref{15/04/02/16:47} that  
\begin{equation}\label{15/03/30/18:03}
R\le d_{\omega}(\psi(t)) \sim |\lambda_{1}(t)|
\quad 
\mbox{for all $t \in [t_{1},t_{3}]$ with $\widetilde{d}_{\omega}(\psi(t))\le \delta_{E}(\omega)$}
.
\end{equation}
In particular, ${\rm sign}[\lambda_{1}(t)]$ is constant on the set $\{t \in [t_{1},t_{3}]\colon \widetilde{d}_{\omega}(\psi(t))\le \delta_{E}(\omega)\}$. Moreover, Lemma \ref{13/05/15/9:32} together with \eqref{13/06/15/20:41} and $\varepsilon <\varepsilon_{*}< \varepsilon_{0}(\delta_{*})$ shows that ${\rm sign}[\mathcal{K}(\psi(t))]$ is constant on the set $\{ t\in[t_{1},t_{3}]\colon \widetilde{d}_{\omega}(\psi(t))\ge \delta_{*}\}$. 
\par 
We shall show that 
 for any $t\in [t_{1},t_{3}]$ with $\delta_{*}\le \widetilde{d}_{\omega}(\psi(t)) \le \delta_{E}(\omega)$, 
\begin{equation}\label{15/04/02/21:17}
{\rm sign}[\lambda_{1}(t)]={\rm sign}[\mathcal{K}(\psi(t))]
.
\end{equation} 
Since both ${\rm sign}[\lambda_{1}(t)]$ and ${\rm sign}[\mathcal{K}(\psi(t))]$ are constant on the set $\{ t\in [t_{1},t_{3}]\colon \delta_{*}\le \widetilde{d}_{\omega}(\psi(t)) \le \delta_{E}(\omega)\}$, it suffices to show that ${\rm sign}[\lambda_{1}(t_{2}')]={\rm sign}[\mathcal{K}(\psi(t_{2}'))]$. 
We see from \eqref{15/04/02/17:07}, \eqref{15/03/31/17:13},   \eqref{13/05/14/21:37} and \eqref{15/07/11/11:18} that 
\begin{equation}\label{15/04/02/21:58}
\begin{split}
\delta_{X}
\le 
B_{*} e^{\mu (t_{2}'-t_{1})}R
&\le 
B_{*} e^{\mu (t_{2}'-t_{1})} \frac{e^{-\mu (t_{S}-t_{1})}}{A_{*}}\widetilde{d}_{\omega}(\psi(t_{S}))
\\[6pt]
&=
\frac{B_{*}}{A_{*}} e^{\mu (t_{2}'-t_{S})}
\delta_{S}
\le 
\frac{\delta_{X}}{2C_{*}}e^{\mu (t_{2}'-t_{S})}.
\end{split} 
\end{equation}
Dividing the both sides of \eqref{15/04/02/21:58} by $\delta_{X}$, and taking the logarithm, we obtain  
\begin{equation}\label{15/06/25/09:55}
\frac{1}{\mu}\log{C_{*}} < t_{2}'-t_{S} .
\end{equation} 
Since $t_{1}< t_{S}$, we find from  \eqref{15/03/31/14:52} and \eqref{15/06/25/09:55} that ${\rm sign}[\lambda_{1}(t_{2}')]={\rm sign}[\mathcal{K}(\psi(t_{2}'))]$. Hence, \eqref{15/04/02/21:17} holds. 
\par 
We find from \eqref{15/04/02/21:17} that the following function $\mathfrak{S}\colon [t_{1},t_{3}] \to \{1,-1\}$ is well-defined:   
\begin{equation}\label{15/04/02/20:40}
\mathfrak{S}(t)
:=\left\{ \begin{array}{ccc}
{\rm sign}[\lambda_{1}(t)]
&\mbox{if}& \widetilde{d}_{\omega}(\psi(t))\le \delta_{E}(\omega), 
\\[6pt]
{\rm sign}[\mathcal{K}(\psi(t))] 
&\mbox{if}& \widetilde{d}_{\omega}(\psi(t))\ge \delta_{*}.
\end{array} \right.
\end{equation} 
Note that the function $\mathfrak{S}(t)$ is constant on $[t_{1},t_{3}]$.  Put $\mathfrak{s}:=\mathfrak{S}(t) \in \{1,-1\}$. When $\mathfrak{s}=1$, we see that there exists $C(\omega)>0$ such that for any $t \in [t_{1},t_{3}]$, 
\begin{equation}\label{17/11/26/21:26}
\sup_{t\in [t_{1},t_{3}]}\|\psi(t)\|_{H^{1}}\le C(\omega)
.
\end{equation}
Indeed, if $\widetilde{d}_{\omega}(\psi(\tau))\ge \delta_{S}$ for some $\tau \in [t_{1},t_{3}]$, then \eqref{15/04/02/20:40} shows $\inf_{t \in [t_{1},t_{3}]}\mathcal{K}(\psi(t))\ge 0$. Hence, we see from \eqref{13/03/30/12:14}, \eqref{15/12/31/11:01} and the assumption \eqref{13/06/15/20:41} that for any $t \in [t_{1},t_{3}]$, 
\begin{equation}\label{13/12/10/20:25}
\frac{\omega}{2}\|\psi(t) \|_{L^{2}}^{2}
+
\frac{s_{p}}{d}\|\nabla \psi(t) \|_{L^{2}}^{2}
\le 
\mathcal{I}_{\omega}(\psi(t))
\le 
\mathcal{S}_{\omega}(\psi) 
\le m_{\omega}+1.
\end{equation}
On the other hand, if $\widetilde{d}_{\omega}(\psi(t))\le \delta_{S}$ for all $t \in [t_{1},t_{3}]$, then we see from Lemma \ref{13/05/11/11:52}, Lemma \ref{13/05/05/15:47} and \eqref{13/05/14/21:37} that for any $t \in [t_{1},t_{3}]$,
\begin{equation}\label{13/12/10/11:47}
\| \psi(t) \|_{H^{1}}^{2}
\lesssim 
\|\Phi_{\omega}\|_{H^{1}}^{2}
+
\| \eta(t) \|_{H^{1}}^{2} 
\lesssim 
\|\Phi_{\omega}\|_{H^{1}}^{2}
+
d_{\omega}(\psi(t))^{2}
\lesssim 
\|\Phi_{\omega}\|_{H^{1}}^{2}+\delta_{S}.
\end{equation}
Thus, we have verified that \eqref{17/11/26/21:26} holds. 
\par 
Let us summarize information obtained so far: there exist $t_{1}'(=t_{1}+T_{*}R^{\min\{1,p-1\}}) \in (t_{1},t_{2})$ and $t_{2}' \in (t_{2},t_{3})$ such that: $\widetilde{d}_{\omega}(\psi(t))$ increases on $[t_{1}',t_{2}']$;  $\widetilde{d}_{\omega}(\psi(t_{2}'))=\delta_{X}$; and for any $t\in (t_{1}, t_{2}')$, 
\begin{align}
\label{13/12/11/17:06}
&
\delta_{X}\ge 
\widetilde{d}_{\omega}(\psi(t)) 
\sim \|\eta(t)\|_{H^{1}}
\sim |\lambda_{1}(t)| 
\sim e^{\mu(t-t_{1})}R,
\\[6pt]
\label{14/01/18/16:34}
&\|\Gamma(t)\|_{H^{1}} 
\lesssim 
\widetilde{d}_{\omega}(\psi(t_{1}))
+|\lambda_{1}(t)|^{\frac{\min\{3,p+1\}}{2}},
\\[6pt]
\label{13/12/11/20:08}
&\mathfrak{s}\mathcal{K}(\psi(t))\gtrsim (e^{\mu(t-t_{1})}-C_{*})R.
\end{align}
Arguing from $t_{3}$ backward in time, we are also able to obtain a time interval $(t_{2}'',t_{3})\subset (t_{2}',t_{3})$ such that: $\widetilde{d}_{\omega}(\psi(t_{2}''))=\delta_{X}$; for any $t\in (t_{2}'', t_{3})$, 
\begin{align}
\label{13/12/11/20:12}
&
\delta_{X}\ge 
\widetilde{d}_{\omega}(\psi(t)) 
\sim \|\eta(t)\|_{H^{1}}
\sim |\lambda_{1}(t)|  
\sim e^{\mu(t_{3}-t)}\widetilde{d}_{\omega}(\psi(t_{3})),
\\[6pt]
\label{14/01/18/16:38}
&\|\Gamma(t)\|_{H^{1}} 
\lesssim 
\widetilde{d}_{\omega}(\psi(t_{3}))
+|\lambda_{1}(t)|^{\frac{\min\{3,p+1\}}{2}},
\\[6pt]
\label{13/12/11/20:13}
&\mathfrak{s}\mathcal{K}(\psi(t))
\gtrsim 
(e^{\mu(t_{3}-t)}-C_{*})\widetilde{d}_{\omega}(\psi(t_{3})),
\end{align}
and $\widetilde{d}_{\omega}(\psi(t))$ decreases at least in the region $\{t\in [t_{2}'',t_{3}] \colon \widetilde{d}_{\omega}(\psi(t))\ge 2\widetilde{d}_{\omega}(\psi(t_{3}))\}$ (cf. \eqref{13/05/06/14:37} in the ejection lemma (Lemma \ref{13/05/05/22:31})).
\par 
Suppose here that there exists a time $\tau \in (t_{2}',t_{2}'')$ such that $\widetilde{d}_{\omega}(\psi(\tau))$ is a local minimum and $\widetilde{d}_{\omega}(\psi(\tau))<\delta_{*}$. Then, we can apply the ejection lemma (Lemma \ref{13/05/05/22:31}) from $\tau$ both forward and backward in time to obtain an open interval $I_{\tau} \subset (t_{2}',t_{2}'')$ such that 
\begin{equation}\label{14/01/18/11:50}
\widetilde{d}_{\omega}(\psi(\inf{I_{\tau}}))
=
\widetilde{d}_{\omega}(\psi(\sup{I_{\tau}}))=\delta_{X},
\end{equation}
and for any $t\in I_{\tau}$, 
\begin{align}
\label{13/12/11/20:32}
&
\delta_{X}\ge 
\widetilde{d}_{\omega}(\psi(t)) 
\sim \|\eta(t)\|_{H^{1}}
\sim |\lambda_{1}(t)| 
\sim e^{\mu|t-\tau|}\widetilde{d}_{\omega}(\psi(\tau)),
\\[6pt]
\label{14/01/18/16:41}
&
\|\Gamma(t)\|_{H^{1}} 
\lesssim 
\widetilde{d}_{\omega}(\psi(\tau))
+|\lambda_{1}(t)|^{\frac{\min\{3,p+1\}}{2}},
\\[6pt]
\label{13/12/11/20:33}
&\mathfrak{s}\mathcal{K}(\psi(t))
\gtrsim 
(e^{\mu|t-\tau|}-C_{*})\widetilde{d}_{\omega}(\psi(\tau)),
\end{align}
and $\widetilde{d}_{\omega}(\psi(t))$ is monotone in the region 
$\widetilde{d}_{\omega}(\psi(t))\ge 2\widetilde{d}_{\omega}(\psi(\tau))$. Note that for any distinct local minimum points $\tau_{1}$ and $\tau_{2}$ of $\widetilde{d}_{\omega}(\psi(t))$ in $(t_{2}',t_{2}'')$, the monotonicity away from them implies that the intervals $I_{\tau_{1}}$ and $I_{\tau_{2}}$ are either disjoint or identical. 

\begin{figure}[H]
\hspace{-4cm}
\input{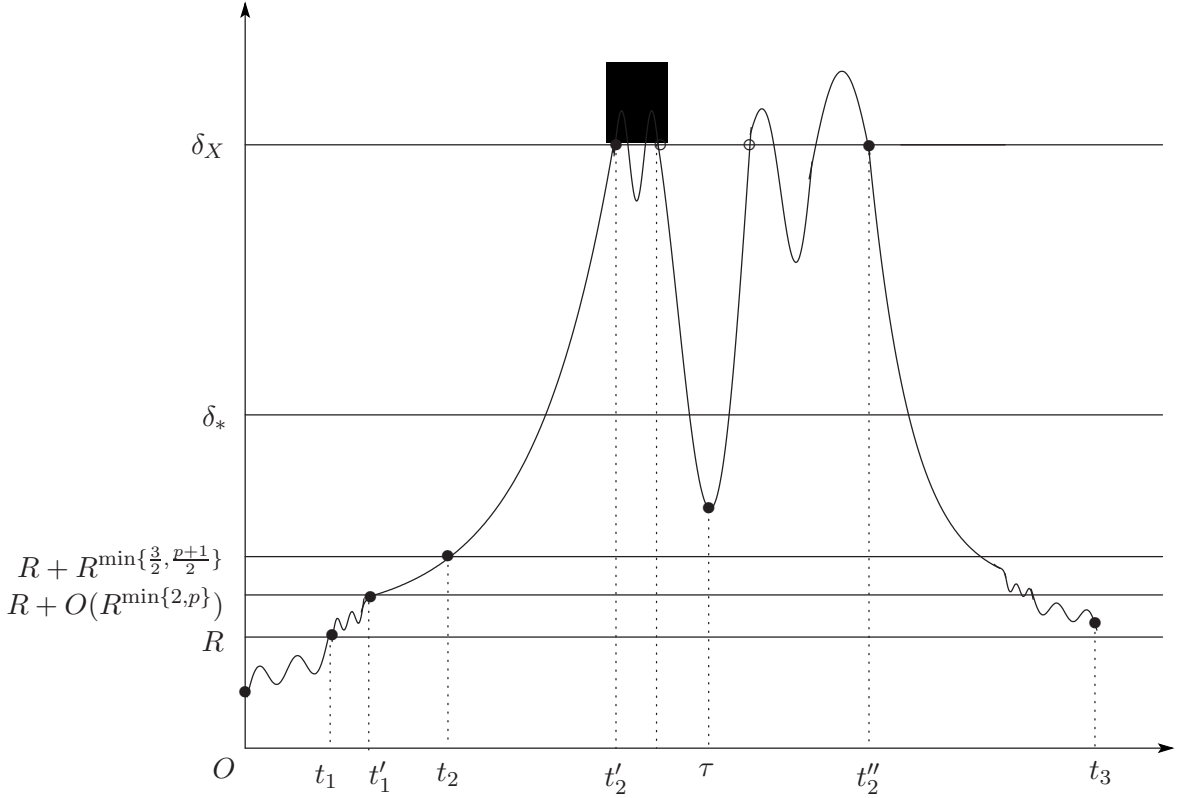}
\caption{Behavior of $\widetilde{d}_{\omega}(\psi(t))$ in the case where there exists a local minimum point $\tau \in (t_{2}',t_{2}'')$ such that $\widetilde{d}_{\omega}(\psi(\tau))<\delta_{*}$.}
\end{figure}
\par 
We find from the above observation that there exist a number $n\ge 2$ and disjoint open subintervals $I_{1},\ldots, I_{n}$ of $[t_{1},t_{3}]$ with the following properties\footnote{Note that $I_{1}=(t_{1},t_{2}')$. Moreover, if $\widetilde{d}_{\omega}(\psi(t))\ge \delta_{*}$ on $[t_{2}',t_{2}'']$, then $n=2$ and $I_{2}=(t_{2}'',t_{3})$.}: 
\\
\noindent 
\textbullet~$\inf{I_{1}}=t_{1}$, $\sup{I_{n}}=t_{3}$, $\widetilde{d}_{\omega}(\psi(\sup{I_{1}}))=\widetilde{d}_{\omega}(\psi(\inf{I_{n}}))=\delta_{X}$, and 
\begin{equation}\label{14/01/18/12:28}
\widetilde{d}_{\omega}(\psi(\inf{I_{j}}))
=\widetilde{d}_{\omega}(\psi(\sup{I_{j}}))=\delta_{X}
\quad \mbox{except for $j=1,n$}.
\end{equation} 
\noindent 
\textbullet~For each $1\le j \le n$, there exists $\tau_{j}\in I_{j}$ such that \begin{equation}\label{18/01/15/09:26}
\widetilde{d}_{\omega}(\psi(\tau_{j}))<\delta_{*}, 
\end{equation}  
and for any $t\in I_{j}$,
\begin{align}
\label{14/01/18/11:35}
&\delta_{X} \ge \widetilde{d}_{\omega}(\psi(t)) 
\sim \|\eta(t)\|_{H^{1}}
\sim |\lambda_{1}(t)| 
\sim e^{\mu|t-\tau_{j}|}\widetilde{d}_{\omega}(\psi(\tau_{j})),
\\[6pt]
\label{14/01/18/16:43}
&\|\Gamma(t)\|_{H^{1}} 
\lesssim 
\widetilde{d}_{\omega}(\psi(\tau_{j}))
+
|\lambda_{1}(t)|^{\frac{\min\{3,p+1\}}{2}},
\\[6pt]
\label{14/01/18/11:36}
& \mathfrak{s}\mathcal{K}(\psi(t))\gtrsim (e^{\mu|t-\tau_{j}|}-C_{*})\widetilde{d}_{\omega}(\psi(\tau_{j})).
\end{align}
\noindent
\textbullet~If $t$ satisfies $\sup{I_{j-1}}<t<\inf{I_{j}}$ for some $2\le j \le n$, then 
\begin{equation}\label{18/01/22/10:37}
\widetilde{d}_{\omega}(\psi(t)) >\delta_{*}.
\end{equation}

Put  $I':=[t_{1},t_{3}]\setminus \bigcup_{j=1}^{n}I_{j}$. Then, for any $t\in I'$, we have $\widetilde{d}_{\omega}(\psi(t))>\delta_{*}$ (see \eqref{18/01/22/10:37}), which together with Lemma \ref{13/05/15/9:32} gives us that for any $t\in I'$, 
\begin{equation}\label{13/12/11/20:51}
\left\{ \begin{array}{lcc}
\displaystyle{
\mathcal{K}(\psi(t))\ge 
\min\big\{ \kappa_{1}(\delta_{*}),\, \frac{1}{2}\|\nabla \psi(t)\|_{L^{2}}^{2} \big\} }
&\mbox{if}& \mathfrak{s}=1,
\\[6pt]
\mathcal{K}(\psi(t))\le -\kappa_{1}(\delta_{*}) &\mbox{if}& \mathfrak{s}=-1. 
\end{array} \right.
\end{equation}
We can find from elementary calculations, \eqref{14/01/18/11:35}, $\widetilde{d}_{\omega}(\psi(\tau_{j})) \le \delta_{*}$ and \eqref{18/01/15/10:11} that for any $1\le j \le n$, 
\begin{equation}\label{13/12/18/05:36}
\begin{split}
\mu |I_{j}| \widetilde{d}_{\omega}(\psi(\tau_{j})) 
&=
\big\{ \log{e^{\mu|\sup{I_{j}-\tau_{j}|}}}+ \log{e^{\mu|\inf{I_{j}-\tau_{j}|}}}
\big\}
\widetilde{d}_{\omega}(\psi(\tau_{j}))
\\[6pt]
&\lesssim 
\log{\Bigm( \frac{\delta_{X}}{\widetilde{d}_{\omega}(\psi(\tau_{j}))}\Bigm)}
\widetilde{d}_{\omega}(\psi(\tau_{j})) 
\le \frac{1}{M_{0}}\delta_{X}.
\end{split}
\end{equation}

We shall derive a contradiction in the case where $\mathfrak{s}=1$. To this end, we use the following identity: for any $t\in I_{\max}(\psi)$ and any $M>0$, 
\begin{equation}\label{13/12/16/15:45}
\begin{split}
&\frac{d}{dt}\Im \int_{\mathbb{R}^{d}} 
\frac{M|x|}{M+|x|}
\frac{x}{|x|} \cdot \nabla \psi(x,t)\overline{\psi(x,t)}\,dx
\\[6pt]
&=2 \mathcal{K}\bigg(\frac{M}{M+|x|} \psi(t) \bigg)
\\[6pt]
&\quad 
-\frac{p-1}{p+1}
\int_{\mathbb{R}^{d}}
\bigg\{ \frac{M^{2}}{(M+|x|)^{2}}
+\frac{(d-1)M}{M+|x|}
-\frac{dM^{p+1}}{(M+|x|)^{p+1}} 
\bigg\}
|\psi(t)|^{p+1}
\,dx 
\\[6pt]
&\quad -
\frac{2}{d}
\int_{\mathbb{R}^{d}} 
\bigg\{ \frac{M^{2}}{(M+|x|)^{2}}
+\frac{(d-1)M}{M+|x|}
-\frac{dM^{2^{*}}}{(M+|x|)^{2^{*}}}
\bigg\}
 |\psi(t)|^{2^{*}}
\,dx
\\[6pt]
&\quad 
+
\frac{1}{2}
\int_{\mathbb{R}^{d}} 
\bigg\{ 
\frac{2M^{2}}{(M+|x|)^{4}}
+
\frac{(d-3)(d-1)M}{|x| (M+|x|)^{2}}
\bigg\}  |\psi(t)|^{2}
\,dx 
.
\end{split}
\end{equation}
Here, note that an elementary computation shows that for any $d\ge 3$ and 
 any $1 \le q \le 2^{*}-1$,  
\begin{equation}\label{18/01/02/08:30}
\begin{split}
&
\int_{\mathbb{R}^{d}}
\bigg\{ \frac{M^{2}}{(M+|x|)^{2}}
+
\frac{(d-1)M}{M+|x|}
-
\frac{dM^{q+1}}{(M+|x|)^{q+1}} 
\bigg\}
|\psi(t)|^{q+1}
\,dx
\\[6pt]
&=  
\int_{\mathbb{R}^{d}}
\bigg\{ 
\frac{d M }{M+|x|}
\bigg( \frac{M}{M+|x|}-\frac{M^{q}}{(M+|x|)^{q}} \bigg)
+
\frac{(d-1) M|x| }{(M+|x|)^{2}}
\bigg\}
|\psi(t)|^{q+1}
\,dx
\\[6pt]
&\lesssim  
\int_{\mathbb{R}^{d}}
\frac{ M|x| }{(M+|x|)^{2}} |\psi(t)|^{q+1}\,dx 
.
\end{split}
\end{equation}

We shall derive a contradiction in three steps: 
\\
\noindent 
{\bf Step 1.}~We shall derive an estimate for \eqref{13/12/16/15:45} on the region $\bigcup_{j=1}^{n}I_{j}$. 
\par 
We see from Sobolev's embedding that for any $d\ge 3$, any $1+\frac{4}{d} \le q \le 2^{*}-1$, any $M>0$ and any $t\in I_{\max}(\psi)$, 
\begin{equation}\label{18/01/19/17:27}
\begin{split}
\int_{\mathbb{R}^{d}}
\frac{M|x|}{(M+|x|)^{2}} 
|\psi(t)|^{q+1}\,dx 
&\lesssim  
\frac{1}{M} 
\int_{\mathbb{R}^{d}} 
|x| |\Phi_{\omega}(x)|^{q+1} \,dx
+
\|\eta(t)\|_{L^{q+1}}^{q+1} 
\\[6pt]
&\lesssim  
\frac{C(\omega)}{M}
+
\|\eta(t)\|_{H^{1}}^{q+1}
,
\end{split}
\end{equation}
where $C(\omega)$ is some positive constant depending only on $d$, $p$ and $\omega$. Furthermore, it follows from \eqref{13/12/16/15:45}, \eqref{18/01/02/08:30} and \eqref{18/01/19/17:27} that for any $d\ge 3$, any $1+\frac{4}{d}\le p<2^{*}-1$, any $M>0$ and any $t\in I_{\max}(\psi)$, then 
\begin{equation}\label{18/01/19/15:48}
\begin{split}
&\frac{d}{dt}\Im \int_{\mathbb{R}^{d}}
\frac{M|x|}{M+|x|}
\frac{x}{|x|} \cdot \nabla \psi(x,t)\overline{\psi(x,t)}\,dx
\\[6pt]
&\ge
2 \mathcal{K}\Big(\frac{M}{M+|x|} \psi(t) \Big)
-
\frac{C(\omega)}{M}
-
C \big\{ \|\eta(t)\|_{H^{1}}^{p+1}
+
\|\eta(t)\|_{H^{1}}^{2^{*}}
\big\}
\end{split}
\end{equation}
where $C(\omega)$ is some positive constant depending only on $d$, $p$ and $\omega$, and $C$ is some positive constant depending only on $d$ and $p$.
\par 
We consider the first two terms on the right-hand side of \eqref{18/01/19/15:48}. 
 Note first that it follows from \eqref{15/02/15/16:08} that for any $u \in H^{1}(\mathbb{R}^{d})$,
\begin{equation}\label{18/01/08/20:47}
\begin{split}
\mathcal{K}'(\Phi_{\omega}) u 
&=
-s_{p}
\langle 
(p-1)\Phi_{\omega}^{p} , u
\rangle_{H^{-1},H^{1}}
-
(2^{*}-2) \langle 
\Phi_{\omega}^{2^{*}-1} , u
\rangle_{H^{-1},H^{1}}
\\[6pt]
&\quad -2\omega 
\langle 
\Phi_{\omega} , u
\rangle_{H^{-1},H^{1}}
.
\end{split} 
\end{equation}
Furthermore, the first order Taylor's expansion of $\mathcal{K}$ around $\Phi_{\omega}$ together with \eqref{18/01/08/20:47}  shows that 
\begin{equation}\label{18/01/08/20:08}
\begin{split}
&\mathcal{K}\Big(\frac{M}{M+|x|} \psi(t) \Big)
=
\mathcal{K}\Big(\Phi_{\omega}+ \eta(t)-\frac{|x|}{M+|x|}
\{ \Phi_{\omega}+\eta(t)\} \Big)
\\[6pt]
&=
-s_{p}
\langle 
(p-1)\Phi_{\omega}^{p} , \eta(t)
\rangle_{H^{-1},H^{1}}
+s_{p}
\langle (p-1)
\Phi_{\omega}^{p}, \frac{|x|}{M+|x|}\big\{ 
\Phi_{\omega}+\eta(t) \big\}
\rangle_{H^{-1},H^{1}}
\\[6pt]
&\quad 
-
(2^{*}-2)
\langle 
 \Phi_{\omega}^{2^{*}-1}, \eta(t) 
\rangle_{H^{-1},H^{1}}
+
(2^{*}-2)
\langle 
 \Phi_{\omega}^{2^{*}-1}, 
\frac{|x|}{M+|x|}
\big\{ \Phi_{\omega}+\eta(t) \big\}
\rangle_{H^{-1},H^{1}}
\\[6pt]
&\quad 
-2\omega 
\langle 
\Phi_{\omega}, \eta(t) 
\rangle_{H^{-1},H^{1}}
+
2\omega 
\langle 
\Phi_{\omega}, 
\frac{|x|}{M+|x|}\big\{ \Phi_{\omega}+\eta(t)\big\}
\rangle_{H^{-1},H^{1}}
\\[6pt]
&\quad +
O\bigm(\Big\| \frac{M}{M+|x|}\eta(t)-\frac{|x|}{M+|x|}\Phi_{\omega} 
\Big\|_{H^{1}}^{2}\bigm)
.
\end{split}
\end{equation}

Here, we see from the decomposition \eqref{13/12/28/10:44}, \eqref{13/12/28/12:07}, \eqref{13/01/02/16:25}, \eqref{13/12/28/17:22} and $\mathfrak{s}=1$ (hence $\lambda_{1}(t)\ge 0$ by \eqref{15/04/02/20:40}) that the first and third terms on the right-hand side of \eqref{18/01/08/20:08} are rewritten as follows:
\begin{equation}\label{18/01/08/20:51}
\begin{split}
&
-s_{p}
\langle 
(p-1)\Phi_{\omega}^{p} , \eta(t)
\rangle_{H^{-1},H^{1}}
-
(2^{*}-2) \langle 
\Phi_{\omega}^{2^{*}-1} , \eta(t)
\rangle_{H^{-1},H^{1}}
\\[6pt]
&=
-s_{p}
\langle 
(p-1)\Phi_{\omega}^{p} , 2 \lambda_{1}(t)f_{1} 
\rangle_{H^{-1},H^{1}}
-s_{p}(p-1)
\langle 
\Phi_{\omega}^{p} , \Gamma(t) 
\rangle_{H^{-1},H^{1}}
\\[6pt]
&\quad -
(2^{*}-2) \langle 
\Phi_{\omega}^{2^{*}-1} , 2\lambda_{1}(t)f_{1}
\rangle_{H^{-1},H^{1}}
-
(2^{*}-2) \langle 
\Phi_{\omega}^{2^{*}-1} , \Gamma(t)
\rangle_{H^{-1},H^{1}}
\\[6pt]
&=
2 s_{p} \lambda_{1}(t)
\langle 
L_{\omega,+}\Phi_{\omega} , f_{1}
\rangle_{H^{-1},H^{1}}
-s_{p}(p-1)
\langle 
\Phi_{\omega}^{p} , \Gamma(t) 
\rangle_{H^{-1},H^{1}}
\\[6pt]
&\quad -
2(1-s_{p})(2^{*}-2) \lambda_{1}(t)
\langle 
\Phi_{\omega}^{2^{*}-1}, f_{1}
\rangle_{H^{-1},H^{1}}
-
(2^{*}-2) \langle 
\Phi_{\omega}^{2^{*}-1} , \Gamma(t)
\rangle_{H^{-1},H^{1}}
\\[6pt]
&=
2 s_{p} \mu | \lambda_{1}(t) |
| ( \Phi_{\omega} , f_{2})_{L^{2}} |
-s_{p}(p-1)
\langle 
\Phi_{\omega}^{p} , \Gamma(t) 
\rangle_{H^{-1},H^{1}}
\\[6pt]
&\quad -
2(1-s_{p})(2^{*}-2) | \lambda_{1}(t)|
\langle 
\Phi_{\omega}^{2^{*}-1}, f_{1}
\rangle_{H^{-1},H^{1}}
-
(2^{*}-2) \langle 
\Phi_{\omega}^{2^{*}-1} , \Gamma(t)
\rangle_{H^{-1},H^{1}}
.
\end{split} 
\end{equation}
We also see from the decomposition \eqref{13/12/28/10:44}, Lemma \ref{13/03/03/14:50} and \eqref{13/12/29/12:12} that 
\begin{equation}\label{18/01/08/21:32}
-2\omega 
\langle 
\Phi_{\omega}, \eta(t) 
\rangle_{H^{-1},H^{1}}
=
-2\omega 
\langle 
\Phi_{\omega}, \Gamma(t)  
\rangle_{H^{-1},H^{1}}
=
\omega \|\eta (t)\|_{L^{2}}^{2}
.
\end{equation}
Moreover, it follows from H\"older's inequality, Sobolev's embedding and \eqref{13/03/03/10:05} that for  any $p \le q \le 2^{*}-1$ and any sufficiently small $\omega >0$, 
\begin{equation}\label{18/01/09/08:58}
\big|
(\Phi_{\omega}^{q} , \Gamma(t) 
)_{L^{2}}
\big|
\le 
\|\Phi_{\omega}\|_{L^{q+1}}^{q}\|\Gamma(t)\|_{L^{q+1}}
\lesssim  
\omega^{\frac{q}{q+1}(\frac{q-1}{p-1}-s_{p})}
\|\Gamma(t)\|_{H^{1}}
\le 
\omega^{\frac{1-s_{p}}{2}}
\|\Gamma(t)\|_{H^{1}}, 
\end{equation}
where the implicit constant depends only on $d$ and $q$; it follows from H\"older's inequality and Sobolev's embedding that for any $1\le q \le 2^{*}-1$ and any sufficiently small $\omega >0$, 
\begin{equation}\label{18/01/10/11:11}
\begin{split}
&\langle 
\Phi_{\omega}^{q}, \frac{|x|}{M+|x|}\big\{ 
\Phi_{\omega}+\eta(t) \big\}
\rangle_{H^{-1},H^{1}}
\ge 
-
\int_{\mathbb{R}^{d}}
\frac{|x|}{M+|x|} \Phi_{\omega}^{q}  |\eta(t)| 
\,dx 
\\[6pt]
&\ge 
-
\frac{1}{M}
\bigg(
\int_{\mathbb{R}^{d}}
|x|^{\frac{q+1}{q}} \Phi_{\omega}^{q+1}
\,dx 
\bigg)^{\frac{q}{q+1}}
\|\eta(t)\|_{L^{q+1}} 
\ge 
-
\frac{1}{M} G(\omega) \|\eta(t)\|_{H^{1}},
\end{split}
\end{equation}
where $G(\omega)$ is some positive constant depending only on $d$, $q$ and $\omega$; and it follows from Sobolev's embedding that 
\begin{equation}\label{18/01/11/13:21}
\Big\| \frac{M}{M+|x|}\eta(t)-\frac{|x|}{M+|x|}\Phi_{\omega}
\Big\|_{H^{1}}^{2}
\lesssim 
\|\eta(t)\|_{H^{1}}^{2}
+
\frac{1}{M}G(\omega), 
\end{equation} 
where $G(\omega)$ is some positive constant depending only on $d$, $q$ and $\omega$.

Plugging \eqref{18/01/08/20:51} and \eqref{18/01/08/21:32} into \eqref{18/01/08/20:08}, and then using Lemma \ref{13/12/19/09:42} with $C=s_{p}-2(1-s_{p})(2^{*}-2)$, \eqref{18/01/09/08:58}, \eqref{18/01/10/11:11} and \eqref{18/01/11/13:21}, we find that if $\omega_{*}$ is sufficiently small (hence so is $\omega$), then 
\begin{equation}\label{18/01/08/21:45}
\begin{split}
&\mathcal{K}\Big(\frac{M}{M+|x|} \psi(t) \Big)
-
\frac{C(\omega)}{M}
\\[6pt]
&=
2 s_{p} \mu |\lambda_{1}(t)|
| (\Phi_{\omega} , f_{2})_{L^{2}}|
-s_{p}(p-1)
\langle 
\Phi_{\omega}^{p} , \Gamma(t) 
\rangle_{H^{-1},H^{1}}
\\[6pt]
&\quad -
2(1-s_{p})(2^{*}-2) | \lambda_{1}(t)|
( 
\Phi_{\omega}^{2^{*}-1}, f_{1}
)_{L^{2}}
-
(2^{*}-2) \langle 
\Phi_{\omega}^{2^{*}-1} , \Gamma(t)
\rangle_{H^{-1},H^{1}}
\\[6pt]
&\quad 
+s_{p}
\langle (p-1)
\Phi_{\omega}^{p}, \frac{|x|}{M+|x|}\big\{ 
\Phi_{\omega}+\eta(t) \big\}
\rangle_{H^{-1},H^{1}}
\\[6pt]
&\quad 
+
(2^{*}-2)
\langle 
 \Phi_{\omega}^{2^{*}-1}, 
\frac{|x|}{M+|x|}
\big\{ \Phi_{\omega}+\eta(t) \big\}
\rangle_{H^{-1},H^{1}}
\\[6pt]
&\quad 
+
\omega 
\|\eta(t)\|_{L^{2}}^{2} 
+
2\omega 
\langle 
\Phi_{\omega}, 
\frac{|x|}{M+|x|}\big\{ \Phi_{\omega}+\eta(t)\big\}
\rangle_{H^{-1},H^{1}}
\\[6pt]
&\quad +
O\bigm(\Big\| \frac{M}{M+|x|}\eta(t)-\frac{|x|}{M+|x|}\Phi_{\omega}
\Big\|_{H^{1}}^{2}\bigm)
-
\frac{C(\omega)}{M}
\\[6pt]
&\ge 
s_{p} \mu |\lambda_{1}(t)|
| (\Phi_{\omega} , f_{2})_{L^{2}}|
-
C_{1} \omega^{\frac{1-s_{p}}{2}} 
\| \Gamma(t) \|_{H^{1}}
- 
\frac{1}{M} G_{1}(\omega)
\|\eta(t)\|_{H^{1}} 
\\[6pt]
&\quad 
-C_{2}
\| \eta(t) \|_{H^{1}}^{2}
-\frac{1}{M}G_{2}(\omega)
,
\end{split}
\end{equation}
where $C_{1}$ and $C_{2}$ are some positive constants depending only on $d$ and $p$, and $G_{1}(\omega)$ and $G_{2}(\omega)$ are some positive constants depending only on $d$, $q$ and $\omega$. Recall here that \eqref{14/01/18/11:35}, \eqref{14/01/18/16:43} and \eqref{18/01/17/10:45} give us that for any $t \in I_{j}$,\begin{align}
\label{18/01/11/09:25}
&\| \eta(t) \|_{H^{1}} \sim |\lambda_{1}(t)| \sim e^{\mu|t-\tau_{j}|}\widetilde{d}_{\omega}(\psi(\tau_{j})),
\\[6pt]
\label{18/01/11/09:35}
&
\| \eta(t) \|_{H^{1}}^{2} 
\lesssim 
\delta_{X} |\lambda_{1}(t)| 
\ll 
\mu | (\Phi_{\omega} , f_{2})_{L^{2}}|
|\lambda_{1}(t)|
,
\\[6pt]
\label{18/01/11/09:29}
&\|\Gamma(t)\|_{H^{1}} 
\lesssim 
\widetilde{d}_{\omega}(\psi(\tau_{j}))
.
\end{align}
Moreover, it follows from \eqref{15/04/02/16:47} that if $M\ge G_{2}(\omega)/R$, then 
\begin{equation}\label{18/01/17/09:07}
\frac{G_{2}(\omega)}{M} 
\le  
R
\le
\widetilde{d}_{\omega}(\psi(\tau_{j}))
.
\end{equation}
Hence, \eqref{18/01/08/21:45} together with \eqref{18/01/11/09:25} through 
 \eqref{18/01/17/09:07} shows that for any sufficiently small $\omega>0$, 
 any $M\ge G_{2}(\omega)/R$ and any $t \in I_{j}$, 
\begin{equation}\label{14/01/18/17:13}
\mathcal{K}\Big(\frac{M}{M+|x|} \psi(t) \Big) 
-
\frac{C(\omega)}{M}
\ge 
2 C_{0}^{*}(\omega)   
\big\{ e^{\mu |t-\tau_{j}|}-C_{1}^{*}(\omega)\big\} \widetilde{d}_{\omega}(\psi(\tau_{j})),
\end{equation}
where $C_{0}^{*}(\omega)$ and $C_{1}^{*}(\omega)$ are some positive constant depending only on $d$, $p$ and $\omega$. 
\par  
We find from \eqref{18/01/19/15:48}, \eqref{14/01/18/17:13}, $\|\eta(t)\|_{H^{1}}\lesssim \delta_{X}$ (see \eqref{14/01/18/11:35}) and \eqref{18/01/11/09:25} that if we take $\delta_{X}$ sufficiently small dependently only on $d$, $p$ and $\omega$ in advance, then for any $M\ge G_{2}(\omega)/R$ and any $t \in I_{j}$, 
\begin{equation}\label{18/01/20/15:26}
\begin{split}
&\frac{d}{dt}\Im \int_{\mathbb{R}^{d}}
\frac{M|x|}{M+|x|}
\frac{x}{|x|} \cdot \nabla \psi(x,t)\overline{\psi(x,t)}\,dx
\\[6pt]
&\ge
2 C_{0}^{*}(\omega)   
\big\{ e^{\mu |t-\tau_{j}|}-C_{1}^{*}(\omega)\big\} \widetilde{d}_{\omega}(\psi(\tau_{j}))
-
C \delta_{X}^{p-1} e^{\mu |t-\tau_{j}|}
\widetilde{d}_{\omega}(\psi(\tau_{j}))
\\[6pt]
&\ge
C_{0}^{*}(\omega)   
\big\{ e^{\mu |t-\tau_{j}|}-C_{1}^{*}(\omega)\big\} \widetilde{d}_{\omega}(\psi(\tau_{j}))
.
\end{split}
\end{equation}

\noindent 
{\bf Step 2}.~Next, we consider the ``variational region'' $I'$. We have $\widetilde{d}_{\omega}(\psi)\ge \delta_{*}$ on $I'$. Moreover, it follows from \eqref{13/12/16/15:45}, \eqref{18/01/02/08:30}, Lemma \ref{18/01/03/11:27} and  \eqref{17/11/26/21:26} that if $d\ge 3$ and $3\le p <5$, or $d\ge 4$ and $1+\frac{4}{d-1}< p<2^{*}-1$, then for any sufficiently large $M>0$ depending only on $d$, $p$ and $\omega$, we have  
\begin{equation}\label{13/12/27/17:05}
\begin{split}
&\frac{d}{dt}\Im \int_{\mathbb{R}^{d}}
\frac{M|x|}{M+|x|}
\frac{x}{|x|} \cdot \nabla \psi(x,t)\overline{\psi(x,t)}\,dx
\\[6pt]
&\ge
2 \mathcal{K}\Big(\frac{M}{M+|x|} \psi(t) \Big)
-o_{M}(1) \Big\|\nabla \Big( \frac{M}{M+|x|}\psi(t) \Big) \Big\|_{L^{2}}^{2}.
\end{split}
\end{equation}
Suppose here that $\|\nabla \psi(t)\|_{L^{2}}\le \alpha$ for some $\alpha>0$. Then, it follows from H\"older's inequality, $\frac{d(p-1)}{2}>2$, Sobolev's embedding and the assumption \eqref{13/06/15/20:40} that 
\begin{equation}\label{14/01/18/17:32}
\begin{split}
\Big\|  \frac{M}{M+|x|} \psi(t) \Big\|^{p+1}_{L^{p+1}}
&\le 
\Big\|  \frac{M}{M+|x|} \psi(t) \Big\|_{L^{2}}^{p+1-\frac{d(p-1)}{2}}
\Big\|  \frac{M}{M+|x|} \psi(t) \Big\|_{L^{2^{*}}}^{\frac{d(p-1)}{2}}
\\[6pt]
&\le 
\|\psi(t)\|_{L^{2}}^{p+1-\frac{d(p-1)}{2}}
\|\psi(t)\|_{L^{2^{*}}}^{\frac{d(p-1)}{2}-2}
\Big\|  \frac{M}{M+|x|} \psi(t) \Big\|_{L^{2^{*}}}^{2}
\\[6pt]
&\lesssim 
\mathcal{M}(\Phi_{\omega})^{\frac{p+1}{2}-\frac{d(p-1)}{4}} 
\alpha^{\frac{d(p-1)}{2}-2}
\Big\| \nabla \Big( \frac{M}{M+|x|} \psi(t) \Big) \Big\|_{L^{2}}^{2}
.
\end{split}
\end{equation} 
Similarly, we have 
\begin{equation}\label{14/01/18/17:43}
\Big\|  \frac{M}{M+|x|} \psi(t) \Big\|_{L^{2^{*}}}^{2^{*}}
\lesssim 
\alpha^{2^{*}-2} 
\Big\|  \nabla \Big( \frac{M}{M+|x|} \psi(t) \Big)\Big\|_{L^{2}}^{2}.
\end{equation}
Furthermore, we find from \eqref{14/01/18/17:32} and \eqref{14/01/18/17:43} that for a given $\omega>0$ there exist $\alpha_{0}>0$ and $\kappa_{0}>0$ such that for any $t\in I_{\max}(\psi)$ with $\|\nabla \psi(t)\|_{L^{2}}\le \alpha_{0}$, 
\begin{equation}\label{14/01/18/17:28}
\mathcal{K}\Big(\frac{M}{M+|x|} \psi(t) \Big)
\ge \kappa_{0}
\Big\| \nabla \Big( \frac{M}{M+|x|} \psi(t) \Big) \Big\|_{L^{2}}^{2}.
\end{equation}
On the other hand, we see from \eqref{13/12/11/20:51} that for any $t \in I'$ with $\|\nabla \psi(t)\|_{L^{2}}> \alpha_{0}$,  
\begin{equation}\label{14/01/18/17:50}
\mathcal{K}(\psi(t))\ge  
\min\big\{\kappa_{1}(\delta_{*}),\, \frac{1}{2} \alpha_{0}^{2}\big\}.
\end{equation}
Choosing $\varepsilon_{*}>0$ so small that $\varepsilon_{*}^{2}<\frac{1}{4}\min\{\kappa_{1}(\delta_{*}), \frac{1}{2} \alpha_{0}^{2}\}$, and using \eqref{13/06/15/20:41} and \eqref{14/01/18/17:50}, we obtain that for any $t \in I'$ with $\|\nabla \psi(t)\|_{L^{2}}> \alpha_{0}$,
\begin{equation}\label{14/01/19/10:40}
\mathcal{J}_{\omega}\Big(\frac{M}{M+|x|} \psi(t) \Big)
\le 
\mathcal{J}_{\omega}(\psi(t))
=\mathcal{S}_{\omega}(\psi)-\frac{1}{2}\mathcal{K}(\psi(t))
<m_{\omega}-\varepsilon_{*}^{2},
\end{equation}
which together with Lemma \ref{13/05/14/17:12} gives us that 
\begin{equation}\label{14/01/19/10:53}
\mathcal{K}\Big(\frac{M}{M+|x|} \psi(t) \Big)
\ge \min\{\kappa_{2}(\varepsilon_{*}^{2}),\  \frac{1}{2}\Big\| \nabla \Big(\frac{M}{M+|x|} \psi(t) \Big) \Big\|_{L^{2}}^{2} \}.
\end{equation}
Putting \eqref{14/01/18/17:28} and \eqref{14/01/19/10:53} together, we find that for any $t \in I'$, 
\begin{equation}\label{14/01/19/11:01}
\mathcal{K}\Big(\frac{M}{M+|x|} \psi(t) \Big)
\ge \min\{\kappa_{2}(\varepsilon_{*}^{2}),\  \kappa_{0}\Big\| \nabla \Big(\frac{M}{M+|x|} \psi(t) \Big) \Big\|_{L^{2}}^{2} \}
\end{equation}
for some constant $\kappa_{0}>0$ depending only on $d$, $p$ and $\omega$. Note here that \eqref{17/11/26/21:26} shows  
\begin{equation}\label{14/01/20/15:29}
\Big\| \nabla \Big(\frac{M}{M+|x|} \psi(t) \Big) \Big\|_{L^{2}}
\le 
\Big\| \frac{M}{(M+|x|)^{2}} \psi(t)  \Big\|_{L^{2}}
+
\Big\| \frac{M}{M+|x|} \nabla \psi(t)  \Big\|_{L^{2}}
\lesssim C(\omega)
\end{equation}
for all $t\in [t_{1},t_{3}]$. Furthermore, we find from \eqref{13/12/27/17:05}, \eqref{14/01/19/11:01} and \eqref{14/01/20/15:29} that if $M$ satisfies $o_{M}(1) \ll \min\{ \kappa_{2}(\varepsilon_{*}^{2})/ C(\omega)^{2}, \kappa_{0}\}$ and $t \in I'$, then 
\begin{equation}\label{14/01/20/15:35}
\frac{d}{dt}\Im \int_{\mathbb{R}^{d}}
\frac{M|x|}{M+|x|}
\frac{x}{|x|} \cdot \nabla \psi(x,t)\overline{\psi(x,t)}\,dx
\ge 0
.
\end{equation}

\vspace{1pt}
\noindent 
{\bf Step 3.}~We finish the proof by deriving a contradiction. 
\par 
We see from \eqref{18/01/20/15:26} and \eqref{14/01/20/15:35} that for any sufficiently small $\omega>0$, there exists $L(\omega)\gg 1$ such that if $M\ge L(\omega)/R$, then   
\begin{equation}\label{14/01/19/11:06}
\begin{split}
&
\Big[
\Im \int_{\mathbb{R}^{d}} \frac{M|x|}{M+|x|}
\frac{x}{|x|} \cdot \nabla \psi(x,t)\overline{\psi(x,t)}\,dx 
\Big]_{t_{1}}^{t_{3}}
\\[6pt]
&=
\int_{t_{1}}^{t_{3}}\frac{d}{dt}\Im \int_{\mathbb{R}^{d}}
\frac{M|x|}{M+|x|}
\frac{x}{|x|} \cdot \nabla \psi(x,t)\overline{\psi(x,t)}\,dx\,dt 
\\[6pt]
&\ge C_{0}^{*}(\omega)
\sum_{j=1}^{n}\int_{I_{j}} \big\{ e^{\mu |t-\tau_{j}|}
-
C_{1}^{*}(\omega)\big\} 
\widetilde{d}_{\omega}(\psi(\tau_{j}))\,dt 
\\[6pt]
&= C_{0}^{*}(\omega)
 \sum_{j=1}^{n} \frac{1}{\mu}
\Big\{ 
e^{\mu |\sup{I_{j}}-\tau_{j}|}
+
e^{\mu |\inf{I_{j}}-\tau_{j}|}
-2 
\Big\} 
\widetilde{d}_{\omega}(\psi(\tau_{j}))
\\[6pt]
&\quad 
-C_{0}^{*}(\omega)C_{1}^{*}(\omega)\sum_{j=1}^{n}|I_{j}| \widetilde{d}_{\omega}(\psi(\tau_{j}))
.
\end{split}
\end{equation}
Here, it follows from \eqref{14/01/18/12:28} and \eqref{14/01/18/11:35} that 
for any $1\le j\le n$, 
\begin{align}
\label{14/01/21/13:33}
\delta_{X}&=\widetilde{d}_{\omega}(\psi(\sup{I_{j}}))
\sim 
e^{\mu |\sup{I_{j}}-\tau_{j}|}\widetilde{d}_{\omega}(\psi(\tau_{j}))
\quad 
\mbox{except for $j=n$},
\\[6pt]
\label{14/01/21/13:34}
\delta_{X}&=\widetilde{d}_{\omega}(\psi(\inf{I_{j}}))
\sim 
e^{\mu |\inf{I_{j}}-\tau_{j}|}\widetilde{d}_{\omega}(\psi(\tau_{j}))
\quad 
\mbox{except for $j=1$}.
\end{align}
Moreover, we see from \eqref{13/12/18/05:36} and $\widetilde{d}_{\omega}(\psi(\tau_{j}))\ge R$ (see \eqref{15/04/02/16:47}) that if $M_{0}$ is sufficiently large dependently only on $d$, $p$ and $\omega$, 
\begin{align}
\label{14/01/21/14:03}
\mu 
C_{1}^{*}(\omega)
|I_{j}|\widetilde{d}_{\omega}(\psi(\tau_{j}))
\lesssim 
C_{1}^{*}(\omega) \frac{1}{M_{0}}\delta_{X}
\ll 
\delta_{X}
.
\end{align}
Thus, \eqref{14/01/19/11:06} together with  \eqref{14/01/21/13:33},  \eqref{14/01/21/13:34}, $\widetilde{d}_{\omega}(\psi(\tau_{j}))\le \delta_{*} \ll \delta_{X}$ and \eqref{14/01/21/14:03} shows that for any sufficiently large $M>0$ depending only on $d$, $p$, $\omega$ and $R$,  
\begin{equation}\label{14/01/20/17:11}
\begin{split}
&
\Big[
\Im \int_{\mathbb{R}^{d}}
\frac{M|x|}{M+|x|}
\frac{x}{|x|} \cdot \nabla \psi(x,t)\overline{\psi(x,t)}
\Big]_{t_{1}}^{t_{3}}
\\[6pt]
&\ge   
2 C^{**}(\omega)
\sum_{j=1}^{n} 
\big\{ \delta_{X}-\delta_{*} \big\} 
-C^{**}(\omega)\sum_{j=1}^{n} \delta_{X}
\ge 
\frac{1}{2} C^{**}(\omega) n\delta_{X}\ge C^{**}(\omega)\delta_{X}
\end{split}
\end{equation}
for some positive constant $C^{**}(\omega)$ depending only on $d$, $p$ and $\omega$. Note here that  
\begin{equation}\label{13/12/18/2:36}
\bigg[
\Im \int_{\mathbb{R}^{d}}\frac{M|x|}{M+|x|}
\frac{x}{|x|} \cdot 
\nabla \Phi_{\omega}(x)\Phi_{\omega}(x)
\,dx
\bigg]_{t_{1}}^{t_{3}}=0.
\end{equation}
Moreover, we have 
\begin{equation}\label{13/12/18/2:49}
\bigg|
\Im \int_{\mathbb{R}^{d}}
\frac{M|x|}{M+|x|}
\frac{x}{|x|} \cdot 
\Phi_{\omega}(x)\overline{\nabla \eta(x,t)}
\,dx
\bigg|
\le 
\||x| \Phi_{\omega}\|_{L^{2}}\|\nabla \eta(t)\|_{L^{2}}
.
\end{equation}
Here, since $\Phi_{\omega}$ decays exponentially, $\||x| \Phi_{\omega}\|_{L^{2}}$ is finite.

We see from the decomposition \eqref{13/04/27/9:38}, \eqref{13/12/18/2:36}, \eqref{13/12/18/2:49}, Lemma \ref{13/05/11/11:52}, Lemma \ref{13/05/05/15:47},  $\|\eta(t_{1})\|_{H^{1}}\sim \widetilde{d}_{\omega}(t_{1})=R$ and $\|\eta(t_{3})\|_{H^{1}}\sim \widetilde{d}_{\omega}(t_{3})\lesssim R$ that if $R_{*}$ is sufficiently small dependently on $\omega$, and $M= L(\omega)/R$ for some 
 sufficiently large constant $L(\omega)$ depending only on $d$, $p$ and $\omega$, then  
\begin{equation}\label{14/01/21/14:14}
\begin{split}
&
\biggm|
\bigg[ 
\Im \int_{\mathbb{R}^{d}}
\frac{M|x|}{M+|x|}
\frac{x}{|x|} \cdot \nabla \psi(x,t)\overline{\psi(x,t)}\,dx
\bigg]_{t_{1}}^{t_{3}}
\biggm|
\\[6pt]
&\le 
\biggm|
\bigg[ 
\Im \int_{\mathbb{R}^{d}}\frac{M|x|}{M+|x|}
\frac{x}{|x|} \cdot 
\bigm( 
\nabla \eta(x,t)\Phi_{\omega}(x) 
+
\nabla \Phi_{\omega}(x,t)\overline{\eta(x,t)}
\bigm)
\,dx
\bigg]_{t_{1}}^{t_{3}}
\biggm|
\\[6pt]
&\quad 
+
\biggm|
\bigg[ 
\Im \int_{\mathbb{R}^{d}}\frac{M|x|}{M+|x|}
\frac{x}{|x|} \cdot 
\nabla \eta(x,t) \overline{\eta(x,t)}\,dx
\bigg]_{t_{1}}^{t_{3}}
\biggm|
\\[6pt]
&\le 
\biggm|
\bigg[ 
\Im \int_{\mathbb{R}^{d}}\frac{M|x|}{M+|x|}
\frac{x}{|x|} \cdot 
\nabla \eta(x,t)\Phi_{\omega}(x) 
\,dx \bigg]_{t_{1}}^{t_{3}}
\biggm|
\\[6pt]
&\quad 
+
\biggm|
\bigg[ 
\Im \int_{\mathbb{R}^{d}}\frac{M|x|}{M+|x|}
\frac{x}{|x|} \cdot 
\Phi_{\omega}(x,t)\overline{\nabla \eta(x,t)}
\,dx
\bigg]_{t_{1}}^{t_{3}}
\biggm|
\\[6pt]
&\quad +
\biggm|
\bigg[ 
\Im \int_{\mathbb{R}^{d}}
\bigg\{ \frac{d M^{2}}{(M+|x|)^{2}}
+
\frac{(d-1)M|x|}{(M+|x|)^{2}}
\bigg\} 
\Phi_{\omega}(x,t)\overline{\eta(x,t)}
\,dx
\bigg]_{t_{1}}^{t_{3}}
\biggm|
\\[6pt]
&\quad 
+
\biggm|
\bigg[ 
\Im \int_{\mathbb{R}^{d}}\frac{M|x|}{M+|x|}
\frac{x}{|x|} \cdot 
\nabla \eta(x,t) \overline{\eta(x,t)}\,dx
\bigg]_{t_{1}}^{t_{3}}
\biggm|
\\[6pt]
&\lesssim 
\|\eta(t_{1})\|_{H^{1}}+\|\eta(t_{3})\|_{H^{1}}
+
M\|\eta(t_{1})\|_{H^{1}}^{2}+M\|\eta(t_{3})\|_{H^{1}}^{2}
\ll C^{**}(\omega) \delta_{X}
.
\end{split}
\end{equation}
This contradicts \eqref{14/01/20/17:11}. Hence, the case $\mathfrak{s}=1$ never happens. 
\\
\par 
We can deal with the case where $\mathfrak{s}=-1$ in a way similar to Section 4.1 of \cite{Nakanishi-Schlag2}. Hence, we have completed the proof.
\end{proof}

\section{Proof of Theorem \ref{13/03/16/12:01}}\label{14/01/21/14:18}
Our aim in this section is to prove Theorem \ref{13/03/16/12:01}. By the time reversality, it suffices to show that there are only three possibilities (scattering, blowup, trapping) forward in time. Throughout this section, we fix $\omega\in (0,\omega_{*})$, where $\omega_{*}>0$ is the frequency given by the one-pass theorem (Theorem \ref{13/06/15/20:32}). 
\par 
We introduce several notation used in this section: 
\\
\noindent 
\textbullet~We use $I_{\max}(\psi)$ to denote the maximal existence-interval of a solution $\psi$ to \eqref{12/03/23/17:57}, and set $T_{\max}(\psi):=\sup{I_{\max}(\psi)}$ and $T_{\min}(\psi):=\inf{I_{\min}(\psi)}$.
\\
\textbullet~Since the set $A_{\omega}^{\varepsilon}$ is invariant under the flow defined by \eqref{12/03/23/17:57} for all $\varepsilon>0$ (see \eqref{13/03/02/20:21}), we use the convention $\psi \in A_{\omega}^{\varepsilon}$ to indicate that $\psi(t)\in A_{\omega}^{\varepsilon}$ for all $t \in I_{\max}(\psi)$.
\\
\noindent 
\textbullet~For given $R>0$ and $\varepsilon>0$, we define $S_{\omega,R}^{\varepsilon}$ by 
\begin{equation}
\label{14/01/23/11:24}
S_{\omega,R}^{\varepsilon}
:=
\bigg\{\psi \colon  
\begin{array}{l}
\mbox{$\psi$ is a radial solution to (NLS) such that $0\in I_{\max}(\psi)$, }
\\[3pt]
\mbox{ $\psi \in A_{\omega}^{\varepsilon}$, and $\widetilde{d}_{\omega}(\psi(t))\ge R$ for all $t \in [0,T_{\max}(\psi))$} 
\end{array} 
\bigg\}.
\end{equation}

Next, we give a fundamental fact in $A_{\omega}^{\varepsilon}$: 
\begin{lemma}\label{17/12/02/20:17}
Assume $d\ge 3$ and $1+\frac{4}{d}<p <2^{*}-1$. Let $\psi$ be a solution to \eqref{12/03/23/17:57} and $\varepsilon>0$. Furthermore, assume that $\psi \in A_{\omega}^{\varepsilon}$ and   
\begin{equation}\label{17/12/02/20:18}
\inf_{t\in I_{\max}(\psi)}\|\nabla \psi(t)\|_{L^{2}}^{2} 
\le \mathcal{H}(\Phi_{\omega}). 
\end{equation}
Then, we have either 
\begin{equation}\label{17/12/02/20:19}
\sup_{t\in I_{\max}(\psi)}\mathcal{K}(\psi(t))<0, 
\end{equation}
or 
\begin{equation}\label{17/12/02/20:20}
\inf_{t\in I_{\max}(\psi)}\mathcal{K}(\psi(t))>0. 
\end{equation}
\end{lemma}
\begin{proof}[Proof of Lemma \ref{17/12/02/20:17}]
Note first that $\mathcal{M}(\psi)=\mathcal{M}(\Phi_{\omega})$ and therefore $\psi$ is non-trivial. We also see from the assumption \eqref{17/12/02/20:18} that there exists $t_{0}\in I_{\max}(\psi)$ such that 
\begin{equation}\label{17/12/30/09:41}
\|\nabla \psi(t_{0})\|_{L^{2}}^{2} 
\le \frac{3}{2}\mathcal{H}(\Phi_{\omega}). 
\end{equation}
Furthermore, we see from $\mathcal{M}(\psi)=\mathcal{M}(\Phi_{\omega})$ and \eqref{17/12/30/09:41} that 
\begin{equation}\label{17/12/02/20:24}
\begin{split}
\mathcal{S}_{\omega}(\psi)
&=
\omega \mathcal{M}(\psi)
+
\frac{1}{2}\|\nabla \psi(t_{0})\|_{L^{2}}^{2}
-
\frac{1}{p+1}\|\psi(t_{0})\|_{L^{p+1}}^{p+1}
-
\frac{1}{2^{*}}\|\psi(t_{0})\|_{L^{2^{*}}}^{2^{*}}
\\[6pt]
&\le 
\omega \mathcal{M}(\Phi_{\omega})  
+
\frac{3}{4}\mathcal{H}(\Phi_{\omega})
=
\mathcal{S}_{\omega}(\Phi_{\omega})
-
\frac{1}{4}\mathcal{H}(\Phi_{\omega})
< m_{\omega}. 
\end{split} 
\end{equation}
Hence, the claim follows from \eqref{13/02/15/15:34} and Theorem \ref{15/03/24/16:05}.
\end{proof}

Now, let us recall that $\delta_{X}$ and $\delta_{S}$ denote the constants given by the ejection lemma (Lemma \ref{13/05/05/22:31}) and \eqref{15/07/11/11:18}, respectively (see also Remark \ref{18/01/27/14:03}). Furthermore, the one-pass theorem shows that there exist constants $0<\varepsilon_{*} \ll R_{*} \ll \delta_{*} (\ll \delta_{S} <\delta_{X})$ such that: $\varepsilon_{*}\le \varepsilon_{0}(\delta_{*})$ (see Lemma \ref{13/05/15/9:32} for the definition of $\varepsilon_{0}(\delta_{*})$); and for any $\varepsilon \in (0,\varepsilon_{*})$, any $R\in (\sqrt{2\varepsilon}, R_{*})$ and any solution $\psi$ to \eqref{12/03/23/17:57} with $\psi \in A_{\omega}^{\varepsilon}$, the following alternative holds: 
\\
{\bf Case 1}. There exists $t_{0}\in I_{\max}(\psi)$ such that $\widetilde{d}_{\omega}(\psi(t_{0}))<R$ and $\widetilde{d}_{\omega}(\psi(t))<R+R^{\frac{\min\{3,p+1\}}{2}}$ for all $t \in [t_{0}, T_{\max}(\psi))$; or  
\\
{\bf Case 2}. There exists $t_{1}\in I_{\max}(\psi)$ such that for all $t_{1}\le t <T_{\max}(\psi)$,  
\begin{equation}\label{14/01/21/16:18}
\widetilde{d}_{\omega}(\psi(t)) \ge R.
\end{equation}

In Case 1, it follows from $R+R^{\frac{\min\{3,p+1\}}{2}}<\delta_{S} <\delta_{X} $, \eqref{13/05/14/21:37} in Proposition \ref{18/01/26/21:25}, Remark \ref{18/01/27/14:03} and Lemma \ref{15/11/08/11:45} that $T_{\max}(\psi)=\infty$. Hence, $\psi$ is trapped by $\mathscr{O}(\Phi_{\omega})$ forward in time. 
\par 
Next, we consider Case 2. In this case, we have that $\psi(\cdot-t_{1}) \in S_{\omega,R}^{\varepsilon}$ and  
\begin{equation}\label{14/09/04/12:11}
\varepsilon 
\le  
\min\Big\{ \frac{\widetilde{d}_{\omega}(\psi(t))}{2},\, \varepsilon_{0}(\delta_{*}) \Big\}
\end{equation} 
for any $t\in [t_{1},T_{\max}(\psi))$. In Section \ref{14/03/19/11:16} and Section \ref{14/01/25/11:47}, we will prove that the solution $\psi$ blows up in a finite time or scatters. To this end, we need some preparations. Let us begin with the following: 
\begin{lemma}\label{14/01/26/08:13}
Let $\varepsilon \in (0, \varepsilon_{*})$, $R\in (\sqrt{2\varepsilon}, R_{*})$, and let $\psi \in S_{\omega,R}^{\varepsilon}$. Then, for any $T \in (0,T_{\max}(\psi))$, there exists $\tau \in [T,T_{\max}(\psi))$ such that 
\begin{equation}\label{14/01/26/08:16}
\widetilde{d}_{\omega}(\psi(\tau))\ge \delta_{X}.
\end{equation}
\end{lemma}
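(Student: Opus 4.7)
I would argue by contradiction. Assume $\widetilde{d}_\omega(\psi(t)) < \delta_X$ for every $t \in [T, T_{\max}(\psi))$. Because $\delta_X < \delta_E(\omega)$, the identification \eqref{13/05/14/21:37} gives $\widetilde{d}_\omega(\psi(t)) = d_\omega(\psi(t))$ on this interval, and the extension clause of the ejection lemma (Lemma \ref{13/05/05/22:31}) then forces $T_{\max}(\psi) = +\infty$. Since $\psi \in S_{\omega,R}^{\varepsilon}$ also gives $d_\omega(\psi(t)) \ge R$ on $[T,\infty)$, the quantity
\begin{equation*}
R_0 := \inf_{t \ge T} d_\omega(\psi(t))
\end{equation*}
lies in $[R, \delta_X)$. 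The plan is to locate a time $t_0 \ge T$ at which $d_\omega(\psi(\cdot))$ attains (or nearly attains) $R_0$, and then to apply the ejection lemma at $t_0$ to produce an exponential lower bound on $d_\omega(\psi(\cdot))$ that eventually overshoots $\delta_X$.

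The hypotheses of the ejection lemma at such a $t_0$ are routine. The mass identity \eqref{13/05/05/21:03} is inherited from $\psi \in A_\omega^{\varepsilon}$; the smallness $0 < d_\omega(\psi(t_0)) < \delta_X$ is part of the standing assumption; and for the action bound \eqref{13/05/05/22:48} one uses $R > \sqrt{2\varepsilon}$ (from Theorem \ref{13/06/15/20:32}) together with $d_\omega(\psi(t_0)) \ge R$ to get
\begin{equation*}
\mathcal{S}_\omega(\psi) < m_\omega + \varepsilon \le m_\omega + \tfrac{R^2}{2} \le m_\omega + \tfrac{d_\omega(\psi(t_0))^2}{2}.
\end{equation*}
Once the minimum condition \eqref{13/05/05/22:35} is in place at $t_0$, the standing assumption $d_\omega(\psi(\cdot)) < \delta_X$ forces the cutoff time $T_X$ in the ejection lemma to coincide with any prescribed future endpoint, so the lower bound \eqref{15/03/31/11:04} reads $d_\omega(\psi(t)) \ge A_* e^{\mu(t-t_0)} d_\omega(\psi(t_0))$ on $[t_0, \infty)$, and the right-hand side exceeds $\delta_X$ for all sufficiently large $t$, contradicting the assumption.

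To arrange the minimum condition I split on whether $R_0$ is attained. If $R_0$ is attained at some $t_0 \in [T, \infty)$, then $d_\omega(\psi(t_0)) \le d_\omega(\psi(t))$ for every $t \ge t_0$, and the forward ejection just described closes the argument. If $R_0$ is not attained, then for each $T_1 > T$ the (continuous) function $d_\omega(\psi(\cdot))$ attains its minimum on the compact interval $[T, T_1]$ at some $t_{T_1}$, and necessarily $t_{T_1} \to +\infty$ and $d_\omega(\psi(t_{T_1})) \searrow R_0$ (otherwise a subsequential limit would realize $R_0$, contradicting non-attainment). For such a choice $t_0 = t_{T_1}$ the inequality $d_\omega(\psi(t_0)) \le d_\omega(\psi(t))$ holds on $[T, t_0]$ -- this time a past minimum -- so by the time reversibility of \eqref{12/03/23/17:57} under $\psi(x,t) \mapsto \overline{\psi(x,-t)}$, under which the decomposition around $\Phi_\omega$ and hence $d_\omega$ transform covariantly, I apply the ejection lemma backward from $t_0$ to obtain
\begin{equation*}
d_\omega(\psi(T)) \ge A_* e^{\mu(t_0-T)} d_\omega(\psi(t_0)) \ge A_* e^{\mu(t_0-T)} R,
\end{equation*}
which exceeds $\delta_X$ once $t_0$ is sufficiently large -- again a contradiction.

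The main technical wrinkle is the non-attained case: the ejection lemma is stated forward in time, yet the natural structure there only supplies a past minimum, forcing one to invoke time reversibility of \eqref{12/03/23/17:57} in order to run the lemma leftward from $t_0$. A minor secondary point is the verification that the chosen $t_0$ lies in the existence interval, which is immediate from $T \in (0, T_{\max}(\psi))$ and $T_{\max}(\psi) = +\infty$.
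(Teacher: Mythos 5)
Your proof is correct and follows essentially the same strategy as the paper's: assume $\widetilde{d}_\omega(\psi)<\delta_X$ on $[T,T_{\max})$, deduce global existence, locate a time where $d_\omega(\psi(\cdot))$ is minimized, and run the ejection lemma forward or backward from that point to force exponential growth past $\delta_X$. The only difference is bookkeeping: the paper avoids your attained/non-attained dichotomy by minimizing over the compact interval $[T_0,\,T_0+L\log(\delta_X/\varepsilon_*)]$ and splitting according to whether the minimizer lies in the first or second half, applying the ejection lemma forward or backward over a half-interval long enough that $e^{\mu(L/2)\log(\delta_X/\varepsilon_*)}\varepsilon_*\gg\delta_X$.
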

\begin{proof}[Proof of Lemma \ref{14/01/26/08:13}]
Suppose for contradiction that there exists $T_{0}\in (0,T_{\max}(\psi))$ such that for any $t \in  [T_{0},T_{\max}(\psi))$, 
\begin{equation}\label{14/01/26/08:33}
d_{\omega}(\psi(t))=\widetilde{d}_{\omega}(\psi(t))< \delta_{X},
\end{equation} 
where the equality follows from \eqref{13/05/14/21:37} in Proposition \ref{18/01/26/21:25} and Remark \ref{18/01/27/14:03}. Then, we see from Lemma \ref{15/11/08/11:45} that $T_{\max}(\psi)=\infty$. Let $L\gg \frac{1}{\mu}$, and consider a time $t_{L}\in [T_{0}, T_{0}+L\log{\frac{ \delta_{X}}{\varepsilon_{*}}}]$ such that $R_{L}:=d_{\omega}(\psi(t_{L}))$ is the minimum of $d_{\omega}(\psi(t))$ over the interval $[T_{0},T_{0}+L\log{\frac{ \delta_{X}}{\varepsilon_{*}}}]$. Note that $\delta_{X}>R_{L}\ge R>\varepsilon_{*}$. If $T_{0}\le t_{L}\le T_{0}+\frac{L}{2}\log{\frac{\delta_{X}}{\varepsilon_{*}}}$, then we see from the ejection lemma (Lemma \ref{13/05/05/22:31})  that 
\begin{equation}\label{14/01/26/08:41}
d_{\omega}
\big(\psi\Big(t_{L}+\frac{L}{2}\log{\frac{\delta_{X}}{\varepsilon_{*}}}\Big)\big)
\sim e^{\mu(\frac{L}{2}\log{\frac{\delta_{X}}{\varepsilon_{*}}})}
R_{L}
>
\left( \frac{\delta_{X}}{\varepsilon_{*}}\right)^{\frac{\mu L}{2}}\varepsilon_{*} 
\gg \delta_{X}.
\end{equation} 
However, this contradicts \eqref{14/01/26/08:33}. Similarly, when $t_{L}\ge T_{0}+\frac{L}{2}\log{\frac{\delta_{X}}{\varepsilon_{*}}}$, applying the ejection lemma backward in time, we reach a contradiction. Thus, we have proved the 
 claim.     
\end{proof}

Next, we show that the solution $\psi$ stays away from $\Phi_{\omega}$ after some time.  
\begin{lemma}\label{14/09/04/15:39}
Let $\varepsilon \in (0, \varepsilon_{*})$, $R \in (\sqrt{2\varepsilon}, R_{*})$, and let $\psi \in S_{\omega,R}^{\varepsilon}$. Then, there exists a time $T_{0} \in [0,T_{\max}(\psi))$ such that for any $t\in [T_{0},T_{\max}(\psi))$, 
\begin{equation}\label{14/09/01/20:14}
\widetilde{d}_{\omega}\big( \psi(t)\big)
\ge 
R_{*}.
\end{equation}
\end{lemma}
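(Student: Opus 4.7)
The plan is to iteratively apply the one-pass theorem (Theorem \ref{13/06/15/20:32}) to upgrade the lower bound on $\widetilde{d}_{\omega}(\psi(t))$ from the given value $R$ to $R_{*}$ in finitely many steps, using Lemma \ref{14/01/26/08:13} at each stage to rule out alternative {\rm (i)} of the one-pass theorem. Write $\alpha := \min\{3,p+1\}/2$, which satisfies $\alpha > 1$ under assumption \eqref{09/05/13/15:03}.

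I would build inductively an increasing sequence of levels $R_{0} = R < R_{1} < R_{2} < \cdots$ and times $T_{0} = 0 < T_{1} < T_{2} < \cdots$ such that $\widetilde{d}_{\omega}(\psi(t)) \geq R_{n}$ for all $t \in [T_{n}, T_{\max}(\psi))$. The base case is the hypothesis $\psi \in S_{\omega,R}^{\varepsilon}$. For the inductive step from level $R_{n}$, I pick a test radius $R'' \in (R_{n}, R_{*})$, taking $R'' := R_{n} + R_{n}^{\alpha}/2$ when $R_{n} + R_{n}^{\alpha} < R_{*}$, and otherwise taking $R''$ close enough to $R_{*}$ that $R'' + (R'')^{\alpha} \geq R_{*}$ (possible since $(R'')^{\alpha}$ is bounded away from $0$ near $R_{*}$). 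If $\widetilde{d}_{\omega}(\psi(t)) \geq R''$ already for all $t \geq T_{n}$, the level upgrades directly; otherwise, choose $t_{*} \geq T_{n}$ with $\widetilde{d}_{\omega}(\psi(t_{*})) < R''$ and apply the one-pass theorem to the translated solution $\tilde{\psi}(s) := \psi(s+t_{*})$ at radius $R''$. The hypotheses $\mathcal{M}(\tilde{\psi}) = \mathcal{M}(\Phi_{\omega})$, $\mathcal{S}_{\omega}(\tilde{\psi}) < m_{\omega} + \varepsilon$, $\widetilde{d}_{\omega}(\tilde{\psi}(0)) < R''$, and $R'' \in (\sqrt{2\varepsilon}, R_{*})$ are all immediate from $\psi \in A_{\omega}^{\varepsilon}$ and the choice of $R''$.

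Alternative {\rm (i)} of Theorem \ref{13/06/15/20:32} would yield $\widetilde{d}_{\omega}(\tilde{\psi}(t)) < R'' + (R'')^{\alpha}$ for all $t \in [0, T_{\max}(\tilde{\psi}))$. Since $R'' < R_{*} \ll \delta_{*} \ll \delta_{S} < \delta_{X}$, this bound is strictly less than $\delta_{X}$, which directly contradicts Lemma \ref{14/01/26/08:13} applied to $\tilde{\psi}$ (noting that $\tilde{\psi}$ also belongs to some $S_{\omega,\tilde{R}}^{\varepsilon}$ with $\tilde{R} \in (2\varepsilon, R_{*})$ by construction). Therefore alternative {\rm (ii)} must hold: some $t_{**} > 0$ gives $\widetilde{d}_{\omega}(\psi(t)) \geq R'' + (R'')^{\alpha}$ for all $t \geq t_{*} + t_{**}$. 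I set $T_{n+1} := t_{*} + t_{**}$ and $R_{n+1} := R'' + (R'')^{\alpha} \geq R_{n} + R_{n}^{\alpha}/2$.

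The iteration terminates because at each step the level increases by at least $R^{\alpha}/2 > 0$ (a uniform positive amount), so after at most $\lceil 2(R_{*} - R)/R^{\alpha}\rceil + 1$ iterations one reaches the second branch of the choice of $R''$, at which point $R_{n+1} \geq R_{*}$ and the lemma follows with $T_{0}$ of the lemma equal to $T_{n+1}$. The main technical subtlety is selecting $R''$ to simultaneously lie in the admissible window $(\sqrt{2\varepsilon}, R_{*})$ of the one-pass theorem and satisfy $R'' + (R'')^{\alpha} < \delta_{X}$ so that Lemma \ref{14/01/26/08:13} delivers the required contradiction to alternative {\rm (i)}; both requirements are guaranteed by the ordering $\sqrt{\varepsilon_{*}} \ll R_{*} \ll \delta_{*} \ll \delta_{S} < \delta_{X}$ built into the setup of Theorem \ref{13/06/15/20:32}.
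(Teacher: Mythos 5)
Your strategy (iterate the one-pass theorem, using Lemma \ref{14/01/26/08:13} to kill alternative {\rm (i)} at each stage) is sound, but it differs from the paper's proof, which needs only a \emph{single} application of the one-pass theorem. The paper's trick is to first use Lemma \ref{14/01/26/08:13} together with the continuity of $t\mapsto\widetilde{d}_{\omega}(\psi(t))$: since the distance must climb back up to $\delta_{X}>R_{*}$ after any time at which it is below $R_{*}$, the intermediate value theorem lets one relocate the starting time $\tau_{0}$ so that $\widetilde{R}_{*}:=\widetilde{d}_{\omega}(\psi(\tau_{0}))$ already lies in $[\tfrac{\sqrt{1+4R_{*}}-1}{2},\,R_{*})$, i.e.\ satisfies $\widetilde{R}_{*}+\widetilde{R}_{*}^{2}\ge R_{*}$. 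One pass at radius $\widetilde{R}_{*}$ then lands directly at $\widetilde{R}_{*}+\widetilde{R}_{*}^{\alpha}\ge \widetilde{R}_{*}+\widetilde{R}_{*}^{2}\ge R_{*}$ (using $\alpha=\min\{3,p+1\}/2\le 2$ and $\widetilde{R}_{*}<1$). Your bootstrapping from $R$ upward buys nothing extra here, but it is a legitimate alternative and would also work in settings where one lacks continuity of the distance function.

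There is, however, one loose end in your endgame. In the ``second branch'' (when $R_{n}+R_{n}^{\alpha}\ge R_{*}$) you fix $R''<R_{*}$ in advance and then split into the direct-upgrade case versus the one-pass case. In the direct-upgrade case you only obtain $\widetilde{d}_{\omega}(\psi(t))\ge R''<R_{*}$, so the claim ``one reaches the second branch \ldots at which point $R_{n+1}\ge R_{*}$'' does not follow as written, and since the increments in this branch are no longer bounded below, the iteration could stall strictly below $R_{*}$. The fix is to choose $R''$ \emph{after} inspecting the solution: if $\inf_{t\ge T_{n}}\widetilde{d}_{\omega}(\psi(t))\ge R_{*}$ you are done; otherwise pick a time $t_{*}$ with $\widetilde{d}_{\omega}(\psi(t_{*}))<R_{*}$ and take $R''$ strictly between $\widetilde{d}_{\omega}(\psi(t_{*}))$ and $R_{*}$ (note $R''>R_{n}\ge x_{0}$, where $x_{0}+x_{0}^{\alpha}=R_{*}$, so $R''+(R'')^{\alpha}\ge R_{*}$ automatically); then the one-pass alternative {\rm (ii)} finishes in one step. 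With that adjustment your argument is complete.
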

\begin{proof}[Proof of Lemma \ref{14/09/04/15:39}]
It suffices to consider the case where there exists a time $\tau_{0} \in [0,T_{\max}(\psi))$ such that  
\begin{equation}\label{14/09/02/16:56}
R\le \widetilde{R}_{*}:=\widetilde{d}_{\omega}\big( \psi(\tau_{0})\big) < R_{*}.\end{equation}
Here, it follows from Lemma \ref{14/01/26/08:13} that we may assume that 
\begin{equation}\label{15/02/21/15:39}
\frac{\sqrt{1+4R_{*}}-1}{2}
\le 
\widetilde{R}_{*}
< R_{*}.
\end{equation}
Then, we see from the one-pass theorem (Theorem \ref{13/06/15/20:32}) that either 
\begin{equation}\label{14/09/04/13:47}
\sup_{t\ge \tau_{0}}\widetilde{d}_{\omega}\big( \psi(t)\big)
\le 
\widetilde{R}_{*}+\widetilde{R}_{*}^{\frac{\min\{3,p+1\}}{2}}, 
\end{equation} 
or there exists $T_{0} \ge \tau_{0}$ such that 
\begin{equation}\label{14/09/04/13:48}
\inf_{t\ge T_{0}}\widetilde{d}_{\omega}\big( \psi(t)\big)
\ge \widetilde{R}_{*}+\widetilde{R}_{*}^{\frac{\min\{3,p+1\}}{2}}
\ge \widetilde{R}_{*}+\widetilde{R}_{*}^{2}
\ge 
R_{*}
. 
\end{equation} 
We find from Lemma \ref{14/01/26/08:13} and $R_{*}\ll \delta_{X}$ that the former case \eqref{14/09/04/13:47} never happens. Thus, the latter case \eqref{14/09/04/13:48} only happens, and the proof is completed. 
\end{proof}

Lastly, we introduce a sign function which will determine the scattering or blowup. 
\begin{lemma}\label{13/05/14/21:19} 
Let $\varepsilon \in (0, \varepsilon_{*})$ and $R\in (\sqrt{2\varepsilon}, R_{*})$. Then,  there exists a function $\mathfrak{S} \colon S_{\omega,R}^{\varepsilon} \to \{1, -1\}$ such that 
\begin{equation}\label{15/04/05/13:39}
\mathfrak{S}(\psi)
=\left\{ \begin{array}{ccl}
{\rm sign}[\lambda_{1}(t)]
&\mbox{if}& \mbox{$t\in [0,T_{\max}(\psi))$ with $\widetilde{d}_{\omega}(\psi(t))\le \delta_{E}(\omega)$}, 
\\[6pt]
{\rm sign}[\mathcal{K}(\psi(t))] 
&\mbox{if}& \mbox{$t\in [0,T_{\max}(\psi))$ with $\widetilde{d}_{\omega}(\psi(t))\ge \delta_{*}$}.
\end{array} \right.
\end{equation} 
In addition,  there exists a positive constant $C(\omega)$ which is independent of $\varepsilon$ and $R$ and satisfies that 
\begin{equation}\label{13/05/14/21:50}
\sup\big\{ \|\psi \|_{L_{t}^{\infty}H_{x}^{1}([0,T_{\max}(\psi)))} \colon  \psi \in S_{\omega,R}^{\varepsilon},\ \mathfrak{S}(\psi)=1 \big\} \le C(\omega). 
\end{equation} 
\end{lemma}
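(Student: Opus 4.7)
The plan is to define $\mathfrak{S}(\psi)$ pointwise in the two given regimes, verify consistency (agreement on the overlap plus global constancy in $t$), and derive the $H^{1}$ bound from the coercivity of $\mathcal{I}_{\omega}$.

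\textbf{Pointwise definition.} For $\psi\in S_{\omega,R}^{\varepsilon}$, the regions $\{\widetilde{d}_{\omega}(\psi(t))\le\delta_{E}(\omega)\}$ and $\{\widetilde{d}_{\omega}(\psi(t))\ge\delta_{*}\}$ cover $[0,T_{\max}(\psi))$ since $\delta_{*}<\delta_{E}(\omega)$. On the first, \eqref{13/05/14/21:37} gives $\widetilde{d}_{\omega}=d_{\omega}$; the hypothesis $R>\sqrt{2\varepsilon}$ combined with $\widetilde{d}_{\omega}(\psi(t))\ge R$ yields $\mathcal{S}_{\omega}(\psi)<m_{\omega}+\tfrac12\widetilde{d}_{\omega}(\psi(t))^{2}$, so Lemma \ref{13/05/05/15:47} gives $|\lambda_{1}(t)|\sim\widetilde{d}_{\omega}(\psi(t))\ge R>0$ and ${\rm sign}[\lambda_{1}(t)]$ is well-defined. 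On the second, $\varepsilon\le\varepsilon_{*}\le\varepsilon_{0}(\delta_{*})$ together with Lemma \ref{13/05/15/9:32} yields $|\mathcal{K}(\psi(t))|\ge\kappa_{1}(\delta_{*})>0$, so ${\rm sign}[\mathcal{K}(\psi(t))]$ is well-defined. Both signs are continuous in $t$ and therefore locally constant on their domains.

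\textbf{Consistency.} The key claim is (a): the two signs coincide on the overlap $\{\delta_{*}\le\widetilde{d}_{\omega}(\psi(t))\le\delta_{E}(\omega)\}$. Granted (a), global constancy of ${\rm sign}[\mathcal{K}(\psi(\cdot))]$ on $\{\widetilde{d}_{\omega}\ge\delta_{*}\}$ follows by connectivity: any gap between two adjacent components lies inside $\{\widetilde{d}_{\omega}<\delta_{E}(\omega)\}$, where ${\rm sign}[\lambda_{1}(\cdot)]$ is continuous and whose two endpoint values match the neighbouring ${\rm sign}[\mathcal{K}(\cdot)]$ values by (a); the mirror argument gives constancy of ${\rm sign}[\lambda_{1}(\cdot)]$ on $\{\widetilde{d}_{\omega}\le\delta_{E}(\omega)\}$. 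For (a), I will use the Taylor expansion \eqref{14/02/15/15:25} of $\mathcal{K}$ at $\Phi_{\omega}$: its leading term $-2\mu s_{p}\lambda_{1}(t)(\Phi_{\omega},f_{2})_{L^{2}}$ has sign ${\rm sign}[\lambda_{1}(t)]$ by \eqref{13/12/28/17:22} and dominates the critical-term correction by Lemma \ref{13/12/19/09:42} (for small $\omega$). The $\Gamma$-dependent and quadratic remainders are controlled by propagating the ejection-lemma estimate \eqref{13/05/05/22:54} from the most recent $\delta_{S}$-crossing of $\widetilde{d}_{\omega}$ (forward or, using time-reversal symmetry, backward in time) on the sub-regime $\widetilde{d}_{\omega}\le\delta_{X}$, and by continuity from $\delta_{X}$ on the sub-regime $\widetilde{d}_{\omega}\in[\delta_{X},\delta_{E}(\omega)]$. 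These give $\|\Gamma(t)\|_{H^{1}}\ll|\lambda_{1}(t)|\sim\widetilde{d}_{\omega}(\psi(t))$ throughout the overlap, forcing ${\rm sign}[\mathcal{K}(\psi(t))]={\rm sign}[\lambda_{1}(t)]$. I then set $\mathfrak{S}(\psi)$ to this common value.

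\textbf{$H^{1}$ bound and main obstacle.} For $\psi\in S_{\omega,R}^{\varepsilon}$ with $\mathfrak{S}(\psi)=+1$: if $\widetilde{d}_{\omega}(\psi(t))\ge\delta_{*}$, then $\mathcal{K}(\psi(t))>0$ and \eqref{13/03/30/12:14} gives
\begin{equation*}
\mathcal{I}_{\omega}(\psi(t))=\mathcal{S}_{\omega}(\psi)-\tfrac{2}{d(p-1)}\mathcal{K}(\psi(t))\le\mathcal{S}_{\omega}(\psi)<m_{\omega}+\varepsilon_{*};
\end{equation*}
writing $\mathcal{I}_{\omega}(u)=\tfrac{\omega}{2}\|u\|_{L^{2}}^{2}+\tfrac{s_{p}}{d}\|\nabla u\|_{L^{2}}^{2}+\tfrac{4-(d-2)(p-1)}{2d(p-1)}\|u\|_{L^{2^{*}}}^{2^{*}}$ (with all coefficients positive by \eqref{09/05/13/15:03}) then yields $\|\psi(t)\|_{H^{1}}^{2}\le C(\omega)(m_{\omega}+\varepsilon_{*})$. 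If instead $\widetilde{d}_{\omega}(\psi(t))<\delta_{*}$, \eqref{13/05/14/21:30} gives $\|\psi(t)\|_{H^{1}}\le\|\Phi_{\omega}\|_{H^{1}}+C(\omega)\delta_{*}$. In either case $\|\psi\|_{L_{t}^{\infty}H_{x}^{1}}\le C(\omega)$ uniformly in $\varepsilon\in(0,\varepsilon_{*})$ and $R\in(2\varepsilon,R_{*})$. The most delicate step is (a): controlling the $\Gamma$-remainder to be genuinely subleading uniformly on $[\delta_{*},\delta_{E}(\omega)]$---especially on the sub-regime $[\delta_{X},\delta_{E}(\omega)]$ where the ejection lemma's quantitative bounds no longer apply directly---will require time-reversal symmetry and the backward ejection picture to import bounds from the most recent $\delta_{S}$-crossing.
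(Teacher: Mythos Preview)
Your overall structure (define both pieces, verify consistency on the overlap, then the $H^{1}$ bound) and your $H^{1}$ bound argument via $\mathcal{I}_{\omega}$ and \eqref{13/05/14/21:30} are correct and essentially match the paper's reasoning (compare \eqref{13/12/10/20:25} and \eqref{13/12/10/11:47}). The gap is in your consistency step~(a).

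\textbf{The gap.} You aim to show $\|\Gamma(t)\|_{H^{1}}\ll|\lambda_{1}(t)|$ at \emph{every} point of the overlap $\{\delta_{*}\le\widetilde{d}_{\omega}(\psi(t))\le\delta_{E}(\omega)\}$, and then read off ${\rm sign}[\mathcal{K}]={\rm sign}[\lambda_{1}]$ from the Taylor expansion \eqref{14/02/15/15:25}. But the ejection-lemma bound \eqref{13/05/05/22:54} reads $\|\Gamma(t)\|_{H^{1}}\lesssim R_{0}+|\lambda_{1}(t)|^{\min\{3,p+1\}/2}$, where $R_{0}$ is the value at the \emph{local minimum} from which the ejection starts (not at a ``$\delta_{S}$-crossing''---the lemma requires \eqref{13/05/05/22:35}). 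If that local minimum already lies in the overlap (say $R_{0}\in[\delta_{*},\delta_{X})$), then near the minimum $|\lambda_{1}(t)|\sim R_{0}$ and $\|\Gamma(t)\|_{H^{1}}\lesssim R_{0}$ is \emph{not} small compared to $|\lambda_{1}(t)|$; the Taylor expansion is then inconclusive there. Your proposed fix for $[\delta_{X},\delta_{E}(\omega)]$ (``continuity from $\delta_{X}$'') does not supply a quantitative smallness of $\|\Gamma\|$ either. So pointwise consistency on the whole overlap is not established.

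\textbf{What the paper does instead.} The paper avoids pointwise consistency entirely. It uses exactly your local-constancy observation, so that it suffices to check ${\rm sign}[\lambda_{1}]={\rm sign}[\mathcal{K}]$ at a \emph{single} time per excursion. That single time is $T_{X}$, the first time $\widetilde{d}_{\omega}$ reaches $\delta_{X}$ after a local minimum $t_{0}$ with $R_{0}<\delta_{*}$. Here one invokes the ejection lemma's packaged conclusion \eqref{13/05/06/14:30},
\[
\mathfrak{s}\,\mathcal{K}(\psi(T_{X}))\ \gtrsim\ \big(e^{\mu(T_{X}-t_{0})}-C_{*}\big)R_{0},
\]
and the calibration $\delta_{*}\ll\delta_{S}=\dfrac{A_{*}\delta_{X}}{2B_{*}C_{*}}$ forces $e^{\mu(T_{X}-t_{0})}\ge\delta_{X}/(B_{*}R_{0})>2C_{*}$ (cf.\ \eqref{15/04/02/21:58}--\eqref{15/06/25/09:55}), so the right side is positive and the two signs agree at $T_{X}$. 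Lemma~\ref{14/01/26/08:13} is what guarantees that such a $T_{X}$ is actually visited. Bridging this one-point match across the two locally constant pieces (exactly your connectivity argument) then yields a single well-defined $\mathfrak{S}(\psi)$ on all of $[0,T_{\max}(\psi))$. In short: do not try to dominate $\|\Gamma\|$ everywhere in the overlap; use \eqref{13/05/06/14:30} at $T_{X}$ together with the built-in gap $\delta_{*}\ll\delta_{S}\ll\delta_{X}$.
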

\begin{proof}[Proof of Lemma \ref{13/05/14/21:19}]
We introduced a sign function \eqref{15/04/02/20:40} in the proof of one-pass theorem (Theorem \ref{13/06/15/20:32}). Employing the same argument as \eqref{15/04/02/20:40} and Lemma \ref{14/01/26/08:13}, we can prove the existence of the desired sign function. Furthermore, we can prove \eqref{13/05/14/21:50} in the same way as \eqref{17/11/26/21:26}.  
\end{proof}

We divide $S_{\omega,R}^{\varepsilon}$ into two parts according to the sign  of $\mathfrak{S}$: 
\begin{equation}\label{14/01/23/11:58} 
S_{\omega,R,\pm}^{\varepsilon}
:=
\big\{ \psi \in S_{\omega,R}^{\varepsilon}\colon 
\mbox{$\mathfrak{S}(\psi)=\pm 1$} \big\}.
\end{equation}


\subsection{Analysis on $\boldsymbol{S_{\omega,R,-}^{\varepsilon}}$}\label{14/03/19/11:16}

In this section, we shall prove that any radial solution $\psi \in S_{\omega,R,-}^{\varepsilon}$ blows up forward in time. 

\begin{proposition}\label{14/09/21/11:11}
Assume that $0< \varepsilon < \min\{ \varepsilon_{0}(R_{*}), \varepsilon_{*}\}$, where $\varepsilon_{0}(R_{*})$ is a constant given by Lemma \ref{13/05/15/9:32}. Let $R \in (\sqrt{2\varepsilon}, R_{*})$, and let $\psi \in S_{\omega,R,-}^{\varepsilon}$. Then, the maximal lifespan $T_{\max}(\psi)$ is finite: the finite time blowup forward in time.    
\end{proposition}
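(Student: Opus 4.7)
The plan is to show that $\mathcal{K}(\psi(t))$ is uniformly negative on some forward half-interval and then feed this into a radial localized-virial argument of Ogawa--Tsutsumi type.

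First, by Lemma \ref{14/09/04/15:39} there is a time $T_{0}\in[0,T_{\max}(\psi))$ with
\[
\widetilde{d}_{\omega}(\psi(t))\ge R_{*} \qquad \text{for every } t\in[T_{0},T_{\max}(\psi)).
\]
Because $\varepsilon<\varepsilon_{0}(R_{*})$ by hypothesis, conservation of mass and action gives $\mathcal{S}_{\omega}(\psi(t))<m_{\omega}+\varepsilon_{0}(R_{*})$ and $\mathcal{M}(\psi(t))=\mathcal{M}(\Phi_{\omega})$, so Lemma \ref{13/05/15/9:32} applied with $\delta=R_{*}$ yields
\begin{equation}\label{eq:plan-Knonvanish}
|\mathcal{K}(\psi(t))|\ge \kappa_{1}(R_{*})>0 \qquad\text{for every } t\in[T_{0},T_{\max}(\psi)).
\end{equation}

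Next I pin down the sign using Lemma \ref{14/01/26/08:13}: there exists $\tau\ge T_{0}$ with $\widetilde{d}_{\omega}(\psi(\tau))\ge \delta_{X}>\delta_{*}$. At such a $\tau$ the definition \eqref{15/04/05/13:39} of the sign function gives $\mathrm{sign}[\mathcal{K}(\psi(\tau))]=\mathfrak{S}(\psi)=-1$. Since $t\mapsto\mathcal{K}(\psi(t))$ is continuous and, by \eqref{eq:plan-Knonvanish}, does not vanish on $[T_{0},T_{\max}(\psi))$, we conclude
\begin{equation}\label{eq:plan-KnegUniform}
\mathcal{K}(\psi(t))\le -\kappa_{1}(R_{*}) \qquad\text{for every } t\in[T_{0},T_{\max}(\psi)).
\end{equation}

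Finally, I close the proof by the radial localized-virial technique already deployed in \cite{AIKN2} for part (ii) of Theorem \ref{15/03/24/16:05}. Fix a smooth radial cutoff $\phi$ with $\phi(x)=|x|^{2}$ for $|x|\le 1$ and $\nabla^{2}\phi\le 2I$ globally, set $\phi_{M}(x):=M^{2}\phi(x/M)$, and put $V_{M}(t):=\int \phi_{M}(x)|\psi(x,t)|^{2}\,dx$. Mass conservation gives $0\le V_{M}(t)\lesssim M^{2}\mathcal{M}(\Phi_{\omega})$. A standard computation yields
\[
V_{M}''(t)=8\mathcal{K}(\psi(t))+\mathrm{Err}_{M}(\psi(t)),
\]
where $\mathrm{Err}_{M}$ is supported in $\{|x|>M\}$ and involves the tails of $\|\nabla\psi\|_{L^{2}}^{2}$, $\|\psi\|_{L^{p+1}}^{p+1}$ and $\|\psi\|_{L^{2^{*}}}^{2^{*}}$. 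Using the radial Strauss embedding $|x|^{(d-1)/2}|u(x)|\lesssim\|u\|_{H^{1}}$, together with \eqref{eq:plan-KnegUniform} (which in particular allows $\|\nabla\psi(t)\|_{L^{2}}^{2}$ to be absorbed into the $L^{p+1}$ and $L^{2^{*}}$ norms on $\{|x|>M\}$), the error obeys $|\mathrm{Err}_{M}(\psi(t))|\le 4\kappa_{1}(R_{*})$ uniformly in $t\in[T_{0},T_{\max}(\psi))$ once $M$ is chosen large enough. Then $V_{M}''(t)\le -4\kappa_{1}(R_{*})$ on this interval, forcing $V_{M}(t)$ to become negative in finite time, which contradicts $V_{M}\ge 0$ unless $T_{\max}(\psi)<\infty$.

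The main obstacle is the uniform estimate of $\mathrm{Err}_{M}$: unlike the scattering side (Lemma \ref{13/05/14/21:19}), in the regime $\mathfrak{S}(\psi)=-1$ we have no $t$-uniform $H^{1}$ bound on $\psi$, so the tail terms cannot simply be bounded by a fixed constant times $M^{-\alpha}$. The way around this, as in \cite{AIKN2}, is to use the negativity of $\mathcal{K}$ itself to trade gradient norms on $\{|x|>M\}$ for the potential norms, which then decay in $M$ through the radial Strauss bound regardless of how $\|\nabla\psi(t)\|_{L^{2}}$ behaves. Once that trade-off is set up, the rest is a mechanical convexity argument.
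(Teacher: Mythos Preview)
Your argument is correct and follows the paper's own proof essentially verbatim: derive $\mathcal{K}(\psi(t))\le -\kappa_{1}(R_{*})$ on $[T_{0},T_{\max}(\psi))$ via Lemmas \ref{14/09/04/15:39}, \ref{13/05/15/9:32}, \ref{14/01/26/08:13} and \ref{13/05/14/21:19}, then invoke the radial localized-virial blowup argument from \cite{AIKN} (the paper cites Theorem 1.3 there rather than \cite{AIKN2}, but the mechanism is the same). One minor imprecision: your parenthetical explanation of the error control is not quite the actual mechanism---the trick in \cite{AIKN} is to combine action conservation $\mathcal{S}_{\omega}(\psi)<m_{\omega}+\varepsilon$ with $\mathcal{K}(\psi(t))<0$ to obtain $\|\nabla\psi(t)\|_{L^{2}}^{2}\lesssim 1+|\mathcal{K}(\psi(t))|$, so that the Strauss-controlled tail terms contribute at most a small multiple of $|\mathcal{K}(\psi(t))|$, which is then absorbed back into the leading $8\mathcal{K}(\psi(t))$---but since the paper itself simply defers to \cite{AIKN} at this point, your sketch is at the same level of detail.
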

\begin{proof}[Proof of Proposition \ref{14/09/21/11:11}]
We see from Lemma \ref{14/09/04/15:39}, Lemma \ref{13/05/15/9:32} and Lemma \ref{17/12/02/20:17} that there exist a time $T_{0}\in [0,T_{\max}(\psi))$ and $\kappa_{1}(R_{*})>0$ such that 
\begin{equation}\label{15/06/01/11:57}
\inf_{t\in [T_{0},T_{\max}(\psi))}|\mathcal{K}(\psi(t)) | 
\ge 
\kappa_{1}(R_{*})
. 
\end{equation}
Moreover, it follows from Lemma \ref{14/01/26/08:13} that there exists a time $\tau \in [T_{0}, T_{\max}(\psi))$ such that  
\begin{equation}\label{14/09/20/20:37}
\widetilde{d}_{\omega}(\psi(\tau))\ge \delta_{X}\ge \delta_{*}. 
\end{equation}
Hence, Lemma \ref{13/05/14/21:19} together with $\mathfrak{S}(\psi)=-1$ shows that 
\begin{equation}\label{14/09/20/20:40}
\mathcal{K}(\psi(\tau))<0.
\end{equation}
Putting \eqref{15/06/01/11:57} and \eqref{14/09/20/20:40} together, we find that \begin{equation}\label{14/09/21/09:01}
\sup_{t\in [T_{0},T_{\max}(\psi))}\mathcal{K}(\psi(t))\le -\kappa_{1}(R_{*}).  
\end{equation}
Then, the same argument as the proof of Theorem 1.3 in \cite{AIKN} is available. Thus, we find that $T_{\max}(\psi)<\infty$. 
\end{proof}

\subsection{Analysis on $\boldsymbol{S_{\omega,R,+}^{\varepsilon}}$}\label{14/01/25/11:47}

In this section, we shall prove that any solution $\psi \in S_{\omega,R,+}^{\varepsilon}$ scatters  forward in time. Lemma \ref{14/09/04/15:39} together with time-translation allows us to restrict ourselves  to the solutions in $S_{\omega,R_{*},+}^{\varepsilon}$. Then, Lemma \ref{13/05/15/9:32} together with Lemma \ref{17/12/02/20:17} determines constants $\varepsilon_{0}(R_{*})>0$ and $\kappa_{1}(R_{*})>0$ such that if $\varepsilon \in (0,\varepsilon_{0}(R_{*}))$, then any solution $\psi \in S_{\omega,R_{*},+}^{\varepsilon}$ obeys 
\begin{equation}\label{15/05/14/16:24}
\inf_{t\in [0,T_{\max}(\psi))} |\mathcal{K}(\psi(t))| 
\ge 
\kappa_{1}(R_{*})
. 
\end{equation}  
Furthermore, this together with Lemma \ref{14/01/26/08:13} and Lemma \ref{13/05/14/21:19} shows that for any $\varepsilon \in (0,\varepsilon_{0}(R_{*}))$ and any $\psi \in S_{\omega,R_{*},+}^{\varepsilon}$, 
\begin{equation}\label{15/05/14/16:33}
\inf_{[0,T_{\max}(\psi))}\mathcal{K}(\psi(t))\ge \kappa_{1}(R_{*}). 
\end{equation}  
\par 
It is well known (see, e.g, the claim (v) of Theorem 4.1 in \cite{AIKN2}) that the boundedness of a solution $\psi$ in the Strichartz-type space $W_{p+1}([0,T_{\max}(\psi))\cap W_{2^{*}}([0,T_{\max}(\psi))$ implies the scattering forward in time. Thus, our aim is to prove the following: 
\begin{proposition}\label{14/01/25/14:59}
For any $\omega \in (0,\omega_{*})$, there exists $\varepsilon \in (0,\varepsilon_{*})$ such that for any $\psi \in S_{\omega,R_{*},+}^{\varepsilon}$, 
\begin{equation}\label{14/01/25/15:01}
\|\psi\|_{W_{p+1}([0,T_{\max}(\psi))\cap W^{2^{*}}([0,T_{\max}(\psi)))}<\infty .
\end{equation} 
In particular, the solution $\psi$ scatters forward in time. 
\end{proposition}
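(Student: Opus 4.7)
The plan is to proceed by the concentration--compactness / rigidity strategy of Kenig--Merle, adapted to our combined energy-critical / subcritical setting and leveraging the tools already developed in this paper. Suppose, for contradiction, that the proposition fails: then there exist sequences $\varepsilon_n \downarrow 0$ and radial solutions $\psi_n \in S_{\omega, R_*, +}^{\varepsilon_n}$ with $\|\psi_n\|_{W \cap W_{p+1}} = \infty$ on $[0, T_{\max}(\psi_n))$. Combining the standing bound $\widetilde{d}_\omega(\psi_n(t)) \ge R_*$ with Lemma \ref{13/05/15/9:32} and the sign information $\mathfrak{S}(\psi_n)=+1$ from Lemma \ref{13/05/14/21:19}, one gets the uniform positivity $\inf_{t,n} \mathcal{K}(\psi_n(t)) \ge \kappa_1(R_*) > 0$. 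If some $\psi_n$ had $\mathcal{S}_\omega(\psi_n) < m_\omega$, then $\psi_n \in PW_{\omega,+}$ and Theorem \ref{15/03/24/16:05} would already yield scattering; hence $\mathcal{S}_\omega(\psi_n) \in [m_\omega, m_\omega + \varepsilon_n)$, so $\mathcal{S}_\omega(\psi_n) \to m_\omega$.

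Next I would extract a critical element via a radial Bahouri--G\'erard profile decomposition of the bounded sequence $\{\psi_n(0)\}\subset H^1_{\mathrm{rad}}(\mathbb{R}^d)$, producing profiles $\{\phi^j\}$ with scales $\lambda_n^j$ and time-shifts $t_n^j$ whose free-evolution remainder has asymptotically negligible $W\cap W_{p+1}$-norm. Orthogonality of $\mathcal{M}$, $\|\nabla\cdot\|_{L^2}^2$, $\|\cdot\|_{L^{p+1}}^{p+1}$ and $\|\cdot\|_{L^{2^*}}^{2^*}$ across profiles, combined with $\mathcal{M}(\psi_n)=\mathcal{M}(\Phi_\omega)$, $\mathcal{S}_\omega(\psi_n)\to m_\omega$, the strict monotonicity of $m_\omega$ (Lemma \ref{15/03/22/14:59}) and the strict gap $m_\omega < \tfrac{1}{d}\sigma^{d/2}$ from \eqref{12/03/23/17:48}, should single out a single surviving profile $\phi^1$ with bounded parameters: profiles with $\lambda_n^j\to 0$ asymptotically solve \eqref{14/01/31/11:03} with action strictly below $\mathcal{H}^{\ddagger}(W)$ and scatter by Theorem \ref{15/04/05/15:27}; profiles with $\lambda_n^j\to\infty$ fall into the small-data regime and scatter; profiles with $|t_n^j|\to\infty$ scatter as their data at time $0$ is small in Strichartz. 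The long-time perturbation theory of Section \ref{14/01/30/10:18} then forces the impossible conclusion $\|\psi_n\|_{W\cap W_{p+1}}<\infty$ unless one profile $\phi^1$ with bounded scale and time-shift persists. A minimality / diagonal extraction on that profile yields a radial solution $\psi_\infty$ on its maximal interval satisfying
\begin{equation*}
\mathcal{M}(\psi_\infty)=\mathcal{M}(\Phi_\omega),\quad \mathcal{S}_\omega(\psi_\infty)=m_\omega,\quad \widetilde{d}_\omega(\psi_\infty(t))\ge R_*,\quad \mathcal{K}(\psi_\infty(t))\ge\kappa_1(R_*),
\end{equation*}
non-scattering forward in time, with the forward trajectory $\{\psi_\infty(t)\}$ precompact in $H^1_{\mathrm{rad}}(\mathbb{R}^d)$.

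The last step is to rule out such a $\psi_\infty$ by a localized virial / rigidity argument, of the same flavour as the cutoff employed in Theorem \ref{13/06/15/20:32}. Pick a radial weight $\phi_M(x)=M^2\phi(|x|/M)$ with $\phi(r)=r^2/2$ near the origin and $\phi'$ bounded at infinity; $H^1$-precompactness of the orbit lets one choose $M$ so large that
\begin{equation*}
\frac{d^2}{dt^2}\int_{\mathbb{R}^d}\phi_M(x)\,|\psi_\infty(x,t)|^2\,dx \;\ge\; 8\mathcal{K}(\psi_\infty(t))-4\kappa_1(R_*)\;\ge\; 4\kappa_1(R_*)
\end{equation*}
uniformly on $[0,T_{\max}(\psi_\infty))$. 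Integrating twice over $[0,T]$ and bounding the first derivative of the weighted quantity by $O(M)$ via the uniform $H^1$-bound from Lemma \ref{13/05/14/21:19}, one obtains $T^2\kappa_1(R_*)\lesssim M$, forcing $T_{\max}(\psi_\infty)<\infty$. But $H^1$-precompactness of the trajectory, combined with the standard blowup criterion for \eqref{12/03/23/17:57}, rules out finite-time blowup, yielding the desired contradiction.

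The main obstacle will be the profile decomposition step. Because $F[u]$ is a sum of an $H^1$-subcritical and an energy-critical focusing term with no common scaling, one must simultaneously track Aubin--Talenti-type bubbling scales and the fixed Schr\"odinger scale of the $|u|^{p-1}u$ nonlinearity; the orthogonality of the $L^{p+1}$- and $L^{2^*}$-norms interacts differently with the scale parameters, and preventing the ground-state mass $\mathcal{M}(\Phi_\omega)$ from leaking into an escaping energy-critical bubble requires a careful combined use of the strict inequality \eqref{12/03/23/17:48} and the strict monotonicity of $\omega\mapsto \mathcal{M}(\Phi_\omega)$ from Proposition \ref{13/03/29/15:30}. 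Establishing the existence of a single surviving profile of the required form is the delicate part; once that is in hand, the virial/rigidity conclusion is essentially routine.
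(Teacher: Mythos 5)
Your proposal follows essentially the same route as the paper: uniform positivity of $\mathcal{K}$ via the one-pass/sign machinery, pinching of the action to the threshold $m_{\omega}$ along a minimizing sequence, a two-scale profile decomposition in which all sub-threshold profiles scatter by Theorem \ref{15/03/24/16:05} (for the fixed scale) or Theorem \ref{15/04/05/15:27} (for the energy-critical scale), leaving a single critical element with precompact forward orbit, which is then excluded by a localized-virial rigidity argument (the paper delegates this last step to Proposition 5.3 of \cite{Kenig-Merle}). The only presentational differences are that the paper frames the threshold argument as an induction on the energy level $E_{*}=\sup\{E:\nu(E)<\infty\}$ rather than a direct contradiction, and it pins the surviving profile to the fixed scale $\lambda_{\infty}^{1}=1$ using the mass constraint $\mathcal{M}(\psi_{n})=\mathcal{M}(\Phi_{\omega})$ directly rather than the monotonicity of $\omega\mapsto\mathcal{M}(\Phi_{\omega})$.
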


\begin{proof}[Proof of Proposition \ref{14/01/25/14:59}] 
We divide the proof into several parts: In section \ref{15/07/11/11:26}, we suppose for contradiction that the claim was false, and extract some sequence of non-scattering solutions in $S_{\omega,R_{*},+}^{\varepsilon_{*}}$. In Section \ref{15/07/11/11:27}, we apply a profile decomposition to this sequence, and obtain ``linear profiles''. In Section \ref{15/07/11/11:28}, we introduce the ``nonlinear profiles'', and investigate their fundamental properties. Furthermore, in Section \ref{15/07/11/11:29}, we show the existence of a nonlinear profile whose Strichartz norm diverges. Finally, in Section \ref{15/07/11/11:30},  we derive a contradiction by showing the existence of the ``critical element'', and complete the proof.  

\subsubsection{Setup}\label{15/07/11/11:26}

 For any $E>0$, we define $\nu(E)$ by 
\begin{equation}\label{14/01/22/11:50}
\nu(E)
:=
\sup\Bigm\{ \|\psi\|_{W_{p+1}([0,T_{\max}(\psi)))\cap W_{2^{*}}([0,T_{\max}(\psi)))} \colon 
\psi \in S_{\omega,R_{*},+}^{\varepsilon_{*}},  
\ \mathcal{H}(\psi) 
\le E 
\Bigm\}.
\end{equation}
Furthermore, we put 
\begin{equation}\label{14/01/22/12:05}
E_{*}:=\sup\{E>0 \colon \nu(E)<\infty \}.
\end{equation}
If $\psi\in S_{\omega,R_{*},+}^{\varepsilon_{*}}$ and $\mathcal{H}(\psi)<\mathcal{H}(\Phi_{\omega})$, then we see from \eqref{15/05/14/16:33} that $\psi \in PW_{\omega,+}$. Hence, it follows from Theorem \ref{15/03/24/16:05} that  
\begin{equation}\label{14/09/04/15:23}
E_{*}\ge \mathcal{H}(\Phi_{\omega}). 
\end{equation}
Thus, what we want to prove is that $E_{*}>\mathcal{H}(\Phi_{\omega})$. We prove this by contradiction, and therefore suppose that 
\begin{equation}\label{14/01/25/15:35}
E_{*}=\mathcal{H}(\Phi_{\omega}). 
\end{equation}
Then, we can take a sequence $\{\varepsilon_{n}\}$ of constants in $(0,\varepsilon_{*})$ and a sequence $\{\psi_{n}\}$ of solutions such 
 that 
\begin{align}
\label{15/05/05/09:09}
& \lim_{n\to \infty}\varepsilon_{n}=0, 
\\[6pt]
\label{14/09/06/14:57}
&\varepsilon_{n} < \min\Big\{ \varepsilon_{0}(R_{*}), \ \frac{\kappa_{1}(R_{*})}{10 d(p-1)+10} \Big\},
\\[6pt]
\label{14/01/26/10:38}
&\psi_{n}\in S_{\omega,R_{*},+}^{\varepsilon_{n}},
\\[6pt]
\label{14/01/25/15:51}
&\|\psi_{n}\|_{W_{p+1}([0,T_{\max}(\psi_{n})))\cap W_{2^{*}}([0,T_{\max}(\psi_{n})))}=\infty.
\end{align}
In particular, we have that 
\begin{align}
\label{14/01/26/10:40}
&\inf_{n\ge 1} \inf_{t\in [0,T_{\max}(\psi_{n}))}\widetilde{d}_{\omega}(\psi_{n}(t))\ge R_{*},\\[6pt]
\label{14/01/25/15:50}
&\lim_{n\to \infty}\mathcal{H}(\psi_{n})= \mathcal{H}(\Phi_{\omega})=E_{*}. 
\end{align}
Furthermore, it follows from \eqref{15/05/14/16:33} and Lemma \ref{13/05/14/21:19} that 
\begin{align}
\label{14/01/26/11:03}
&\inf_{n\ge 1}\inf_{t\in [0,T_{\max}(\psi_{n}))}\mathcal{K}(\psi_{n}(t))
\ge 
\kappa_{1}(R_{*}),
\\[6pt]
\label{14/01/26/10:45}
&\sup_{n\ge 1}\sup_{t\in [0,T_{\max}(\psi_{n}))}\|\psi_{n}(t)\|_{H^{1}} <\infty
.
\end{align}

\subsubsection{Linear profiles}\label{15/07/11/11:27}
Let $\{\psi_{n}\}$ be the sequence of solutions obtained in Section \ref{15/07/11/11:26}. We employ the profile decomposition for a sequence in the homogeneous space $\dot{H}^{1}(\mathbb{R}^{d})$ (see Theorem 1.6 in \cite{Keraani} and Lemma 2.10 in \cite{Killip-Visan}). We apply it to the sequence $\{ |\nabla|^{-1} \langle \nabla \rangle \psi_{n}(0) \}$ in $\dot{H}^{1}(\mathbb{R}^{d})$, instead of $\{\psi_{n}\}$ itself. Then, we find\footnote{We do not use the assumption that every $\psi_{n}$ is radial in this subsection (Section \ref{15/07/11/11:27}) and therefore translation parameters appear in the linear profile decomposition. In Section \ref{15/07/11/11:29} and \ref{15/07/11/11:30}, we need the radial symmetry of the solutions so that we can take every translation $x_{n}^{j}=0$. } that there exists a subsequence of $\{\psi_{n}\}$ (denoted by the same symbol $\{\psi_{n} \}$) with the following properties: there exist a family $\{ \{(x_{n}^{1},t_{n}^{1},\lambda_{n}^{1})\}, \{(x_{n}^{2},t_{n}^{2},\lambda_{n}^{2})\}, \ldots \}$ of sequences in $\mathbb{R}^{d}\times \mathbb{R}\times (0,\infty)$, a family $\{\widetilde{u}^{1},\widetilde{u}^{2},\ldots \}$ of functions in $H^{1}(\mathbb{R}^{d})$, and a family $\{\{w_{n}^{1}\},\{w_{n}^{2}\}, \ldots \}$ of sequences in $H^{1}(\mathbb{R}^{d})$ such that:
\\
\noindent  
\textbullet~For any $j\ge 1$,  
\begin{align}
\label{14/01/26/11:46}
&\lim_{n\to \infty}t_{n}^{j}=t_{\infty}^{j} \in \mathbb{R}\cup \{\pm \infty \}, \\[6pt]
\label{14/01/26/11:47}
&\lim_{n\to \infty}\lambda_{n}^{j}=\lambda_{\infty}^{j} \in \{0, 1, \infty\},
\qquad 
\left\{ \begin{array}{ccc}
\lambda_{n}^{j}\equiv 1 &\mbox{if}& \lambda_{\infty}^{j}=1,
\\[6pt]
\lambda_{n}^{j}\le 1 &\mbox{if}& \lambda_{\infty}^{j}=0 .
\end{array}\right.   
\end{align}
\textbullet~For any $j'\neq j$, 
\begin{equation}\label{14/01/26/11:48}
\lim_{n\to \infty}
\Bigm\{ 
\frac{\lambda_{n}^{j'}}{\lambda_{n}^{j}}
+
\frac{\lambda_{n}^{j}}{\lambda_{n}^{j'}}
+
\frac{|x_{n}^{j}-x_{n}^{j'}|}{\lambda_{n}^{j}}
+
\frac{|t_{n}^{j}-t_{n}^{j'}|}{(\lambda_{n}^{j})^{2}}
\Bigm\}=\infty .
\end{equation}
\noindent 
\textbullet~For any $\dot{H}^{1}$-admissible pair\footnote{A pair $(q,r)$ is said to be $\dot{H}^{1}$-admissible if $\frac{1}{r}=\frac{d}{2}(\frac{1}{2}-\frac{1}{q}-\frac{1}{d})$ and $(q,r)\in [2,\infty]\times [2,\infty]$.} $(q,r)$,  
\begin{equation}\label{14/01/26/11:57}
\lim_{j\to \infty}\lim_{n\to \infty}\big\| |\nabla|^{-1}\langle \nabla \rangle e^{it\Delta}w_{n}^{j} \big\|_{L_{t}^{r}L_{x}^{q}(\mathbb{R})}=0
.
\end{equation}
\noindent 
\textbullet~For any $n\ge 1$ and $k \ge 1$, 
\begin{equation}\label{14/01/26/12:16}
\begin{split}
e^{it\Delta}\psi_{n}(0)
&=\sum_{j=1}^{k}\langle \nabla \rangle^{-1}|\nabla |
G_{n}^{j}\big( e^{it\Delta} |\nabla|^{-1}\langle \nabla \rangle \widetilde{u}^{j}\big)
+e^{it\Delta}w_{n}^{k}
\\[6pt]
&=
\sum_{j=1}^{k} \frac{\langle \nabla \rangle^{-1} \langle \lambda_{n}^{j} \nabla \rangle }{\lambda_{n}^{j}}e^{i(t-t_{n}^{j})\Delta}g_{n}^{j} \widetilde{u}^{j}
+e^{it\Delta}w_{n}^{k}
,
\end{split}
\end{equation}
where $G_{n}^{j}$ and $g_{n}^{j}$ are the operators defined by 
\begin{align}
\label{15/07/11/13:00}
&G_{n}^{j}v(x,t):=\frac{1}{(\lambda_{n}^{j})^{\frac{d-2}{2}}}v\Big( \frac{x-x_{n}^{j}}{\lambda_{n}^{j}},\frac{t-t_{n}^{j}}{(\lambda_{n}^{j})^{2}}\Big),
\\[6pt]
\label{14/01/26/14:02}
&g_{n}^{j}u(x):=\frac{1}{(\lambda_{n}^{j})^{\frac{d-2}{2}}}u\Big( \frac{x-x_{n}^{j}}{\lambda_{n}^{j}}\Big)
.
\end{align}
Note that 
\begin{equation}\label{17/11/02/10:37}
\frac{\langle \nabla \rangle^{-1} \langle \lambda_{n}^{j} \nabla \rangle }{\lambda_{n}^{j}}e^{i(t-t_{n}^{j})\Delta}g_{n}^{j} \widetilde{u}^{j}
=
g_{n}^{j} \sigma_{n}^{j} e^{i\frac{t-t_{n}^{j}}{(\lambda_{n}^{j})^{2}}\Delta }\widetilde{u}^{j}
=
G_{n}^{j}\sigma_{n}^{j} e^{it\Delta}\widetilde{u}^{j}
,
\end{equation}
where 
\begin{equation}\label{14/01/26/14:14}
\sigma_{n}^{j}:=\frac{\langle (\lambda_{n}^{j})^{-1}\nabla \rangle^{-1}\langle \nabla \rangle}{\lambda_{n}^{j}}.
\end{equation}
\noindent 
\textbullet~For any  $k\ge 1$ and any $1\le j \le k$, 
\begin{equation}\label{17/10/14/16:03}
\lim_{n \to \infty}
\langle \nabla \rangle^{-1}|\nabla| e^{-i\frac{t_{n}^{j}}{(\lambda_{n}^{j})}\Delta}(g_{n}^{j})^{-1} |\nabla|^{-1} \langle \nabla \rangle w_{n}^{k} =0
\quad \mbox{weakly in $H^{1}(\mathbb{R}^{d})$}
.
\end{equation}
\noindent 
\textbullet~For any $k\ge 1$,  
\begin{equation}\label{14/01/26/14:07}
\lim_{n\to \infty}\Big\{ 
\|\langle \nabla \rangle \psi_{n}(0)\|_{L^{2}}^{2}
-\sum_{j=1}^{k}
\| \langle \nabla \rangle g_{n}^{j}\sigma_{n}^{j}e^{-i\frac{t_{n}^{j}}{(\lambda_{n}^{j})^{2}}\Delta}\widetilde{u}^{j} \|_{L^{2}}^{2}
-\|\langle \nabla \rangle w_{n}^{k} \|_{L^{2}}^{2}
\Big\}=0
.
\end{equation}
Furthermore, the following hold for all $k\ge 1$ (see Lemma 2.2. in \cite{AIKN2}): 
\begin{align}
\label{17/10/14/16:19}
&\lim_{n\to \infty}\Big\{ 
\mathcal{M}(\psi_{n}) 
-\sum_{j=1}^{k}\mathcal{M}\big( g_{n}^{j}\sigma_{n}^{j}e^{-i\frac{t_{n}^{j}}{(\lambda_{n}^{j})^{2}}\Delta}\widetilde{u}^{j} \big) 
-\mathcal{M}(w_{n}^{k}) 
\Big\}=0,
\\[6pt]
\label{14/01/26/14:08}
&\lim_{n\to \infty}\Big\{ 
\mathcal{H}(\psi_{n}) 
-\sum_{j=1}^{k}\mathcal{H}\big( g_{n}^{j}\sigma_{n}^{j}e^{-i\frac{t_{n}^{j}}{(\lambda_{n}^{j})^{2}}\Delta}\widetilde{u}^{j} \big) 
-\mathcal{H}(w_{n}^{k}) 
\Big\}=0,
\\[6pt]
\label{14/01/26/14:09}
&\lim_{n\to \infty}\Big\{ 
\mathcal{S}_{\omega}(\psi_{n}) 
-\sum_{j=1}^{k}\mathcal{S}_{\omega}\big( g_{n}^{j}\sigma_{n}^{j}e^{-i\frac{t_{n}^{j}}{(\lambda_{n}^{j})^{2}}\Delta}\widetilde{u}^{j} \big) 
-\mathcal{S}_{\omega}(w_{n}^{k}) 
\Big\}=0,
\\[6pt]
\label{14/01/26/14:11}
&\lim_{n\to \infty}\Big\{ 
\mathcal{I}_{\omega}(\psi_{n}(0)) 
-
\sum_{j=1}^{k}\mathcal{I}_{\omega}\big( g_{n}^{j}\sigma_{n}^{j}e^{-i\frac{t_{n}^{j}}{(\lambda_{n}^{j})^{2}}\Delta}\widetilde{u}^{j} \big) 
-
\mathcal{I}_{\omega}(w_{n}^{k}) 
\Big\}=0.
\end{align}

Note that it follows from Strichartz' estimate, \eqref{14/01/26/10:45} and \eqref{14/01/26/14:07} that for any $k\ge 1$, there exists a number $N(k)$ such that for any $n\ge N(k)$, 
\begin{equation}\label{14/02/03/11:19}
\big\| \langle \nabla \rangle e^{it\Delta}  w_{n}^{k} \big\|_{St(\mathbb{R})}
\lesssim 
\|w_{n}^{k}\|_{H^{1}} \lesssim 1.
\end{equation}
Moreover, it follows from \eqref{14/01/26/11:57} and \eqref{14/02/03/11:19} that for any $1+\frac{4}{d} < q \le 2^{*}-1$,
\begin{equation}\label{15/02/28/16:41}
\begin{split}
\lim_{k\to \infty}\lim_{n\to \infty}\|e^{it\Delta}w_{n}^{k} \|_{W_{q+1}(\mathbb{R})}
&\le 
\lim_{k\to \infty}\lim_{n\to \infty}
\|e^{it\Delta}w_{n}^{k} \|_{W_{2+\frac{4}{d}}(\mathbb{R})}^{1-s_{q}}
\|e^{it\Delta}w_{n}^{k} \|_{W_{2^{*}}(\mathbb{R})}^{s_{q}}
\\[6pt]
&\lesssim
\lim_{k\to \infty}\lim_{n\to \infty}
\||\nabla|^{-1} \langle \nabla \rangle e^{it\Delta}w_{n}^{k} \|_{W_{2^{*}}(\mathbb{R})}^{s_{q}}
=0. 
\end{split} 
\end{equation}

We shall discuss fundamental properties of operators $g_{n}^{j}$ (see \eqref{14/01/26/14:02}) and $\sigma_{n}^{j}$ (see \eqref{14/01/26/14:14}). To this end, for each $j\ge 1$, we introduce an operator $\sigma_{\infty}^{j}$ as  
\begin{equation}\label{14/01/26/14:40}
\sigma_{\infty}^{j}
:=\left\{ \begin{array}{ccc} 
|\nabla |^{-1}\langle \nabla \rangle &\mbox{if}& \lambda_{\infty}^{j}=0,
\\[6pt]
1 &\mbox{if}& \lambda_{\infty}^{j}=1,
\\[6pt]
0  &\mbox{if}& \lambda_{\infty}^{j}=\infty.
\end{array} \right. 
\end{equation}

\begin{lemma}\label{14/09/01/11:30}
The following hold for all function $f\in H^{1}(\mathbb{R}^{d})$ and all $j\ge 1$:
\\
{\rm (i)}
\begin{equation}\label{14/09/01/11:32}
\lim_{n\to \infty} 
\big\| \nabla \big\{ \sigma_{n}^{j}f -\sigma_{\infty}^{j}f \big\} \big\|_{L^{2}}=0;
\end{equation}
\noindent 
{\rm (ii)} 
\begin{equation}\label{14/09/06/17:32}
\lim_{n\to \infty} 
\big\| g_{n}^{j} \sigma_{n}^{j}f \big\|_{L^{2}}
=
\lim_{n\to \infty} 
\big\| \lambda_{n}^{j} \sigma_{n}^{j}f \big\|_{L^{2}}
=\left\{ \begin{array}{ccc} 
0 &\mbox{if}& \lambda_{\infty}^{j}=0,
\\[6pt]
\|f\|_{L^{2}} &\mbox{if}& \lambda_{\infty}^{j}=1,
\\[6pt]
\|\langle \nabla \rangle f\|_{L^{2}}  &\mbox{if}& \lambda_{\infty}^{j}=\infty.
\end{array} \right.
\end{equation}
\end{lemma}
\begin{proof}[Proof of Lemma \ref{14/09/01/11:30}] 
We shall prove \eqref{14/09/01/11:32}. If $\lambda_{\infty}^{j}=1$, then $\sigma_{n}^{j} \equiv 1$ and therefore the claim is trivial. Assume that $\lambda_{\infty}^{j}=0$. Then, we see from Parseval's identity that 
\begin{equation}\label{14/09/01/11:43}
\begin{split}
\lim_{n\to \infty}\big\||\nabla| \big\{ \sigma_{n}^{j}f -\sigma_{\infty}^{j}f 
\big\} \big\|_{L^{2}}
&=
\lim_{n\to \infty}\bigg\| |\nabla| \bigg\{ 
\frac{ \langle (\lambda_{n}^{j})^{-1}\nabla \rangle^{-1} }{\lambda_{n}^{j}} - |\nabla|^{-1} \bigg\} \langle \nabla \rangle f 
\bigg\|_{L^{2}}
\\[6pt]
&=
\lim_{n\to \infty}\bigg\| \bigg\{ 
\frac{|\xi|}{\sqrt{(\lambda_{n}^{j})^{2}+|\xi|^{2}}} 
-1  \bigg\} \langle \xi \rangle \mathcal{F}[f] 
\bigg\|_{L_{\xi}^{2}}. 
\end{split}
\end{equation}
Moreover, since $\lim_{n\to \infty}\lambda_{n}^{j}=\lambda_{\infty}^{j}=0$, we 
see that 
\begin{equation}\label{17/10/29/14:13}
\lim_{n\to \infty}
\bigg\{ \frac{|\xi|}{\sqrt{(\lambda_{n}^{j})^{2}+|\xi|^{2}}} 
-1  \bigg\} \langle \xi \rangle \mathcal{F}[f](\xi)
=0 \qquad \mbox{for almost all $\xi \in \mathbb{R}^{d}$}
. 
\end{equation}
Hence, we find from Lebesgue's convergence theorem that \eqref{14/09/01/11:32} holds. Similarly, we can prove the case where $\lambda_{\infty}^{j}=\infty$.
\par 
Next, we give a proof of \eqref{14/09/06/17:32}. The case where $\lambda_{\infty}^{j}=1$ is trivial. Assume that $\lambda_{\infty}^{j}=0$. Then, we see from the substitution of variables and Parseval's identity that 
\begin{equation}\label{17/10/29/14:20}
\begin{split}
\lim_{n\to \infty}
\| g_{n}^{j} \sigma_{n}^{j} f \|_{L^{2}}
&=
\lim_{n\to \infty} 
\| \lambda_{n}^{j} \sigma_{n}^{j} f  \|_{L^{2}}
=
\lim_{n\to \infty}
\biggm\| 
\frac{1}{\sqrt{ 1+\frac{|\xi|^{2}}{(\lambda_{n}^{j})^{2}}}} 
 \langle \xi \rangle\mathcal{F}[f]
\biggm\|_{L_{\xi}^{2}}. 
\end{split}
\end{equation}
Since $\lim_{n\to \infty}\lambda_{n}^{j}=\lambda_{\infty}^{j}=0$,
we find from \eqref{17/10/29/14:20} and Lebesgue's convergence theorem that \eqref{14/09/06/17:32} holds. Similarly, we can prove the case where $\lambda_{\infty}^{j}=\infty$.  
\end{proof}

\begin{lemma}\label{14/06/21/15:10}
Let $j$ be a number such that $\lambda_{\infty}^{j} \neq \infty$. Then, for any  $n\ge 1$, any $q \in [2,\infty)$ and any function $f$ with $\langle \nabla \rangle f  \in L^{q}(\mathbb{R}^{d})$, 
\begin{equation} 
\label{14/06/21/15:12}
\big\| \lambda_{n}^{j} \sigma_{n}^{j}f \big\|_{L^{q}}
+
\big\| \nabla \sigma_{n}^{j}f \big\|_{L^{q}}
\lesssim 
\big\|\langle \nabla \rangle f \big\|_{L^{q}},
\end{equation}
where the implicit constant depends only on $d$ and $q$. 
\end{lemma}
\begin{proof}[Proof of Lemma \ref{14/06/21/15:10}]
If $\lambda_{\infty}^{j}=1$, then the claim is trivial. When $\lambda_{\infty}^{j}=0$, we find from the Mihlin multiplier theorem that 
\begin{equation}\label{14/06/21/15:20}
\begin{split}
\big\|\lambda_{n}^{j} \sigma_{n}^{j} f \big\|_{L^{q}}
+
\big\| \nabla \sigma_{n}^{j}f \big\|_{L^{q}}
&\lesssim  
\big\|\langle \nabla \rangle f \big\|_{L^{q}}
+
\big\| \sigma_{n}^{j}(\sigma_{\infty}^{j})^{-1} |\nabla| \sigma_{\infty}^{j}f \big\|_{L^{q}}
\\[6pt]
&\lesssim   
\big\|\langle \nabla \rangle f \big\|_{L^{q}}
+
\big\||\nabla| \sigma_{\infty}^{j} f \big\|_{L^{q}}
\lesssim 
\big\|\langle \nabla \rangle f \big\|_{L^{q}}. 
\end{split}
\end{equation}
Thus, we have proved the lemma.  
\end{proof}

\begin{lemma}\label{14/09/07/15:55}
Let $1+\frac{4}{d}<q<2^{*}-1$, and let $j\ge 1$ be a number for which $\lambda_{\infty}^{j}\in \{0, \infty\}$. Then, the linear profile $\tilde{u}^{j}$ satisfies   
\begin{equation}\label{14/09/20/11:58}
\lim_{n\to \infty}
\| G_{n}^{j}\sigma_{n}^{j}e^{it \Delta}\widetilde{u}^{j} \big|_{t=0} \|_{L^{q+1}}
=
\lim_{n\to \infty}
\| g_{n}^{j}\sigma_{n}^{j}e^{-i\frac{t_{n}^{j}}{(\lambda_{n}^{j})^{2}}\Delta}\widetilde{u}^{j}
\|_{L^{q+1}}
=0. 
\end{equation}
\end{lemma}
\begin{proof}[Proof of Lemma \ref{14/09/07/15:55}]
Assume that $\lambda_{\infty}^{j}\in \{0,\infty\}$. Then, we see from the Gagliardo-Nirenberg inequality that 
\begin{equation}\label{14/09/20/12:02}
\big\| g_{n}^{j}\sigma_{n}^{j}e^{-i\frac{t_{n}^{j}}{(\lambda_{n}^{j})^{2}}\Delta}\widetilde{u}^{j}\big\|_{L^{q+1}}^{q+1}
\lesssim 
\big\| \lambda_{n}^{j}\sigma_{n}^{j} \widetilde{u}^{j} \big\|_{L^{2}}^{q+1-\frac{d(q-1)}{2}} 
\big\|\nabla \sigma_{n}^{j} \widetilde{u}^{j} \big\|_{L^{2}}^{\frac{d(q-1)}{2}}
. 
\end{equation}
If $\lambda_{\infty}^{j}=0$, then it follows from \eqref{14/09/20/12:02} and Lemma \ref{14/09/01/11:30} that 
\begin{equation}\label{17/10/29/14:47}
\lim_{n\to \infty}
\big\| g_{n}^{j}\sigma_{n}^{j}e^{-i\frac{t_{n}^{j}}{(\lambda_{n}^{j})^{2}}\Delta}\widetilde{u}^{j}\big\|_{L^{q+1}}^{q+1}
\lesssim 
\lim_{n\to \infty} 
\big\| \lambda_{n}^{j}\sigma_{n}^{j} \widetilde{u}^{j} \big\|_{L^{2}}^{q+1-\frac{d(q-1)}{2}} 
\| \langle \nabla \rangle \widetilde{u}^{j} \|_{L^{2}}^{\frac{d(q-1)}{2}}
=0. 
\end{equation}
On the other hand, if $\lambda_{\infty}^{j}=\infty$, then it follows from \eqref{14/09/20/12:02} and Lemma \ref{14/09/01/11:30} that 
\begin{equation}\label{17/10/29/14:50}
\lim_{n\to \infty}
\big\| g_{n}^{j}\sigma_{n}^{j}e^{-i\frac{t_{n}^{j}}{(\lambda_{n}^{j})^{2}}\Delta}\widetilde{u}^{j}\big\|_{L^{q+1}}^{q+1}
\lesssim 
\big\| \langle \nabla \rangle \widetilde{u}^{j} \big\|_{L^{2}}^{q+1-\frac{d(q-1)}{2}} 
\lim_{n\to \infty}
\big\|\nabla \sigma_{n}^{j} \widetilde{u}^{j} \big\|_{L^{2}}^{\frac{d(q-1)}{2}}
=0. 
\end{equation}
Thus, we have completed the proof. 
\end{proof}

\subsubsection{Nonlinear profiles}\label{15/07/11/11:28}
In this subsection, we introduce the ``nonlinear profile'' associated with 
 $(\tilde{u}^{j}, \{(x_{n}^{j}, t_{n}^{j}, \lambda_{n}^{j})\})$ for each $j\ge 1$. From the point of view of \eqref{14/01/26/12:16}, we first introduce a function $U_{n}^{j}$ as the solution to \eqref{12/03/23/17:57} satisfying 
\begin{equation}\label{17/11/01/14:26}
U_{n}^{j}(0)=\frac{\langle \nabla \rangle^{-1} \langle \lambda_{n}^{j} \nabla \rangle }{\lambda_{n}^{j}}e^{-it_{n}^{j} \Delta}g_{n}^{j} \widetilde{u}^{j}
.
\end{equation}
Then, we find from \eqref{17/11/02/10:37} that 
\begin{equation}\label{17/11/01/13:09}
e^{it\Delta } U_{n}^{j}(0)= G_{n}^{j}\sigma_{n}^{j} e^{it \Delta} \widetilde{u}^{j}
.
\end{equation}
Furthermore, Duhamel's formula for $U_{n}^{j}$ together with \eqref{17/11/01/13:09} shows that $U_{n}^{j}$ satisfies 
\begin{equation}\label{17/11/02/11:01}
U_{n}^{j}(t)
-
G_{n}^{j}\sigma_{n}^{j} e^{it\Delta} \widetilde{u}^{j}
=
i\int_{0}^{t} 
e^{i(t-s)\Delta}
\bigm\{ 
F^{\dagger}[U_{n}^{j}](s)
+
F^{\ddagger}[U_{n}^{j}](s)
\bigm\} ds,
\end{equation}
where see \eqref{14/06/05/17:01} for the definitions of $F^{\dagger}$ and $F^{\ddagger}$. Applying $(\sigma_{n}^{j})^{-1}(G_{n}^{j})^{-1}$ to the both sides in \eqref{17/11/02/11:01},  we find that
\begin{equation}\label{17/11/02/11:02}
\begin{split}
& (\sigma_{n}^{j})^{-1}(G_{n}^{j})^{-1}
U_{n}^{j}(t)
-
e^{it\Delta} \widetilde{u}^{j}
\\[6pt]
&=
i (\sigma_{n}^{j})^{-1} (g_{n}^{j})^{-1}
\int_{-\frac{t_{n}^{j}}{(\lambda_{n}^{j})^{2}}}^{t} 
e^{i(\lambda_{n}^{j})^{2}(t-s)\Delta}
\bigm\{ 
(\lambda_{n}^{j})^{-\frac{(d-2)(p-1)}{2}} g_{n}^{j} 
F^{\dagger}[(G_{n}^{j})^{-1} U_{n}^{j}](s)
\\[6pt]
& \quad \hspace{30pt}
+
(\lambda_{n}^{j})^{-2} g_{n}^{j}F^{\ddagger}[ (G_{n}^{j})^{-1} U_{n}^{j}](s)
\bigm\} (\lambda_{n}^{j})^{2} \, ds
\\[6pt]
&=
i (\sigma_{n}^{j})^{-1} 
\int_{-\frac{t_{n}^{j}}{(\lambda_{n}^{j})^{2}}}^{t} 
e^{i(t-s)\Delta}
\bigm\{ 
(\lambda_{n}^{j})^{\frac{4-(d-2)(p-1)}{2}} 
F^{\dagger}[\sigma_{n}^{j}(\sigma_{n}^{j})^{-1}(G_{n}^{j})^{-1} U_{n}^{j}](s)
\\[6pt]
& \quad \hspace{17pt}
+
F^{\ddagger}[ \sigma_{n}^{j} (\sigma_{n}^{j})^{-1}(G_{n}^{j})^{-1} U_{n}^{j}](s)
\bigm\} ds
.
\end{split}
\end{equation}
From the point of view of \eqref{17/11/02/11:02},  for each number $j\ge 1$ with $\lambda_{\infty}^{j}\in \{0,1\}$, we define the nonlinear profile $\widetilde{\psi}^{j}$ associated with $(\widetilde{u}^{j},\{x_{n}^{j},t_{n}^{j}, \lambda_{n}^{j}\})$ as the solution to  
\begin{equation}\label{14/01/26/14:26}
\begin{split}
\widetilde{\psi}^{j}(t)
&=
e^{it\Delta}\widetilde{u}^{j}
+
i\int_{-\frac{t_{\infty}^{j}}{(\lambda_{\infty}^{j})^{2}}}^{t} 
e^{i(t-s)\Delta}(\sigma_{\infty}^{j})^{-1} 
\Big\{ 
(\lambda_{\infty}^{j})^{\frac{4-(d-2)(p-1)}{2}}
F^{\dagger}[\sigma_{\infty}^{j}\widetilde{\psi}^{j}](s)
\\[6pt]
&\qquad \hspace{30pt} +
F^{\ddagger}[ \sigma_{\infty}^{j}\widetilde{\psi}^{j}](s)
\Big\}\,ds.
\end{split}
\end{equation}
When $-\frac{t_{\infty}^{j}}{(\lambda_{\infty}^{j})^{2}}\in \{\pm \infty\}$, the equation \eqref{14/01/26/14:26} is interpreted  as the final value problem at $\pm \infty$. Moreover, if $j\ge 1$ is a number for which $\lambda_{\infty}^{j}=\infty$, then we define $\widetilde{\psi}^{j}(t):=e^{it\Delta}\widetilde{u}^{j}$
 (we define $\widetilde{\psi}^{j}$ as a free solution to \eqref{12/03/23/17:57}  so that \eqref{14/01/27/14:51} holds; in particular, if $\widetilde{u}^{j}\equiv 0$, then $\widetilde{\psi}^{j}\equiv 0$). 
\par 
We simply write $I_{\max}^{j}:= I_{\max}(\widetilde{\psi}^{j})$,   $T_{\max}^{j}:=T_{\max}(\widetilde{\psi}^{j})$ and $T_{\min}^{j}:=T_{\min}(\widetilde{\psi}^{j})$. Moreover, for an interval $I$ and a number $j\ge 1$, we define 
\begin{equation}\label{14/02/08/16:35}
\mathcal{W}^{j}(I)
:=\left\{ \begin{array}{ccl} 
W_{p+1}(I)\cap W_{2^{*}}(I) &\mbox{if}& \lambda_{\infty}^{j}\in \{1,\infty\},
\\
W_{2^{*}}(I) &\mbox{if}& \lambda_{\infty}^{j}=0. 
\end{array} \right.
\end{equation}

We find from the construction of the nonlinear profiles (see \eqref{14/01/26/14:26}) that the following hold for all numbers $j\ge 1$: 
\\[6pt]
\textbullet~$\widetilde{\psi}^{j} \in C(I_{\max}^{j},H^{1}(\mathbb{R}^{d}))$.
 \\[6pt]
\textbullet~$\widetilde{\psi}^{j}$ satisfies 
\begin{equation}\label{14/01/29/11:41}
\left\{ \begin{array}{rcl}
\displaystyle{i\frac{\partial \sigma_{\infty}^{j}\widetilde{\psi}^{j}}{\partial t} 
+\Delta \sigma_{\infty}^{j}\widetilde{\psi}^{j}
+F[\sigma_{\infty}^{j}\widetilde{\psi}^{j} ] =0}   
&\mbox{if}& \lambda_{\infty}^{j}=1, 
\\[12pt]
\displaystyle{i\frac{\partial \sigma_{\infty}^{j}\widetilde{\psi}^{j}}{\partial t} 
+\Delta \sigma_{\infty}^{j}\widetilde{\psi}^{j}
+F^{\ddagger}[\sigma_{\infty}^{j}\widetilde{\psi}^{j}]
=0} 
&\mbox{if}& \lambda_{\infty}^{j}=0,
\\[12pt]
\displaystyle{i\frac{\partial \widetilde{\psi}^{j}}{\partial t} 
+\Delta \widetilde{\psi}^{j}=0
}
&\mbox{if}& \lambda_{\infty}^{j}=\infty  
.   
\end{array} \right.
\end{equation}
\textbullet~There exists a number $N(j)$ such that for any $n\ge N(j)$, 
\begin{equation}\label{14/08/30/16:23}
-\frac{t_{n}^{j}}{(\lambda_{n}^{j})^{2}} \in I_{\max}^{j}
\end{equation} 
and 
\begin{equation}\label{14/01/27/14:51}
\lim_{n\to \infty}\big\| \widetilde{\psi}^{j}\Big( -\frac{t_{n}^{j}}{(\lambda_{n}^{j})^{2}} \Bigm)
-
e^{-i\frac{t_{n}^{j}}{(\lambda_{n}^{j})^{2}} \Delta} 
\widetilde{u}^{j} \big\|_{H^{1}}=0
.
\end{equation}
\noindent 
\textbullet~If $-\frac{t_{\infty}^{j}}{(\lambda_{\infty}^{j})^{2}}=\infty$, then $T_{\max}^{j}=\infty$ and for any $T>T_{\min}^{j}$, 
\begin{equation}\label{14/02/08/16:32}
\big\| \sigma_{\infty}^{j}\widetilde{\psi}^{j}\big\|_{\mathcal{W}^{j}([T,\infty))}<\infty
;
\end{equation}
\textbullet~If $-\frac{t_{\infty}^{j}}{(\lambda_{\infty}^{j})^{2}}=-\infty$,   then $T_{\min}^{j}=-\infty$ and for any $T<T_{\max}^{j}$, 
\begin{equation}\label{14/02/08/16:45}
\big\| \sigma_{\infty}^{j}\widetilde{\psi}^{j}\big\|_{\mathcal{W}^{j}((-\infty,T])}<\infty.
\end{equation}
When $\lambda_{\infty}^{j}\in\{ 0,\infty\}$, it is convenient rewriting \eqref{14/01/27/14:51} in the form 
\begin{equation}\label{14/01/29/20:27}
\left\{ \begin{array}{ccc}
\displaystyle{\lim_{n\to \infty}\big\| \widetilde{\psi}^{j}\bigg(\cdot-x_{n}^{j},\, -\frac{t_{n}^{j}}{(\lambda_{n}^{j})^{2}} \bigg)
-
g_{n}^{j}\sigma_{n}^{j}e^{-i\frac{t_{n}^{j}}{(\lambda_{n}^{j})^{2}} \Delta}\widetilde{u}^{j} 
\big\|_{H^{1}}=0}
&\mbox{if}& \lambda_{\infty}^{j}=1,
\\[12pt]
\displaystyle{\lim_{n\to \infty}\big\| \nabla g_{n}^{j}\Bigg\{ \sigma_{\infty}^{j}\widetilde{\psi}^{j}\bigg( -\frac{t_{n}^{j}}{(\lambda_{n}^{j})^{2}} \bigg)
-
\sigma_{n}^{j}e^{-i\frac{t_{n}^{j}}{(\lambda_{n}^{j})^{2}} \Delta}\widetilde{u}^{j} \big\} \Big\|_{L^{2}}=0}
&\mbox{if}& \lambda_{\infty}^{j}=0.
\end{array} \right.
\end{equation}
 
Now, for each number $j\ge 1$, we define  
\begin{equation}\label{14/02/01/10:50}
\psi_{n}^{j}:=G_{n}^{j}\sigma_{n}^{j}\widetilde{\psi}^{j},
\end{equation}
and let $I_{\max,n}^{j}$ be the maximal existence-interval of $\psi_{n}^{j}$. Then, we see from \eqref{14/01/29/11:41} that  
\begin{equation}\label{11/12/04/16:09}
\left\{ 
\begin{array}{ccc}
\displaystyle{i\frac{\partial \psi_{n}^{j}}{\partial t}
+
\Delta \psi_{n}^{j}+F[\psi_{n}^{j}]=0 }
&\mbox{if}& \lambda_{\infty}^{j}=1,
\\[6pt]
\displaystyle{i\frac{\partial \psi_{n}^{j}}{\partial t}
+
\Delta \psi_{n}^{j}
+(\sigma_{\infty}^{j})^{-1}
F^{\ddagger}[G_{n}^{j}\sigma_{\infty}^{j}\widetilde{\psi}^{j}]
=0
}
&\mbox{if}& \lambda_{\infty}^{j}=0,
\\[6pt]
\displaystyle{i\frac{\partial \psi_{n}^{j}}{\partial t}
+
\Delta \psi_{n}^{j}=0 }
&\mbox{if}& \lambda_{\infty}^{j}=\infty,  
\end{array}
\right.
\end{equation}
and 
\begin{equation}\label{14/02/01/10:57}
I_{\max,n}^{j}
=
\left\{ \begin{array}{ccc}
( (\lambda_{n}^{j})^{2}T_{\min}^{j}+t_{n}^{j}, \ (\lambda_{n}^{j})^{2}T_{\max}^{j}+t_{n}^{j})
&\mbox{if}& \lambda_{\infty}^{j}\in \{0,1\},
\\[6pt]
(-\infty, \infty)
&\mbox{if}& \lambda_{\infty}^{j}=\infty.
\end{array} \right.  
\end{equation} 
Furthermore, \eqref{14/08/30/16:23} shows that $0\in I_{\max,n}^{j}$ for any $n\ge N(j)$. 
\par 
In what follows, we discuss fundamental properties of nonlinear profiles in a series of lemmas. In particular, we show that there is at most one ``bad nonlinear profile''; the other profiles are ``good'' in the sense of \eqref{14/02/05/11:47} and \eqref{14/02/01/17:58}. 
\begin{lemma}\label{17/11/11/15:22}
Let $j$ be a number for which $\lambda_{\infty}^{j}=0$, and let $I$ be an interval on which  
\begin{equation}\label{17/11/11/15:23}
\big\| \langle \nabla \rangle \widetilde{\psi}^{j} \big\|_{St(I)}<\infty
.
\end{equation}
Then, the following hold:  
\begin{align}
\label{17/11/11/15:24}
&\lim_{n\to \infty}
\|
\sigma_{\infty}^{j}
\widetilde{\psi}^{j}
-
\sigma_{n}^{j} \widetilde{\psi}^{j}
\big\|_{W_{2^{*}}(I)}
\lesssim 
\lim_{n\to \infty}
\big\|
 \nabla 
\big\{
\sigma_{\infty}^{j}
\widetilde{\psi}^{j}
-
\sigma_{n}^{j} \widetilde{\psi}^{j}
\big\} 
\big\|_{V_{2^{*}}(I)}
=0,
\\[6pt]
\label{17/11/11/16:05}
&\lim_{n\to \infty}
\big\|
 \nabla 
\big\{
\sigma_{\infty}^{j}
\widetilde{\psi}^{j}
-
\sigma_{n}^{j} \widetilde{\psi}^{j}
\big\} 
\big\|_{V_{2+\frac{4}{d}}(I)}
=0,
\\[6pt]
\label{17/11/11/16:41}
&
\lim_{n\to \infty}
\big\|
 \lambda_{n}^{j} 
 \sigma_{n}^{j} \widetilde{\psi}^{j}
\big\|_{V_{2+\frac{4}{d}}(I)}
=0
.
\end{align}
\end{lemma}
\begin{proof}[Proof of Lemma \ref{17/11/11/15:22}]
The inequality in \eqref{17/11/11/15:24} follows from Sobolev's embedding. We see from H\"older's inequality, Lemma \ref{14/09/01/11:30}, Lemma \ref{14/06/21/15:10} and the assumption \eqref{17/11/11/15:23} that for almost all $t\in I$, 
\begin{equation}\label{17/11/11/15:00}
\begin{split}
&
\lim_{n\to \infty}\|
\nabla \{
\sigma_{\infty}^{j} \widetilde{\psi}^{j}(t)
-
\sigma_{n}^{j} \widetilde{\psi}^{j}(t)
\big\}
\|_{L^{\frac{2d(d+2)}{d^{2}+4}}}^{\frac{2(d+2)}{d-2}}
\\[6pt]
&\le 
\lim_{n\to \infty}
\|
\nabla \{
\sigma_{\infty}^{j} \widetilde{\psi}^{j}(t)
-
\sigma_{n}^{j} \widetilde{\psi}^{j}(t)
\big\}
\|_{L^{2}(I)}^{\frac{8}{d-2}}
\|
\langle \nabla \rangle  \widetilde{\psi}^{j}(t)
\|_{L^{2^{*}}(I)}^{2}=0
.
\end{split}
\end{equation}
Furthermore, we see from  Lemma \ref{14/06/21/15:10} and the assumption 
\eqref{17/11/11/15:23} that 
\begin{equation}\label{17/11/11/15:01}
\|
\nabla \{
\sigma_{\infty}^{j} \widetilde{\psi}^{j}(t)
-
\sigma_{n}^{j} \widetilde{\psi}^{j}(t)
\big\}
\|_{L^{\frac{2d(d+2)}{d^{2}+4}}}^{\frac{2(d+2)}{d-2}}
\lesssim
\| \langle \nabla \rangle 
\widetilde{\psi}^{j}(t)
\|_{L^{\frac{2d(d+2)}{d^{2}+4}}}^{\frac{2(d+2)}{d-2}} \in L^{1}(I)
.
\end{equation}
Thus, Lebesgue's convergence theorem shows that the equality in \eqref{17/11/11/15:24} holds. Similarly, we can prove \eqref{17/11/11/16:05}. It remains to prove \eqref{17/11/11/16:41}. We see from H\"older's inequality, Lemma \ref{14/09/01/11:30}, Lemma \ref{14/06/21/15:10} and the assumption \eqref{17/11/11/15:23} that for almost all $t\in I$, 
\begin{equation}\label{17/11/11/16:47}
\lim_{n\to \infty}
\big\|
 \lambda_{n}^{j} 
 \sigma_{n}^{j} \widetilde{\psi}^{j}(t)
\big\|_{L_{\frac{2(d+2)}{d}}}^{\frac{2(d+2)}{d}}
\le 
\lim_{n\to \infty}
\|
\lambda_{n}^{j} 
 \sigma_{n}^{j} \widetilde{\psi}^{j}(t)
\|_{L^{2}(I)}^{\frac{4}{d}}
\|
\langle \nabla \rangle  \widetilde{\psi}^{j}(t)
\|_{L^{2^{*}}(I)}^{2}=0
.
\end{equation}
Furthermore, we see from  Lemma \ref{14/06/21/15:10} and the assumption 
\eqref{17/11/11/15:23} that 
\begin{equation}\label{17/11/11/16:57}
\|
\lambda_{n}^{j}
\sigma_{n}^{j} \widetilde{\psi}^{j}(t)
\|_{L^{\frac{2(d+2)}{d}}}^{\frac{2(d+2)}{d}}
\lesssim
\| \langle \nabla \rangle 
\widetilde{\psi}^{j}(t)
\|_{L^{\frac{2(d+2)}{d}}}^{\frac{2(d+2)}{d}} \in L^{1}(I)
.
\end{equation}
Thus, Lebesgue's convergence theorem shows that \eqref{17/11/11/16:41} holds. 
\end{proof}

\begin{lemma}\label{14/07/02/10:30}
Let $j$ be a number for which $\lambda_{\infty}^{j}=0$, and let $I$ be an interval on which  
\begin{equation}\label{14/07/03/14:03}
\big\| \langle \nabla \rangle \widetilde{\psi}^{j} \big\|_{St(I)}<\infty
.
\end{equation}
Then, the nonlinear term in \eqref{11/12/04/16:09} satisfies 
\begin{equation}\label{14/07/02/10:31}
\lim_{n\to \infty}
\big\|
\langle \nabla \rangle
\big\{
(\sigma_{\infty}^{j})^{-1}
F^{\ddagger}[G_{n}^{j}\sigma_{\infty}^{j}\widetilde{\psi}^{j}]
-
 F[\psi_{n}^{j}]
\big\} 
\big\|_{L_{t,x}^{\frac{2(d+2)}{d+4}}(I_{n}^{j})}
=0,
\end{equation}
where $I_{n}^{j}:=( (\lambda_{n}^{j})^{2}\inf{I}+t_{n}^{j}, \ (\lambda_{n}^{j})^{2}\sup{I}+t_{n}^{j})$. 
\end{lemma}
\begin{proof}[Proof of Lemma \ref{14/07/02/10:30}]
Note first that 
\begin{equation}\label{18/01/24/18:17}
F[\psi_{n}^{j}]
=
F^{\ddagger}[G_{n}^{j}\sigma_{n}^{j}\widetilde{\psi}^{j}]
+
F^{\dagger}[G_{n}^{j}\sigma_{n}^{j}\widetilde{\psi}^{j}]
.
\end{equation}
Next, observe that 
\begin{equation}\label{17/11/09/09:01}
\begin{split}
& 
\lim_{n\to \infty}\big\|
\langle \nabla \rangle \big\{ (\sigma_{\infty}^{j})^{-1} 
F^{\ddagger}[G_{n}^{j}\sigma_{\infty}^{j}\widetilde{\psi}^{j}]
-
F^{\ddagger}[G_{n}^{j}\sigma_{n}^{j} \widetilde{\psi}^{j}]
\big\}
\big\|_{L_{t,x}^{\frac{2(d+2)}{d+4}}(I_{n}^{j})}
\\[6pt]
&=
\lim_{n\to \infty}
(\lambda_{n}^{j})^{-2} 
\big\|
G_{n}^{j} (\lambda_{n}^{j})^{-1} | \nabla | 
F^{\ddagger}[\sigma_{\infty}^{j}\widetilde{\psi}^{j}]
-
G_{n}^{j}
\langle (\lambda_{n}^{j})^{-1} \nabla \rangle 
F^{\ddagger}[\sigma_{n}^{j} \widetilde{\psi}^{j}]
\big\|_{L_{t,x}^{\frac{2(d+2)}{d+4}}(I_{n}^{j})}
\\[6pt]
&=
\lim_{n\to \infty}\big\|
|\nabla| 
F^{\ddagger}[ \sigma_{\infty}^{j}\widetilde{\psi}^{j}]
-
\lambda_{n}^{j}\langle (\lambda_{n}^{j})^{-1}\nabla \rangle 
F^{\ddagger}[\sigma_{n}^{j} \widetilde{\psi}^{j}]
\big\|_{L_{t,x}^{\frac{2(d+2)}{d+4}}(I)}
\\[6pt]
&\lesssim  
\lim_{n\to \infty}\big\|
\nabla 
\big\{ 
F^{\ddagger}[\sigma_{\infty}^{j} \widetilde{\psi}^{j}]
-
F^{\ddagger}[\sigma_{n}^{j} \widetilde{\psi}^{j}]
\big\}
\big\|_{L_{t,x}^{\frac{2(d+2)}{d+4}}(I)}
\\[6pt]
&\quad +
\lim_{n\to \infty}
\lambda_{n}^{j}
\big\|
\big\{ 
|(\lambda_{n}^{j})^{-1}\nabla|-\langle (\lambda_{n}^{j})^{-1}\nabla \rangle 
\big\}
F^{\ddagger}[ \sigma_{n}^{j}\widetilde{\psi}^{j}]
\big\|_{L_{t,x}^{\frac{2(d+2)}{d+4}}(I)}
.
\end{split}
\end{equation}

We consider the first term on the right-hand side of \eqref{17/11/09/09:01}. We see from elementary computations and H\"older's inequality  that 
\begin{equation}\label{14/07/02/10:39}
\begin{split}
&
\lim_{n\to \infty}\big\|
\nabla 
\big\{ 
F^{\ddagger}[\sigma_{\infty}^{j} \widetilde{\psi}^{j}]
-
F^{\ddagger}[\sigma_{n}^{j} \widetilde{\psi}^{j}]
\big\}
\big\|_{L_{t,x}^{\frac{2(d+2)}{d+4}}(I)}
\\[6pt]
&\le 
\lim_{n\to \infty}\big\|
|\sigma_{\infty}^{j} \widetilde{\psi}^{j}|^{\frac{4}{d-2}}
\nabla \sigma_{\infty}^{j} \widetilde{\psi}^{j}
-
|\sigma_{n}^{j} \widetilde{\psi}^{j}|^{\frac{4}{d-2}}
\nabla \sigma_{n}^{j} \widetilde{\psi}^{j}
\big\}
\big\|_{L_{t,x}^{\frac{2(d+2)}{d+4}}(I)}
\\[6pt]
&\quad +
\lim_{n\to \infty}\big\|
|\sigma_{\infty}^{j} \widetilde{\psi}^{j}|^{\frac{8-2d}{d-2}}(\sigma_{\infty}^{j} \widetilde{\psi}^{j})^{2}\overline{ \nabla \sigma_{\infty}^{j} \widetilde{\psi}^{j}}
-
|\sigma_{n}^{j} \widetilde{\psi}^{j}|^{\frac{8-2d}{d-2}}
(\sigma_{n}^{j} \widetilde{\psi}^{j})^{2}
\overline{\nabla \sigma_{n}^{j} \widetilde{\psi}^{j}}
\big\}
\big\|_{L_{t,x}^{\frac{2(d+2)}{d+4}}(I)}
.
\end{split}
\end{equation}
Furthermore, we see from the triangle inequality, H\"older's inequality, Sobolev's embedding, Lemma \ref{14/06/21/15:10}, Lemma \ref{17/11/11/15:22} and the assumption \eqref{14/07/03/14:03} that if $d\ge 6$ (hence $\frac{4}{d-2}\le 1$), then 
\begin{equation}\label{17/11/11/14:45}
\begin{split}
&
\lim_{n\to \infty}\big\|
|\sigma_{\infty}^{j} \widetilde{\psi}^{j}|^{\frac{4}{d-2}}
\nabla \sigma_{\infty}^{j} \widetilde{\psi}^{j}
-
|\sigma_{n}^{j} \widetilde{\psi}^{j}|^{\frac{4}{d-2}}
\nabla \sigma_{n}^{j} \widetilde{\psi}^{j}
\big\}
\big\|_{L_{t,x}^{\frac{2(d+2)}{d+4}}(I)}
\\[6pt]
&\quad +
\lim_{n\to \infty}\big\|
|\sigma_{\infty}^{j} \widetilde{\psi}^{j}|^{\frac{8-2d}{d-2}}(\sigma_{\infty}^{j} \widetilde{\psi}^{j})^{2}\overline{ \nabla \sigma_{\infty}^{j} \widetilde{\psi}^{j}}
-
|\sigma_{n}^{j} \widetilde{\psi}^{j}|^{\frac{8-2d}{d-2}}
(\sigma_{n}^{j} \widetilde{\psi}^{j})^{2}
\overline{\nabla \sigma_{n}^{j} \widetilde{\psi}^{j}}
\big\}
\big\|_{L_{t,x}^{\frac{2(d+2)}{d+4}}(I)}
\\[6pt]
&\lesssim 
\lim_{n\to \infty}\big\|
|\sigma_{\infty}^{j} \widetilde{\psi}^{j}|^{\frac{4}{d-2}}
\nabla \sigma_{\infty}^{j} \widetilde{\psi}^{j}
-
|\sigma_{n}^{j} \widetilde{\psi}^{j}|^{\frac{4}{d-2}}
\nabla \sigma_{\infty}^{j} \widetilde{\psi}^{j}
\big\}
\big\|_{L_{t,x}^{\frac{2(d+2)}{d+4}}(I)}
\\[6pt]
&\quad + \lim_{n\to \infty}\big\|
|\sigma_{\infty}^{j} \widetilde{\psi}^{j}|^{\frac{8-2d}{d-2}}(\sigma_{\infty}^{j} \widetilde{\psi}^{j})^{2}\overline{ \nabla \sigma_{\infty}^{j} \widetilde{\psi}^{j}}
-
|\sigma_{n}^{j} \widetilde{\psi}^{j}|^{\frac{8-2d}{d-2}}
(\sigma_{n}^{j} \widetilde{\psi}^{j})^{2}
\overline{\nabla \sigma_{\infty}^{j} \widetilde{\psi}^{j}}
\big\}
\big\|_{L_{t,x}^{\frac{2(d+2)}{d+4}}(I)}
\\[6pt]
&\quad +
\lim_{n\to \infty}\big\|
|\sigma_{n}^{j} \widetilde{\psi}^{j}|^{\frac{4}{d-2}}
\nabla \sigma_{\infty}^{j} \widetilde{\psi}^{j}
-
|\sigma_{n}^{j} \widetilde{\psi}^{j}|^{\frac{4}{d-2}}
\nabla \sigma_{n}^{j} \widetilde{\psi}^{j}
\big\}
\big\|_{L_{t,x}^{\frac{2(d+2)}{d+4}}(I)}
\\[6pt]
&\quad + 
\lim_{n\to \infty}\big\|
|\sigma_{n}^{j} \widetilde{\psi}^{j}|^{\frac{8-2d}{d-2}}(\sigma_{n}^{j} \widetilde{\psi}^{j})^{2}\overline{ \nabla \sigma_{\infty}^{j} \widetilde{\psi}^{j}}
-
|\sigma_{n}^{j} \widetilde{\psi}^{j}|^{\frac{8-2d}{d-2}}
(\sigma_{n}^{j} \widetilde{\psi}^{j})^{2}
\overline{\nabla \sigma_{n}^{j} \widetilde{\psi}^{j}}
\big\}
\big\|_{L_{t,x}^{\frac{2(d+2)}{d+4}}(I)}
\\[6pt]
&\lesssim 
\lim_{n\to \infty}\|
\sigma_{\infty}^{j} \widetilde{\psi}^{j}
-
\sigma_{n}^{j} \widetilde{\psi}^{j}
\|_{W_{2^{*}}(I)}^{\frac{4}{d-2}}
\| 
\nabla \sigma_{\infty}^{j} \widetilde{\psi}^{j}
\|_{V_{2+\frac{4}{d}}(I)}
\\[6pt]
&\quad +
\lim_{n\to \infty}\|
\sigma_{n}^{j} \widetilde{\psi}^{j}
\|_{W_{2^{*}}(I)}^{\frac{4}{d-2}}
\| 
\nabla \big\{
\sigma_{\infty}^{j} \widetilde{\psi}^{j}
-
\sigma_{\infty}^{j} \widetilde{\psi}^{j}
\big\}
\|_{V_{2+\frac{4}{d}}(I)}
\\[6pt]
&\lesssim 
\lim_{n\to \infty}\|
\sigma_{\infty}^{j} \widetilde{\psi}^{j}
-
\sigma_{n}^{j} \widetilde{\psi}^{j}
\|_{W_{2^{*}}(I)}^{\frac{4}{d-2}}
\| 
\langle \nabla \rangle \widetilde{\psi}^{j}
\|_{V_{2+\frac{4}{d}}(I)}
\\[6pt]
&\quad +
\lim_{n\to \infty}\| \langle \nabla \rangle
\widetilde{\psi}^{j}
\|_{V_{2^{*}}(I)}^{\frac{4}{d-2}}
\| 
\nabla \big\{
\sigma_{\infty}^{j} \widetilde{\psi}^{j}
-
\sigma_{\infty}^{j} \widetilde{\psi}^{j}
\big\}
\|_{V_{2+\frac{4}{d}}(I)}
\\[6pt]
&=0.
\end{split}
\end{equation}
Similarly, we can verify that if $3\le d \le 5$ (hence $1<\frac{4}{d-2}=1+\frac{6-d}{d-2}$), then 
\begin{equation}\label{17/11/11/16:25}
\begin{split}
&
\lim_{n\to \infty}\big\|
|\sigma_{\infty}^{j} \widetilde{\psi}^{j}|^{\frac{4}{d-2}}
\nabla \sigma_{\infty}^{j} \widetilde{\psi}^{j}
-
|\sigma_{n}^{j} \widetilde{\psi}^{j}|^{\frac{4}{d-2}}
\nabla \sigma_{n}^{j} \widetilde{\psi}^{j}
\big\}
\big\|_{L_{t,x}^{\frac{2(d+2)}{d+4}}(I)}
\\[6pt]
&\quad +
\lim_{n\to \infty}\big\|
|\sigma_{\infty}^{j} \widetilde{\psi}^{j}|^{\frac{8-2d}{d-2}}(\sigma_{\infty}^{j} \widetilde{\psi}^{j})^{2}\overline{ \nabla \sigma_{\infty}^{j} \widetilde{\psi}^{j}}
-
|\sigma_{n}^{j} \widetilde{\psi}^{j}|^{\frac{8-2d}{d-2}}
(\sigma_{n}^{j} \widetilde{\psi}^{j})^{2}
\overline{\nabla \sigma_{n}^{j} \widetilde{\psi}^{j}}
\big\}
\big\|_{L_{t,x}^{\frac{2(d+2)}{d+4}}(I)}
\\[6pt]
&\lesssim 
\lim_{n\to \infty}
\big\{ 
\| \sigma_{\infty}^{j} \widetilde{\psi}^{j}\|_{W_{2^{*}}}^{\frac{6-d}{d-2}}
+
\|\sigma_{n}^{j} \widetilde{\psi}^{j}
\|_{W_{2^{*}}(I)}^{\frac{6-d}{d-2}}
\big\}
\|
\sigma_{\infty}^{j} \widetilde{\psi}^{j}
-
\sigma_{n}^{j} \widetilde{\psi}^{j}
\|_{W_{2^{*}}(I)}
\| 
\langle \nabla \rangle \widetilde{\psi}^{j}
\|_{V_{2+\frac{4}{d}}(I)}
\\[6pt]
&\quad +
\lim_{n\to \infty}
\| \sigma_{n}^{j}\widetilde{\psi}^{j}
\|_{W_{2^{*}}(I)}^{\frac{4}{d-2}}
\| 
\nabla \big\{
\sigma_{\infty}^{j} \widetilde{\psi}^{j}
-
\sigma_{\infty}^{j} \widetilde{\psi}^{j}
\big\}
\|_{V_{2+\frac{4}{d}}(I)}
\\[6pt]
&=0
.
\end{split}
\end{equation}

Next, we consider the second term on the right-hand side of \eqref{17/11/09/09:01}. We see from the Mihlin multiplier theorem, the assumption \eqref{14/07/03/14:03} and Lemma \ref{17/11/11/15:22} that 
\begin{equation}\label{17/11/10/13:25}
\begin{split}
&
\lim_{n\to \infty}\lambda_{n}^{j}
\big\| 
\big\{ 
|(\lambda_{n}^{j})^{-1}\nabla|- \langle (\lambda_{n}^{j})^{-1}\nabla \rangle 
\big\}
F^{\ddagger}[ \sigma_{n}^{j}\widetilde{\psi}^{j}]
\big\|_{L_{t,x}^{\frac{2(d+2)}{d+4}}(I)}
\\[6pt]
&\lesssim 
 \lim_{n\to \infty} \lambda_{n}^{j}\big\|
F^{\ddagger}[ \sigma_{n}^{j}\widetilde{\psi}^{j}]
\big\|_{L_{t,x}^{\frac{2(d+2)}{d+4}}(I)}
\lesssim  
\lim_{n\to \infty}\big\|
\sigma_{n}^{j}\widetilde{\psi}^{j}
\big\|_{W_{2^{*}}(I)}^{\frac{4}{d-2}}
\| \lambda_{n}^{j} \sigma_{n}^{j}\widetilde{\psi}^{j} \|_{V_{2+\frac{4}{d}}(I)}
=0. 
\end{split}
\end{equation}
Putting \eqref{17/11/09/09:01} through \eqref{17/11/10/13:25} together, we find that 
\begin{equation}\label{17/11/11/17:59}
\lim_{n\to \infty}\big\|
\langle \nabla \rangle (\sigma_{\infty}^{j})^{-1} 
F^{\ddagger}[G_{n}^{j}\sigma_{\infty}^{j}\widetilde{\psi}^{j}]
-
\langle \nabla \rangle 
F^{\ddagger}[G_{n}^{j}\sigma_{n}^{j} \widetilde{\psi}^{j}]
\big\|_{L_{t,x}^{\frac{2(d+2)}{d+4}}(I_{n}^{j})}
=0
.
\end{equation}
We can also verify in a way similar to the estimates \eqref{17/11/09/09:01} and \eqref{17/11/10/13:25} that 
\begin{equation}\label{14/07/02/11:50}
\begin{split}
&\lim_{n\to \infty}
\big\|\langle \nabla \rangle
F^{\dagger}[ G_{n}^{j}\sigma_{n}^{j} \widetilde{\psi}^{j} ]
\big\|_{L_{t,x}^{\frac{2(d+2)}{d+4}}(I_{n}^{j})}
\\[6pt]
&=
\lim_{n\to \infty}
(\lambda_{n}^{j})^{\frac{4-(d-2)(p-1)}{2}}\big\|\lambda_{n}^{j} \langle (\lambda_{n}^{j})^{-1} \nabla \rangle
F^{\dagger}[ \sigma_{n}^{j} \widetilde{\psi}^{j} ]
\big\|_{L_{t,x}^{\frac{2(d+2)}{d+4}}(I_{n}^{j})}
\\[6pt]
&\lesssim 
\lim_{n\to \infty}
(\lambda_{n}^{j})^{\frac{4-(d-2)(p-1)}{2}}
\lambda_{n}^{j}\big\| \big\{ |(\lambda_{n}^{j})^{-1} \nabla | -  \langle (\lambda_{n}^{j})^{-1} \nabla \rangle \big\}
F^{\dagger}[ \sigma_{n}^{j} \widetilde{\psi}^{j} ]
\big\|_{L_{t,x}^{\frac{2(d+2)}{d+4}}(I_{n}^{j})}
\\[6pt]
&\quad + 
\lim_{n\to \infty}
(\lambda_{n}^{j})^{\frac{4-(d-2)(p-1)}{2}}\big\| \nabla 
F^{\dagger}[ \sigma_{n}^{j} \widetilde{\psi}^{j} ]
\big\|_{L_{t,x}^{\frac{2(d+2)}{d+4}}(I_{n}^{j})}
\\[6pt]
&\lesssim 
\lim_{n\to \infty}
(\lambda_{n}^{j})^{\frac{4-(d-2)(p-1)}{2}}
\lambda_{n}^{j}\big\| F^{\dagger}[ \sigma_{n}^{j} \widetilde{\psi}^{j} ]
\big\|_{L_{t,x}^{\frac{2(d+2)}{d+4}}(I_{n}^{j})}
\\[6pt]
&\quad + 
\lim_{n\to \infty}
(\lambda_{n}^{j})^{\frac{4-(d-2)(p-1)}{2}}\big\| \nabla 
F^{\dagger}[ \sigma_{n}^{j} \widetilde{\psi}^{j} ]
\big\|_{L_{t,x}^{\frac{2(d+2)}{d+4}}(I_{n}^{j})}
\\[6pt]
&\lesssim
\lim_{n\to \infty}
(\lambda_{n}^{j})^{\frac{4-(d-2)(p-1)}{2}}
\big\|
\sigma_{n}^{j}\widetilde{\psi}^{j}
\big\|_{W_{p+1}(I)}^{p-1}
\| \lambda_{n}^{j} \sigma_{n}^{j}\widetilde{\psi}^{j} \|_{V_{2+\frac{4}{d}}(I)}
\\[6pt]
&\quad + 
\lim_{n\to \infty}
(\lambda_{n}^{j})^{\frac{4-(d-2)(p-1)}{2}}
\big\|
\sigma_{n}^{j}\widetilde{\psi}^{j}
\big\|_{W_{p+1}(I)}^{p-1}
\| \nabla \sigma_{n}^{j}\widetilde{\psi}^{j} \|_{V_{2+\frac{4}{d}}(I)}
\\[6pt]
&\le
\lim_{n\to \infty}
\| \lambda_{n}^{j} \sigma_{n}^{j} \widetilde{\psi}^{j} 
\|_{V_{2+\frac{4}{d}}(I)}^{(p-1)(1-s_{p})}
\| \sigma_{n}^{j} \widetilde{\psi}^{j} 
\|_{W_{2^{*}}(I)}^{(p-1)s_{p}}
\| \lambda_{n}^{j} \sigma_{n}^{j} \widetilde{\psi}^{j} 
\|_{V_{2+\frac{4}{d}}(I)}
\\[6pt]
&\quad + 
\lim_{n\to \infty}
\| \lambda_{n}^{j} \sigma_{n}^{j}\widetilde{\psi}^{j}
\big\|_{V_{2+\frac{4}{d}}(I)}^{(p-1)(1-s_{p})}
\| \lambda_{n}^{j} \sigma_{n}^{j}\widetilde{\psi}^{j}
\big\|_{W_{2^{*}}(I)}^{(p-1)s_{p}}
\| \nabla \sigma_{n}^{j}\widetilde{\psi}^{j} \|_{V_{2+\frac{4}{d}}(I)}
\\[6pt]
&=0.
\end{split} 
\end{equation}
Then, the desired result \eqref{14/07/02/10:31} follows from \eqref{17/11/11/17:59} and \eqref{14/07/02/11:50}.
\end{proof}

\begin{lemma}\label{17/11/25/12:18}
For any $\delta >0$ and any $k\ge 1$, there exists a number $N(\delta,k)$ such that for any $n\ge N(\delta,k)$, 
\begin{equation}\label{17/11/25/12:19}
\sum_{{1\le j \le k}\atop{\lambda_{\infty}^{j}= \infty}}
\| \langle \nabla \rangle  
F\big[ \psi_{n}^{j}\big]
\|_{L_{t,x}^{\frac{2(d+2)}{d+4}}(\mathbb{R})}
\le \delta, 
\end{equation}
where the sum is taken over all integers $j$ satisfying $1\le j \le k$ and $\lambda_{\infty}^{j}=\infty$.  
\end{lemma}
\begin{proof}[Proof of Lemma \ref{17/11/25/12:18}]
Let $q$ denote $p$ or $2^{*}-1$, and let $j\ge 1$ be a number for which $\lambda_{\infty}^{j}=\infty$, so that $\psi_{n}^{j}=G_{n}^{j}\sigma_{n}^{j} e^{it\Delta}\widetilde{u}^{j}$. Then, we see from H\"older's inequality, Sobolev's embedding, Strichartz' estimate, Lemma \ref{14/09/01/11:30} with $\sigma_{\infty}^{j}=0$ (see \eqref{14/01/26/14:40}) that 
\begin{equation}\label{17/11/25/14:08}
\begin{split}
&
\lim_{n\to \infty}
\|   
| \psi_{n}^{j}|^{q-1} \psi_{n}^{j}
\|_{L_{t,x}^{\frac{2(d+2)}{d+4}}(\mathbb{R})}
\\[6pt]
&\le 
\lim_{n\to \infty} \|   
\psi_{n}^{j}
\|_{W_{q+1}(I_{n})}^{q-1}
\| \psi_{n}^{j}
\|_{V_{2+\frac{4}{d}}(\mathbb{R})}
\\[6pt]
&\lesssim 
\lim_{n\to \infty}(\lambda_{n}^{j})^{(q-1)(1-s_{q})}
\|   |\nabla|^{s_{q}}
\sigma_{n}^{j} e^{it\Delta}\widetilde{u}^{j}
\|_{V_{q+1}(I_{n})}^{q-1}
\lambda_{n}^{j} \| \sigma_{n}^{j} e^{it\Delta}\widetilde{u}^{j}
\|_{V_{2+\frac{4}{d}}(\mathbb{R})}
\\[6pt]
&\lesssim 
\lim_{n\to \infty}(\lambda_{n}^{j})^{(q-1)(1-s_{q})}
\|   |\nabla|^{s_{q}}
\sigma_{n}^{j}  \widetilde{u}^{j} \|_{L^{2}}^{q-1}
\|   \lambda_{n}^{j} \sigma_{n}^{j}  \widetilde{u}^{j} \|_{L^{2}}
\\[6pt]
&\lesssim 
\lim_{n\to \infty}\| \lambda_{n}^{j} \sigma_{n}^{j}  \widetilde{u}^{j} \|_{L^{2}}^{(q-1)(1-s_{q})}
\| \nabla \sigma_{n}^{j}  \widetilde{u}^{j} \|_{L^{2}}^{(q-1)s_{q}}
\| \lambda_{n}^{j} \sigma_{n}^{j}  \widetilde{u}^{j} \|_{L^{2}}
=0. 
\end{split}
\end{equation}
Similarly, we can verify that 
\begin{equation}\label{17/11/25/14:37}
\begin{split}
\lim_{n\to \infty}
\|  \nabla \{  
| \psi_{n}^{j}|^{q-1} \psi_{n}^{j} \}
\|_{L_{t,x}^{\frac{2(d+2)}{d+4}}(\mathbb{R})}
&\lesssim 
\lim_{n\to \infty}
\| | \psi_{n}^{j}|^{q-1} | \nabla \psi_{n}^{j}|
\|_{L_{t,x}^{\frac{2(d+2)}{d+4}}(\mathbb{R})}
\\[6pt]
&\le 
\lim_{n\to \infty} \|   
\psi_{n}^{j}
\|_{W_{q+1}(I_{n})}^{q-1}
\| \nabla \psi_{n}^{j}
\|_{V_{2+\frac{4}{d}}(\mathbb{R})}
=0. 
\end{split}
\end{equation}
Thus, we find from \eqref{17/11/25/14:08} and \eqref{17/11/25/14:37} that for any $\delta>0$ and any $k\ge 1$, there exists $N(\delta,k)\ge 1$ such that for any $n\ge N(\delta,k)$, 
\begin{equation}\label{17/11/25/14:33}
\sum_{{1\le j \le k}\atop{\lambda_{\infty}^{j}= \infty}}
\|\langle \nabla \rangle \big\{ | \psi_{n}^{j}|^{q-2} \psi_{n}^{j} 
\big\}
\|_{L_{t,x}^{\frac{2(d+2)}{d+4}}(\mathbb{R})}\le \delta,
\end{equation}
which gives us the desired result. 
\end{proof}

\begin{lemma}\label{14/06/18/10:16}
Let $j \ge 1$, and let $I$ be an interval on which $\widetilde{\psi}^{j}$ exists. Then, we have 
\begin{equation}
\label{14/06/18/11:27}
\sup_{n\ge 1}\| \langle \nabla \rangle \psi_{n}^{j}\|_{St(I_{n}^{j})}
\lesssim 
\|\langle \nabla \rangle \widetilde{\psi}^{j}\|_{St(I)},
\end{equation}
where $I_{n}^{j}:=( (\lambda_{n}^{j})^{2}\inf{I}+t_{n}^{j}, \ (\lambda_{n}^{j})^{2}\sup{I}+t_{n}^{j})$. 
\end{lemma}
\begin{proof}[Proof of Lemma \ref{14/06/18/10:16}]
We can easily verify that 
\begin{align}
\label{14/06/18/10:17}
&\|\langle \nabla \rangle \psi_{n}^{j}\|_{L_{t}^{\infty}L_{x}^{2}(I_{n}^{j})}
\le 
\|\lambda_{n}^{j} \sigma_{n}^{j} \widetilde{\psi}^{j}\|_{L_{t}^{\infty}L_{x}^{2}(I)}
+
\| \nabla \sigma_{n}^{j}\widetilde{\psi}^{j}\|_{L_{t}^{\infty}L_{x}^{2}(I)},
\\[6pt]
\label{14/06/18/11:00}
&\|\langle \nabla \rangle \psi_{n}^{j}\|_{L_{t}^{2}L_{x}^{2^{*}}(I_{n}^{j})}
\le 
\|\lambda_{n}^{j} \sigma_{n}^{j} \widetilde{\psi}^{j}\|_{L_{t}^{2}L_{x}^{2^{*}}(I)}
+
\| \nabla \sigma_{n}^{j}\widetilde{\psi}^{j}\|_{L_{t}^{2}L_{x}^{2^{*}}(I)}. 
\end{align}
When $\lambda_{\infty}^{j}\in \{0,1\}$, these estimates \eqref{14/06/18/10:17} and \eqref{14/06/18/11:00} together with Lemma \ref{14/06/21/15:10} prove the lemma. Assume $\lambda_{\infty}^{j}= \infty$, so that $\widetilde{\psi}^{j}(t)=e^{it\Delta}\widetilde{u}^{j}$. Then, we see from  \eqref{14/06/18/10:17}, \eqref{14/06/18/11:00} and Strichartz' estimate that 
\begin{equation}\label{17/11/24/15:33}
\begin{split}
\|\langle \nabla \rangle \psi_{n}^{j}\|_{L_{t}^{\infty}L_{x}^{2}(I_{n}^{j})}
+
\|\langle \nabla \rangle \psi_{n}^{j}\|_{L_{t}^{2}L_{x}^{2^{*}}(I_{n}^{j})}
& \lesssim 
\|\lambda_{n}^{j} \sigma_{n}^{j} \widetilde{u}^{j}\|_{L^{2}}
+
\| \nabla  \sigma_{n}^{j} \widetilde{u}^{j}\|_{L^{2}}
\\[6pt]
&\lesssim 
\| \langle \nabla \rangle \widetilde{u}^{j} \|_{L^{2}}
=
\| \langle \nabla \rangle \widetilde{\psi}^{j} \|_{L_{t}^{\infty}L_{x}^{2}(I)}
.
\end{split}
\end{equation}
Thus, we have completed the proof. 
\end{proof}

\begin{lemma}[cf. Lemma 6.8 in \cite{AIKN2}]\label{14/01/26/17:46}
There exists a number $J_{0}$ such that $I_{\max}^{j}=\mathbb{R}$ for any $j>J_{0}$; and for any $r\ge 2$, 
\begin{equation}
\label{14/02/01/16:22}
\sum_{j> J_{0}} 
\|\langle \nabla \rangle \widetilde{\psi}^{j}\|_{St(\mathbb{R})}^{r}
\lesssim 
\sum_{j> J_{0}}\| \widetilde{u}^{j}\|_{H^{1}}^{r}
<\infty.
\end{equation} 
\end{lemma}
\begin{proof}[Proof of Lemma \ref{14/01/26/17:46}]
It follows from \eqref{14/01/26/14:07} and the uniform boundedness \eqref{14/01/26/10:45} that 
\begin{equation}\label{14/01/26/21:41}
\lim_{n\to \infty}\sum_{j=1}^{k}
\big\| g_{n}^{j}\sigma_{n}^{j} e^{-i\frac{t_{n}^{j}}{(\lambda_{n}^{j})^{2}}\Delta }
\widetilde{u}^{j} \big\|_{H^{1}}^{2}\lesssim 1
\end{equation}
for all $k\ge 1$. Furthermore, we see from Lemma \ref{14/09/01/11:30} that: if $\lambda_{\infty}^{j}=0$, then  
\begin{align}
\label{17/11/09/10:25}
&
\lim_{n\to \infty}
\big\| g_{n}^{j}\sigma_{n}^{j} e^{-i\frac{t_{n}^{j}}{(\lambda_{n}^{j})^{2}}\Delta }\widetilde{u}^{j} 
\big\|_{L^{2}}
=
\lim_{n\to \infty}
\big\| \lambda_{n}^{j} \sigma_{n}^{j} \widetilde{u}^{j} \big\|_{L^{2}}
=0,
\\[6pt]
\label{14/09/06/17:23}
&
\lim_{n\to \infty}
\big\| \nabla g_{n}^{j}\sigma_{n}^{j} e^{-i\frac{t_{n}^{j}}{(\lambda_{n}^{j})^{2}}\Delta }\widetilde{u}^{j} 
\big\|_{L^{2}}
=
\lim_{n\to \infty}\big\| \nabla  \sigma_{n}^{j} \widetilde{u}^{j} \big\|_{L^{2}}=
\| \widetilde{u}^{j} \|_{H^{1}};
\end{align}
and if $\lambda_{\infty}^{j}\in \{1,\infty\}$, then  
\begin{equation}\label{14/09/06/17:50}
\lim_{n\to \infty}
\big\| g_{n}^{j}\sigma_{n}^{j} e^{-i\frac{t_{n}^{j}}{(\lambda_{n}^{j})^{2}}\Delta }\widetilde{u}^{j} 
\big\|_{H^{1}}
=
\| \widetilde{u}^{j} \|_{H^{1}}.
\end{equation}
Putting \eqref{14/01/26/21:41} through \eqref{14/09/06/17:50} together, we obtain that 
\begin{equation}\label{14/09/06/17:53}
\sum_{j=1}^{\infty}\| \widetilde{u}^{j} \|_{H^{1}}^{2}
=
\lim_{k\to \infty} \sum_{j=1}^{k}\| \widetilde{u}^{j} \|_{H^{1}}^{2} 
\lesssim 
\lim_{k\to \infty}\sum_{j=1}^{k}\lim_{n\to \infty}
\big\| g_{n}^{j}\sigma_{n}^{j} e^{-i\frac{t_{n}^{j}}{(\lambda_{n}^{j})^{2}}\Delta }\widetilde{u}^{j} \big\|_{H^{1}}^{2}\lesssim 1
. 
\end{equation}
In particular, for any $\delta>0$, we can take $J(\delta)\ge 1$ such that for any $j\ge J(\delta)$, 
\begin{equation}\label{15/02/23/10:19}
\| \widetilde{u}^{j} \|_{H^{1}} < \delta.
\end{equation}
Note here that if $\lambda_{\infty}^{j}=0$, then \eqref{15/02/23/10:19} is rewritten as $\| \nabla \sigma_{\infty}^{j} \widetilde{u}^{j} \|_{L^{2}}^{2}< \delta$. Since $\widetilde{\psi}^{j}$ satisfies \eqref{14/01/29/11:41} and \eqref{14/01/27/14:51}, the small-data theory (Lemma \ref{14/01/30/10:19}) together with \eqref{15/02/23/10:19} implies that there exists a number $J_{0}$ such that  for any $j\ge J_{0}$, 
\begin{equation}\label{14/01/26/21:47}
\|\langle \nabla \rangle \widetilde{\psi}^{j}\|_{St(\mathbb{R})}
\lesssim  
\| \widetilde{u}^{j}\|_{H^{1}}.
\end{equation}
Then, the claim \eqref{14/02/01/16:22} follows from \eqref{14/09/06/17:53}, \eqref{14/01/26/21:47} and Jensen's inequality.
\end{proof}

\begin{lemma}\label{14/01/26/17:10}
For any number $k\ge 1$, there exists a number $N(k)$ such that for any $n\ge N(k)$, \begin{equation}\label{14/01/29/21:05}
\sum_{j=1}^{k}\mathcal{I}_{\omega}\big( g_{n}^{j}\sigma_{n}^{j}e^{-i\frac{t_{n}^{j}}{(\lambda_{n}^{j})^{2}}\Delta}\widetilde{u}^{j} \big) 
+
\mathcal{I}_{\omega}(w_{n}^{k}) 
\le 
m_{\omega}-\frac{\kappa_{1}(R_{*})}{10 d(p-1)}.
\end{equation}
\end{lemma}
\begin{proof}[Proof of Lemma \ref{14/01/26/17:10}]
It follows from \eqref{14/09/06/14:57}, \eqref{14/01/26/11:03} and $\mathcal{S}_{\omega}(\psi_{n})\le m_{\omega}+\varepsilon_{n}$ that for any numbers $n$ and $k$, 
\begin{equation}\label{14/01/26/16:48}
\begin{split}
m_{\omega}-\frac{\kappa_{1}(R_{*})}{2d(p-1)}
&\ge 
m_{\omega}+\varepsilon_{n}
-\frac{2}{d(p-1)}\mathcal{K}(\psi_{n}(0))
\\[6pt]
&\ge 
\mathcal{S}_{\omega}(\psi_{n})
-
\frac{2}{d(p-1)}\mathcal{K}(\psi_{n}(0))
=
\mathcal{I}_{\omega}(\psi_{n}(0))
.
\end{split}
\end{equation}
This together with \eqref{14/01/26/14:11} gives us the desired result \eqref{14/01/29/21:05}. 
\end{proof}

\begin{lemma}\label{14/01/29/12:01}
For any number $k\ge 1$, there exists a number $N(k)$ such that for any $n\ge N(k)$ and any $j \le k$ for which the linear profile $\widetilde{u}^{j}$ is non-trivial,   
\begin{align}
\label{14/01/28/09:07}
&\mathcal{H}\big( 
g_{n}^{j}\sigma_{n}^{j}e^{-i\frac{t_{n}^{j}}{(\lambda_{n}^{j})^{2}}\Delta}\widetilde{u}^{j}\big)
>
\frac{1}{2}\mathcal{K}\big( 
g_{n}^{j}\sigma_{n}^{j}e^{-i\frac{t_{n}^{j}}{(\lambda_{n}^{j})^{2}}\Delta}\widetilde{u}^{j}
\big) > 0,
\\[6pt]
\label{14/01/29/12:09}
&\mathcal{H}(w_{n}^{k})\ge \frac{1}{2}\mathcal{K}(w_{n}^{k})\ge 0.
\end{align}
\end{lemma}
\begin{proof}[Proof of Lemma \ref{14/01/29/12:01}]
Since $\mathcal{I}_{\omega}$ is non-negative, Lemma \ref{14/01/26/17:10} 
 together with \eqref{12/03/23/18:17} shows that for any $k\ge 1$, there exists a number $N(k)$ such that for any $n\ge N(k)$ and any $j\le k$ for which $\widetilde{u}^{j}$ is non-trivial, 
\begin{equation}\label{15/02/23/10:53}
\mathcal{K}\big( 
g_{n}^{j}\sigma_{n}^{j}e^{-i\frac{t_{n}^{j}}{(\lambda_{n}^{j})^{2}}\Delta}\widetilde{u}^{j}
\big) >0, 
\quad 
\mathcal{K}(w_{n}^{k})\ge 0. 
\end{equation}
This together with \eqref{14/09/19/16:12} gives us the desired result. 
\end{proof}

\begin{lemma}\label{14/01/29/11:33}
There is at most one number $j_{0}$ such that $\lambda_{\infty}^{j_{0}} \in \{0,1\}$ and 
\begin{equation}\label{14/01/29/11:31}
\left\{ 
\begin{array}{ccc}
\mathcal{S}_{\omega}(\sigma_{\infty}^{j_{0}}\widetilde{\psi}^{j_{0}}) \ge m_{\omega}
&\mbox{if}& \lambda_{\infty}^{j_{0}}=1,
\\[6pt]
\mathcal{H}^{\ddagger}(\sigma_{\infty}^{j_{0}}\widetilde{\psi}^{j_{0}}) 
\ge 
\mathcal{H}^{\ddagger}(W)
&\mbox{if}& \lambda_{\infty}^{j_{0}}=0.
\end{array}
\right.
\end{equation}
\end{lemma}
\begin{proof}[Proof of Lemma \ref{14/01/29/11:33}] 
First, we consider a number $k$ such that     
\begin{equation}\label{14/01/29/20:11}
\mathcal{S}_{\omega} \big( g_{n}^{k}\sigma_{n}^{k}e^{-i\frac{t_{n}^{k}}{(\lambda_{n}^{k})^{2}}\Delta}\widetilde{u}^{k} \big)
\le \frac{2}{3}m_{\omega}.
\end{equation}
Then, it follows from Lemma \ref{14/09/01/11:30}, Lemma \ref{14/09/07/15:55}, \eqref{14/01/29/11:41}, \eqref{14/01/29/20:27}, \eqref{15/02/24/20:55} and \eqref{12/03/23/17:48} that for  any sufficiently large number $n$,
\begin{equation}\label{14/01/29/20:13}
\left\{ \begin{array}{lcc}
\mathcal{S}_{\omega}( \sigma_{\infty}^{k}\widetilde{\psi}^{k})
=
\mathcal{S}_{\omega}\big( \sigma_{\infty}^{k}\widetilde{\psi}^{k}\Big( -\frac{t_{n}^{k}}{(\lambda_{n}^{k})^{2}} \Big)\big)<m_{\omega}
&\mbox{if}& \lambda_{\infty}^{k}=1,
\\[9pt]
\mathcal{H}^{\ddagger}(\sigma_{\infty}^{k}\widetilde{\psi}^{k})
=
\mathcal{H}^{\ddagger}\big( \sigma_{\infty}^{k}\widetilde{\psi}^{k}\Big( -\frac{t_{n}^{k}}{(\lambda_{n}^{k})^{2}} \Big)
\big)
<
\mathcal{H}^{\ddagger}(W)
&\mbox{if}& \lambda_{\infty}^{k}=0.
\end{array} \right.
\end{equation}
Thus, it suffices for the desired result to show that there is at most one number $j_{0}$ such that for any sufficiently large $n$, 
\begin{equation}\label{14/01/29/19:56}
\mathcal{S}_{\omega}\big( g_{n}^{j_{0}}\sigma_{n}^{j_{0}}e^{-i\frac{t_{n}^{j_{0}}}{(\lambda_{n}^{j_{0}})^{2}}\Delta}\widetilde{u}^{j_{0}} \big)\ge \frac{2}{3}m_{\omega}.
\end{equation}
We see from Lemma \ref{14/01/29/12:01} that for any $j\ge 1$ and any sufficiently large $n$,  
\begin{equation}\label{14/01/29/11:56}
\mathcal{S}_{\omega} \big( g_{n}^{j}\sigma_{n}^{j}e^{-i\frac{t_{n}^{j}}{(\lambda_{n}^{j})^{2}}\Delta}\widetilde{u}^{j} \big) \ge 0.
\end{equation}
Hence, it follows from \eqref{14/01/26/14:09} and $\mathcal{S}_{\omega}(\psi_{n}(0))<m_{\omega}+\varepsilon_{n}$ that there are at most one linear profile satisfying \eqref{14/01/29/19:56}. 
\end{proof}

Now, using Lemma \ref{14/01/29/11:33} and reordering the indices, we may assume that for any $j\ge 2$, 
\begin{equation}\label{14/02/01/17:53}
\left\{ \begin{array}{ccc}
\mathcal{S}_{\omega}(\sigma_{\infty}^{j}\widetilde{\psi}^{j})<m_{\omega}
&\mbox{if}& \lambda_{\infty}^{j}=1,
\\[6pt]
\mathcal{H}^{\ddagger}(\sigma_{\infty}^{j}\widetilde{\psi}^{j})
<
\mathcal{H}^{\ddagger}(W)
&\mbox{if}& \lambda_{\infty}^{j}=0.
\end{array}\right.
\end{equation}

\begin{lemma}\label{14/01/29/20:50}
Assume \eqref{14/02/01/17:53}. Then, for any $j\ge 1$ for which the nonlinear profile $\widetilde{\psi}^{j}$ is non-trivial and $\lambda_{\infty}^{j}\in \{0,1\}$, there exists a number $N(j)$ such that for any $n\ge N(j)$,  
\begin{equation}\label{14/01/28/11:45}
\left\{ \begin{array}{lcl}
\mathcal{K}\big( 
\widetilde{\psi}^{j}
\Big( -\frac{t_{n}^{j}}{(\lambda_{n}^{j})^{2}}\Big)
\big) > 0 \qquad 
&\mbox{if}& \lambda_{\infty}^{j}=1,
\\[9pt]
\mathcal{K}^{\ddagger}\big( 
\sigma_{\infty}^{j}\widetilde{\psi}^{j}\Big(-\frac{t_{n}^{j}}{(\lambda_{n}^{j})^{2}} \Big) >0
&\mbox{if}& \lambda_{\infty}^{j}=0.
\end{array} \right.
\end{equation} 
Furthermore, if $\lambda_{\infty}^{j}=0$ and $\mathcal{H}^{\ddagger}(\sigma_{\infty}^{j}\widetilde{\psi}^{j})
<
\mathcal{H}^{\ddagger}(W)$, then for any $n\ge N(j)$, 
\begin{equation}\label{15/02/24/21:28}
\big\|
\nabla \sigma_{\infty}^{j}\widetilde{\psi}^{j}
\Big(-\frac{t_{n}^{j}}{(\lambda_{n}^{j})^{2}} \Big)
\big\|_{L^{2}}^{2}
<
\|\nabla W\|_{L^{2}}^{2}.
\end{equation}
\end{lemma}
\begin{proof}[Proof of Lemma \ref{14/01/29/20:50}] 
We see from Lemma \ref{14/01/26/17:10} that there exists a number $N(j)$ such that for any $n \ge N(j)$, 
\begin{equation}\label{14/01/26/22:17}
\mathcal{I}_{\omega}\big(g_{n}^{j}\sigma_{n}^{j}  e^{-i\frac{t_{n}^{j}}{(\lambda_{n}^{j})^{2}}\Delta} \widetilde{u}^{j} \big) 
\le m_{\omega}-\frac{\kappa_{1}(R_{*})}{100 d(p-1)} 
.
\end{equation} 
When $\lambda_{\infty}^{j}=1$, the desired result \eqref{14/01/28/11:45} follows from \eqref{14/01/29/20:27}, \eqref{14/01/26/22:17} and \eqref{12/03/23/18:17}. In order to prove \eqref{14/01/28/11:45} in the case $\lambda_{\infty}^{j}=0$, note that it follows from \eqref{14/09/20/13:55} and \eqref{14/01/26/22:17} that for any $n \ge N(j)$, 
\begin{equation}\label{18/01/28/18:06}
\begin{split}
&
\mathcal{I}_{\omega}^{\ddagger}\big(g_{n}^{j}\sigma_{n}^{j}  e^{-i\frac{t_{n}^{j}}{(\lambda_{n}^{j})^{2}}\Delta} \widetilde{u}^{j} \big) 
=
\mathcal{H}^{\ddagger}\big(g_{n}^{j}\sigma_{n}^{j}  e^{-i\frac{t_{n}^{j}}{(\lambda_{n}^{j})^{2}}\Delta} \widetilde{u}^{j} \big) 
-
\frac{2}{d(p-1)}
\mathcal{K}^{\ddagger}\big(g_{n}^{j}\sigma_{n}^{j}  e^{-i\frac{t_{n}^{j}}{(\lambda_{n}^{j})^{2}}\Delta} \widetilde{u}^{j} \big) 
\\[6pt]
&\le 
\mathcal{S}_{\omega}\big(g_{n}^{j}\sigma_{n}^{j}  e^{-i\frac{t_{n}^{j}}{(\lambda_{n}^{j})^{2}}\Delta} \widetilde{u}^{j} \big) 
-
\frac{2}{d(p-1)}
\mathcal{K}\big(g_{n}^{j}\sigma_{n}^{j}  e^{-i\frac{t_{n}^{j}}{(\lambda_{n}^{j})^{2}}\Delta} \widetilde{u}^{j} \big) 
\\[6pt]
&=
\mathcal{I}_{\omega}\big(g_{n}^{j}\sigma_{n}^{j}  e^{-i\frac{t_{n}^{j}}{(\lambda_{n}^{j})^{2}}\Delta} \widetilde{u}^{j} \big) 
\le 
m_{\omega}-\frac{\kappa_{1}(R_{*})}{100 d(p-1)} 
.
\end{split} 
\end{equation} 
Furthermore, we see from \eqref{18/01/28/18:06} and \eqref{14/01/29/20:27} that for any $n \ge N(j)$, 
\begin{equation}\label{15/02/24/20:52}
\mathcal{I}_{\omega}^{\ddagger} \big( \sigma_{\infty}^{j}\widetilde{\psi}^{j}\Big(-\frac{t_{n}^{j}}{(\lambda_{n}^{j})^{2}} \Big) \big) 
=
\lim_{n\to \infty}\mathcal{I}^{\ddagger}\big( 
g_{n}^{j}\sigma_{n}^{j} e^{-i\frac{t_{n}^{j}}{(\lambda_{n}^{j})^{2}}\Delta}\widetilde{u}^{j} \big)
\le 
m_{\omega}-\frac{\kappa_{1}(R_{*})}{100 d(p-1)}
.
\end{equation}
Thus, when $\lambda_{\infty}^{j}=0$, \eqref{15/08/10/17:31} together with \eqref{15/02/24/20:55}, \eqref{12/03/23/17:48} and \eqref{15/02/24/20:52} shows that for any $n\ge N(j)$, 
\begin{equation}\label{15/02/24/11:13}
\mathcal{K}^{\ddagger}\big( 
\sigma_{\infty}^{j}\widetilde{\psi}^{j}\Big(-\frac{t_{n}^{j}}{(\lambda_{n}^{j})^{2}} \Big) 
\big)
=
\big\|\nabla \sigma_{\infty}^{j}\widetilde{\psi}^{j}\Big(-\frac{t_{n}^{j}}{(\lambda_{n}^{j})^{2}} \Big) \big\|_{L^{2}}^{2}
-
\big\|\sigma_{\infty}^{j}\widetilde{\psi}^{j}\Big(-\frac{t_{n}^{j}}{(\lambda_{n}^{j})^{2}} \Big) \big\|_{L^{2^{*}}}^{2^{*}}
>0,
\end{equation}
which completes the proof of \eqref{14/01/28/11:45}.
\par 
Next, we shall show \eqref{15/02/24/21:28}. Assume $\lambda_{\infty}^{j}=0$ and $\mathcal{H}^{\ddagger}(\sigma_{\infty}^{j}\widetilde{\psi}^{j})
<
\mathcal{H}^{\ddagger}(W)$. Then, we see from \eqref{15/02/24/11:13}, \eqref{15/02/24/20:55} and \eqref{12/08/22/13:32} that for any $n\ge N(j)$,
\begin{equation}\label{14/01/28/21:10}
\begin{split}
&\frac{1}{d}\big\|\nabla \sigma_{\infty}^{j}\widetilde{\psi}^{j}\Big(-\frac{t_{n}^{j}}{(\lambda_{n}^{j})^{2}} \Big) \big\|_{L^{2}}^{2}
\\[6pt]
&=
\frac{1}{2}\big\|\nabla \sigma_{\infty}^{j}\widetilde{\psi}^{j}\Big(-\frac{t_{n}^{j}}{(\lambda_{n}^{j})^{2}} \Big) \big\|_{L^{2}}^{2}
-
\frac{1}{2^{*}}\big\|\nabla \sigma_{\infty}^{j}\widetilde{\psi}^{j}\Big(-\frac{t_{n}^{j}}{(\lambda_{n}^{j})^{2}} \Big) \big\|_{L^{2}}^{2}
\\[6pt]
&\le 
\frac{1}{2}\big\|\nabla \sigma_{\infty}^{j}\widetilde{\psi}^{j}\Big(-\frac{t_{n}^{j}}{(\lambda_{n}^{j})^{2}} \Big) \big\|_{L^{2}}^{2}
-
\frac{1}{2^{*}}\big\|\sigma_{\infty}^{j}\widetilde{\psi}^{j}\Big(-\frac{t_{n}^{j}}{(\lambda_{n}^{j})^{2}} \Big) \big\|_{L^{2^{*}}}^{2^{*}}
\\[6pt]
&=
\mathcal{H}^{\ddagger}\big( 
\sigma_{\infty}^{j}\widetilde{\psi}^{j}
\big) 
<
\mathcal{H}^{\ddagger}(W)
=
\frac{1}{d}\|\nabla W\|_{L^{2}}^{2}
.
\end{split} 
\end{equation}
Thus, we have completed the proof. 
\end{proof}

We see from Lemma \ref{14/01/29/20:50} and \eqref{14/02/01/17:53} that for any $j\ge 2$ for which $\widetilde{\psi}^{j}$ is non-trivial and $\lambda_{\infty}^{j}\neq \infty$, 
\begin{equation}\label{14/02/05/11:47}
\left\{ \begin{array}{ccc}
\sigma_{\infty}^{j}\widetilde{\psi}^{j} \in PW_{\omega,+} &\mbox{if}& \lambda_{\infty}^{j}=1,
\\[6pt]
\sigma_{\infty}^{j}\widetilde{\psi}^{j} \in PW_{+}^{\ddagger} &\mbox{if}& \lambda_{\infty}^{j}=0. 
\end{array}\right.
\end{equation}
Furthermore, since $\widetilde{\psi}^{j}(t)=e^{it\Delta}\widetilde{u}^{j}$ for $j\ge 1$ with $\lambda_{\infty}^{j}=\infty$, it follows from Theorem \ref{15/04/05/15:27}, Theorem \ref{15/03/24/16:05} and Lemma \ref{14/01/26/17:46} that $I_{\max}^{j}=\mathbb{R}$ for any $j\ge 2$, and 
\begin{equation}
\label{14/02/01/17:58}
\sup_{j\ge 2} \|\langle \nabla \rangle \widetilde{\psi}^{j}\|_{St(\mathbb{R})}<\infty. 
\end{equation}

\subsubsection{Existence of bad profile}\label{15/07/11/11:29}

In the previous section, we showed that there are at most one bad profile (see Lemma \ref{14/01/29/11:33}), and the candidate is $\widetilde{\psi}^{1}$ (see \eqref{14/02/01/17:58}). Our aim in  this section is to show that for any $T \in (T_{\min}^{1},T_{\max}^{1})$, 
\begin{equation}\label{15/07/11/14:25}
\|\sigma_{\infty}^{1} \widetilde{\psi}^{1} \|_{\mathcal{W}^{1}([T,T_{\max}^{1}))}
=\infty,
\end{equation}
where $\mathcal{W}^{1}$ is the function space defined by \eqref{14/02/08/16:35}  (see Proposition \ref{14/02/02/14:35} below). To this end, we observe properties of $\widetilde{\psi}^{1}$ on an interval where  
\begin{equation}\label{14/06/14/14:38}
\|\sigma_{\infty}^{1}\widetilde{\psi}^{1} \|_{\mathcal{W}^{1}(I)}
<\infty. 
\end{equation}

\begin{lemma}\label{14/02/02/18:02}
For any interval $I$ satisfying \eqref{14/06/14/14:38}, we can take a constant $A(I)>0$ such that 
\begin{equation}\label{14/02/02/18:04}
\|\langle \nabla \rangle \widetilde{\psi}^{1} \|_{St(I)}\le A(I).
\end{equation}
\end{lemma}
\begin{proof}[Proof of Lemma \ref{14/02/02/18:02}]
When $\lambda_{\infty}^{1}=\infty$, $\widetilde{\psi}^{1}(t)=e^{it \Delta}\widetilde{u}^{1}$ and therefore the claim follows from Strichartz' estimate. Moreover, when $I$ is a compact interval in $I_{\max}^{1}$, the claim follows from the well-posedness theory. Thus, we may assume that $\lambda_{\infty}^{j}\in \{0,1\}$, and $\sup{I}=T_{\max}^{1}$ or $\inf{I}=T_{\min}^{1}$. We only consider the case where $\sup{I}=T_{\max}^{1}$ and $T_{\min}^{1}< \inf{I}$. The same proof is applicable for the other cases. Since $\sup{I}=T_{\max}^{1}$, the blowup criterion (cf. Theorem 4.1 in \cite{AIKN2} and Lemma 2.11 in \cite{Kenig-Merle}) together with $\eqref{14/06/14/14:38}$ shows that $T_{\max}^{1}=\infty$ and   
\begin{equation}\label{14/02/02/18:03}
\|\sigma_{\infty}^{1}\widetilde{\psi}^{1} \|_{\mathcal{W}^{1}([\inf{I},\infty))}
<\infty.
\end{equation}

We shall show that  
\begin{equation}\label{14/02/02/18:05}
\sup_{t\in [\inf{I},\infty)} \|\widetilde{\psi}^{1} (t) \|_{H^{1}}<\infty.
\end{equation}
Suppose for contradiction that \eqref{14/02/02/18:05} was false. Then, we could take a sequence $\{t_{n}\}$ in $[\inf{I},\infty)$ such that $\lim_{n\to \infty}t_{n}=\infty$ and 
\begin{equation}\label{14/02/02/20:54}
\lim_{n\to \infty}\big\|\widetilde{\psi}^{1}(t_{n})\big\|_{H^{1}}=\infty.
\end{equation}
Assume $\lambda_{\infty}^{1}=1$. Then, $\widetilde{\psi}^{1}$ is a solution to \eqref{12/03/23/17:57}, and we see from Strichartz' estimate and H\"older's inequality that for any $t_{m}<t_{n}$, 
\begin{equation}\label{14/02/02/20:58}
\begin{split}
&\big\|e^{-it_{m}\Delta}\widetilde{\psi}^{1}(t_{m})-e^{-it_{n}\Delta}\widetilde{\psi}^{1}(t_{n}) \big\|_{H^{1}}
\le 
\sup_{T\in [t_{m},t_{n}]}
\Big\|\int_{t_{m}}^{T}e^{-it\Delta} 
F[\widetilde{\psi}^{1}(t)]
\,dt \Big\|_{H^{1}}
\\[6pt]
&\lesssim 
\| \widetilde{\psi}^{1} \|_{W_{p+1}([t_{m},\infty))}^{p-1}
\|\langle \nabla \rangle \widetilde{\psi}^{1} \|_{V_{p+1}([t_{m},\infty))}+
\| \widetilde{\psi}^{1} \|_{W_{2^{*}}([t_{m},\infty))}^{\frac{4}{d-2}}
\|\langle \nabla \rangle \widetilde{\psi}^{1} \|_{V_{2^{*}}([t_{m}, \infty ))}.
\end{split}
\end{equation}
Moreover, we see from \eqref{14/02/02/18:03} and $\sigma_{\infty}^{1}=1$ that 
\begin{equation}\label{14/02/02/21:39}
\lim_{T \to \infty} \|\widetilde{\psi}^{1} \|_{W_{p+1}([T,\infty))\cap W_{2^{*}}([T,\infty))}=0.
\end{equation}
In particular, for any $\delta >0$, there exists $T(\delta)>0$ such that 
\begin{equation}\label{14/02/02/18:07}
\|\widetilde{\psi}^{1} \|_{W_{p+1}([T(\delta),\infty))\cap W_{2^{*}}([T(\delta),\infty))}\le \delta.
\end{equation}
An estimate similar to \eqref{14/02/02/20:58} together with \eqref{14/02/02/18:07} also yields that  
\begin{equation}\label{14/02/02/21:15}
\begin{split}
&
\| \langle \nabla \rangle \widetilde{\psi}^{1} 
\|_{V_{p+1}([T(\delta),\infty))\cap V_{2^{*}}([T(\delta),\infty))}
\\[6pt]
&\lesssim 
\|\widetilde{\psi}^{1}\big(T(\delta)\big)\|_{H^{1}}
+(\delta^{p-1}+\delta^{\frac{4}{d-2}})
\| \langle \nabla \rangle \widetilde{\psi}^{1} 
\|_{V_{p+1}([T(\delta),\infty))\cap V_{2^{*}}([T(\delta),\infty))}.
\end{split}
\end{equation}
Thus, we can take $\delta_{0}>0$ such that  
\begin{equation}\label{14/02/02/21:31}
\| \langle \nabla \rangle \widetilde{\psi}^{1} \|_{V_{p+1}([T(\delta_{0}),\infty))\cap V_{2^{*}}([T(\delta_{0}) ,\infty))} 
\lesssim 
\|\widetilde{\psi}^{1}\big(T(\delta_{0})\big)\|_{H^{1}}.
\end{equation}
Combining \eqref{14/02/02/20:58} with \eqref{14/02/02/21:39} and \eqref{14/02/02/21:31}, we find that $\{e^{-it_{n}\Delta}\widetilde{\psi}^{1}(t_{n})\}$ is a Cauchy sequence in $H^{1}(\mathbb{R}^{d})$. Similarly, when $\lambda_{\infty}^{1}=0$, we can verify that  $\{e^{-it_{n}\Delta}\sigma_{\infty}^{1} \widetilde{\psi}^{1}(t_{n})\}$ is a Cauchy sequence in $\dot{H}^{1}(\mathbb{R}^{d})$. Thus, in both cases $\lambda_{\infty}^{1}=1$ and $\lambda_{\infty}^{1}=0$, we can take $\phi_{+}\in H^{1}(\mathbb{R}^{d})$ such that 
\begin{equation}\label{14/02/02/21:41}
\lim_{n\to \infty}\| \widetilde{\psi}^{1}(t_{n})\|_{H^{1}}
=
\|\phi_{+}\|_{H^{1}}<\infty.
\end{equation}
However, this contradicts \eqref{14/02/02/20:54}. Hence, we have proved \eqref{14/02/02/18:05}. 
\par 
Now, we are able to show \eqref{14/02/02/18:04}. If $\lambda_{\infty}^{1}=1$, then $e[\widetilde{\psi}^{1}]\equiv 0$ (see \eqref{14/05/05/17:02} for the definition of $e[\widetilde{\psi}^{1}]$), and  Lemma \ref{14/06/11/11:16} together with \eqref{14/02/02/18:03} and \eqref{14/02/02/18:05} gives us the desired estimate \eqref{14/02/02/18:04}. If $\lambda_{\infty}^{1}=0$, then $e^{\ddagger}[\sigma_{\infty}^{1}\widetilde{\psi}^{1}]\equiv 0$. Moreover, it follows from \eqref{14/02/08/16:35}, \eqref{14/02/02/18:03} and \eqref{14/02/02/18:05} that 
\begin{equation}
\label{14/06/14/16:13}
\| \sigma_{\infty}^{1}\widetilde{\psi}^{1} \|_{W_{2^{*}}([\inf{I},\infty))}<\infty, 
\quad 
\sup_{t\in [\inf{I},\infty)}
\| \nabla \sigma_{\infty}^{1}\widetilde{\psi}^{1} \|_{L^{2}}
<\infty.
\end{equation} 
Hence, we see from Lemma \ref{14/06/11/11:16} that 
\begin{equation}\label{14/06/14/16:29}
\|\langle \nabla \rangle \widetilde{\psi}^{1} \|_{St([\inf{I},\infty))}
=
\| \nabla \sigma_{\infty}^{1}\widetilde{\psi}^{1} \|_{St([\inf{I},\infty))}
<\infty,
\end{equation}
which completes the proof. 
\end{proof}

Since the estimate \eqref{14/01/26/11:57} is insufficient to control the remainder $e^{it\Delta}w_{n}^{j}$ in the Strichartz space $\langle \nabla \rangle^{-1}St(\mathbb{R})$, we need the following estimate: 
\begin{lemma}\label{14/08/26/11:23}
Assume \eqref{14/02/01/17:53}, and let $q$ denote $p$ or $2^{*}-1$. Then, for any $\delta>0$, any interval $I$ satisfying \eqref{14/06/14/14:38}, and any number $j\ge 1$, there exists a number $K(\delta,I,j)$ with the following property: for any $k\ge K(\delta,I,j)$, there exists a number $N(\delta,I,j,k)$ such that for any $n\ge N(\delta,I,j,k)$ and $s\in \{0,1\}$,  
\begin{equation}
\label{14/02/26/11:25}
\big\| \psi_{n}^{j}| \nabla|^{s} e^{it\Delta}w_{n}^{k} 
\big\|_{L_{t,x}^{\frac{2(d+2)(q-1)}{d(q-1)+4}}(I_{n})}
\le \delta,
\end{equation}
where $I_{n}:=( (\lambda_{n}^{1})^{2}\inf{I}+t_{n}^{1}, \ (\lambda_{n}^{1})^{2}\sup{I}+t_{n}^{1})$. 
\end{lemma}
\begin{proof}[Proof of Lemma \ref{14/08/26/11:23}] 
Let $I$ be an interval on which \eqref{14/06/14/14:38} holds. Put $I^{1}:=I$ and  $I^{j}:=\mathbb{R}$ for  $j\ge 2$. Then, it follows from Lemma \ref{14/02/02/18:02} and \eqref{14/02/01/17:58} that there exists a constant $A(I)>0$ such that  
\begin{equation}\label{14/08/27/11:22}
\sup_{j\ge 1}\|\langle \nabla \rangle \widetilde{\psi}^{j}\|_{St(I^{j})}\le A(I).  
\end{equation} 
Moreover, we see from \eqref{14/02/01/10:50} and elementary computations that for any numbers $j, k, n \ge 1$  and $s\in \{0,1\}$,  
\begin{equation}\label{17/11/15/10:19}
\begin{split}
&\| \psi_{n}^{j}  |\nabla |^{s} e^{it\Delta} w_{n}^{k} \|_{L_{t,x}^{\frac{2(d+2)(q-1)}{d(q-1)+4}}(I_{n})}
\\[6pt]
&\le 
(\lambda_{n}^{j})^{2-s_{q}}
\| \sigma_{n}^{j}\widetilde{\psi}^{j}  
(G_{n}^{j})^{-1} |\nabla |^{s} e^{it\Delta}  w_{n}^{k}
\|_{L_{t,x}^{\frac{2(d+2)(q-1)}{d(q-1)+4}}(I^{j})}
.
\end{split}
\end{equation}
We consider the right-hand side of \eqref{17/11/15/10:19} according to $s$ and $\lambda_{\infty}^{j}$:
\\
\noindent 
{\bf Case 1.1.}~Assume $s=0$ and $\lambda_{\infty}^{j}\in \{0,1 \}$ in \eqref{17/11/15/10:19}. Then, we see from H\"older's inequality, Lemma \ref{14/06/21/15:10}, \eqref{14/08/27/11:22} and \eqref{15/02/28/16:41} that there exists  a number $K_{1}(\delta,I)$ with the following property: for any $k\ge K_{1}(\delta,I)$, there exists a number $N_{1}(\delta,I,k)$ such that  for any $j\ge 1$ and any $n \ge N_{1}(\delta,I, k)$, 
\begin{equation}\label{14/08/08/16:12}
\begin{split}
&
(\lambda_{n}^{j})^{2-s_{q}}
\| \sigma_{n}^{j}\widetilde{\psi}^{j}  
(G_{n}^{j})^{-1} e^{it\Delta}  w_{n}^{k}
\|_{L_{t,x}^{\frac{2(d+2)(q-1)}{d(q-1)+4}}(I^{j})}
\\[6pt]
&\le 
\lambda_{n}^{j}
\| 
\sigma_{n}^{j}\widetilde{\psi}^{j}
\|_{V_{2+\frac{4}{d}}(I^{j})} 
\| e^{it\Delta}  w_{n}^{k} \|_{W_{q+1}(\mathbb{R})}
\lesssim 
\| 
\langle \nabla \rangle \widetilde{\psi}^{j}
\|_{V_{2+\frac{4}{d}}(I^{j})} 
\| e^{it\Delta}  w_{n}^{k} \|_{W_{q+1}(\mathbb{R})}
\le 
\delta.
\end{split}
\end{equation}

\noindent 
{\bf Case 1.2.}~Assume $s=0$ and $\lambda_{\infty}^{j}=\infty$. Then, $\widetilde{\psi}^{j}(t)=e^{it\Delta}\widetilde{u}^{j}$ and we see from an estimate similar to \eqref{14/08/08/16:12}, Strichartz' estimate, Lemma \ref{14/09/01/11:30} and Lemma \ref{14/01/26/17:46} that for any $k\ge K_{1}(\delta,I)$, there exists a number $N_{1}(\delta,I,k)$ such that  for any $j\ge 1$ and any $n \ge N_{1}(\delta,I, k)$, 
\begin{equation}\label{18/01/28/10:07}
\begin{split}
&
(\lambda_{n}^{j})^{2-s_{q}}
\| \sigma_{n}^{j}\widetilde{\psi}^{j}  
(G_{n}^{j})^{-1} e^{it\Delta}  w_{n}^{k}
\|_{L_{t,x}^{\frac{2(d+2)(q-1)}{d(q-1)+4}}(I^{j})}
\\[6pt]
&\le 
\lambda_{n}^{j}
\| 
e^{it\Delta}\sigma_{n}^{j}\widetilde{u}^{j}
\|_{V_{2+\frac{4}{d}}(I^{j})} 
\| e^{it\Delta}  w_{n}^{k} \|_{W_{q+1}(\mathbb{R})}
\lesssim 
\| 
\langle \nabla \rangle \widetilde{u}^{j}
\|_{L^{2}} 
\| e^{it\Delta}  w_{n}^{k} \|_{W_{q+1}(\mathbb{R})}
\le 
\delta.
\end{split}
\end{equation}

\noindent 
{\bf Case 2.1.}~Assume $s=1$ and $\lambda_{\infty}^{j}\in \{0,1\}$ in \eqref{17/11/15/10:19}. Take a sequence $\{v_{m}^{j} \}_{m\ge 1}$ of smooth functions on $\mathbb{R}^{d}\times \mathbb{R}$ with the following properties: 
\begin{equation}\label{14/08/26/11:53}
\lim_{m\to \infty}
\big\| \langle \nabla \rangle \big( 
\widetilde{\psi}^{j}-v_{m}^{j} 
\big)
\big\|_{V_{2+\frac{4}{d}}(I^{j})\cap  V_{2^{*}}(I^{j})}=0, 
\end{equation}
and for each $m\ge 1$, there exist $R_{m}^{j}>0$,  $T_{m}^{j}>0$ and $a_{m}^{j} \in \mathbb{R}$ such that 
\begin{equation}\label{17/11/15/10:42}
E_{m}^{j}:=\big\{ (x,t)\colon |x|\le R_{m}^{j}, \ |t-a_{m}^{j}|\le T_{m}^{j} \big\}\subset \mathbb{R}^{d}\times I^{j}
\end{equation} 
and the support of $v_{m}^{j}$ is contained in $E_{m}^{j}$.

We see from \eqref{14/08/27/11:22} and \eqref{14/08/26/11:53} that for each $j\ge 1$, there exists a number $M_{1}(I,j)$ such that for any $m\ge M_{1}(I,j)$, 
\begin{equation}\label{14/08/08/16:09}
\| \langle \nabla \rangle v_{m}^{j} 
\|_{V_{2+\frac{4}{d}}(I^{j})\cap V_{2^{*}}(I^{j})} \le A(I)+1.
\end{equation}
Moreover, it follows from \eqref{14/08/26/11:53} that for any $\delta>0$ and any $j\ge 1$, there exists $M_{2}(\delta,I,j)\ge 1$ such that for any $m\ge M_{2}(\delta,I,j)$, 
\begin{equation}\label{14/08/08/16:08}
\| \langle \nabla \rangle \big( 
\widetilde{\psi}^{j}-v_{m}^{j} 
\big) \|_{V_{2+\frac{4}{d}}(I^{j})\cap V_{2^{*}}(I^{j})}
\le 
\delta .
\end{equation}
Put $m_{0}:=\max\{M_{1}(I,j),M_{2}(\delta,I,j)\}$. We see from the triangle inequality, H\"older's inequality and Sobolev's embedding that the right-hand side of \eqref{17/11/15/10:19} with $s=1$ is estimated as follows: 
\begin{equation}\label{14/08/08/16:10}
\begin{split}
&(\lambda_{n}^{j})^{2-s_{q}}
\| \sigma_{n}^{j}\widetilde{\psi}^{j}  
(G_{n}^{j})^{-1} |\nabla | e^{it\Delta}  w_{n}^{k}
\|_{L_{t,x}^{\frac{2(d+2)(q-1)}{d(q-1)+4}}(I^{j})}
\\[6pt]
&\le 
(\lambda_{n}^{j})^{1-s_{q}}
\|   \sigma_{n}^{j} \widetilde{\psi}^{j} 
- 
\sigma_{n}^{j} v_{m_{0}}^{j} 
\|_{W_{q+1}(I^{j})} 
\|
|\nabla | e^{it\Delta}  w_{n}^{k}
\|_{V_{2+\frac{4}{d}}(I^{j})}
\\[6pt]
&\quad 
+
(\lambda_{n}^{j})^{2-s_{q}} \| \sigma_{n}^{j}v_{m_{0}}^{j}
(G_{n}^{j})^{-1} |\nabla | e^{it\Delta}  w_{n}^{k} \|_{L_{t,x}^{\frac{2(d+2)(q-1)}{d(q-1)+4}}(I^{j})}
.
\end{split}
\end{equation}

We consider the first term on the right-hand side of \eqref{14/08/08/16:10}. 
 Using  \eqref{14/02/03/11:19}, H\"older's inequality, Lemma \ref{14/06/21/15:10}  and \eqref{14/08/08/16:08} that if $n$ is sufficiently large dependently on $k$, then    
\begin{equation}\label{14/08/08/16:11}
\begin{split}
&
(\lambda_{n}^{j})^{1-s_{q}}
\|   \sigma_{n}^{j} \widetilde{\psi}^{j} 
- 
\sigma_{n}^{j} v_{m_{0}}^{j} 
\|_{W_{q+1}(I^{j})} 
\|
|\nabla | e^{it\Delta}  w_{n}^{k}
\|_{V_{2+\frac{4}{d}}(I^{j})}
\\[6pt]
&\lesssim  
(\lambda_{n}^{j})^{1-s_{q}}
\|  \sigma_{n}^{j} \widetilde{\psi}^{j}
- \sigma_{n}^{j} v_{m_{0}}^{j} 
\|_{V_{2+\frac{4}{d}}(I^{j})}^{1-s_{q}}
\|  \sigma_{n}^{j} \widetilde{\psi}^{j}
- \sigma_{n}^{j} v_{m_{0}}^{j} \|_{W_{2^{*}}(I^{j})}^{s_{q}}
\\[6pt]
&\lesssim 
\|  \langle \nabla \rangle  
\{ \widetilde{\psi}^{j}- v_{m_{0}}^{j} \} \|_{V_{2+\frac{4}{d}}(I)}
^{1-s_{q}}
\|  \langle \nabla \rangle  
\{ \widetilde{\psi}^{j}- v_{m_{0}}^{j} \} \|_{V_{2^{*}}(I)}^{1-s_{q}}
\le  
\delta.
\end{split}
\end{equation}

Next, we consider the second term on the right-hand side of \eqref{14/08/08/16:10}. Assume first that $\lambda_{\infty}^{j}=1$, so that $\lambda_{n}^{j}\equiv 1$ and $\sigma_{n}^{j}\equiv 1$. Then, we see from H\"older's inequality, Sobolev's embedding,  \eqref{14/08/08/16:09}, \eqref{14/02/03/11:19},  Lemma 2.5 of \cite{Killip-Visan} and \eqref{15/02/28/16:41} that there exists a number $K_{2}(\delta,I,j)$ with the following property: for any $k\ge K_{2}(\delta,I,j)$, there exists $N_{2}(\delta,I,j,k)\ge 1$ such that for any $n\ge N_{2}(\delta,I,j,k)$, 
\begin{equation}\label{17/11/13/15:37}
\begin{split}
&
(\lambda_{n}^{j})^{2-s_{q}} \| \sigma_{n}^{j}v_{m_{0}}^{j}
(G_{n}^{j})^{-1} |\nabla | e^{it\Delta}  w_{n}^{k} \|_{L_{t,x}^{\frac{2(d+2)(q-1)}{d(q-1)+4}}(I^{j})}
\\[6pt]
&=
\| v_{m_{0}}^{j}
|\nabla | e^{it\Delta}  w_{n}^{k} 
\|_{L^{\frac{2(d+2)(q-1)}{d(q-1)+4}}(E_{m_{0}}^{j})}
\\[6pt]
&\le 
\| v_{m_{0}}^{j} \|_{W_{2^{*}}(I^{j})}
\| |\nabla| e^{it\Delta} w_{n}^{k} 
\|_{L^{\frac{(d+2)(q-1)}{q+1}}(E_{m_{0}}^{j})}
\\[6pt]
&\lesssim 
\| \nabla v_{m_{0}}^{j}  \|_{V_{2^{*}}(I^{j})}
\| |\nabla| e^{it\Delta} w_{n}^{k} 
\|_{L^{\frac{(d+2)(q-1)}{q+1}}(E_{m_{0}}^{j})}
\\[6pt]
&\lesssim 
\{ A(I) +1\}
\| |\nabla| e^{it\Delta} w_{n}^{k} \|_{L^{2}(E_{m_{0}}^{j})}^{1-s_{q}}\| |\nabla| e^{it\Delta} w_{n}^{k} \|_{V_{2+\frac{4}{d}}(\mathbb{R})}^{s_{q}}
\\[6pt]
&\lesssim 
\{A(I) +1\}
(T_{m_{0}}^{j})^{\frac{2(1-s_{q})}{3(d+2)}}
(R_{m_{0}}^{j})^{\frac{(3d+2)(1-s_{q})}{6(d+2)}}
\| e^{it\Delta}w_{n}^{k} \|_{W_{2^{*}}(\mathbb{R})}^{\frac{1-s_{q}}{3}}
\| \nabla w_{n}^{k} \|_{L^{2}}^{\frac{2(1-s_{q})}{3}}
\\[6pt]
&\lesssim 
\{ A(I) +1\}
(T_{m_{0}}^{j})^{\frac{2(1-s_{q})}{3(d+2)}}
(R_{m_{0}}^{j})^{\frac{(3d+2)(1-s_{q})}{6(d+2)}}
\| e^{it\Delta}w_{n}^{k} \|_{W_{2^{*}}(\mathbb{R})}^{\frac{1-s_{q}}{3}}
\le
\delta
.
\end{split}
\end{equation}
Next, we assume that $\lambda_{\infty}^{j}=0$. We see from Lemma \ref{14/06/21/15:10} and the support property of $v_{m_{0}}^{j}$ that 
\begin{equation}\label{17/11/15/18:22}
\|  \lambda_{n}^{j}\sigma_{n}^{j} v_{m_{0}}^{j}(t)  \|_{L^{2}}^{2}
\lesssim  
\| \langle \nabla \rangle v_{m_{0}}^{j}(t)  \|_{L^{2}}^{2}
\in 
L^{1}(\mathbb{R}).
\end{equation}
We also see from Lemma \ref{14/09/01/11:30} that for any $t\in \mathbb{R}$, 
\begin{equation}\label{17/11/15/18:27}
\lim_{n\to \infty}\|  \lambda_{n}^{j}\sigma_{n}^{j} v_{m_{0}}^{j}(t)  \|_{L^{2}}=0
.
\end{equation}
Hence, Lebesgue's convergence theorem shows that 
\begin{equation}\label{17/11/15/18:03}
\lim_{n\to \infty}
\|  \lambda_{n}^{j}\sigma_{n}^{j} v_{m_{0}}^{j}  \|_{L_{t,x}^{2}(I^{j})}^{2}
=
\lim_{n\to \infty}\int_{\mathbb{R}} \|  \lambda_{n}^{j}\sigma_{n}^{j} v_{m_{0}}^{j}(t)  \|_{L^{2}}^{2}\,dt 
=0.
\end{equation}
Furthermore, it follows from H\"older's inequality, \eqref{14/02/03/11:19}, Sobolev's embedding,  Lemma \ref{14/06/21/15:10}, \eqref{14/08/08/16:09}, $\lambda_{n}^{j}\le 1$ and \eqref{17/11/15/18:03} that for any $k\ge 1$, there exists $N_{3}(\delta,I,j,k)\ge 1$ such that for any $n\ge N_{3}(\delta,I,j,k)$, 
\begin{equation}\label{17/11/15/15:25}
\begin{split}
&
(\lambda_{n}^{j})^{2-s_{q}} \| \sigma_{n}^{j}v_{m_{0}}^{j}
(G_{n}^{j})^{-1} |\nabla | e^{it\Delta}  w_{n}^{k} \|_{L_{t,x}^{\frac{2(d+2)(q-1)}{d(q-1)+4}}(I^{j})}
\\[6pt]
&\le 
(\lambda_{n}^{j})^{1-s_{q}} 
\|  \sigma_{n}^{j} v_{m_{0}}^{j}  \|_{V_{2+\frac{4}{d}}(I^{j})}^{1-s_{q}}
\|  \sigma_{n}^{j} v_{m_{0}}^{j}  \|_{W_{2^{*}}(I^{j})}^{s_{q}}
\| |\nabla| e^{it\Delta} w_{n}^{k} 
\|_{V_{2+\frac{4}{d}}(\mathbb{R})}
\\[6pt]
&\lesssim   
\|  \lambda_{n}^{j}\sigma_{n}^{j} v_{m_{0}}^{j}  \|_{V_{2+\frac{4}{d}}(I^{j})}^{1-s_{q}}
\{ A(I)+ 1 \}^{s_{q}}
\\[6pt]
&\lesssim   
\|  \lambda_{n}^{j}\sigma_{n}^{j} v_{m_{0}}^{j}  \|_{L_{t}^{2}L_{x}^{2}(I^{j})}^{\frac{1-s_{q}}{2}}
(\lambda_{n}^{j})^{\frac{1-s_{q}}{2}}
\|  \sigma_{n}^{j} v_{m_{0}}^{j}  \|_{W_{2^{*}}(I^{j})}^{\frac{1-s_{q}}{2}}
\{ A(I)+ 1 \}^{s_{q}}
\\[6pt]
&\lesssim
\|  \lambda_{n}^{j}\sigma_{n}^{j} v_{m_{0}}^{j}  \|_{L_{t}^{2}L_{x}^{2}(I^{j})}^{\frac{1-s_{q}}{2}}
\|  \nabla \sigma_{n}^{j} v_{m_{0}}^{j}  \|_{V_{2^{*}}(I^{j})}^{\frac{1-s_{q}}{2}}
\{ A(I)+ 1 \}^{s_{q}}
\\[6pt]
&\lesssim 
\| \lambda_{n}^{j}\sigma_{n}^{j} v_{m_{0}}^{j}  \|_{L_{t}^{2}L_{x}^{2}(I_{j})}^{\frac{1-s_{q}}{2}}
\{A(I)+1 \}^{\frac{1+s_{q}}{2}}
\le \delta
.
\end{split}
\end{equation}

\noindent 
{\bf Case 2.2.}~Assume $s=1$ and $\lambda_{\infty}^{j}=\infty$ in \eqref{17/11/15/10:19}. We see from H\"older's inequality, \eqref{14/02/03/11:19}, Sobolev's embedding, Strichartz' estimate and Lemma \ref{14/09/01/11:30} that for any $k\ge 1$, there exists $N_{4}(\delta,j,k)\ge 1$ such that for any $n\ge N_{4}(\delta,j,k)$, 
\begin{equation}\label{17/11/25/17:30}
\begin{split}
&
(\lambda_{n}^{j})^{2-s_{q}} \| \sigma_{n}^{j}\widetilde{\psi}^{j}
(G_{n}^{j})^{-1} |\nabla | e^{it\Delta}  w_{n}^{k} \|_{L_{t,x}^{\frac{2(d+2)(q-1)}{d(q-1)+4}}(I^{j})}
\\[6pt]
&\le 
(\lambda_{n}^{j})^{1-s_{q}} 
\|  \sigma_{n}^{j} \widetilde{\psi}^{j} \|_{V_{2+\frac{4}{d}}(I^{j})}^{1-s_{q}}
\|  \sigma_{n}^{j} \widetilde{\psi}^{j}  \|_{W_{2^{*}}(I^{j})}^{s_{q}}
\| |\nabla| e^{it\Delta} w_{n}^{k} 
\|_{V_{2+\frac{4}{d}}(\mathbb{R})}
\\[6pt]
&\lesssim  
(\lambda_{n}^{j})^{1-s_{q}} 
\|  e^{it\Delta} \sigma_{n}^{j} \widetilde{u}^{j} \|_{V_{2+\frac{4}{d}}(I^{j})}^{1-s_{q}}
\|  e^{it\Delta}\nabla \sigma_{n}^{j} \widetilde{u}^{j}  \|_{V_{2^{*}}(I^{j})}^{s_{q}}
\\[6pt]
&\lesssim   
\| \lambda_{n}^{j}\sigma_{n}^{j} \widetilde{u}^{j} \|_{L^{2}}^{1-s_{q}}
\| \nabla \sigma_{n}^{j} \widetilde{u}^{j} \|_{L^{2}}^{s_{q}}
\le \delta
.
\end{split}
\end{equation}

Thus, putting the estimates \eqref{17/11/15/10:19}, \eqref{14/08/08/16:12}, \eqref{14/08/08/16:10}, \eqref{14/08/08/16:11}, \eqref{17/11/13/15:37}, \eqref{17/11/15/15:25} and \eqref{17/11/25/17:30} together, we obtain the desired result \eqref{14/02/26/11:25}. 
\end{proof}

Now, for a number $k\ge 1$, we define an approximate solution $\psi_{n}^{k-app}$ of $\psi_{n}$ by 
\begin{equation}\label{14/02/03/11:10}
\psi_{n}^{k-app}(t):=\sum_{j=1}^{k}\psi_{n}^{j}(t)+e^{it\Delta}w_{n}^{k}. 
\end{equation}
Assume \eqref{14/02/01/17:53}. Then, we have $I_{\max}^{j}=\mathbb{R}$ for all $j\ge 2$, which together with \eqref{14/02/01/10:57} shows that $I_{\max,n}^{j}=\mathbb{R}$ for all $j\ge 2$. Hence, for any numbers $k$ and $n$, the maximal existence-interval of $\psi_{n}^{k-app}$ is $I_{\max,n}^{1}:=\big((\lambda_{n}^{1})^{2} T_{\min}^{1}+t_{n}^{1}, (\lambda_{n}^{1})^{2}T_{\max}^{1}+t_{n}^{1}\big)$.  Furthermore, we see from \eqref{14/08/30/16:23} that $0\in I_{\max,n}^{1}$ for any sufficiently large  number $n$.  

\begin{lemma}\label{14/08/14/14:30}
Assume \eqref{14/02/01/17:53}, and let $q$ denote one of the numbers $1+\frac{4}{d}$, $p$ and $2^{*}-1$. Then, for any $j_{0}\ge 2$,  any $k>j_{0}$ and any $\delta>0$, there exists a number $N(j_{0}, k,\delta)$ such that for any $n\ge N(j_{0}, k,\delta)$, 
\begin{align}
\label{14/08/14/14:34}
&\big\| \sum_{j=j_{0}}^{k}\psi_{n}^{j}
\big\|_{W_{q+1}(\mathbb{R})}^{\frac{(d+2)(q-1)}{2}}
\le
\sum_{j=j_{0}}^{k}
\| \psi_{n}^{j} \|_{W_{q+1}(\mathbb{R})}^{\frac{(d+2)(q-1)}{2}}
+
\delta,
\\[6pt]
\label{17/11/25/10:07}
&\big\| \nabla \sum_{j=j_{0}}^{k}\psi_{n}^{j}
\big\|_{V_{2+\frac{4}{d}}(\mathbb{R})}^{\frac{2(d+2)}{d}}
\le
\sum_{j=j_{0}}^{k}
\| \nabla \psi_{n}^{j} \|_{V_{2+\frac{4}{d}}(\mathbb{R})}^{\frac{2(d+2)}{d}}
+
\delta.
\end{align}
\end{lemma}
\begin{proof}[Proof of Lemma \ref{14/08/14/14:30}] 
Let $j_{0}$ and $k$ be numbers with $2\le j_{0}<k$. Then, we can verify 
 that there exists a constant $C(j_{0}, k)$ such that for any number $n$, 
\begin{equation}\label{14/08/14/14:45}
\begin{split}
&
\big\| \sum_{j=j_{0}}^{k}\psi_{n}^{j}
\big\|_{W_{q+1}(\mathbb{R})}^{\frac{(d+2)(q-1)}{2}}
\\[6pt]
&\le  
\sum_{j=j_{0}}^{k}
\|\psi_{n}^{j} \|_{W_{q+1}(\mathbb{R})}^{\frac{(d+2)(q-1)}{2}}
+
C(j_{0}, k)\sum_{j=2}^{k}
\sum_{{2\le j'\le k;}\atop {j'\neq j}}
\int_{\mathbb{R}}\int_{\mathbb{R}^{d}}
| \psi_{n}^{j} |^{\frac{(d+2)(q-1)}{2}-1}
| \psi_{n}^{j'} |\,dxdt
.
\end{split}
\end{equation}
Furthermore, the orthogonality \eqref{14/01/26/11:48} shows that for any $\delta>0$, we can take a number $N(j_{0},k,\delta)$ such that for any distinct numbers $j,j' \in \{2,\ldots, k\}$ and any $n\ge N(j_{0}, k,\delta)$, 
\begin{equation}\label{14/08/23/10:45}
\int_{\mathbb{R}}
\int_{\mathbb{R}^{d}}
\big| \psi_{n}^{j}(x,t) \big|^{\frac{(d+2)(q-1)}{2}-1}
\big| \psi_{n}^{j'}(x,t) \big|\,dxdt
\le 
\frac{\delta}{C(j_{0},k)k^{2}}.
\end{equation}
Putting \eqref{14/08/14/14:45} and \eqref{14/08/23/10:45} together, we obtain the desired result \eqref{14/08/14/14:34}. Similarly, we can verify that \eqref{17/11/25/10:07} holds. 
\end{proof}

\begin{lemma}\label{14/06/17/11:32}
Assume \eqref{14/02/01/17:53}. Then, for any interval $I$ satisfying \eqref{14/06/14/14:38}, we can take a constant $B(I)>0$ with the following property: for any number $k$, there exists a number $N(k)$ such that for any $n\ge N(k)$, 
\begin{equation}\label{14/06/17/11:36}
\|
\psi_{n}^{k\mbox{-}app}
\|_{W_{p+1}(I_{n})\cap W_{2^{*}}(I_{n})}
+
\| \langle \nabla \rangle 
\psi_{n}^{k\mbox{-}app} \|_{V_{2+\frac{4}{d}}(I_{n})}
\le B(I),
\end{equation}
where $I_{n}:=((\lambda_{n}^{1})^{2}\inf{I}+t_{n}^{1}, 
(\lambda_{n}^{1})^{2}\sup{I}+t_{n}^{1})$.
\end{lemma}
\begin{proof}[Proof of Lemma \ref{14/06/17/11:32}]
We consider the first term on the right-hand side of \eqref{14/06/17/11:36}. Let $q$ denote $p$ or $2^{*}-1$, and let $J_{0}$ be the number found in Lemma \ref{14/01/26/17:46}.  Then, it follows from Lemma \ref{14/08/14/14:30} with $\delta=1$, Lemma \ref{14/06/18/10:16}, \eqref{14/02/03/11:19},  Lemma \ref{14/02/02/18:02}, \eqref{14/02/01/17:58} and Lemma \ref{14/01/26/17:46}  that for any  number $k$, there exists a number $N(k)$ such that for any $n\ge N(k)$, 
\begin{equation}\label{11/07/02/18:18}
\begin{split}
&\| \psi_{n}^{k\mbox{-}app} \|_{W_{q+1}(I_{n})}^{\frac{(d+2)(q-1)}{2}}
\\[6pt]
&\lesssim  
\| \psi_{n}^{1} \|_{W_{q+1}(I_{n})}^{\frac{(d+2)(q-1)}{2}}
+
\sum_{j=2}^{k}
\| \psi_{n}^{j} \|_{W_{q+1}(\mathbb{R})}^{\frac{(d+2)(q-1)}{2}}
+1+
\|e^{it\Delta}w_{n}^{k}\|_{W_{q+1}(\mathbb{R})}^{\frac{(d+2)(q-1)}{2}}
\\[6pt]
&\lesssim  
\| \langle \nabla \rangle \widetilde{\psi}^{1} \|_{St(I)}^{\frac{(d+2)(q-1)}{2}}+
\sum_{j=2}^{J_{0}}
\| \langle \nabla \rangle \widetilde{\psi}^{j} 
\|_{St(\mathbb{R})}^{\frac{(d+2)(q-1)}{2}}
+
\sum_{j=J_{0}}^{\infty}
\| \langle \nabla \rangle \widetilde{\psi}^{j} 
\|_{St(\mathbb{R})}^{\frac{(d+2)(q-1)}{2}}
+
1
\\[6pt]
&\lesssim  
A(I)^{\frac{(d+2)(q-1)}{2}}
+
J_{0} 
+
1.
\end{split}
\end{equation}
Thus, we have obtained the desired estimate for the first term. We can deal with  the second term in a similar way.
\end{proof}

\begin{lemma}\label{14/08/08/16:01}
Assume \eqref{14/02/01/17:53}, and  let $q$ denote $p$ or $2^{*}-1$. Then, for any $\delta>0$ and any interval $I$ satisfying \eqref{14/06/14/14:38}, 
 we can take a number $K(\delta,I)$ with the following property: for any $k\ge K(\delta,I)$, there exists a number $N(\delta,I,k)$ such that for any $n\ge N(\delta, I,k)$ and $s\in \{0,1\}$, 
\begin{equation}
\label{14/08/08/16:03}
\big\| |\psi_{n}^{k\mbox{-}app}|^{q-1}| \nabla|^{s}e^{it\Delta}w_{n}^{k} 
\big\|_{L_{t,x}^{\frac{2(d+2)}{d+4}}(I_{n})}\le \delta,
\end{equation}
where $I_{n}:=( (\lambda_{n}^{1})^{2}\inf{I}+t_{n}^{1}, \ (\lambda_{n}^{1})^{2}\sup{I}+t_{n}^{1})$. 
\end{lemma}
\begin{proof}[Proof of Lemma \ref{14/08/08/16:01}] Let us begin with some preparation. We see from Sobolev's embedding, Lemma \ref{14/06/18/10:16} and Lemma \ref{14/01/26/17:46} that for any $\delta>0$, there exists a number $K_{1}(\delta)$ such that 
\begin{equation}\label{14/08/27/15:27}
\sum_{j=K_{1}(\delta)}^{\infty} \big\| \psi_{n}^{j} \big\|_{W_{q+1}(\mathbb{R})}^{\frac{(d+2)(q-1)}{2}} 
\lesssim 
\sum_{j=K_{1}(\delta)}^{\infty} 
\big\|\langle \nabla \rangle \widetilde{\psi}^{j} \big\|_{St(\mathbb{R})}^{\frac{(d+2)(q-1)}{2}}
\le \delta. 
\end{equation}
This together with Lemma \ref{14/08/14/14:30} shows that for any $\delta>0$ and any $k>K_{1}(\delta)$, there exists a number $N_{1}(\delta,k)$ such that for any $n\ge N_{1}(\delta,k)$, 
\begin{equation}\label{14/08/27/17:40}
\big\| \sum_{j=K_{1}(\delta)+1}^{k} \psi_{n}^{j} 
\big\|_{W_{q+1}(\mathbb{R})}^{\frac{(d+2)(q-1)}{2}} 
\lesssim \delta. 
\end{equation}
Furthermore, we see from H\"older's inequality, \eqref{14/02/03/11:19} and \eqref{15/02/28/16:41} that for any $\delta>0$, there exists  a number $K_{2}(\delta)$ with the following property: for any $k\ge K_{2}(\delta)$, there exists a number $N_{2}(\delta,k)$ such that for any $n\ge N_{2}(\delta,k)$ and $s\in \{0,1\}$,
\begin{equation}\label{14/08/28/9:00}
\begin{split}
\big\| e^{it\Delta} w_{n}^{k} | \nabla|^{s}e^{it\Delta}w_{n}^{k} 
\big\|_{L_{t,x}^{\frac{2(d+2)(q-1)}{d(q-1)+4}}(\mathbb{R})}^{\frac{(d+2)(q-1)}{2}}
&\le 
\| e^{it\Delta} w_{n}^{k} \|_{W_{q+1}(\mathbb{R})}^{\frac{(d+2)(q-1)}{2}}
\| |\nabla |^{s} e^{it\Delta} w_{n}^{k} \|_{V_{2+\frac{4}{d}}(\mathbb{R})}^{\frac{(d+2)(q-1)}{2}}
\\[6pt]
&\lesssim 
\| e^{it\Delta} w_{n}^{k} \|_{W_{q+1}(\mathbb{R})}^{\frac{(d+2)(q-1)}{2}}
\le \delta.
\end{split} 
\end{equation}

Now, we shall prove \eqref{14/08/08/16:03}. First, we consider the case where $q\le 2$. Then, we see from H\"older's inequality, \eqref{14/02/03/11:19}, \eqref{14/08/27/17:40} and \eqref{14/08/28/9:00} that for any $\delta>0$, any $k> \max\{K_{1}(\delta),K_{2}(\delta)\}$, any $n\ge \max\{N_{1}(\delta, k),N_{2}(\delta,k)\}$ and $s\in \{0,1\}$,
\begin{equation}\label{14/08/14/14:10}
\begin{split}
&
\| |\psi_{n}^{k\mbox{-}app}|^{q-1}| \nabla|^{s}e^{it\Delta}w_{n}^{k} 
\|_{L_{t,x}^{\frac{2(d+2)}{d+4}}(I_{n})}^{\frac{d+2}{2}}
\\[6pt]
&\le 
\| \psi_{n}^{k\mbox{-}app} | \nabla|^{s}e^{it\Delta}w_{n}^{k} 
\|_{L_{t,x}^{\frac{2(d+2)(q-1)}{d(q-1)+4}}(I_{n})}^{\frac{(d+2)(q-1)}{2}}
\| | \nabla|^{s}e^{it\Delta}w_{n}^{k} \|_{V_{2+\frac{4}{d}}(I_{n})}^{\frac{(d+2)(2-q)}{2}}
\\[6pt]
&\lesssim 
\| \psi_{n}^{k\mbox{-}app} | \nabla|^{s}e^{it\Delta}w_{n}^{k} 
\|_{L_{t,x}^{\frac{2(d+2)(q-1)}{d(q-1)+4}}(I_{n})}^{\frac{(d+2)(q-1)}{2}}
\\[6pt]
&\lesssim
C(\delta) \sum_{j=1}^{K_{1}(\delta)} 
\| \psi_{n}^{j} | \nabla|^{s}e^{it\Delta}w_{n}^{k} 
\|_{L_{t,x}^{\frac{2(d+2)(q-1)}{d(q-1)+4}}(I_{n})}^{\frac{(d+2)(q-1)}{2}}
\\[6pt]
&\quad +
\| \sum_{j=K_{1}(\delta)+1}^{k}  \psi_{n}^{j} 
\|_{W_{q+1}(\mathbb{R})}^{\frac{(d+2)(q-1)}{2}} 
\| | \nabla|^{s}e^{it\Delta}w_{n}^{k} \|_{V_{2+\frac{4}{d}}(\mathbb{R})}^{\frac{(d+2)(q-1)}{2}}
\\[6pt]
&\quad +
\big\| e^{it\Delta} w_{n}^{k} | \nabla|^{s}e^{it\Delta}w_{n}^{k} 
\big\|_{L_{t,x}^{\frac{2(d+2)(q-1)}{d(q-1)+4}}(\mathbb{R})}^{\frac{(d+2)(q-1)}{2}}
\\[6pt]
&\lesssim 
C(\delta)K_{1}(\delta)\sup_{1\le j \le K_{1}(\delta)} 
\| \psi_{n}^{j} | \nabla|^{s}e^{it\Delta}w_{n}^{k} 
\|_{L_{t,x}^{\frac{2(d+2)(q-1)}{d(q-1)+4}}(I_{n})}^{\frac{(d+2)(q-1)}{2}}
+
\delta
,
\end{split}
\end{equation}
where $C(\delta)$ is some constant depending only on $d$, $q$ and $K_{1}(\delta)$. Moreover,  Lemma \ref{14/08/26/11:23} shows that we can take a number $K_{3}(\delta,I)$ with the following property: for any $k\ge K_{3}(\delta,I)$, there exists a number $N_{3}(\delta,I,k)$ such that for any $1\le j \le K_{1}(\delta)$, any $n\ge N_{3}(\delta,I,k)$ and $s \in \{0,1\}$,  
\begin{equation}\label{14/08/28/09:48}
\big\| \psi_{n}^{j} | \nabla|^{s}e^{it\Delta}w_{n}^{k} 
\big\|_{L_{t,x}^{\frac{2(d+2)(q-1)}{d(q-1)+4}}(I_{n})}^{\frac{(d+2)(q-1)}{2}}
\le \frac{\delta}{C(\delta)K_{1}(\delta)}.
\end{equation}
Putting \eqref{14/08/14/14:10} and \eqref{14/08/28/09:48} together, we obtain the desired result \eqref{14/08/08/16:03} in the case $q\le 2$.
\par 
Next, we consider the case where $q>2$. In this case, we see from H\"older's inequality and Lemma \ref{14/06/17/11:32} that there exists $B(I)>0$ with the following property: for any number $k$, there exists a number $N_{4}(k)$ such that for any $n\ge N_{4}(k)$ and $s \in \{0,1\}$,   
\begin{equation}\label{14/08/25/11:50}
\begin{split}
&\big\| |\psi_{n}^{k\mbox{-}app}|^{q-1}| \nabla|^{s}e^{it\Delta}w_{n}^{k} 
\big\|_{L_{t,x}^{\frac{2(d+2)}{d+4}}(I_{n})}
\\[6pt]
&\le 
\| \psi_{n}^{k\mbox{-}app} | \nabla|^{s}e^{it\Delta}w_{n}^{k} 
\|_{L_{t,x}^{\frac{2(d+2)(q-1)}{d(q-1)+4}}(I_{n})}
\| \psi_{n}^{k\mbox{-}app} \|_{W_{q+1}}^{q-2}
\\[6pt]
&\le 
\| \psi_{n}^{k\mbox{-}app} | \nabla|^{s}e^{it\Delta}w_{n}^{k} 
\|_{L_{t,x}^{\frac{2(d+2)(q-1)}{d(q-1)+4}}(I_{n})}
B(I)^{q-2}.
\end{split}
\end{equation}
Then, we can obtain the desired estimate \eqref{14/08/08/16:03} in a way 
 similar to the case $q \le 2$.   
\end{proof}

The following lemma enables us to control the error term of the approximate solution: 
\begin{lemma}\label{14/06/14/10:52}
Assume \eqref{14/02/01/17:53}. Then, for any $\delta\in (0,1)$ and any interval $I$ satisfying \eqref{14/06/14/14:38}, there exist numbers $k_{0}$ (depending on $\delta$ and $I$) and $N(\delta,I)$ such that for any $n\ge N(\delta,I)$, 
\begin{equation}\label{11/07/02/17:21}
\|\langle \nabla \rangle  
e[\psi_{n}^{k_{0}\mbox{-}app}]
\|_{L_{t,x}^{\frac{2(d+2)}{d+4}}(I_{n})}
\le \delta,
\end{equation}
where $I_{n}:=((\lambda_{n}^{1})^{2}\inf{I}+t_{n}^{1}, 
(\lambda_{n}^{1})^{2}\sup{I}+t_{n}^{1})$.
\end{lemma}
\begin{proof}[Proof of Lemma \ref{14/06/14/10:52}] 
Let $q$ denote $p$ or $2^{*}-1$. Then, we see from \eqref{15/02/28/16:41}, Lemma \ref{14/06/17/11:32} and Lemma \ref{14/08/08/16:01} that for any $\delta\in (0,1)$ and any interval $I$ satisfying \eqref{14/06/14/14:38}, there exist a number $k_{0}\ge 1$ (depending on $\delta$ and $I$), a constant $B(I)>0$ (independent of $\delta$) and a number $N_{1}(\delta,I)\ge 1$ such that if $n\ge N_{1}(\delta,I)$, then 
\begin{align}
\label{14/08/05/12:00}
&
\| e^{it\Delta}w_{n}^{k_{0}} \|_{W_{q+1}(\mathbb{R})}
\le 
\dfrac{\delta}{(1+B(I))^{2^{*}}}, 
\\[6pt]
\label{14/06/14/14:19}
&
\|\psi_{n}^{k_{0}\mbox{-}app} 
\|_{W_{q+1}(I_{n})}
+
\|\langle \nabla \rangle \psi_{n}^{k_{0}\mbox{-}app} 
\|_{V_{2+\frac{4}{d}}(I_{n})}
\le B(I),
\\[6pt]
\label{14/08/13/16:53}
&
\| |\psi_{n}^{k_{0}\mbox{-}app} |^{q-1}
e^{it\Delta}w_{n}^{k_{0}}
\|_{L_{t,x}^{\frac{2(d+2)}{d+4}}(I_{n})}
+
\| |\psi_{n}^{k_{0}\mbox{-}app} |^{q-1}
|\nabla |  e^{it\Delta}w_{n}^{k_{0}}
\|_{L_{t,x}^{\frac{2(d+2)}{d+4}}(I_{n})}
\le 
\delta
.
\end{align}

We rewrite the ``error term''  $e[\psi_{n}^{k_{0}\mbox{-}app}]$ as follows:   
\begin{equation}\label{11/07/15/9:59}
\begin{split} 
e[\psi_{n}^{k_{0}\mbox{-}app}]
&=
\sum_{j=1}^{k_{0}}
\Bigm( i\frac{\partial \psi_{n}^{j}}{\partial t}
+
\Delta \psi_{n}^{j}
\Bigm)
+
F\big[ \sum_{j=1}^{k_{0}}\psi_{n}^{j} \big]
\\[6pt]
&\quad +
F\big[\psi_{n}^{k_{0}\mbox{-}app}\big]
-
F\big[\psi_{n}^{k_{0}\mbox{-}app}-e^{it\Delta}w_{n}^{k_{0}}\big]
.
\end{split}
\end{equation}
Then, we have 
\begin{equation}\label{14/07/01/11:45}
\begin{split}
&
\|\langle \nabla \rangle  
e[\psi_{n}^{k_{0}\mbox{-}app}]
\|_{L_{t,x}^{\frac{2(d+2)}{d+4}}(I_{n})}
\\[6pt]
&\le 
\| \langle \nabla \rangle 
\Bigm\{ 
\sum_{j=1}^{k_{0}} \Big( 
i\frac{\partial \psi_{n}^{j}}{\partial t}
+
\Delta \psi_{n}^{j}
\Big)
+
F\big[ 
\sum_{j=1}^{k_{0}} \psi_{n}^{j}
\big]
\Bigm\} 
\|_{L_{t,x}^{\frac{2(d+2)}{d+4}}(I_{n})}
\\[6pt]
&\quad +
\big\| \langle \nabla \rangle 
\bigm\{
F\big[\psi_{n}^{k_{0}\mbox{-}app}\big]
-
F\big[\psi_{n}^{k_{0}\mbox{-}app}-e^{it\Delta}w_{n}^{k_{0}}\big] 
\bigm\}
\big\|_{L_{t,x}^{\frac{2(d+2)}{d+4}}(I_{n})}
.
\end{split}
\end{equation}
We consider the first term on the right-hand side of \eqref{14/07/01/11:45}. It follows from \eqref{11/12/04/16:09}, Lemma \ref{14/07/02/10:30} and Lemma \ref{17/11/25/12:18} that there exists a number $N_{2}(\delta,I)$ such that for any $n\ge N_{2}(\delta,I)$, 
\begin{equation}\label{11/07/17/13:49}
\begin{split}
&
\| \langle \nabla \rangle \Bigm\{ 
\sum_{j=1}^{k_{0}} 
\Big( 
i\frac{\partial \psi_{n}^{j}}{\partial t}
+
\Delta \psi_{n}^{j}
\Big)
+
F\big[ 
\sum_{j=1}^{k_{0}} \psi_{n}^{j}
\big]
\Bigm\}
\|_{L_{t,x}^{\frac{2(d+2)}{d+4}}(I_{n})}
\\[6pt]
&\le 
\| \langle \nabla \rangle  
\Big\{
\sum_{j=1}^{k_{0}}  
F\big[ \psi_{n}^{j}\big]
-
F\big[ 
\sum_{j=1}^{k_{0}} \psi_{n}^{j}
\big]
\Big\}
\|_{L_{t,x}^{\frac{2(d+2)}{d+4}}(I_{n})}
+\delta.
\end{split}
\end{equation}
Then, we see from elementary computations that for $q=p$ or $q=2^{*}-1$, 
\begin{equation}\label{14/08/04/11:29}
\begin{split}
&
\| 
\sum_{j=1}^{k_{0}}  
\big|\psi_{n}^{j}|^{q-1}\psi_{n}^{j}
-
| \sum_{j=1}^{k_{0}} \psi_{n}^{j} \big|^{q-1}\sum_{j=1}^{k_{0}} \psi_{n}^{j}
\|_{L_{t,x}^{\frac{2(d+2)}{d+4}}(I_{n})}
\\[6pt]
&\lesssim 
\sum_{j=1}^{k_{0}} \sum_{{1\le k \le k_{0}}\atop {k \neq j}}
\| |  \psi_{n}^{j} |^{q-1} \psi_{n}^{k} \|_{L_{t,x}^{\frac{2(d+2)}{d+4}}(I_{n})}\end{split}
\end{equation}
and 
\begin{equation}\label{15/03/01/11:45}
\begin{split}
& \| \nabla   
\Big\{
\sum_{j=1}^{k_{0}}  |\psi_{n}^{j} |^{q-1}\psi_{n}^{j}
-
\big| \sum_{j=1}^{k_{0}} \psi_{n}^{j} \big|^{q-1}\sum_{j=1}^{k_{0}}
 \psi_{n}^{j}
\Big\} \|_{L_{t,x}^{\frac{2(d+2)}{d+4}}(I_{n})}
\\[6pt]
&\lesssim
\sum_{j=1}^{k_{0}} \sum_{{1\le k\le k_{0}}\atop {k\neq j}}
\| |  \psi_{n}^{j} |^{q-1}  \nabla \psi_{n}^{k} \|_{L_{t,x}^{\frac{2(d+2)}{d+4}}(I_{n})}.
\end{split} 
\end{equation}
Here, when $q>2$, we must add the following term to the right-hand side of \eqref{15/03/01/11:45}: 
\begin{equation}\label{15/04/24/17:17}
\sum_{i=1}^{k_{0}} 
\sum_{j=1}^{k_{0}} 
\sum_{{1\le k \le k_{0}}\atop {k \neq j}}
\| |  \psi_{n}^{i} |^{q-2} 
 |\nabla \psi_{n}^{j}|  |\psi_{n}^{k}| \|_{L_{t,x}^{\frac{2(d+2)}{d+4}}(I_{n})}.
\end{equation}
We see from the orthogonality \eqref{14/01/26/11:48} that there exists a number $N_{3}(\delta, I)$ such that for any distinct numbers $j,k\in \{1,\ldots, k_{0}\}$ and any $n\ge N_{3}(\delta,I)$,
\begin{equation}\label{15/04/24/16:15}
\| | \psi_{n}^{j} |^{q-1}  \psi_{n}^{k} 
\|_{L_{t,x}^{\frac{2(d+2)}{d+4}}(I_{n})}
+
\| | \psi_{n}^{j} |^{q-1}  \nabla \psi_{n}^{k} 
\|_{L_{t,x}^{\frac{2(d+2)}{d+4}}(I_{n})}
\le 
\frac{\delta}{k_{0}^{2}}
.
\end{equation}
When $q>2$, we also see that for any number $i \in \{1,\ldots, k_{0}\}$, any distinct numbers $j,k\in \{1,\ldots, k_{0}\}$ and any $n\ge N_{3}(\delta,I)$, 
\begin{equation}\label{15/03/01/11:55}
\| | \psi_{n}^{i} |^{q-2}  \nabla \psi_{n}^{j}  \, \psi_{n}^{k}
\|_{L_{t,x}^{\frac{2(d+2)}{d+4}}(I_{n})}
\le \frac{\delta}{k_{0}^{3}}. 
\end{equation}
Putting the estimates \eqref{11/07/17/13:49} through \eqref{15/03/01/11:55} together, we find that for any $n\ge \max\{N_{2}(\delta,I),N_{3}(\delta,I)\}$, 
\begin{equation}\label{14/08/03/12:10}
\| \langle \nabla \rangle 
\Bigm\{
\sum_{j=1}^{k_{0}} 
\Big(
i\frac{\partial \psi_{n}^{j}}{\partial t}
+
\Delta \psi_{n}^{j}
\Big)
+
F\big[ 
\sum_{j=1}^{k_{0}} \psi_{n}^{j}
\big]
\Bigm\}
\|_{L_{t,x}^{\frac{2(d+2)}{d+4}}(I_{n})}
\lesssim \delta. 
\end{equation}
We move on to the second term on the right-hand side of \eqref{14/07/01/11:45}.  We can verify that for $q=p$ or $q=2^{*}-1$, and any number $n\ge 1$, 
\begin{equation}\label{11/07/16/10:08}
\begin{split}
&
\| \langle \nabla \rangle 
\Big\{
|\psi_{n}^{k_{0}\mbox{-}app}|^{q-1}\psi_{n}^{k_{0}\mbox{-}app}
-
| \psi_{n}^{k_{0}\mbox{-}app}\!-e^{it\Delta}w_{n}^{k_{0}} |^{q-1} 
( \psi_{n}^{k_{0}\mbox{-}app}\!-e^{it\Delta}w_{n}^{k_{0}})
\Big\} 
\|_{L_{t,x}^{\frac{2(d+2)}{d+4}}(I_{n})}
\\[6pt]
&\lesssim  
\| | \psi_{n}^{k_{0}\mbox{-}app} |^{q-1}
e^{it\Delta}w_{n}^{k_{0}} \|_{L_{t,x}^{\frac{2(d+2)}{d+4}}(I_{n})}
+
\| | e^{it\Delta}w_{n}^{k_{0}}  |^{q-1}\psi_{n}^{k_{0}\mbox{-}app}
 \|_{L_{t,x}^{\frac{2(d+2)}{d+4}}(I_{n})}
\\[6pt]
&\quad 
+
\| | \psi_{n}^{k_{0}\mbox{-}app} |^{q-1}
\nabla e^{it\Delta}w_{n}^{k_{0}}
\|_{L_{t,x}^{\frac{2(d+2)}{d+4}}(I_{n})}
+
\| | e^{it\Delta}w_{n}^{k_{0}}  |^{q-1}
\nabla \psi_{n}^{k_{0}\mbox{-}app} \|_{L_{t,x}^{\frac{2(d+2)}{d+4}}(I_{n})}
\\[6pt]
&\quad 
+
\| \langle \nabla \rangle 
\big\{ 
| e^{it\Delta}w_{n}^{k_{0}} |^{q-1}
e^{it\Delta}w_{n}^{k_{0}} \big\} \|_{L_{t,x}^{\frac{2(d+2)}{d+4}}(I_{n})}
.
\end{split}
\end{equation}
When $q>2$, we must add the following terms to the right-hand side of \eqref{11/07/16/10:08}:   
\begin{align}
\label{11/08/21/11:18}
&
\| | \psi_{n}^{k_{0}\mbox{-}app} |^{q-2}
 e^{it\Delta}w_{n}^{k_{0}} 
 \nabla  \psi_{n}^{k_{0}\mbox{-}app} \|_{L_{t,x}^{\frac{2(d+2)}{d+4}}(I_{n})},
\\[6pt] 
\label{15/02/25/22:12}
&
\| | e^{it\Delta}w_{n}^{k_{0}} |^{q-2}
 \psi_{n}^{k_{0}\mbox{-}app} 
\nabla e^{it\Delta}w_{n}^{k_{0}} \|_{L_{t,x}^{\frac{2(d+2)}{d+4}}(I_{n})}
.
\end{align}
Furthermore, using \eqref{14/08/13/16:53}, H\"older's inequality,  \eqref{14/08/05/12:00}, \eqref{14/06/14/14:19} and \eqref{14/02/03/11:19}, we can verify that  the right-hand side of \eqref{11/07/16/10:08} vanishes as $n$ tends to the infinity (cf. the proof of (3.10) in \cite{Killip-Visan}). Thus, we conclude that 
\begin{equation}\label{15/03/01/09:00}
\lim_{n\to \infty}
\big\| \langle \nabla \rangle 
\big\{
F\big[\psi_{n}^{k_{0}\mbox{-}app}\big]
-
F\big[\psi_{n}^{k_{0}\mbox{-}app}-e^{it\Delta}w_{n}^{k_{0}}\big] 
\big\}
\big\|_{L_{t,x}^{\frac{2(d+2)}{d+4}}(I_{n})}
=0. 
\end{equation}
Putting \eqref{14/07/01/11:45}, \eqref{14/08/03/12:10} and \eqref{15/03/01/09:00} together, we find that the desired estimate \eqref{11/07/02/17:21} holds. 
\end{proof}

\begin{proposition}\label{14/02/02/14:35}
Assume \eqref{14/02/01/17:53}. Then, $\lambda_{\infty}^{1}\in \{0,1\}$, and for any $T\in I_{\max}^{1}=(T_{\min}^{1}, T_{\max}^{1})$,  
\begin{equation}\label{14/02/02/14:36}
\|\sigma_{\infty}^{1} \widetilde{\psi}^{1} \|_{\mathcal{W}^{1}([T,T_{\max}^{1}))}=\infty.
\end{equation}
Furthermore, the following holds: 
\begin{equation}\label{14/02/05/11:10}
\left\{ 
\begin{array}{ccc}
\mathcal{S}_{\omega}(\sigma_{\infty}^{1}\widetilde{\psi}^{1}) \ge m_{\omega}
&\mbox{if}& \lambda_{\infty}^{1}=1,
\\[6pt]
\mathcal{H}^{\ddagger}(\sigma_{\infty}^{1}\widetilde{\psi}^{1}) 
\ge \frac{1}{d}\sigma^{\frac{d}{2}}>m_{\omega}
&\mbox{if}& \lambda_{\infty}^{1}=0.
\end{array}
\right.
\end{equation}
\end{proposition}
\begin{proof}[Proof of Proposition \ref{14/02/02/14:35}]
First, we assume $\lambda_{\infty}^{1}\in \{0,1\}$ and prove \eqref{14/02/02/14:36}. Suppose for contradiction that \eqref{14/02/02/14:36} failed for some $T \in I_{\max}^{1}$. Then, we see from the well-posedness theory that $T_{\max}^{1}=\infty$ and for any $\tau \in (T_{\min}^{1},\infty)$,  
\begin{equation}\label{14/02/03/11:02}
\|\sigma_{\infty}^{1} \widetilde{\psi}^{1} \|_{\mathcal{W}^{1}([\tau, \infty))}
<\infty.
\end{equation}
In order to derive a contradiction, we consider the approximate solution $\psi_{n}^{k\mbox{-}app}$ defined by \eqref{14/02/03/11:10}. Note that for any numbers $k$ and $n$, the maximal existence interval of $\psi_{n}^{k\mbox{-}app}$ is identical to $I_{\max,n}^{1}:=((\lambda_{n}^{1})^{2}T_{\min}^{1}+t_{n}^{1},\infty)$.\par 
We shall show that there exists a constant $B>0$ with the following property: for any number $k$, there exists a number $N(k)$ such that for any $n\ge N(k)$, 
\begin{equation}\label{14/08/30/17:04}
\| \psi_{n}^{k\mbox{-}app} \|_{V_{2+\frac{4}{d}}([0,\infty))\cap W_{2^{*}}([0,\infty))}
\le B.
\end{equation}
If $-\frac{t_{\infty}^{1}}{(\lambda_{\infty}^{1})^{2}}=-\infty$, then we see from \eqref{14/02/03/11:02} and \eqref{14/02/08/16:45} that Lemma \ref{14/06/17/11:32} is available on the whole interval $\mathbb{R}$. Thus, we can take a constant $B_{1}>0$ with the following property: for any number $k$, there exists a number $N_{1}(k)$ such that 
 for any $n\ge N_{1}(k)$, 
\begin{equation}\label{14/08/30/16:40}
\| \psi_{n}^{k\mbox{-}app} \|_{V_{2+\frac{4}{d}}(\mathbb{R})\cap W_{2^{*}}(\mathbb{R})}
\le B_{1}.
\end{equation}
On the other hand, if $-\frac{t_{\infty}^{1}}{(\lambda_{\infty}^{1})^{2}}\neq -\infty$, then it follows from the construction of $\widetilde{\psi}^{1}$ (see \eqref{14/01/26/14:26}) that $-\frac{t_{\infty}^{1}}{(\lambda_{\infty}^{1})^{2}} \in (T_{\min}^{1}, \infty)\cup \{\infty\}$. This implies that there exist $\tau_{0} \in (T_{\min}^{1}, \infty)$ and a number $N_{0}$ such that $\tau_{0}<-\frac{t_{n}^{1}}{(\lambda_{n}^{1})^{2}}$ for any $n\ge N_{0}$, so that $[0, \infty) \subset ((\lambda_{n}^{1})^{2}\tau_{0}+t_{n}^{1}, \infty) \subset I_{\max,n}^{1}$. Furthermore, we see from Lemma \ref{14/06/17/11:32} and \eqref{14/02/03/11:02} with $\tau=\tau_{0}$ that there exists a constant $B_{2}>0$ with the following property: for any number $k$, there exists a number $N_{2}(k)$ such that for any $n\ge \max\{N_{0}, N_{2}(k)\}$, 
\begin{equation}\label{14/02/03/15:00}
\| \psi_{n}^{k\mbox{-}app} \|_{V_{2+\frac{4}{d}}([0,\infty))\cap W_{2^{*}}([0,\infty))}
\le B_{2}.
\end{equation}
Thus, \eqref{14/08/30/16:40} and \eqref{14/02/03/15:00} give the desired result \eqref{14/08/30/17:04}. 
\par 
Now, we shall finish the proof of \eqref{14/02/02/14:36} for $\lambda_{\infty}^{1}\in \{0,1\}$. Note that it follows from \eqref{14/01/26/12:16}, \eqref{17/11/02/10:37} and Lemma \ref{14/06/21/15:10} that 
\begin{equation}\label{14/02/03/15:06}
\begin{split}
\| \psi_{n}(0)-\psi_{n}^{k\mbox{-}app}(0) \|_{H^{1}}
&=
\big\| \sum_{j=1}^{k}g_{n}^{j}\sigma_{n}^{j}e^{-i\frac{t_{n}^{j}}{(\lambda_{n}^{j})^{2}}\Delta}\widetilde{u}^{j}
-
\sum_{j=1}^{k}g_{n}^{j}\sigma_{n}^{j}\widetilde{\psi}^{j}\Big( -\frac{t_{n}^{j}}{(\lambda_{n}^{j})^{2}}\Big) 
\big\|_{H^{1}}
\\[6pt]
&\lesssim 
\sum_{j=1}^{k}
\big\| e^{-\frac{t_{n}^{j}}{(\lambda_{n}^{j})^{2}} 
\Delta}
\widetilde{u}^{j}
-
\widetilde{\psi}^{j}\Big( -\frac{t_{n}^{j}}{(\lambda_{n}^{j})^{2}}\Big) 
\big\|_{H^{1}}.
\end{split}
\end{equation}
This estimate together with \eqref{14/01/27/14:51} shows that for any number $k$, there exists a number  $N_{3}(k)$ such that
\begin{equation}\label{14/02/03/15:39}
\sup_{n\ge N_{3}(k)} \| \psi_{n}(0)-\psi_{n}^{k\mbox{-}app}(0) \|_{H^{1}}\le 1. \end{equation}
We also see from \eqref{14/01/26/10:45} that   
\begin{equation}\label{14/02/03/20:19} 
\sup_{n\ge 1}\|\psi_{n}\|_{L_{t}^{\infty}H_{x}^{1}([0,\infty))} \lesssim 1. 
\end{equation}
Let  $\delta_{0}>0$ be a constant determined by the long-time perturbation theory (Lemma \ref{14/06/05/10:08}) together with \eqref{14/08/30/17:04}, \eqref{14/02/03/15:39} and \eqref{14/02/03/20:19}. Then, we see from Sobolev's embedding, Strichartz' estimate, \eqref{14/02/03/15:06} and \eqref{14/01/27/14:51} that for any number $k$, there exists a number $N_{4}(k)$ such that for any $n\ge N_{4}(k)$, 
\begin{equation}\label{14/02/03/15:50}
\begin{split}
&\| e^{it\Delta}\big\{ \psi_{n}(0)-\psi_{n}^{k\mbox{-}app}(0) \big\} 
\|_{V_{2+\frac{4}{d}}([0,\infty)\cap W_{2^{*}}([0,\infty))} 
\\[6pt]
&\lesssim 
\| \langle \nabla \rangle e^{it\Delta}\big\{ \psi_{n}(0)-\psi_{n}^{k\mbox{-}app}(0) \big\} \|_{St([0,\infty))}
\lesssim 
\| \psi_{n}(0)-\psi_{n}^{k\mbox{-}app}(0) \|_{H^{1}}
\ll   
\delta_{0}.
\end{split}
\end{equation}
Moreover, it follows from Lemma \ref{14/06/14/10:52} that we can take numbers $k_{0}$ and $N_{5}$ such that for any $n \ge N_{5}$, 
\begin{equation}\label{14/02/03/16:05} 
\| \langle \nabla \rangle  e[\psi_{n}^{k_{0}\mbox{-}app}] 
\|_{L_{t,x}^{\frac{2(d+2)}{d+4}}([0,\infty))}
\le \delta_{0}.
\end{equation}
Thus, applying the long-time perturbation theory (Lemma \ref{14/06/05/10:08}) to $\psi_{n}$ and $\psi_{n}^{k_{0}\mbox{-}app}$ with $n\ge \max\{N(k_{0}), N_{3}(k_{0}), N_{4}(k_{0}), N_{5} \}$, we find that 
\begin{equation}\label{14/08/29/16:17}
\| \langle \nabla \rangle \psi_{n} \|_{St([0,\infty))}<\infty. 
\end{equation}
However, this contradicts \eqref{14/01/25/15:51}. Thus, we have shown that \eqref{14/02/02/14:36} holds when $\lambda_{\infty}^{1}\in \{0,1\}$. 
\par 
Next, we assume that $\lambda_{\infty}^{1}=\infty$. Then, \eqref{14/08/30/17:04} holds without the hypothesis \eqref{14/02/03/11:02}. Furthermore, the same argument   as \eqref{14/02/03/15:06} through \eqref{14/08/29/16:17} shows that the case $\lambda_{\infty}^{1}=\infty$ never happens. 
\par 
It remains to prove  \eqref{14/02/05/11:10}. Suppose to the contrary that the claim \eqref{14/02/05/11:10} was false. Then, it follows from \eqref{14/01/29/11:41}, Theorem \ref{15/04/05/15:27}, Theorem \ref{15/03/24/16:05}, Lemma \ref{14/01/29/20:50} and \eqref{14/02/02/14:36} that $\| \sigma_{\infty}^{j}\tilde{\psi}^{j}\|_{St([0,\infty))}<\infty$. However, this contradicts \eqref{14/02/02/14:36}. Thus, we have proved that the  claim \eqref{14/02/05/11:10}  is true.  
\end{proof}

\subsubsection{Critical element and completion of the proof}\label{15/07/11/11:30}
We shall show the existence of ``critical element'' (see Lemma \ref{14/02/04/10:06} below). Furthermore, we will derive a contradiction by using it under the hypothesis \eqref{14/01/25/15:35}, which completes the proof of Proposition \ref{14/01/25/14:59}. 
\par 
Let us begin by defining the functionals $\mathcal{S}_{\omega}^{j}$ and $\mathcal{I}_{\omega}^{j}$. For each $j\ge 1$, we define   
\begin{equation}\label{14/02/06/09:41}
\mathcal{S}_{\omega}^{j}
:=\left\{ \begin{array}{ccc}
\mathcal{S}_{\omega} &\mbox{if}& \lambda_{\infty}^{j}\in \{1,\infty\},
\\[6pt]
\mathcal{H}^{\ddagger} &\mbox{if}& \lambda_{\infty}^{j}=0,
\end{array} \right.
\qquad 
\mathcal{I}_{\omega}^{j}:=
\left\{ \begin{array}{lcc}
\mathcal{I}_{\omega}
&\mbox{if}& \lambda_{\infty}^{j}\in \{1,\infty\},
\\[6pt]
\mathcal{I}^{\ddagger}
&\mbox{if}& \lambda_{\infty}^{j}=0. 
\end{array} \right.
\end{equation}

\begin{lemma}\label{14/02/04/12:07}
Assume \eqref{14/02/01/17:53}. Then, for any $\delta>0$ and any number $k\ge 2$, there exists a number $N(\delta,k)$ such that for any $t\in \mathbb{R}$ and any $n\ge N(\delta,k)$, 
\begin{equation}\label{14/02/05/09:16}
\sum_{j=2}^{k} \mathcal{I}_{\omega}^{j}\big( \sigma_{\infty}^{j}\widetilde{\psi}^{j}(t) \big)
+
\mathcal{I}_{\omega}(w_{n}^{k}) \le \delta.
\end{equation}
\end{lemma}
\begin{proof}[Proof of Lemma \ref{14/02/04/12:07}]
We see from  \eqref{15/05/05/09:09}, \eqref{14/01/26/10:38} and \eqref{14/01/26/14:09} that for any $\delta>0$ and any number $k$, there exists a number $N_{1}(\delta, k)$ such that for any $n\ge N_{1}(\delta,k)$,
\begin{equation}\label{14/02/05/09:44}
\mathcal{S}_{\omega}\big( g_{n}^{1}\sigma_{n}^{1}e^{-i\frac{t_{n}^{1}}{(\lambda_{n}^{1})^{2}}\Delta}\widetilde{u}^{1} \big)
+
\sum_{j=2}^{k} \mathcal{S}_{\omega}\big( g_{n}^{j}\sigma_{n}^{j}e^{-i\frac{t_{n}^{j}}{(\lambda_{n}^{j})^{2}}\Delta}\widetilde{u}^{j} \big)
+
\mathcal{S}_{\omega}(w_{n}^{k})
\le  m_{\omega}+ \delta.
\end{equation}
If $\lambda_{\infty}^{1}=\lim_{n\to \infty}\lambda_{n}^{1}=0$, 
 then Lemma \ref{14/09/01/11:30} shows that for each $t \in I_{\max}^{1}$, 
\begin{equation}\label{14/02/04/20:17}
\lim_{n\to \infty} \| g_{n}^{1}\sigma_{n}^{1} \widetilde{\psi}^{1}(t) \|_{L^{2}}=0.
\end{equation} 
We see from Lemma \ref{14/01/29/20:50}, \eqref{14/01/29/12:09} in Lemma \ref{14/01/29/12:01}, \eqref{14/01/29/20:27}, \eqref{14/02/05/09:44}, \eqref{14/02/04/20:17} and \eqref{14/02/05/11:10} in Proposition \ref{14/02/02/14:35}  that for any $\delta>0$ and any number $k$, there exists $N_{2}(\delta,k)$ such that for any $t\in \mathbb{R}$ and any $n\ge N_{2}(\delta,k)$, 
\begin{equation}\label{14/02/05/11:30}
\begin{split} 
\sum_{j=2}^{k} \mathcal{I}_{\omega}^{j}\big( \sigma_{\infty}^{j}\widetilde{\psi}^{j}(t) \big)
+
\mathcal{I}_{\omega}(w_{n}^{k})
&\le 
\sum_{j=2}^{k} \mathcal{S}_{\omega}^{j}\big( \sigma_{\infty}^{j}\widetilde{\psi}^{j} \big)
+
\mathcal{S}_{\omega}(w_{n}^{k})
\\[6pt]
&\le 
\sum_{j=2}^{k} \mathcal{S}_{\omega}^{j}\big( g_{n}^{j}\sigma_{n}^{j}e^{-i\frac{t_{n}^{j}}{(\lambda_{n}^{j})^{2}}\Delta}\widetilde{u}^{j} \big)
+
\delta
+
\mathcal{S}_{\omega}(w_{n}^{k})
\\[6pt]
&\le 
m_{\omega}-\mathcal{S}_{\omega}^{1}( \sigma_{\infty}^{1}\widetilde{\psi}^{1})
+2\delta 
\le 
2\delta 
.
\end{split}
\end{equation}
Thus, we have completed the proof. 
\end{proof}

Put $\tau_{n}^{1}:=-\frac{t_{n}^{j}}{(\lambda_{n}^{1})^{2}}$. Then, passing to a subsequence, we may assume that 
\begin{equation}\label{15/04/27/10:15}
\tau_{\infty}^{1}:=\lim_{n\to \infty}\tau_{n}^{1} \in \mathbb{R}\cup \{\pm \infty\}.
\end{equation}

\begin{lemma}\label{14/09/05/09:52}
Assume \eqref{14/02/01/17:53}. Then, we have that $\lambda_{\infty}^{1}=1$. Furthermore, there exists a time $T \in I_{\max}^{1}$ such that 
\begin{equation}\label{14/05/29/11:08}
\inf_{t\in [T,T_{\max}^{1})}\widetilde{d}_{\omega}\big( \widetilde{\psi}^{1}(t) \big) \ge \frac{R_{*}}{2}
\end{equation}
and 
\begin{equation}\label{14/09/06/14:30}
\inf_{t\in [T,T_{\max}^{1})}\mathcal{K}\big( \widetilde{\psi}^{1}(t) \big)\ge 
\frac{\kappa_{1}(R_{*})}{2},
\end{equation}
where $\kappa_{1}(R_{*})$ is the constant appearing in \eqref{14/01/26/11:03}. 
\end{lemma}
\begin{proof}[Proof of Lemma \ref{14/09/05/09:52}]
Proposition \ref{14/02/02/14:35} together with \eqref{14/02/08/16:32} shows that $\lambda_{\infty}^{1}\neq \infty$ and $\tau_{\infty}^{1}\neq \infty$. If $\tau_{\infty}^{1}=-\infty$, then it follows from \eqref{14/02/08/16:45} that $T_{\min}^{1}=-\infty$ and $\|\sigma_{\infty}^{1}\widetilde{\psi}^{1}\big\|_{\mathcal{W}^{1}((-\infty,T])}<\infty$ for any $T<T_{\max}^{1}$.
 If $\tau_{\infty}^{1}\in \mathbb{R}$, then we see from the construction of $\widetilde{\psi}^{1}$  (see \eqref{14/01/26/14:26}) that $\tau_{\infty}^{1} \in I_{\max}^{1}$.  Put 
\begin{equation}\label{14/09/05/16:23}
\tau_{\min}^{1}:=
\left\{ \begin{array}{ccl}
\displaystyle{\frac{ T_{\min}^{1}+ \tau_{\infty}^{1}}{2}} &\mbox{if}& T_{\min}^{1}>-\infty, 
\\[6pt]
\tau_{\infty}^{1}-1 &\mbox{if}& \mbox{$\tau_{\infty}^{1}\in \mathbb{R}$ and $T_{\min}^{1}=-\infty$},
\\[6pt]
-\infty &\mbox{if}& \tau_{\infty}^{1}=-\infty.
\end{array} \right.
\end{equation}
Then, $T_{\min}^{1}\le \tau_{\min}^{1}\le \tau_{\infty}^{1}$, and we can take a number $N_{1}$ such that for any $n\ge N_{1}$, 
\begin{equation}\label{14/09/05/10:50}
\tau_{\min}^{1} < -\frac{t_{n}^{1}}{(\lambda_{n}^{1})^{2}} <  T_{\max}^{1}.
\end{equation}
Note that $0\in I_{\max,n}^{1}$ for all $n\ge N_{1}$. Furthermore, 
 for each $T \in (\tau_{\min}^{1}, T_{\max}^{1})$, 
\begin{equation}\label{14/09/05/11:09}
\| \sigma_{\infty}^{1} \widetilde{\psi}^{1}\|_{\mathcal{W}^{1}((\tau_{\min}^{1},T])} <\infty,
\end{equation}
which together with Lemma \ref{14/02/02/18:02} shows that for any $T \in (\tau_{\min}^{1}, T_{\max}^{1})$, there exists a constant $A(T)>0$ such that  
\begin{equation}\label{14/09/05/10:45}
\|\langle \nabla \rangle \widetilde{\psi}^{1}\|_{St((\tau_{\min}^{1},T])}
\le A(T).
\end{equation}
Sobolev's embedding also gives us that 
\begin{equation}\label{14/09/05/17:10}
\|\sigma_{\infty}^{1} \widetilde{\psi}^{1}\|_{\mathcal{W}^{1}((\tau_{\min}^{1},T])}
\lesssim  
\|\langle \nabla \rangle \widetilde{\psi}^{1}\|_{St((\tau_{\min}^{1},T])}
\le A(T).
\end{equation}

Now, we consider the approximate solution $\psi_{n}^{k\mbox{-}app}$ defined by \eqref{14/02/03/11:10}. Lemma \ref{14/06/17/11:32} together with \eqref{14/09/05/17:10} shows that for any  $T \in (\tau_{\min}^{1}, T_{\max}^{1})$, there exists $B(T)>0$ with the following property: for any number $k$, there exists a number $N_{2}(k)$ such that for any $n\ge N_{2}(k)$, 
\begin{equation}\label{14/09/05/11:52}
\|  \psi_{n}^{k\mbox{-}app} 
\|_{V_{2+\frac{4}{d}}(I_{n}(T))\cap W_{2^{*}}(I_{n}(T))}
+
\|\langle \nabla \rangle \psi_{n}^{k\mbox{-}app} 
\|_{L_{t}^{\infty}L_{x}^{2}(I_{n}(T))}
\le B(T),
\end{equation}
where 
\begin{equation}\label{14/09/05/12:30}
I_{n}(T):=\big((\lambda_{n}^{1})^{2}\tau_{\min}^{1}+t_{n}^{1},\,  (\lambda_{n}^{1})^{2}T+t_{n}^{1} \big].
\end{equation}
Lemma \ref{14/06/14/10:52} together with \eqref{14/09/05/17:10} shows that for any $\delta \in (0,1)$ and any $T\in (\tau_{\min}^{1},T_{\max}^{1})$, there exists numbers $k_{0}$ (depending on $\delta$ and $T$) and $N_{3}(\delta,T)$ such that for any $n \ge N_{3}(\delta,T)$, 
\begin{equation}\label{14/09/05/14:17} 
\| \langle \nabla \rangle e[\psi_{n}^{k_{0}\mbox{-}app}] 
\|_{L_{t,x}^{\frac{2(d+2)}{d+4}}(I_{n}(T))}
\le \delta.
\end{equation}
We also see from \eqref{14/01/26/12:16}, Lemma \ref{14/06/21/15:10} and \eqref{14/01/27/14:51} that for any number $k$ and any $\gamma>0$, there exists a number $N_{4}(k,\gamma)$ such that for any $n\ge N_{4}(k,\gamma)$,
\begin{equation}\label{14/09/05/14:30}
\begin{split}
\big\| \psi_{n}(0)-\psi_{n}^{k\mbox{-}app}(0)\big\|_{H^{1}}
&= 
\big\| \sum_{j=1}^{k}g_{n}^{j}\sigma_{n}^{j}e^{-i\frac{t_{n}^{j}}{(\lambda_{n}^{j})^{2}}\Delta}\widetilde{u}^{j}
-
\sum_{j=1}^{k}g_{n}^{j}\sigma_{n}^{j}\widetilde{\psi}^{j}\Big( -\frac{t_{n}^{j}}{(\lambda_{n}^{j})^{2}}\Big) 
\big\|_{H^{1}}
\\[6pt]
&\lesssim 
\sum_{j=1}^{k}
\big\| 
e^{-i\frac{t_{n}^{j}}{(\lambda_{n}^{j})^{2}} 
\Delta}
\widetilde{u}^{j}
-
\widetilde{\psi}^{j}\Big( -\frac{t_{n}^{j}}{(\lambda_{n}^{j})^{2}}\Big) 
\big\|_{H^{1}}
\le \gamma.
\end{split}
\end{equation}
Furthermore, it follows from Strichartz' estimate and \eqref{14/09/05/14:30} that for any number $k$ and any $\delta>0$, there exists a number $N_{5}(k,\delta)$ such that for any $n\ge N_{5}(k,\delta)$,
\begin{equation}\label{14/09/05/14:35}
\big\| \langle \nabla \rangle e^{it\Delta}\big\{ \psi_{n}(0)-\psi_{n}^{k\mbox{-}app}(0) \big\} \big\|_{V_{p+1}(\mathbb{R})}
\lesssim
\big\| \psi_{n}(0)-\psi_{n}^{k\mbox{-}app}(0) \big\|_{H^{1}}
\le \delta.
\end{equation}

The long-time perturbation theory (Lemma \ref{14/06/05/10:08}) together with 
 \eqref{14/09/05/11:52}, \eqref{14/09/05/14:17}, \eqref{14/09/05/14:30} and \eqref{14/09/05/14:35} shows that for any $T \in (\tau_{\min}^{1},T_{\max}^{1})$, there exist constants $\delta_{0}(T,\gamma)>0$ and $C(T,\gamma)>0$ with the following properties: for any $\delta \in (0, \delta_{0}(T,\gamma))$, we can take numbers $k_{0}$ (depending on $\delta$ and $T$) and $N_{6}(\delta,T)$ such that for any $n\ge N_{6}(\delta,T)$,
\begin{align}
\label{14/09/05/14:40} 
\big\|\langle \nabla \rangle \psi_{n} \big\|_{St(I_{n}(T))}
&\le C(T,\gamma),
\\[6pt]
\label{14/09/05/14:41}
\sup_{t\in I_{n}(T)} \big\|\langle \nabla \rangle \big\{ \psi_{n}(t)-\psi_{n}^{k_{0}\mbox{-}app}(t) \big\} \|_{L^{2}} 
&\le C(T,\gamma) \gamma 
.
\end{align}
Here, $C(T,\gamma)$ is non-decreasing with respect to $\gamma$. Furthermore, it follows from \eqref{14/09/05/14:41}, Lemma \ref{14/09/01/11:30} and Lemma \ref{14/02/04/12:07}  that for any $T\in (\tau_{\min}^{1},T_{\max}^{1})$, any $\delta  \in (0, \delta_{0}(T,\gamma))$ and the number $k_{0}$ determined by $T$ and $\delta$, we can take a number $N_{7}(\delta, T, \omega)$ such that for any $n\ge N_{7}(\delta, T, \omega)$ and $s\in \{0,1\}$,
\begin{equation}\label{14/09/05/14:47}
\begin{split}
&\| |\nabla|^{s}  \big\{
\psi_{n}\big( (\lambda_{n}^{1})^{2}T+t_{n}^{1}\big)-\psi_{n}^{1}\big( (\lambda_{n}^{1})^{2}T+t_{n}^{1}\big) \big\}
\|_{L^{2}}^{2}
\\[6pt]
&\lesssim  
\| | \nabla |^{s} \big\{
\psi_{n}\big( (\lambda_{n}^{1})^{2}T+t_{n}^{1}\big)
- 
\psi_{n}^{k_{0}\mbox{-}app}\big( (\lambda_{n}^{1})^{2}T+t_{n}^{1}\big)
\big\} \|_{L^{2}}^{2}
\\[6pt]
&\quad +
\big\| | \nabla |^{s}  
\sum_{j=2}^{k_{0}} 
\psi_{n}^{j}\big( (\lambda_{n}^{1})^{2}T+t_{n}^{1}\big)
\big\|_{L^{2}}^{2}
+
\||\nabla |^{s} w_{n}^{k_{0}}\|_{L^{2}}^{2}
\\[6pt]
&\lesssim  
C(T,\gamma)^{2} \gamma^{2}
+
k_{0}
\sum_{j=2}^{k_{0}} 
\big\| | \nabla |^{s}  \psi_{n}^{j} \big( (\lambda_{n}^{1})^{2}T+t_{n}^{1}\big)
\big\|_{L^{2}}^{2}
+
\|| \nabla |^{s} w_{n}^{k_{0}}\|_{L^{2}}^{2}
+
\delta
\\[6pt]
&=  
C(T,\gamma)^{2}\gamma^{2}
+
k_{0}
\sum_{j=2}^{k_{0}} 
\big\| | \nabla |^{s} g_{n}^{j} \sigma_{n}^{j} 
\widetilde{\psi}^{j}(T)
\big\|_{L^{2}}^{2}
+
\||\nabla |^{s} w_{n}^{k_{0}}\|_{L^{2}}^{2}
+
\delta
\\[6pt]
&\lesssim 
C(T,\gamma)^{2}\gamma^{2}+\delta.
\end{split}
\end{equation}

We shall show that $\lambda_{\infty}^{1}=1$. Suppose for contradiction that $\lambda_{\infty}^{1}=0$. Fix a time $T \in (\tau_{\min}^{1},T_{\max}^{1})$. Then, it follows from Lemma \ref{14/09/01/11:30} that 
\begin{equation}\label{14/09/05/14:48}
\lim_{n\to \infty}\big\| \psi_{n}^{1}\big( (\lambda_{n}^{1})^{2}T+t_{n}^{1}\big) \big\|_{L^{2}}
=
\lim_{n\to \infty}\big\| g_{n}^{1}\sigma_{n}^{1} \widetilde{\psi}^{1}(T) \big\|_{L^{2}}
=0.
\end{equation}
Furthermore, we see from $\mathcal{M}(\psi_{n})\equiv \mathcal{M}(\Phi_{\omega})$, \eqref{14/09/05/14:47} and \eqref{14/09/05/14:48} that for any sufficiently small $\omega>0$ and any $\delta \in (0,\delta_{0}(T,\gamma))$,  
\begin{equation}\label{14/09/01/15:31}
\|\Phi_{\omega}\|_{L^{2}}^{2}
=
\lim_{n\to \infty}\| \psi_{n} \big( (\lambda_{n}^{1})^{2}T+t_{n}^{1}\big) 
\|_{L^{2}}^{2}
\lesssim  
C(T,\gamma)^{2}\gamma^{2}+\delta.
\end{equation}
Taking $\gamma \to 0$ and $\delta\to 0$, we deduce that $\Phi_{\omega} \equiv 0$. However, this is a contradiction. Thus, we find that $\lambda_{\infty}^{1}=1$, so that $\lambda_{n}^{1}\equiv 1$, $\tau_{\infty}^{1}=-t_{\infty}^{1}$ and $\sigma_{n}^{1}\equiv 1$. 
\par 
Now, we are in a position to prove \eqref{14/05/29/11:08} and \eqref{14/09/06/14:30}. We first consider the case $\tau_{\infty}^{1}=-\infty$. In this case, we have $t_{\infty}^{1}=\infty$. Suppose for contradiction that \eqref{14/05/29/11:08} failed. Then, for any $T \in (\tau_{\min}^{1},T_{\max}^{1})$, we can take $T' \in [T, T_{\max}^{1})$ such that 
\begin{equation}\label{14/09/06/11:16}
\widetilde{d}_{\omega}\big( \widetilde{\psi}^{1}(T') \big)\le \frac{2}{3}R_{*}.
\end{equation}
Since $\lim_{n\to \infty}t_{n}^{1}=\infty$, we can take a number $N_{8}(T')$ such that $T'+t_{n}^{1} \ge 0$ for any $n\ge N_{8}(T')$. Thus, we find from \eqref{14/01/26/10:40}, \eqref{14/09/05/14:47} with $\lambda_{n}^{1}\equiv 1$ and \eqref{14/09/06/11:16} that 
\begin{equation}\label{14/09/05/17:47}
\begin{split}
R_{*}
&\le \widetilde{d}_{\omega}\big( \psi_{n}\big( T'+t_{n}^{1}\big) \big)
\le 
\widetilde{d}_{\omega}\big( \psi_{n}^{1}\big( T'+t_{n}^{1} \big) \big)
+o_{\gamma}(1)
+o_{\delta}(1)
\\[6pt]
&=
\widetilde{d}_{\omega}\big( \widetilde{\psi}^{1}(T') \big)
+o_{\gamma}(1)+o_{\delta}(1)
\le \frac{2}{3}R_{*}+o_{\gamma}(1)+o_{\delta}(1).
\end{split}
\end{equation}
However, this is impossible for sufficiently small $\gamma$ and $\delta$. Hence, the claim \eqref{14/05/29/11:08} is true. Suppose next that \eqref{14/09/06/14:30} failed. Then, for any $T \in (\tau_{\min}^{1}, T_{\max}^{1})$, there exists $T' \in [T, T_{\max}^{1})$ such that 
\begin{equation}\label{14/09/07/10:53}
\mathcal{K}\big( \widetilde{\psi}^{1}(T') \big)\le \frac{2}{3}\kappa_{1}(R_{*}).\end{equation}
Let $N_{8}(T')$ be the number obtained above. Then, we find from \eqref{14/01/26/11:03}, \eqref{14/09/05/14:47} with $\lambda_{n}^{1}\equiv 1$ and \eqref{14/09/07/10:53} that 
\begin{equation}\label{14/09/07/11:07}
\begin{split}
\kappa_{1}(R_{*}) 
&\le 
\mathcal{K}\big( \psi_{n}\big( T'+t_{n}^{1}\big) \big)
\\[6pt]
&\le 
\mathcal{K}\big( \psi_{n}^{1}\big( T'+t_{n}^{1} \big) \big)
+o_{\gamma}(1)+ o_{\delta}(1)  
\le \frac{2}{3} \kappa_{1}(R_{*})+o_{\gamma}(1)+o_{\delta}(1).
\end{split}
\end{equation}
However, for a sufficiently small $\gamma$ and $\delta$, this is a contradiction. Hence, the claim \eqref{14/09/06/14:30} is true.
\par 
It remains to consider the case $\tau_{\infty}^{1}\in \mathbb{R}$. In this case, we have that $\tau_{\min}^{1}< \tau_{\infty}^{1}=-t_{\infty}^{1}<T_{\max}^{1}$ (see \eqref{14/09/05/10:50}). Let $T' \in [-t_{\infty}^{1},T_{\max}^{1})$ be a time for which \eqref{14/09/06/11:16} or \eqref{14/09/07/10:53} holds. Then, we can derive a contradiction as well as the case $\tau_{\infty}^{1}=-\infty$. 
\end{proof}
\begin{lemma}\label{14/09/18/07:00}
Assume \eqref{14/02/01/17:53}. Then, we have $\mathcal{M}( \widetilde{\psi}^{1})=\mathcal{M}(\Phi_{\omega})$.
\end{lemma} 
\begin{proof}[Proof of Lemma \ref{14/09/18/07:00}]
We see from  $\lambda_{n}^{1}\equiv 1$ (see Lemma \ref{14/09/05/09:52}), 
 \eqref{17/10/14/16:19} with $k=2$, \eqref{14/01/27/14:51}, Lemma \ref{14/02/04/12:07} and  Lemma \ref{14/09/01/11:30} that for any $\delta>0$, there exists a number $N(\delta)$ such that for any $n\ge N(\delta)$, 
\begin{equation}\label{14/09/18/15:06}
\begin{split}
\omega \delta
&\ge 
 \omega \big| \mathcal{M}(\psi_{n}(0)) 
-
\mathcal{M}( e^{-i t_{n}^{1}\Delta}\widetilde{u}^{1}) 
\big|
-
\omega \mathcal{M}( g_{n}^{2}e^{-i t_{n}^{2}\Delta}\widetilde{u}^{2}) 
-
\omega \mathcal{M}(w_{n}^{2})
\\[6pt]
&\ge 
\omega \big|
\mathcal{M}(\psi_{n}(0)) 
-
\mathcal{M}( e^{-i t_{n}^{1}\Delta}\widetilde{u}^{1} ) 
\big|
-\delta 
\\[6pt]
&\ge 
\omega \big|
\mathcal{M}(\Phi_{\omega}) 
-
\mathcal{M}( \widetilde{\psi}^{1}) 
\big|
-\omega \delta -\delta.
\end{split}
\end{equation}
Since $\delta$ is an arbitrary constant, \eqref{14/09/18/15:06} implies the desired result.   
\end{proof}

\begin{proposition}\label{14/02/04/10:06}
There exists a solution $\Psi$ to \eqref{12/03/23/17:57} such that 
\begin{align}
\label{14/02/04/08:01}
&\Psi \in S_{\omega,R_{*},+}^{\varepsilon_{*}}, 
\\[6pt]
\label{14/09/30/09:20}
&\inf_{t\in [0,T_{\max})}\mathcal{K}(\Psi(t))\ge \frac{\kappa_{1}(R_{*})}{2},
\\[6pt]
\label{14/02/04/08:02}
&\mathcal{H}(\Psi)=E_{*}, 
\\[6pt]
\label{14/02/04/08:03}
&\|\Psi\|_{W_{p+1}([0,T_{\max}))\cap W_{2^{*}}([0,T_{\max}))}=\infty,
\\[6pt]
\label{14/09/19/17:12}
&\big\{\Psi(t) \colon t\in [0,T_{\max}) \big\} \ \mbox{is precompact in $H^{1}(\mathbb{R}^{d})$},
\end{align}
where $T_{\max}$ denotes the maximal lifespan of $\Psi$. 
\end{proposition}
\begin{proof}[Proof of Proposition \ref{14/02/04/10:06}]
Without loss of generality, we may assume \eqref{14/02/01/17:53}. Then, we see from Lemma \ref{14/09/05/09:52} that $\lambda_{\infty}^{1}=1$ (hence $\sigma_{\infty}^{1}=1$ and $\widetilde{\psi}^{1}$ is a solution to \eqref{12/03/23/17:57}), and there exists $T \in I_{\max}^{1}$ such that 
\begin{equation}\label{14/09/07/11:18}
\inf_{t\in [T,T_{\max}^{1})} \widetilde{d}_{\omega}\big(\widetilde{\psi}^{1}(t)\big) \ge \frac{R_{*}}{2},
\quad 
\inf_{t\in [T,T_{\max}^{1})} \mathcal{K}\big(\widetilde{\psi}^{1}(t)\big)\ge \frac{\kappa_{1}(R_{*})}{2}. 
\end{equation}
We also see from \eqref{15/05/05/09:09}, \eqref{14/01/26/10:38}, \eqref{14/01/25/15:50}, \eqref{14/01/26/14:09}, \eqref{14/01/27/14:51} and Lemma \ref{14/01/29/12:01} that 
\begin{equation}\label{15/06/27/14:51}
\mathcal{S}_{\omega}(\widetilde{\psi}^{1}) 
\le 
\lim_{n\to \infty}\mathcal{S}_{\omega}(\psi_{n}) 
\le
\omega \mathcal{M}(\Phi_{\omega})+E_{*}=m_{\omega}.
\end{equation}
This together with Lemma \ref{14/09/18/07:00} and \eqref{14/09/07/11:18} shows that $\widetilde{\psi}^{1}(\cdot +T) \in S_{\omega,\frac{R_{*}}{2}}^{\frac{\varepsilon_{*}}{2}}$. Furthermore, we find from Lemma \ref{14/01/26/08:13}, Lemma \ref{13/05/14/21:19}, \eqref{14/09/07/11:18} and Lemma \ref{14/09/04/15:39}
 that $\widetilde{\psi}^{1}(\cdot +T_{0}) \in S_{\omega, R_{*},+}^{\varepsilon_{*}}$ for some $T_{0}\ge T$. Put $\Psi(t):=\widetilde{\psi}^{1}(t+T_{0})$. Then, the maximal lifespan of $\Psi$ is $T_{\max}(\Psi):=T_{\max}^{1}-T_{0}$, and $\Psi \in S_{\omega,R_{*},+}^{\varepsilon_{*}}$. We also find from Proposition \ref{14/02/02/14:35}, \eqref{14/09/19/16:12} and \eqref{14/09/07/11:18} that 
\begin{align}
\label{14/02/04/10:48}
&\| \Psi \|_{W_{p+1}([0, T_{\max}(\Psi)))\cap W_{2^{*}}([0,T_{\max}(\Psi))} =\infty ,
\\[6pt]
\label{14/02/04/11:46}
&
\mathcal{H}(\Psi)\ge \frac{1}{2}
\inf_{t\in [0,T_{\max}(\Psi))}\mathcal{K}\big(\Psi(t) \big) \ge \frac{\kappa_{1}(R_{*})}{4}.
\end{align}
Since $\mathcal{M}(\Psi)=\mathcal{M}(\widetilde{\psi}^{1})=\mathcal{M}(\Phi_{\omega})$, the definition of $E_{*}$ (see \eqref{14/01/22/12:05}) together with \eqref{15/06/27/14:51} and \eqref{14/02/04/10:48} shows the property \eqref{14/02/04/08:02}. 

Now, suppose for contradiction that there existed a number $j\ge 2$ such that $\widetilde{\psi}^{j}$ is non-trivial. Without loss of generality, we may assume that $\widetilde{\psi}^{2}$ is non-trivial.  Then, it follows from \eqref{14/02/05/11:47} and Theorem \ref{15/04/05/15:27} that 
\begin{equation}\label{14/09/07/15:38}
\left\{ \begin{array}{ccc}
\mathcal{H}\big( \widetilde{\psi}^{2}\big)>0
&\mbox{if}&  \lambda_{\infty}^{2}=1,
\\[6pt]
\mathcal{H}^{\ddagger}\big( \sigma_{\infty}^{2}\widetilde{\psi}^{2}\big)>0
&\mbox{if}&  \lambda_{\infty}^{2}=0 .
\end{array} \right. 
\end{equation}
We see from \eqref{14/01/29/20:27}, Lemma \ref{14/09/07/15:55} and \eqref{14/09/07/15:38} that 
\begin{equation}\label{14/09/07/15:43}
0<h_{2}:=\left\{ \begin{array}{ccc}
\lim_{n\to \infty}\mathcal{H}\big( g_{n}^{2}\sigma_{n}^{2}e^{-i\frac{t_{n}^{2}}{(\lambda_{n}^{2})^{2}}\Delta}\widetilde{u}^{2} \big)
=
\mathcal{H}\big( \widetilde{\psi}^{2}\big) 
&\mbox{if}& \lambda_{\infty}^{2}=1,
\\[6pt]
\lim_{n\to \infty}\mathcal{H}\big( g_{n}^{2}\sigma_{n}^{2}e^{-i\frac{t_{n}^{2}}{(\lambda_{n}^{2})^{2}}\Delta}\widetilde{u}^{2} \big)
=
\mathcal{H}^{\ddagger}\big( \sigma_{\infty}^{2}\widetilde{\psi}^{2}\big)
&\mbox{if}& \lambda_{\infty}^{2}=0. 
\end{array} \right.
\end{equation}
Furthermore, we see from  \eqref{14/01/26/14:08}, \eqref{14/01/25/15:50}, \eqref{14/01/29/12:09} in Lemma \ref{14/01/29/12:01} and \eqref{14/09/07/15:43} that 
\begin{equation}\label{14/09/07/14:07}
\begin{split} 
0&=
\lim_{n\to \infty}\Big\{ 
\mathcal{H}(\psi_{n}) 
-\sum_{j=1}^{2}\mathcal{H}\big( g_{n}^{j}\sigma_{n}^{j}e^{-i\frac{t_{n}^{j}}{(\lambda_{n}^{j})^{2}}\Delta}\widetilde{u}^{j} \big) 
-\mathcal{H}(w_{n}^{2}) 
\Big\}
\\
&\le 
E_{*} 
-
\lim_{n\to \infty}\mathcal{H}\big( e^{-i t_{n}^{1}\Delta}\widetilde{u}^{1} \big)-
h_{2}.
\end{split}
\end{equation}
This together with \eqref{14/01/29/20:27} shows that 
\begin{equation}\label{14/09/08/11:17}
\mathcal{H}( \Psi )
=
\mathcal{H}\big( \widetilde{\psi}^{1} \big)
=
\lim_{n\to \infty}\mathcal{H}\big( e^{-i t_{n}^{1}\Delta}\widetilde{u}^{1} \big)\le E_{*}-h_{2}.
\end{equation}
However, this contradicts the proved property \eqref{14/02/04/08:02}. Thus, we have found that $\widetilde{\psi}^{j}\equiv 0$ for all $j\ge 2$. Then, we also 
 see that $\widetilde{u}^{j}\equiv 0$ for all $j\ge 2$.
\par 
We return to the decomposition \eqref{14/01/26/12:16}:  
 $\psi_{n}(0)=e^{-it_{n}^{1}\Delta}\widetilde{u}^{1}+w_{n}^{1}$. 
 It follows from \eqref{17/10/14/16:19}, \eqref{14/01/29/20:27} and $\mathcal{M}(\widetilde{\psi}^{1})=\mathcal{M}(\Psi)=\mathcal{M}(\Phi_{\omega})=\mathcal{M}(\psi_{n})$ that 
\begin{equation}\label{14/09/19/15:50}
\lim_{n\to \infty}\|w_{n}^{1} \|_{L^{2}}=0. 
\end{equation}
Moreover, it follows from \eqref{14/01/25/15:50}, \eqref{14/01/26/14:08}, \eqref{14/01/29/20:27} and \eqref{14/02/04/08:02} that 
\begin{equation}\label{14/09/19/15:59}
\lim_{n\to \infty}\mathcal{H}\big( w_{n}^{1} \big)=0, 
\end{equation}
which together with \eqref{14/01/29/12:09} in Lemma \ref{14/01/29/12:01} shows  \begin{equation}\label{14/09/19/17:25}
\begin{split}
\lim_{n\to \infty} \|\nabla w_{n}^{1} \|_{L^{2}}^{2}
&\le 
\frac{d}{s_{p}} \lim_{n\to \infty}
\Big\{\mathcal{H}\big( w_{n}^{1} \big)
-\frac{2}{d(p-1)}\mathcal{K}(w_{n}^{k})
\Big\}
\le 
\frac{d}{s_{p}} \lim_{n\to \infty} \mathcal{H}\big( w_{n}^{1} \big)=0.
\end{split}
\end{equation}
Thus, we find that 
\begin{equation}\label{14/09/19/17:33}
\lim_{n\to \infty}\|w_{n}^{1} \|_{H^{1}}=0. 
\end{equation}

Finally, we shall prove the precompactness of $\{\Psi(t)\}$ in $H^{1}(\mathbb{R}^{d})$. Take a sequence $\{\tau_{n}\}$ in $[0,T_{\max}(\Psi))$. By the continuity in time, it suffices to consider the case where $\lim_{n\to \infty}\tau_{n}=T_{\max}(\Psi)$. Applying the above argument to $\{\Psi(t+\tau_{n})\}$, we can take a subsequence of $\{\tau_{n}\}$ (still denoted by the same symbol), a sequence $\{t_{n}\}$ in $\mathbb{R}$, a sequence $\{w_{n}\}$ in $H^{1}(\mathbb{R}^{d})$ and a function $\widetilde{u} \in H^{1}(\mathbb{R}^{d})$ such that 
\begin{align}
\label{14/09/20/09:45}
&\Psi(\tau_{n})=e^{-it_{n}\Delta}\widetilde{u}+w_{n},
\\[6pt]
\label{14/09/20/09:47}
&\lim_{n\to \infty}\|w_{n}\|_{H^{1}}=0,
\\[6pt]
\label{14/09/20/10:08}
&t_{\infty}:=\lim_{n\to \infty}t_{n}\in \mathbb{R}\cup \{\infty \},
\end{align}
and if $t_{\infty}=\infty$, then $T_{\min}(\Psi)=-\infty$.  We see from  \eqref{14/09/20/09:45} and \eqref{14/09/20/09:47} that 
\begin{equation}\label{14/09/29/11:20}
\lim_{n\to \infty} \| \Psi(\tau_{n})-e^{-it_{n}\Delta}\widetilde{u} \|_{H^{1}}
=0.
\end{equation}
Hence, in order to prove the precompactness, it suffices to show that $t_{\infty}\in \mathbb{R}$. Suppose for contradiction that  $t_{\infty}=\infty$. Then, we find from Strichartz' estimate and \eqref{14/09/29/11:20} that 
\begin{equation}\label{14/09/29/11:05}
\begin{split}
&
\lim_{n\to \infty} \| e^{it\Delta} \Psi(\tau_{n}) \|_{ V_{2+\frac{4}{d}}((-\infty,0])\cap W_{2^{*}}((-\infty,
0])}
\\[6pt]
&=
\lim_{n\to \infty}
\| e^{i(t+t_{n})\Delta}\Psi(\tau_{n}) \|_{ V_{2+\frac{4}{d}}((-\infty,-t_{n}])\cap W_{2^{*}}((-\infty,-t_{n}])}
\\[6pt]
&\le
\lim_{n\to \infty}\| e^{i(t+t_{n})\Delta} e^{-it_{n}\Delta}\widetilde{u} \|_{ V_{2+\frac{4}{d}}((-\infty,-t_{n}])\cap W_{2^{*}}((-\infty,-t_{n}])}
\\[6pt]
&\quad +
\lim_{n\to \infty}\|  e^{i(t+t_{n})\Delta}\big\{ \Psi(\tau_{n})-e^{-it_{n}\Delta}\widetilde{u}\big\} \|_{ V_{2+\frac{4}{d}}((-\infty,-t_{n}])\cap W_{2^{*}}((-\infty,-t_{n}])}
\\[6pt]
&\lesssim
\lim_{n\to \infty} \| \langle \nabla \rangle e^{it\Delta}\widetilde{u} \|_{ V_{2+\frac{4}{d}}((-\infty,-t_{n}])\cap V_{2^{*}}((-\infty,-t_{n}])}
+
\lim_{n\to \infty} \| \Psi(\tau_{n})-e^{-it_{n}\Delta}\widetilde{u} \|_{H^{1}}
\\[6pt]
&=0.
\end{split}
\end{equation}
Thus, the small-data theory (see Lemma \ref{14/01/30/10:19} below) shows that there exists a number $N\ge 1$ such that for any $n\ge N$,  
\begin{equation}\label{14/09/20/09:54}
\| \Psi \|_{ W_{p+1}((-\infty,\tau_{n}])\cap W_{2^{*}}((-\infty,\tau_{n}])}
=
\| \Psi(\cdot+\tau_{n}) \|_{ W_{p+1}((-\infty,0])\cap W_{2^{*}}((-\infty,0])}
\lesssim 1,
\end{equation}
which together with $\lim_{n\to \infty}\tau_{n}=T_{\max}(\Psi)$ shows 
\begin{equation}\label{14/09/20/09:10}
\| \Psi \|_{ W_{p+1}((-\infty,T_{\max}(\Psi)))\cap W_{2^{*}}((-\infty,T_{\max}(\Psi)))}
\lesssim 1. 
\end{equation}
However, this contradicts \eqref{14/02/04/08:03}. Thus, we have $t_{\infty}\in \mathbb{R}$ and therefore the precompactness holds.  
\end{proof}

Now, we finish the proof of Proposition \ref{14/01/25/14:59}. Under the hypothesis \eqref{14/01/25/15:35}, we have shown the existence of solution $\Psi$ with the properties \eqref{14/02/04/08:01} through \eqref{14/09/19/17:12} in Proposition \ref{14/02/04/10:06}. However, the same argument as the proof of Proposition 5.3 in \cite{Kenig-Merle} shows that such a solution $\Psi$ never exists, and therefore we have arrived at a contradiction. Thus, the hypothesis \eqref{14/01/25/15:35} is false, and we have proved Proposition \ref{14/01/25/14:59}. 
\end{proof}


\appendix 

\section{Existence of ground state}\label{17/12/16/15:13}

In this section, we show the existence of ground state of \eqref{12/03/23/18:09} in three dimensions. We also give a proof of \eqref{15/02/24/20:55} at the end of this section.
\begin{proposition}\label{17/12/25/17:17}
Assume $d=3$ and $1<p<5$. Then, there exists $\omega_{3}>0$ such that for any $0< \omega <\omega_{3}$, a ground state of \eqref{12/03/23/18:09} exists. 
\end{proposition}

In order to prove Proposition \ref{17/12/25/17:17}, we consider the following equation in $\mathbb{R}^{d}$:
\begin{equation}\label{17/12/26/11:11} 
v -\Delta v
-\omega^{-\frac{5-p}{4}}|v|^{p-1}v
-|v|^{\frac{4}{d-2}}v
=0,
\qquad 
v \in H^{1}(\mathbb{R}^{d})\setminus\{0\} 
.
\end{equation}
The action associated with the equation \eqref{17/12/26/11:11} is 
\begin{equation}\label{17/12/26/11:52}
\widetilde{\mathcal{S}}_{\omega}(v)
=
\frac{1}{2}\|v \|_{L^{2}}^{2}
+
\frac{1}{2}\|\nabla v \|_{L^{2}}^{2}
-
\frac{\omega^{-\frac{5-p}{4}}}{p+1} \|v \|_{L^{p+1}}^{p+1}
-
\frac{1}{2^{*}}\|v \|_{L^{2^{*}}}^{2^{*}}
.
\end{equation}

It is easy to see that: if $d=3$ and $v$ is a solution to \eqref{17/12/26/11:11}, then the $H^{1}$-rescaled function $\omega^{\frac{1}{4}}v(\sqrt{\omega}\cdot )$ becomes a solution to \eqref{12/03/23/18:09} in $\mathbb{R}^{3}$; and if $u$ is a solution to \eqref{12/03/23/18:09} in $\mathbb{R}^{3}$, then $\omega^{-\frac{1}{4}}u(\cdot/\sqrt{\omega})$ becomes a solution to  \eqref{17/12/26/11:11}. Furthermore, if $d=3$ and $\widetilde{\Phi}_{\omega}$ is a ground state of \eqref{17/12/26/11:11} (namely, $\widetilde{\Phi}_{\omega}$ is a solution which minimizes the action $\widetilde{\mathcal{S}}_{\omega}$ among the solutions to \eqref{17/12/26/11:11}), then for any solution $u$ to \eqref{12/03/23/18:09} with $d=3$, 
\begin{equation}\label{17/12/26/12:17}
\mathcal{S}_{\omega}(\omega^{\frac{1}{4}}\widetilde{\Phi}_{\omega}(\sqrt{\omega}\cdot))
=
\widetilde{\mathcal{S}}_{\omega}(\widetilde{\Phi}_{\omega})
\le
\widetilde{\mathcal{S}}_{\omega}(\omega^{-\frac{1}{4}}u(\cdot/\sqrt{\omega}))
=
\mathcal{S}_{\omega}(u).
\end{equation}
Thus, we find that it is sufficient for Proposition \ref{17/12/25/17:17} to prove the existence of ground state of \eqref{17/12/26/11:11} in $\mathbb{R}^{3}$. An advantage to consider the equation \eqref{17/12/26/11:52} is that we can take into account the smallness of $\omega$ easier than the original problem \eqref{12/03/23/18:09} in our variational argument below.  
\par 
We define 
\begin{equation}\label{17/12/26/11:12}
\widetilde{m}_{\omega}
:=
\inf\big\{ \widetilde{\mathcal{T}}_{\omega}(v) \colon 
v \in H^{1}(\mathbb{R}^{3})\setminus \{0\}, \ \widetilde{\mathcal{N}}_{\omega}(v)\le 0
\big\},
\end{equation}
where 
\begin{align}
\label{17/12/26/16:33}
\widetilde{\mathcal{N}}_{\omega}(v)
&:= \|v\|_{L^{2}}^{2}+\|\nabla v \|_{L^{2}}^{2}
-\omega^{-\frac{5-p}{4}} \|v\|_{L^{p+1}}^{p+1}
-\|v \|_{L^{2^{*}}}^{2^{*}},
\\[6pt]
\label{17/12/26/17:25}
\widetilde{\mathcal{T}}_{\omega}(v)
&:= 
\frac{p-1}{2(p+1)}\big\{ \|v \|_{L^{2}}^{2}+ \|\nabla v \|_{L^{2}}^{2}\big\}
+
\frac{5-p}{6(p+1)}\|v \|_{L^{2^{*}}}^{2^{*}}
.
\end{align}
It is easy to verify that 
\begin{equation}\label{17/12/30/11:04}
\widetilde{\mathcal{T}}_{\omega}=\widetilde{\mathcal{S}}_{\omega}-\frac{1}{p+1}\widetilde{\mathcal{N}}_{\omega}.
\end{equation}  
Moreover, we have the following:
\begin{lemma}\label{17/12/30/11:01}
Assume $d=3$ and $1 < p < 5$. Then, we have the following: 
\\
{\rm (i)}
\begin{equation}\label{17/12/26/16:45} 
\widetilde{m}_{\omega}
=
\inf\big\{ \widetilde{\mathcal{S}}_{\omega}(v) \colon 
v \in H^{1}(\mathbb{R}^{3})\setminus \{0\}, \ \widetilde{\mathcal{N}}_{\omega}(v)=0
\big\}.
\end{equation}
{\rm (ii)}~Any minimizer of the variational problem associated with $\widetilde{m}_{\omega}$ becomes a ground state of \eqref{17/12/26/11:11}. 
\end{lemma}
\begin{proof}[Proof of Lemma \ref{17/12/30/11:01}]
First, we shall prove {\rm (i)}. Since $\widetilde{\mathcal{T}}_{\omega}(v) = \widetilde{\mathcal{S}}_{\omega}(v)$ for every $v \in H^1(\mathbb{R}^{3})$ with $\widetilde{\mathcal{N}}_{\omega}(v) = 0$, we see that $\widetilde{m}_\omega \le \inf\big\{ \widetilde{\mathcal{S}}_{\omega}(v) \colon 
v \in H^{1}(\mathbb{R}^{3})\setminus \{0\}, \ \widetilde{\mathcal{N}}_{\omega}(v)=0 \big\}$. In order to prove the opposite inequality, we note that for any $v_{0} \in H^{1}(\mathbb{R}^{3}) \setminus \{0\}$ with $\widetilde{\mathcal{N}}_{\omega}(v_{0}) \le 0$, there exists $0<\lambda_{0} \le 1$ such that $\widetilde{\mathcal{N}}_{\omega} (\lambda_{0} v_{0}) = 0$. Furthermore, this together with \eqref{17/12/30/11:04} shows that   
\begin{equation}\label{17/12/30/11:16}
\inf\big\{ \widetilde{\mathcal{S}}_{\omega}(v) \colon 
v \in H^{1}(\mathbb{R}^{3})\setminus \{0\}, \ \widetilde{\mathcal{N}}_{\omega}(v)=0
\big\}
\le 
\widetilde{\mathcal{S}}_{\omega} (\lambda_{0} v_{0}) 
= 
\widetilde{\mathcal{T}}_{\omega}(\lambda_{0} v_{0}) 
\le 
\widetilde{\mathcal{T}}_{\omega}(v_{0}).
\end{equation}
Since $v_{0}$ is an arbitrary function in $H^{1}(\mathbb{R}^{3}) \setminus \{0\}$ with $\widetilde{\mathcal{N}}_{\omega}(v_{0}) \le 0$, we find that \eqref{17/12/26/16:45} holds.  
\par 
Next, we shall prove the claim {\rm (ii)}. It is well known that if $\widetilde{Q}_{\omega}$ is a minimizer of $\inf\big\{ \widetilde{\mathcal{S}}_{\omega}(v) \colon 
v \in H^{1}(\mathbb{R}^{3})\setminus \{0\}, \ \widetilde{\mathcal{N}}_{\omega}(v)=0 \big\}$, then $\widetilde{Q}_{\omega}$ is also a ground state of \eqref{17/12/26/11:11}. Hence, we see from \eqref{17/12/30/11:04} and \eqref{17/12/26/16:45} that it suffices to prove that $\widetilde{\mathcal{N}}_{\omega}(\widetilde{Q}_{\omega})=0$ for all minimizer $\widetilde{Q}_{\omega}$ for $\widetilde{m}_{\omega}$. Suppose to the contrary that there exists a minimizer $\widetilde{Q}_{\omega}$ for $\widetilde{m}_{\omega}$ such that $\widetilde{\mathcal{N}}_{\omega}(\widetilde{Q}_{\omega})<0$. Then, we could take $0< \lambda_{0}<1$ such that $\widetilde{\mathcal{N}}_{\omega}(\lambda_{0}\widetilde{Q}_{\omega})=0$. Furthermore, we see from the definition of $\widetilde{m}_{\omega}$ (see \eqref{17/12/26/11:12}) and $0< \lambda_{0}<1$ that 
\begin{equation}\label{17/12/30/11:55}
\widetilde{m}_{\omega}
\le 
\widetilde{\mathcal{T}}_{\omega}(\lambda_{0}\widetilde{Q}_{\omega})
< \widetilde{\mathcal{T}}_{\omega}(\widetilde{Q}_{\omega})
=
\widetilde{m}_{\omega},
\end{equation}
which is a contradiction. Thus, $\widetilde{\mathcal{N}}_{\omega}(\widetilde{Q}_{\omega})=0$. 
\end{proof}

A key lemma to prove the existence of the minimizer is the following: 
\begin{lemma}\label{17/07/08/17:13}
Assume $d=3$ and $1<p<5$. Then, there exists $\omega_{3}>0$ such that for any $0< \omega <\omega_{3}$,  
\begin{equation}\label{17/07/08/17:15}
0< \widetilde{m}_{\omega}< \frac{1}{3}\sigma^{\frac{3}{2}}
.
\end{equation}
\end{lemma}
\begin{proof}[Proof of Lemma \ref{17/07/08/17:13}]
First, we shall show that $\widetilde{m}_{\omega}>0$. Let $v$ be a nontrivial function in $H^{1}(\mathbb{R}^{3})$ such that $\widetilde{\mathcal{N}}_{\omega}(v)\le 0$. Then, it follows from Sobolev's embedding that  
\begin{equation}\label{17/12/26/18:47}
\|v\|_{H^{1}}^{2}
=
\|v\|_{L^{2}}^{2}+\|\nabla v \|_{L^{2}}^{2}
\le \omega^{-\frac{5-p}{4}} \|v\|_{L^{p+1}}^{p+1}
+\|v \|_{L^{2^{*}}}^{2^{*}}
\lesssim 
\omega^{-\frac{5-p}{4}} \|v\|_{H^{1}}^{p+1} 
+\|v \|_{H^{1}}^{6}
.
\end{equation}
In particular, \eqref{17/12/26/18:47} implies that 
\begin{equation}\label{17/12/26/18:59}
\omega^{\frac{5-p}{4}} \lesssim 
\|v\|_{H^{1}}^{p-1} + \omega^{\frac{5-p}{4}} \|v \|_{H^{1}}^{4}
.
\end{equation}
Since the implicit constant of the inequality \eqref{17/12/26/18:59} depends only on $p$, we find that for any $1<p<5$ and any $\omega >0$, there exists $C(\omega)>0$ such that $C(\omega) \le \widetilde{\mathcal{T}}_{\omega}(v)$ for all $v \in H^{1}(\mathbb{R}^{d})\setminus \{0\}$ with $\widetilde{\mathcal{N}}_{\omega}(v)\le 0$. Thus, we have proved that  $\widetilde{m}_{\omega}>0$.

Next, we shall show that $\widetilde{m}_{\omega}<\frac{1}{3}\sigma^{\frac{3}{2}}$. We define 
\begin{equation}\label{17/07/08/13:33}
W_{\omega}(x)
:=
\omega^{-\frac{1}{2}}W(x / \omega)
=
\omega^{\frac{1}{2}}\Big(\omega^{2}+\frac{|x|^{2}}{3} \Big)^{-\frac{1}{2}}.
\end{equation}
Note that $W_{\omega}$ does not belong to $L^{2}(\mathbb{R}^{3})$. Let $\chi$ be an even smooth function on $\mathbb{R}$ such that $\chi(r)=1$ for $0\le r\le 1$, $\chi(r)=0$ for $r\ge 2$, and $\chi$ is non-increasing on $[0,\infty)$. Then, we define 
\begin{equation}\label{17/07/08/11:48}
\widetilde{W}_{\omega}(x)
:=
\chi(|x|)W_{\omega}(x)
.
\end{equation}
We see from \eqref{12/08/22/13:32} that 
\begin{align}
\label{17/07/08/13:27}
\|\nabla \widetilde{W}_{\omega}\|_{L^{2}}^{2}
&=
\sigma^{\frac{3}{2}}+O(\omega)
,
\\[6pt]
\label{17/07/08/14:29}
\| \widetilde{W}_{\omega} \|_{L^{2^{*}}}^{2^{*}}
&=
\sigma^{\frac{3}{2}}+O(\omega^{3})
.
\end{align}
Moreover, we find that for any $2\le q <6$, 
\begin{equation}\label{17/07/08/20:47}
\|\widetilde{W}_{\omega}\|_{L^{q}}^{q} 
=
\left\{ \begin{array}{ccc} 
O(\omega^{\frac{6-q}{2}})
&\mbox{if}& 3<q <6, 
\\[6pt]
O(\omega^{\frac{3}{2}}|\log{\omega}|)
&\mbox{if}& q=3,  
\\[6pt]
O(\omega^{\frac{q}{2}})
&\mbox{if}& 2\le  q <3 .
\end{array}
\right. 
\end{equation}
Note here that there exists a unique $\tau_{\omega,0}>0$ such that 
\begin{equation}\label{17/12/17/13:32}
\widetilde{\mathcal{N}}_{\omega}(\tau_{\omega, 0} \widetilde{W}_{\omega})=0.
\end{equation} 
Furthermore, it follows from \eqref{17/07/08/13:27} through \eqref{17/12/17/13:32} that: if $2<p<5$, then 
\begin{equation}\label{17/12/17/11:20}
\begin{split}
0&=
\tau_{\omega,0}^{2}\|\widetilde{W}_{\omega} \|_{L^{2}}^{2}
+
\tau_{\omega,0}^{2}
\|\nabla \widetilde{W}_{\omega} \|_{L^{2}}^{2}
-
\omega^{-\frac{5-p}{4}} 
\tau_{\omega,0}^{p+1} 
\|\widetilde{W}_{\omega} \|_{L^{p+1}}^{p+1}
-
\tau_{\omega,0}^{2^{*}} \|\widetilde{W}_{\omega} \|_{L^{2^{*}}}^{2^{*}}
\\[6pt]
&=
\tau_{\omega,0}^{2}O(\omega)
+
\tau_{\omega,0}^{2}
\{\sigma^{\frac{3}{2}}+O(\omega)\}
-
\tau_{\omega,\varepsilon}^{p+1} 
O(\omega^{\frac{5-p}{4}})
-
\tau_{\omega,0}^{6} 
\{\sigma^{\frac{3}{2}}+O(\omega^{3})\}
;
\end{split}
\end{equation}
if $p=2$, then 
\begin{equation}\label{17/12/26/17:35}
0=
\tau_{\omega,0}^{2}O(\omega)
+
\tau_{\omega,0}^{2}
\{\sigma^{\frac{3}{2}}+O(\omega)\}
-
\tau_{\omega,0}^{3} 
O(\omega^{\frac{3}{4}}|\log{\omega}|)
-
\tau_{\omega,0}^{6} 
\{\sigma^{\frac{3}{2}}+O(\omega^{3})\}
;
\end{equation}
and if $1< p<2$, then 
\begin{equation}\label{17/12/26/17:37}
0=
\tau_{\omega,0}^{2}O(\omega)
+
\tau_{\omega,0}^{2}
\{\sigma^{\frac{3}{2}}+O(\omega)\}
-
\tau_{\omega,0}^{p+1} 
O(\omega^{\frac{p+1}{2}-\frac{5-p}{4}})
-
\tau_{\omega,0}^{6} 
\{\sigma^{\frac{3}{2}}+O(\omega^{3})\}
.
\end{equation}
Thus, we find from \eqref{17/12/17/11:20} through \eqref{17/12/26/17:37} that   \begin{equation}\label{17/12/17/13:24}
\tau_{\omega,0}^{4}
=
1+O(\omega)
-
\tau_{\omega,0}^{p-1} o_{\omega}(1),
\end{equation}
where $o_{\omega}(1)$ means a certain function such that $\lim_{\omega \to 0}o_{\omega}(1)=0$. Furthermore, since $0< p-1 <4$, this implies that
\begin{equation}\label{17/12/17/13:25}
\lim_{\omega \to + 0}\tau_{\omega,0}\ge \frac{1}{2}.
\end{equation}

We introduce a function $y_{\omega} \colon (0,\infty) \to \mathbb{R}$ as 
\begin{equation}\label{17/07/08/15:55}
y_{\omega}(t):=\frac{1}{2}t^{2} \big\{ 
 \|\widetilde{W}_{\omega}\|_{L^{2}}^{2}
+
\|\nabla \widetilde{W}_{\omega}\|_{L^{2}}^{2}
\big\} 
-\frac{t^{2^{*}}}{2^{*}} 
\|\widetilde{W}_{\omega}\|_{L^{2^{*}}}^{2^{*}}
.
\end{equation}
It is easy to verify that the function $y_{\omega}$ attains its maximum only at the point
\begin{equation}\label{17/07/08/16:02}
\tau_{\omega, \max}:=\frac{\big\{ 
\|\widetilde{W}_{\omega}\|_{L^{2}}^{2}
+
\|\nabla \widetilde{W}_{\omega}\|_{L^{2}}^{2}
\big\}^{\frac{1}{4}} 
}{ \|\widetilde{W}_{\omega}\|_{L^{2^{*}}}^{\frac{3}{2}}
}.
\end{equation}
Note here that it follows from the definition of $\sigma$ (see \eqref{12/03/24/17:52}), \eqref{17/07/08/13:27} and \eqref{17/07/08/14:29} that
\begin{equation}\label{17/07/08/15:19}
\frac{\|\nabla \widetilde{W}_{\omega}\|_{L^{2}}^{2}}{\|\widetilde{W}_{\omega}\|_{L^{2^{*}}}^{2}}
=
\frac{\sigma^{\frac{3}{2}} 
+ 
O( \omega )}{\sigma^{\frac{1}{2}}
+
O(\omega^{3})}
=
\sigma +  O(\omega).
\end{equation} 
Moreover, we see from \eqref{17/07/08/13:27} and \eqref{17/07/08/20:47} that 
\begin{equation}\label{17/07/08/20:56}
\frac{\|\widetilde{W}_{\omega}\|_{L^{2}}^{2}}{\|\nabla \widetilde{W}_{\omega}\|_{L^{2}}^{2}}
=
O(\omega)
.
\end{equation} 
Hence, if $\omega$ is sufficiently small, then we have 
\begin{equation}\label{17/07/08/16:06}
\begin{split}
y(\tau_{\omega,\max})
&=
\frac{1}{3}
\Big(
\frac{ \|\widetilde{W}_{\omega}\|_{L^{2}}^{2}
+
\|\nabla\widetilde{W}_{\omega}\|_{L^{2}}^{2}
}{ \|\widetilde{W}_{\omega}\|_{L^{2^{*}}}^{2}}
\Big)^{\frac{3}{2}} 
=
\frac{1}{3}
\frac{\|\nabla \widetilde{W}_{\omega}\|_{L^{2}}^{3}
}{ \|\widetilde{W}_{\omega}\|_{L^{2^{*}}}^{3}
}
\Big(
1+ 
\frac{ \|\widetilde{W}_{\omega}\|_{L^{2}}^{2}
}{ \|\nabla \widetilde{W}_{\omega}\|_{L^{2}}^{2}
}
\Big)^{\frac{3}{2}} 
\\[6pt]
&=
\frac{1}{3}
\frac{\|\nabla \widetilde{W}_{\omega}\|_{L^{2}}^{3}
}{ \|\widetilde{W}_{\omega}\|_{L^{2^{*}}}^{3}
}
\Big\{
1+ \frac{3}{2}
\frac{ \|\widetilde{W}_{\omega}\|_{L^{2}}^{2}
}{ \|\nabla \widetilde{W}_{\omega}\|_{L^{2}}^{2}
}
+
O\big( 
\frac{ \|\widetilde{W}_{\omega}\|_{L^{2}}^{4}
}{ \|\nabla \widetilde{W}_{\omega}\|_{L^{2}}^{4}} 
\big)
\Big\} 
=
\frac{1}{3}\sigma^{\frac{3}{2}} +O(\omega)
.
\end{split}
\end{equation}
Furthermore, it follows from  \eqref{17/12/26/16:45}, \eqref{17/07/08/20:47}, \eqref{17/12/17/13:32}, the same computations concerning $\|\widetilde{W}_{\omega}\|_{L^{p+1}}^{p+1}$ as \eqref{17/12/17/11:20} through \eqref{17/12/26/17:37}, and \eqref{17/12/17/13:25} that 
\begin{equation}\label{17/07/08/15:33}
\begin{split}
\widetilde{m}_{\omega} 
&\le \widetilde{\mathcal{S}}_{\omega}(\tau_{\omega,0} \widetilde{W}_{\omega})
=
y_{\omega}(\tau_{\omega,0})
-
\frac{ \omega^{-\frac{5-p}{4}}}{p+1} 
\tau_{\omega,0}^{p+1}\|\widetilde{W}_{\omega} \|_{L^{p+1}}^{p+1}
\\[6pt]
&\le 
y_{\omega}(\tau_{\omega,\max})
-
c \omega^{\theta}
=
\frac{1}{3}\sigma^{\frac{3}{2}} 
+ 
O(\omega)
-
c \omega^{\theta}
\end{split} 
\end{equation}
for some $0< \theta <1$ and some $c>0$ depending only on $p$. Thus, we find that if $\omega$ is sufficiently small depending only on $p$, then \eqref{17/07/08/17:15} holds. 
\end{proof}

Now, we give a brief proof of Proposition \ref{17/12/25/17:17}:  
\begin{proof}[Proof of  Proposition \ref{17/12/25/17:17}] 
As mentioned above, if $d=3$ and $\widetilde{\Phi}_{\omega}$ is a ground state of \eqref{17/12/26/11:11}, then $\omega^{\frac{1}{4}}\widetilde{\Phi}_{\omega}(\sqrt{\omega} \cdot)$ is one of \eqref{12/03/23/18:09}. We also see from Lemma \ref{17/12/30/11:01} that any minimizer of the variational problem associated with $\widetilde{m}_{\omega}$ becomes a ground state of \eqref{17/12/26/11:11}. Thus, it suffices to prove that for the frequency $\omega_{3}>0$ given in Lemma \ref{17/07/08/17:13} and any $0<\omega <\omega_{3}$, there exists a minimizer for $\widetilde{m}_{\omega}$. The proof is standard: using Lemma \ref{17/07/08/17:13} and the Schwarz symmetrization,  we can prove the existence of minimizer for $\widetilde{m}_{\omega}$  (see, e.g, Propositoin 2.1 in \cite{AIKN}, and \cite{Brezis-Nirenberg}).
\end{proof}

At the end of this section, we discuss the variational value $m_{\omega}$ defined by \eqref{12/03/24/17:51} in three dimensions.
\begin{proposition}\label{17/12/30/10:09}
Assume $d=3$ and $1+\frac{4}{3}<p<5$. Let $\omega_{3}$ be the frequency given in Proposition \ref{17/12/25/17:17}. Then, for any $0< \omega <\omega_{3}$,
\begin{equation}\label{17/12/30/10:10}
0< m_{\omega}<\frac{1}{3}\sigma^{\frac{3}{2}}.
\end{equation}
Furthermore, any ground state $Q_{\omega}$ of \eqref{12/03/23/18:09} satisfies 
\begin{equation}\label{17/12/27/12:12}
\mathcal{S}_{\omega}(Q_{\omega})=m_{\omega}. 
\end{equation} 
\end{proposition}
\begin{proof}[Proof of Proposition \ref{17/12/30/10:09}]
First, we shall show that $m_{\omega}>0$. To this end, we recall that \eqref{12/03/23/18:17} holds for $d=3$ and $1+\frac{4}{3}<p<5$. Let $u$ be a nontrivial function in $H^{1}(\mathbb{R}^{3})$ such that $\mathcal{K}(u)\le 0$. Then, it follows from Sobolev's embedding that  
\begin{equation}\label{17/12/30/13:16}
\|\nabla u \|_{L^{2}}^{2}
\lesssim 
\|u\|_{L^{2}}^{\frac{5-p}{2}} 
\|\nabla u\|_{L^{2}}^{\frac{3(p-1)}{2}}
+
\|\nabla u \|_{L^{2}}^{6}
.
\end{equation}
In particular, if $\|u\|_{L^{2}}\ll 1$, then \eqref{17/12/30/13:16} implies that $1 \lesssim \|\nabla u\|_{L^{2}}$. Hence, if $\|u\|_{L^{2}}\ll 1$, then $\mathcal{I}_{\omega}(u)\ge \frac{p-1-\frac{4}{3}}{p-1}\|\nabla u\|_{L^{2}}^{2}\gtrsim 1$. On the other hand, if $\|u\|_{L^{2}}\gtrsim 1$, then $\mathcal{I}_{\omega}(u)\ge \frac{\omega}{2} \|u\|_{L^{2}}^{2} \gtrsim \omega$. Thus, we find that $\mathcal{I}_{\omega}(u)\gtrsim \min\{1,  \omega\}$ for all $u \in H^{1}(\mathbb{R}^{3}) \setminus \{0\}$ with $\mathcal{K}(u)\le 0$. This together with \eqref{12/03/23/18:17} shows $m_{\omega}>0$. 
\par 
Next, we shall prove \eqref{17/12/30/10:10}. Let $\widetilde{Q}_{\omega}$ be a minimizer of the variational problem associated with $\widetilde{m}_{\omega}$ (see the proof of Proposition \ref{17/12/25/17:17} for the existence of the minimizer). Then, it follows from Lemma \ref{17/12/30/11:01} that $\widetilde{Q}_{\omega}$ is a ground state of \eqref{17/12/26/11:11}. Furthermore, it is easy to verify that $\widetilde{\mathcal{N}}_{\omega}(\widetilde{Q})=0$. Put $Q_{\omega}:=\omega^{\frac{1}{4}}\widetilde{Q}_{\omega}(\sqrt{\omega}\cdot)$. Then, $Q_{\omega}$ becomes a ground state of \eqref{12/03/23/18:09} and satisfies the following 
 identity: 
\begin{equation}\label{17/12/27/12:37}
\mathcal{K}(Q_{\omega})=0.
\end{equation}
Hence, we find from the definition of $m_{\omega}$, \eqref{17/12/26/16:45} and Lemma \ref{17/07/08/17:13} that 
\begin{equation}\label{17/12/27/12:37}
m_{\omega} 
\le 
\mathcal{\mathcal{S}}_{\omega}(Q_{\omega})
=
\widetilde{\mathcal{S}}_{\omega}(\widetilde{Q}_{\omega})
=
\widetilde{m}_{\omega}<\frac{1}{3}\sigma^{\frac{3}{2}}. 
\end{equation}
Then, the standard argument (see, e.g., Proposition 2.1 in \cite{AIKN}) proves the existence of minimizer of the variational problem associated with $m_{\omega}$. Furthermore, it is well-known that any minimizer for $m_{\omega}$ becomes a ground state of \eqref{12/03/23/18:09} (see, e.g., Proposition 1.1 in \cite{AIKN}). Thus, \eqref{17/12/27/12:12} holds. 
\end{proof} 

Next, we give a proof of \eqref{15/02/24/20:55}. 
\begin{proof}[Proof of \eqref{15/02/24/20:55}]
We find from \eqref{12/08/22/13:32} that the function $W$ (see \eqref{13/03/16/16:43}) satisfies $\mathcal{K}^{\ddagger}(W)=0$ and $\mathcal{I}^{\ddagger}(W)=\mathcal{H}^{\ddagger}(W)=\frac{1}{d}\sigma^{\frac{d}{2}}$. Hence, $m^{\ddagger} \le \frac{1}{d}\sigma^{\frac{d}{2}}$ and therefore it remains to prove $\frac{1}{d}\sigma^{\frac{d}{2}} \le m^{\ddagger}$. Let $u \in H^{1}(\mathbb{R}^{d}) \setminus \{0\}$ satisfy $\mathcal{K}^{\ddagger}(u)\le 0$. Then, we can take $0< \lambda_{0} \le 1$ such that $\mathcal{K}^{\ddagger}(\lambda_{0} u)=0$. Furthermore, it follows from the definition of $\sigma$ (see \eqref{12/03/24/17:52}) and $\mathcal{K}^{\ddagger}(\lambda_{0} u)=\lambda_{0}^{2}\|\nabla u \|_{L^{2}}^{2}-\lambda_{0}^{2^{*}}\|u\|_{L^{2^{*}}}^{2^{*}}=0$ that 
\begin{equation}\label{18/01/31/17:37}
\sigma 
\le 
\frac{\|\nabla u \|_{L^{2}}^{2}}{\|  u \|_{L^{2^{*}}}^{2}}
\le 
\lambda_{0}^{\frac{4}{d-2}}\|  u \|_{L^{2^{*}}}^{\frac{4}{d-2}}
.
\end{equation}
Raising the both sides of \eqref{18/01/31/17:37} to the power of $\frac{d}{2}$, and using $\mathcal{K}^{\ddagger}(\lambda_{0} u)=0$ and $\lambda_{0}\le 1$, we see that  
\begin{equation}\label{18/01/31/17:51}
\begin{split}
\frac{1}{d}\sigma^{\frac{d}{2}}
&\le 
\frac{1}{d}\| \lambda_{0} u \|_{L^{2^{*}}}^{2^{*}}
=
\mathcal{H}^{\ddagger}(\lambda_{0} u )
\\[6pt]
&=
\mathcal{H}^{\ddagger}(\lambda_{0} u )
-
\frac{2}{d(p-1)}\mathcal{K}^{\ddagger}(\lambda_{0} u )
=
\mathcal{I}^{\ddagger}(\lambda_{0} u )
\le 
\mathcal{I}^{\ddagger}(u ).
\end{split}
\end{equation}
Since $u$ is arbitrary, we find that $\frac{1}{d}\sigma^{\frac{d}{2}} \le m^{\ddagger}$. Thus, we have completed the proof. 
\end{proof}


\section{Fundamental properties of the linearized operators}
\label{15/01/29/14:22}

In this section, we discuss fundamental properties of the operators $L_{\omega,+}$ and $L_{\omega,-}$ defined by \eqref{13/02/02/17:56} and \eqref{13/02/02/17:57}, respectively. Throughout this section, we assume that $d\ge 3$, $1+\frac{4}{d}<p <2^{*}-1$ and $0<\omega <\omega_{1}$, where $\omega_{1}$ is the frequency given by Proposition \ref{13/03/29/15:30}. 
\par 
First, we can easily verify that for any $f \in H^{1}(\mathbb{R}^{d})$, 
\begin{equation}\label{13/02/19/10:42}
\Big| 
\langle 
L_{\omega,+} f, f
\rangle_{H^{-1},H^{1}}
\Big|
+
\Big| 
\langle   
L_{\omega,-} f, f
\rangle_{H^{-1},H^{1}}
\Big| 
\lesssim  
\big( \omega + 1+ 
\|\Phi_{\omega}\|_{L^{p+1}}^{p-1}
+
\|\Phi_{\omega}\|_{L^{2^{*}}}^{\frac{4}{d-2}} 
\big)
\|f\|_{H^{1}}^{2},
\end{equation}
where the implicit constant is independent of $\omega$. Moreover, we see from \eqref{13/01/04/14:46} and \eqref{13/02/20/9:45} that for any $u,v \in H_{real}^{1}(\mathbb{R}^{d})$,  
\begin{equation}\label{12/12/31/21:56}
\big[ \mathcal{S}_{\omega}''(\Phi_{\omega})u \big]v
=
\langle L_{\omega,+}\Re[u] 
+ 
i L_{\omega,-}\Im[u], v \rangle_{H^{-1},H^{1}}
. 
\end{equation}

\begin{lemma}\label{13/03/18/15:50}
The operator $L_{\omega,-}$ is non-negative and ${\rm Ker}\, L_{\omega,-}={\rm span}\{ \Phi_{\omega}\}$; in particular, there exists $\delta_{\omega}>0$ depending on $\omega$ such that for any $u\in H^{1}(\mathbb{R}^{d})$ with $(u,\Phi_{\omega})_{L_{real}^{2}}=0$, 
\begin{equation}\label{13/04/12/12:19}
\langle L_{\omega,-}u,u \rangle_{H^{-1},H^{1}}\ge \delta_{\omega} \| u \|_{H^{1}}^{2}.
\end{equation}
\end{lemma}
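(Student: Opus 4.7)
The plan is to combine the classical ground-state substitution with a compactness argument. The identity $L_{\omega,-}\Phi_{\omega}=0$ is a direct restatement of \eqref{13/12/28/15:36}. To establish non-negativity I would exploit the positivity of $\Phi_{\omega}$ (Proposition \ref{13/03/16/15:33}) to perform the substitution $u=\Phi_{\omega}v$ for real-valued $u\in H^{1}(\mathbb{R}^{d})$. Expanding $|\nabla u|^{2}=v^{2}|\nabla\Phi_{\omega}|^{2}+2v\Phi_{\omega}\nabla v\cdot\nabla\Phi_{\omega}+\Phi_{\omega}^{2}|\nabla v|^{2}$, integrating the cross term by parts, and using $-\Delta\Phi_{\omega}+\omega\Phi_{\omega}=\Phi_{\omega}^{p}+\Phi_{\omega}^{2^{*}-1}$ gives the ground-state identity
\begin{equation*}
\langle L_{\omega,-}u,u\rangle_{H^{-1},H^{1}}=\int_{\mathbb{R}^{d}}\Phi_{\omega}^{2}\,|\nabla v|^{2}\,dx\ge 0.
\end{equation*}
Since $L_{\omega,-}$ has real coefficients and the pairing $\langle\cdot,\cdot\rangle_{H^{-1},H^{1}}$ is based on the real inner product, the inequality extends to complex-valued $u$ by splitting into real and imaginary parts. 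The only delicate point is the integration by parts, since $v=u/\Phi_{\omega}$ need not lie in $H^{1}$; this is handled by first carrying out the identity for $u$ such that $u/\Phi_{\omega}$ is smooth and compactly supported, and then approximating using the exponential decay of $\Phi_{\omega}$ and of its first two derivatives provided by Proposition \ref{14/06/29/14:03}.

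From the above identity, $\langle L_{\omega,-}u,u\rangle=0$ forces $\nabla v\equiv 0$ on $\mathbb{R}^{d}$, hence $u$ is a scalar multiple of $\Phi_{\omega}$; this identifies $\mathrm{Ker}\,L_{\omega,-}=\mathrm{span}\{\Phi_{\omega}\}$.

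For the quantitative gap \eqref{13/04/12/12:19} I would argue by contradiction. Suppose the inequality fails. Then there is a sequence $\{u_{n}\}\subset H^{1}(\mathbb{R}^{d})$ with $\|u_{n}\|_{H^{1}}=1$, $(u_{n},\Phi_{\omega})_{L^{2}_{real}}=0$ and $\langle L_{\omega,-}u_{n},u_{n}\rangle\to 0$. Extract a subsequence with $u_{n}\rightharpoonup u_{\infty}$ weakly in $H^{1}$. The exponential decay of $\Phi_{\omega}$ (Proposition \ref{14/06/29/14:03}) together with the Rellich--Kondrachov theorem makes the quadratic forms $u\mapsto\int\Phi_{\omega}^{p-1}|u|^{2}$ and $u\mapsto\int\Phi_{\omega}^{4/(d-2)}|u|^{2}$ weakly continuous on $H^{1}$, so these potential terms pass to the limit. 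Weak lower semicontinuity of $\|\nabla\cdot\|_{L^{2}}^{2}+\omega\|\cdot\|_{L^{2}}^{2}$ then yields $\langle L_{\omega,-}u_{\infty},u_{\infty}\rangle\le 0$; combined with the already-proved non-negativity and kernel characterisation this forces $u_{\infty}\in\mathrm{span}\{\Phi_{\omega}\}$, and the orthogonality gives $u_{\infty}=0$. But with $u_{\infty}=0$ the potential integrals vanish in the limit, leaving $\liminf(\|\nabla u_{n}\|_{L^{2}}^{2}+\omega\|u_{n}\|_{L^{2}}^{2})\le\liminf\langle L_{\omega,-}u_{n},u_{n}\rangle=0$, contradicting $\|u_{n}\|_{H^{1}}=1$.

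The main obstacle is the integration-by-parts step in the ground-state substitution, which is classical but requires care because $v=u/\Phi_{\omega}$ is defined through division by an exponentially decaying function; the quantitative decay of $\Phi_{\omega}$ and of $\nabla\Phi_{\omega}$, $\Delta\Phi_{\omega}$ in Proposition \ref{14/06/29/14:03} is exactly what is needed for the cutoff-and-pass-to-the-limit argument. Everything else is a soft compactness argument, and the lack of uniformity of $\delta_{\omega}$ in $\omega$ is acceptable since only the given $\omega$ enters the statement.
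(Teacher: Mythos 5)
Your proof is correct and is exactly the classical argument the paper relies on: the paper gives no details for this lemma, saying only that it ``can be proved in a way similar to \cite{Weinstein1}'', and Weinstein's argument is precisely the ground-state substitution $u=\Phi_{\omega}v$ yielding $\langle L_{\omega,-}u,u\rangle_{H^{-1},H^{1}}=\int\Phi_{\omega}^{2}|\nabla v|^{2}$, followed by a weak-compactness argument for the spectral gap on $\{\Phi_{\omega}\}^{\perp}$. The one genuinely delicate point --- justifying the substitution identity for an arbitrary kernel element, whose quotient $u/\Phi_{\omega}$ need not decay even though $u$ does (this is the Perron--Frobenius simplicity of the ground state) --- is correctly identified by you as the place where the cutoff-and-pass-to-the-limit argument, using the decay estimates of Proposition \ref{14/06/29/14:03}, must be carried out.
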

\begin{proof}[Proof of Lemma \ref{13/03/18/15:50}]
We can prove the claim in a way similar to \cite{Weinstein1}.
\end{proof}

Differentiating the equation \eqref{12/03/23/18:09} for $\Phi_{\omega}$ with respect to $x_{j}$, $1\le j \le d$, we have  
\begin{equation}\label{13/02/03/10:31}
\begin{split}
0
=\partial_{j}(\omega \Phi_{\omega}-\Delta\Phi_{\omega} - \Phi_{\omega}^{p}- \Phi_{\omega}^{2^{*}-1} )
=
L_{\omega,+}\partial_{j}\Phi_{\omega}.
\end{split}
\end{equation}

\begin{lemma}\label{12/12/31/22:31}
There exists $\omega_{3} >0$ such that for any $\omega \in (0, \omega_{3})$, 
 we have 
\begin{equation}\label{15/01/29/11:54}
{\rm Ker}\, L_{\omega,+}
=
{\rm span}\{ \partial_{1}\Phi_{\omega},\ldots, \partial_{d}\Phi_{\omega}\}
.
\end{equation}
\end{lemma}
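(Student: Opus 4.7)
The inclusion ${\rm span}\{\partial_{1}\Phi_{\omega},\ldots,\partial_{d}\Phi_{\omega}\} \subset {\rm Ker}\,L_{\omega,+}$ is immediate from \eqref{13/02/03/10:31} and valid for every $\omega>0$. For the reverse inclusion at small frequency, the plan is to argue by contradiction along the lines of the proof of part (i) of Proposition \ref{13/03/29/15:30}. Suppose that for every $n \in \mathbb{N}$ there exist $\omega_{n} \in (0,1/n)$ and a nonzero $f_{n} \in {\rm Ker}\,L_{\omega_{n},+}$ not lying in ${\rm span}\{\partial_{j}\Phi_{\omega_{n}}\}_{j=1}^{d}$. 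After subtracting the $L^{2}$-projection onto this span, I may assume $f_{n}\neq 0$ and $(f_{n},\partial_{j}\Phi_{\omega_{n}})_{L^{2}}=0$ for $j=1,\ldots,d$.

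The next step is to pass to the rescaled picture. Set $g_{n}:=T_{\omega_{n}}f_{n}/\|T_{\omega_{n}}f_{n}\|_{H^{1}}$. The same scaling manipulation that produced \eqref{13/03/30/10:57} gives $\widetilde{L}_{\omega_{n},+}g_{n}=0$ and $\|g_{n}\|_{H^{1}}=1$. Using the identity $T_{\omega_{n}}(\partial_{j}\Phi_{\omega_{n}})=\sqrt{\omega_{n}}\,\partial_{j}(T_{\omega_{n}}\Phi_{\omega_{n}})$ together with the $L^{2}$-scaling $(T_{\omega_{n}}u,T_{\omega_{n}}v)_{L^{2}}=\omega_{n}^{s_{p}}(u,v)_{L^{2}}$, the orthogonality translates to $(g_{n},\partial_{j}(T_{\omega_{n}}\Phi_{\omega_{n}}))_{L^{2}}=0$. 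I then extract a subsequence with $g_{n}\rightharpoonup g$ weakly in $H^{1}$ and $g_{n}\to g$ strongly in $L^{q}_{loc}$ for $2\le q <2^{*}$. Testing $\widetilde{L}_{\omega_{n},+}g_{n}=0$ against $\varphi \in C_{c}^{\infty}(\mathbb{R}^{d})$ and using Proposition \ref{13/03/03/9:43} together with $\omega_{n}^{(2^{*}-(p+1))/(p-1)} \to 0$ yields $L_{+}^{\dagger}g=0$ in the distribution sense. Since $\partial_{j}(T_{\omega_{n}}\Phi_{\omega_{n}})\to \partial_{j}U$ strongly in $L^{2}$ (by Proposition \ref{13/03/03/9:43}) and $g_{n}\rightharpoonup g$ in $L^{2}$, the orthogonality passes to the limit, yielding $(g,\partial_{j}U)_{L^{2}}=0$. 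Lemma \ref{13/03/36/15:35} forces $g=0$.

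To close the argument I derive a contradiction with $\|g_{n}\|_{H^{1}}=1$ by pairing $\widetilde{L}_{\omega_{n},+}g_{n}=0$ with $g_{n}$:
\begin{equation*}
\|g_{n}\|_{H^{1}}^{2}
=
p\int_{\mathbb{R}^{d}}(T_{\omega_{n}}\Phi_{\omega_{n}})^{p-1}|g_{n}|^{2}\,dx
+
(2^{*}-1)\omega_{n}^{\frac{2^{*}-(p+1)}{p-1}}\int_{\mathbb{R}^{d}}(T_{\omega_{n}}\Phi_{\omega_{n}})^{\frac{4}{d-2}}|g_{n}|^{2}\,dx.
\end{equation*}
The critical term is bounded by $C\omega_{n}^{(2^{*}-(p+1))/(p-1)}\|T_{\omega_{n}}\Phi_{\omega_{n}}\|_{L^{2^{*}}}^{4/(d-2)}\|g_{n}\|_{L^{2^{*}}}^{2}$, which tends to $0$ by Sobolev's embedding and the smallness of the scaling factor. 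For the subcritical term I split the integration into $|x|\le R$ and $|x|>R$ and apply H\"older with exponents $((p+1)/(p-1),(p+1)/2)$: the uniform $L^{p+1}$-smallness of the tails of $T_{\omega_{n}}\Phi_{\omega_{n}}$ (a consequence of its $L^{p+1}$-convergence to $U$ from Proposition \ref{13/03/03/9:43}) handles the outer region, while Rellich's compactness (since $p+1<2^{*}$) combined with $g=0$ handles the inner region. Thus $\|g_{n}\|_{H^{1}}\to 0$, contradicting the normalization.

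The main obstacle will be the final compactness step, specifically obtaining uniform tail decay of the weight $(T_{\omega_{n}}\Phi_{\omega_{n}})^{p-1}$: one cannot invoke the exponential decay from Proposition \ref{14/06/29/14:03} directly (its constants depend on $\omega$), and must instead leverage the $H^{1}$-convergence $T_{\omega_{n}}\Phi_{\omega_{n}}\to U$ through Sobolev embedding to obtain the required uniform $L^{p+1}$-tightness.
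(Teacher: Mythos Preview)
Your proposal is correct and implements precisely the perturbation argument that the paper's one-line proof alludes to (``regarding the operator $L_{\omega,+}$ as a perturbation of $L_{+}^{\dagger}$''), fleshed out via the same contradiction/compactness template used in the paper for Proposition~\ref{13/03/29/15:30}\,(i) and Lemma~\ref{13/03/31/16:12}. The only difference is that the paper outsources the details to \cite{Grillakis} and Lemma~\ref{13/03/36/15:35}, whereas you carry them out explicitly; your handling of the uniform $L^{p+1}$-tightness of $(T_{\omega_{n}}\Phi_{\omega_{n}})^{p-1}$ via the $H^{1}$-convergence from Proposition~\ref{13/03/03/9:43} is the correct way to close the compactness step.
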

\begin{proof}[Proof of Lemma \ref{12/12/31/22:31}]
We can prove this lemma by regarding the operator $L_{\omega,+}$ as a perturbation of $L_{\omega}^{\dagger}$ (see \cite{Grillakis} and Lemma \ref{13/03/36/15:35}). 
\end{proof}

Now, we introduce the functional $\mathcal{N}_{\omega}$ as  
\begin{equation}\label{13/04/14/17:06}
\mathcal{N}_{\omega}(u):=\omega \|u\|_{L^{2}}^{2}+\|\nabla u \|_{L^{2}}^{2}
-\|u\|_{L^{p+1}}^{p+1}-\|u \|_{L^{2^{*}}}^{2^{*}}. 
\end{equation}
Then, we can verify that  
\begin{equation}\label{13/04/14/17:08}
m_{\omega}=\inf\left\{ \mathcal{S}_{\omega}(u) \colon 
u\in H^{1}(\mathbb{R}^{d})\setminus \{0\} , \ 
\mathcal{N}_{\omega}(u)=0
\right\}.
\end{equation}

\begin{lemma}\label{13/01/01/16:58}
For any $u \in H^{1}(\mathbb{R}^{d})$ satisfying  
\begin{equation}\label{13/01/01/21:13}
\bigm( (p-1)\Phi_{\omega}^{p}
+(2^{*}-2)\Phi_{\omega}^{2^{*}-1},
\,  u \bigm)_{L_{real}^{2}}
=0,
\end{equation}
we have  
\begin{equation}\label{13/04/14/13:34}
\big\langle L_{\omega,+}u, u \big\rangle_{H^{-1},H^{1}} \ge 0.
\end{equation}
\end{lemma}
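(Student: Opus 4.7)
The plan is to identify the hypothesis as a tangent-space condition on the Nehari-type manifold $\{\mathcal{N}_{\omega}=0\}$, and then exploit the variational characterization \eqref{13/04/14/17:08} of $m_{\omega}$ together with the fact that $\Phi_{\omega}$ is an unconstrained critical point of $\mathcal{S}_{\omega}$ (so that the Lagrange multiplier vanishes).

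First I would compute $\mathcal{N}_{\omega}'(\Phi_{\omega})$ explicitly: a direct differentiation gives
\[
\mathcal{N}_{\omega}'(\Phi_{\omega})v = 2\omega(\Phi_{\omega},v)_{L^{2}} + 2(\nabla\Phi_{\omega},\nabla v)_{L^{2}} - (p+1)(\Phi_{\omega}^{p},v)_{L^{2}} - 2^{*}(\Phi_{\omega}^{2^{*}-1},v)_{L^{2}},
\]
and inserting the Euler--Lagrange identity $\omega\Phi_{\omega}-\Delta\Phi_{\omega}=\Phi_{\omega}^{p}+\Phi_{\omega}^{2^{*}-1}$ collapses this to
\[
\mathcal{N}_{\omega}'(\Phi_{\omega})v = -(p-1)(\Phi_{\omega}^{p},v)_{L^{2}} - (2^{*}-2)(\Phi_{\omega}^{2^{*}-1},v)_{L^{2}}.
\]
Thus the hypothesis \eqref{13/01/01/21:13} is exactly $\mathcal{N}_{\omega}'(\Phi_{\omega})u=0$, and evaluating at $v=\Phi_{\omega}$ gives $\mathcal{N}_{\omega}'(\Phi_{\omega})\Phi_{\omega} = -(p-1)\|\Phi_{\omega}\|_{L^{p+1}}^{p+1}-(2^{*}-2)\|\Phi_{\omega}\|_{L^{2^{*}}}^{2^{*}} < 0$, in particular nonzero.

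Next I would produce a curve on the constraint manifold tangent to $u$. For $t$ near $0$ set $w_{t}:=\lambda(t)(\Phi_{\omega}+tu)$, where $\lambda(t)>0$ is chosen so that $\mathcal{N}_{\omega}(w_{t})=0$; concretely $\lambda(t)$ solves
\[
\omega\|\Phi_{\omega}+tu\|_{L^{2}}^{2}+\|\nabla(\Phi_{\omega}+tu)\|_{L^{2}}^{2} = \lambda^{p-1}\|\Phi_{\omega}+tu\|_{L^{p+1}}^{p+1} + \lambda^{2^{*}-2}\|\Phi_{\omega}+tu\|_{L^{2^{*}}}^{2^{*}}.
\]
The right-hand side is strictly increasing in $\lambda>0$, so $\lambda(t)$ exists uniquely, is $C^{2}$ by the implicit function theorem, and $\lambda(0)=1$. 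Differentiating $\mathcal{N}_{\omega}(w_{t})=0$ at $t=0$ yields $\mathcal{N}_{\omega}'(\Phi_{\omega})\bigl(\lambda'(0)\Phi_{\omega}+u\bigr)=0$, and combining the constraint $\mathcal{N}_{\omega}'(\Phi_{\omega})u=0$ with $\mathcal{N}_{\omega}'(\Phi_{\omega})\Phi_{\omega}\neq 0$ forces $\lambda'(0)=0$; hence $\dot{w}_{0}=u$ and $\ddot{w}_{0}=\lambda''(0)\Phi_{\omega}$.

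Finally, the variational characterization \eqref{13/04/14/17:08} gives $\mathcal{S}_{\omega}(w_{t})\ge m_{\omega}=\mathcal{S}_{\omega}(\Phi_{\omega})$ for $|t|$ small (since $w_{t}\neq 0$ by continuity). Hence $t\mapsto \mathcal{S}_{\omega}(w_{t})$ has a local minimum at $t=0$, so its second derivative there is non-negative. Using $\mathcal{S}_{\omega}'(\Phi_{\omega})=0$ and \eqref{13/01/04/14:46},
\[
0 \le \frac{d^{2}}{dt^{2}}\mathcal{S}_{\omega}(w_{t})\bigg|_{t=0} = [\mathcal{S}_{\omega}''(\Phi_{\omega})u]u + \mathcal{S}_{\omega}'(\Phi_{\omega})\ddot{w}_{0} = \langle L_{\omega,+}u, u\rangle_{H^{-1},H^{1}},
\]
which is the desired inequality. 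The main subtlety is the transversality/implicit function step giving $\lambda(t)$ with $\lambda'(0)=0$, but this is routine once the explicit formula $\mathcal{N}_{\omega}'(\Phi_{\omega})\Phi_{\omega}<0$ is in hand; the rest is second-variation bookkeeping. (The lemma is applied in the paper with real-valued $u$, e.g.\ $u=f_{1}$ in \eqref{13/03/03/14:56}, so no separate treatment of an imaginary part is required.)
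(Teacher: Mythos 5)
Your argument is correct: identifying the hypothesis as $\mathcal{N}_{\omega}'(\Phi_{\omega})u=0$, building the rescaled curve $w_{t}$ on the Nehari manifold with $\lambda'(0)=0$, and reading off the sign of the second variation from the minimality \eqref{13/04/14/17:08} is exactly the constrained second-variation argument that the paper invokes by citing Lemma 2.3 of Nakanishi--Schlag, so this is essentially the same approach (merely written out in full). Your closing remark about real-valued $u$ is also the right reading of the statement, since the orthogonality \eqref{13/01/01/21:13} only constrains $\Re[u]$ and the conclusion is applied in the paper only to real-valued functions such as $f_{1}$.
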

\begin{proof}[Proof of Lemma \ref{13/01/01/16:58}]
The proof is similar to Lemma 2.3 in \cite{Nakanishi-Schlag1}.  
\end{proof}

\begin{lemma}\label{13/01/01/16:22}
As an operator in $L_{rad}^{2}(\mathbb{R}^{d})$, $L_{\omega,+}$ has only one negative eigenvalue which is non-degenerate and $0$ is not an eigenvalue. 
\end{lemma}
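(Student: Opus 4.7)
The plan is to assemble three ingredients: the structure of the essential spectrum, the explicit kernel description from Lemma \ref{12/12/31/22:31}, and the coercivity statement of Lemma \ref{13/01/01/16:58} paired with a one-parameter trick to forbid a second negative mode.

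\textbf{Spectral setup.} First I would note that, since $\Phi_{\omega}$ decays exponentially by Proposition \ref{14/06/29/14:03}, the multiplication operators $p\Phi_{\omega}^{p-1}$ and $(2^{*}-1)\Phi_{\omega}^{4/(d-2)}$ are relatively compact with respect to $-\Delta+\omega$. By Weyl's theorem, the essential spectrum of $L_{\omega,+}$ on $L^{2}(\mathbb{R}^{d})$ equals $[\omega,\infty)$, so the part of $\sigma(L_{\omega,+})$ lying in $(-\infty,\omega)$ consists of isolated eigenvalues of finite multiplicity. Radiality is preserved by $L_{\omega,+}$, so the same holds on $L_{rad}^{2}(\mathbb{R}^{d})$.

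\textbf{Zero is not an eigenvalue on $L_{rad}^{2}$.} By Lemma \ref{12/12/31/22:31}, ${\rm Ker}\,L_{\omega,+}={\rm span}\{\partial_{1}\Phi_{\omega},\ldots,\partial_{d}\Phi_{\omega}\}$. Using the radiality of $\Phi_{\omega}$ and $\frac{x}{|x|}\cdot\nabla\Phi_{\omega}<0$ from Proposition \ref{13/03/16/15:33}, one has $\partial_{j}\Phi_{\omega}(x)=\tfrac{x_{j}}{|x|}\Phi_{\omega}'(|x|)$ with $\Phi_{\omega}'(r)\neq 0$. Hence any linear combination $\sum c_{j}\partial_{j}\Phi_{\omega}=\Phi_{\omega}'(|x|)\,\tfrac{c\cdot x}{|x|}$ is radial only for $c=0$, so ${\rm Ker}\,L_{\omega,+}\cap L_{rad}^{2}=\{0\}$.

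\textbf{Existence of a negative eigenvalue.} From \eqref{13/12/28/12:07} we have
\begin{equation*}
\langle L_{\omega,+}\Phi_{\omega},\Phi_{\omega}\rangle_{H^{-1},H^{1}}
=-(p-1)\|\Phi_{\omega}\|_{L^{p+1}}^{p+1}-(2^{*}-2)\|\Phi_{\omega}\|_{L^{2^{*}}}^{2^{*}}<0,
\end{equation*}
and $\Phi_{\omega}\in L_{rad}^{2}$. The min-max principle, together with the essential spectrum being $[\omega,\infty)$, then guarantees at least one strictly negative radial eigenvalue.

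\textbf{Uniqueness and non-degeneracy.} This is the main step. Suppose, for contradiction, that $L_{\omega,+}$ on $L_{rad}^{2}$ admits two linearly independent eigenfunctions $\phi_{1},\phi_{2}$ with eigenvalues $\mu_{1},\mu_{2}\le 0$ where at least one is strictly negative; by the spectral theorem we may assume $(\phi_{1},\phi_{2})_{L^{2}}=0$. The scalar linear condition
\begin{equation*}
\big((p-1)\Phi_{\omega}^{p}+(2^{*}-2)\Phi_{\omega}^{2^{*}-1},\,a\phi_{1}+b\phi_{2}\big)_{L_{real}^{2}}=0
\end{equation*}
on the pair $(a,b)\in\mathbb{R}^{2}$ admits a non-trivial solution. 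For the corresponding $u:=a\phi_{1}+b\phi_{2}\neq 0$, orthogonality of $\phi_{1},\phi_{2}$ yields
\begin{equation*}
\langle L_{\omega,+}u,u\rangle_{H^{-1},H^{1}}
=a^{2}\mu_{1}\|\phi_{1}\|_{L^{2}}^{2}+b^{2}\mu_{2}\|\phi_{2}\|_{L^{2}}^{2}<0
\end{equation*}
(since at least one $\mu_{i}$ is negative and the other is non-positive, and one cannot have both coefficients of the strictly negative mode vanishing without also killing $u$), which contradicts Lemma \ref{13/01/01/16:58}. Hence the negative eigenvalue is unique and simple, and combining with the previous steps completes the proof. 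The substantive obstacle is precisely the coercivity under the orthogonality \eqref{13/01/01/21:13}, but that has already been established as Lemma \ref{13/01/01/16:58}; so the remaining work is just the dimension-counting argument used above.
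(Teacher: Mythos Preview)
Your argument is essentially correct and self-contained, but it takes a different route from what the paper points to. The paper defers to the Sturm--Liouville/oscillation approach of Weinstein and Grillakis, which counts negative eigenvalues of the radial ODE directly. You instead leverage two results already proved in the paper: the kernel description of Lemma \ref{12/12/31/22:31} for the ``$0$ is not a radial eigenvalue'' part, and the variational coercivity of Lemma \ref{13/01/01/16:58} for the Morse-index bound. This is a clean alternative, and it has the advantage of staying within the variational framework the paper has already built; the trade-off is that Lemma \ref{12/12/31/22:31} is only stated for $\omega\in(0,\omega_{3})$, so your proof of ``$0\notin\sigma_{p}$ on $L^{2}_{rad}$'' inherits that restriction, whereas the ODE approach works for all $\omega>0$. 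Since the lemma is only used inside Lemma \ref{13/01/02/12:08} (small $\omega$), this is harmless here.

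One point to clean up: in the uniqueness step you write ``$\mu_{1},\mu_{2}\le 0$ with at least one strictly negative'' and then claim the quadratic form is strictly negative. That fails if, say, $\mu_{1}<0=\mu_{2}$ and the orthogonality condition forces $a=0$: then $\langle L_{\omega,+}u,u\rangle=0$, and Lemma \ref{13/01/01/16:58} gives no contradiction. The fix is simply to invoke your already-proved step that $0$ is not a radial eigenvalue first, so that the contradiction hypothesis becomes the existence of two $L^{2}$-orthogonal radial eigenfunctions with $\mu_{1},\mu_{2}<0$; then $a^{2}\mu_{1}\|\phi_{1}\|_{L^{2}}^{2}+b^{2}\mu_{2}\|\phi_{2}\|_{L^{2}}^{2}<0$ for every nontrivial $(a,b)$, and the codimension-one constraint \eqref{13/01/01/21:13} genuinely yields the contradiction with Lemma \ref{13/01/01/16:58}.
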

\begin{proof}[Proof of Lemma \ref{13/01/01/16:22}]
We can prove this lemma in a way similar to \cite{Weinstein1} and \cite{Grillakis}. 
\end{proof}

\begin{lemma}\label{13/01/02/12:08}
Let $\omega_{2}$ be the frequency given by Proposition \ref{13/02/20/9:41}. Then, there exists $\omega_{0} \in (0,\omega_{2})$ such that the following holds for  all $0<\omega <\omega_{0}$: Let $\mu>0$ be a positive eigenvalue of $i\mathscr{L}_{\omega}$ as an operator in $L_{real}^{2}(\mathbb{R}^{d})$, and let  $\mathscr{U}_{+}$ be a corresponding eigenfunction. Then, we have:
\\
{\rm (i)} For any non-trivial, real-valued radial function $g \in H^{1}(\mathbb{R}^{d})$ with $(g,f_{2})_{L_{real}^{2}}=0$, 
\begin{equation}
\label{13/01/02/12:11}
\langle L_{\omega,+}g,g \rangle_{H^{-1},H^{1}} \sim \|g\|_{H^{1}}^{2},
\end{equation}
where the implicit constant may depend on $\omega$.
\\[6pt]
{\rm (ii)} For any non-trivial, real-valued radial function $g \in H^{1}(\mathbb{R}^{d})$ with $(g,\partial_{\omega}\Phi_{\omega})_{L_{real}^{2}}=0$,  
\begin{equation}\label{13/01/02/12:12}
\langle L_{\omega,-}g,g \rangle_{H^{-1},H^{1}} \sim \|g\|_{H^{1}}^{2},
\end{equation}
where the implicit constant may depend on $\omega$. 
\end{lemma}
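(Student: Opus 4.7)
For (ii), I will exploit the coercivity of $L_{\omega,-}$ away from its kernel, already established in Lemma \ref{13/03/18/15:50}, together with the nondegeneracy $(\Phi_\omega, \Phi_\omega')_{L^2_{real}} \neq 0$ from Proposition \ref{13/03/29/15:30}(iii). Given a real-valued radial $g$ with $(g, \Phi_\omega')_{L^2_{real}} = 0$, I decompose $g = c\Phi_\omega + g^\perp$ where $g^\perp \perp \Phi_\omega$ in $L^2_{real}$. The orthogonality constraint determines $c$ linearly in $(g^\perp, \Phi_\omega')$, yielding $|c| \lesssim \|g^\perp\|_{L^2}$ and hence $\|g\|_{H^1} \lesssim \|g^\perp\|_{H^1}$. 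Since $\Phi_\omega \in \ker L_{\omega,-}$, we have $\langle L_{\omega,-}g,g \rangle = \langle L_{\omega,-}g^\perp, g^\perp\rangle \geq \delta_\omega \|g^\perp\|_{H^1}^2 \gtrsim \|g\|_{H^1}^2$. The upper bound $\langle L_{\omega,-}g,g\rangle \lesssim \|g\|_{H^1}^2$ is immediate from \eqref{13/02/19/10:42}.

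For (i), the argument is more subtle. By Lemmas \ref{13/01/01/16:22} and \ref{12/12/31/22:31} (the latter giving $\omega_3 > 0$ below which $\ker L_{\omega,+} = \mathrm{span}\{\partial_j\Phi_\omega\}$ contains no radial function), for $\omega < \omega_3$ the operator $L_{\omega,+}|_{L^2_{rad}}$ has exactly one simple negative eigenvalue $-\lambda^2$ with normalized eigenvector $\chi_-$, trivial kernel, and remaining spectrum contained in $[c_0, \infty)$ for some $c_0 > 0$. The key transversality is $(\chi_-, f_2)_{L^2} \neq 0$: supposing otherwise and using $L_{\omega,+} f_1 = -\mu f_2$ from \eqref{13/01/02/16:25} together with self-adjointness, one computes $-\lambda^2(\chi_-, f_1) = (L_{\omega,+}\chi_-, f_1) = (\chi_-, L_{\omega,+}f_1) = -\mu(\chi_-, f_2) = 0$, so $(\chi_-, f_1) = 0$; this places $f_1$ in the strictly positive spectral part of $L_{\omega,+}$, contradicting $\langle L_{\omega,+}f_1, f_1\rangle = -\mu(f_2, f_1)_{L^2} < 0$ (where positivity of $(f_1, f_2)_{L^2}$ comes from Lemma \ref{13/03/03/14:50}).

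Next I will prove strict positivity of
\[
\nu_\omega := \inf\{\langle L_{\omega,+}g, g\rangle : g \in H^1_{rad},\ \|g\|_{L^2}=1,\ (g, f_2)_{L^2} = 0\}.
\]
Because the constraint set excludes (a nonzero multiple of) $\chi_-$ and $\nu_\omega \geq -\lambda^2$, a standard compactness argument on radial functions produces a minimizer $g_*$ satisfying $L_{\omega,+}g_* = \nu_\omega g_* + \beta f_2$ for some $\beta \in \mathbb{R}$. I rule out $\nu_\omega \leq 0$ by case analysis: if $\nu_\omega = 0$, inverting $L_{\omega,+}$ gives $g_* = -\beta f_1/\mu$, and the constraint combined with $(f_1, f_2) > 0$ forces $\beta = 0$, hence $g_* = 0$; if $\nu_\omega = -\lambda^2$, testing against $\chi_-$ gives $\beta(f_2, \chi_-) = 0$, forcing $\beta = 0$ and then $g_* \in \mathrm{span}\{\chi_-\}$, contradicting $(g_*, f_2) = 0$; and if $\nu_\omega \in (-\lambda^2, 0)$, inverting the resolvent yields $g_* = \beta(L_{\omega,+} - \nu_\omega)^{-1}f_2$ and the constraint reduces to $\beta\, h(\nu_\omega) = 0$ with $h(\nu) := ((L_{\omega,+} - \nu)^{-1}f_2, f_2)$. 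The crucial observation is that $h(0) = (L_{\omega,+}^{-1}f_2, f_2) = -(f_1, f_2)/\mu < 0$ and $h$ is strictly increasing between consecutive eigenvalues of $L_{\omega,+}$ with $h(\nu) \to -\infty$ as $\nu \downarrow -\lambda^2$, so $h < 0$ on all of $(-\lambda^2, 0]$, again forcing $\beta = 0$ and $g_* = 0$.

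Having obtained $\nu_\omega > 0$, so that $\langle L_{\omega,+}g, g\rangle \geq \nu_\omega \|g\|_{L^2}^2$ on the constraint set, I upgrade to an $H^1$ lower bound by the standard interpolation argument: writing $\langle L_{\omega,+}g, g\rangle = \|\nabla g\|_{L^2}^2 + \omega\|g\|_{L^2}^2 - \int(p\Phi_\omega^{p-1} + (2^*-1)\Phi_\omega^{4/(d-2)})g^2$ and taking a convex combination with the $L^2$ bound yields $\langle L_{\omega,+}g, g\rangle \gtrsim \|g\|_{H^1}^2$; the upper bound is again \eqref{13/02/19/10:42}. The main obstacle in the proof is establishing the sign of $h$ on $(-\lambda^2, 0]$, which rests delicately on the identity $L_{\omega,+}f_1 = -\mu f_2$ together with $(f_1, f_2)_{L^2} > 0$, and on the spectral monotonicity of the resolvent trace $h(\nu)$; once this is in place, the rest of the argument is routine.
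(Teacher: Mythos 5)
Your part (ii) is essentially the paper's proof: the same decomposition $g = a\Phi_\omega + h$ with $h \perp \Phi_\omega$, the same use of the nondegeneracy $(\Phi_\omega,\Phi_\omega')_{L^2_{real}}\neq 0$ to control the $\Phi_\omega$-component, and coercivity of $L_{\omega,-}$ on $\{\Phi_\omega\}^\perp$ from Lemma \ref{13/03/18/15:50}. Part (i), however, is correct but follows a genuinely different route. The paper first proves mere strict positivity $\langle L_{\omega,+}g,g\rangle>0$ on $\{g\perp f_2\}$ by a very short min-max argument: if some $g_-\perp f_2$ had $\langle L_{\omega,+}g_-,g_-\rangle\le 0$, then since $\langle L_{\omega,+}g_-,f_1\rangle = -\mu(g_-,f_2)=0$ and $\langle L_{\omega,+}f_1,f_1\rangle<0$, the form would be $\le 0$ on the two-dimensional span of $g_-$ and $f_1$, contradicting the single non-positive eigenvalue of $L_{\omega,+}|_{L^2_{rad}}$ (Lemma \ref{13/01/01/16:22}); it then upgrades to the quantitative $H^1$ bound by a compactness contradiction argument on an $H^1$-normalized minimizing sequence. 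You instead carry out a full spectral analysis: transversality $(\chi_-,f_2)\neq 0$, a constrained $L^2$-normalized eigenvalue problem, and the resolvent function $h(\nu)=((L_{\omega,+}-\nu)^{-1}f_2,f_2)$, whose monotonicity together with $h(0)=-(f_1,f_2)/\mu<0$ rules out $\nu_\omega\le 0$, followed by the standard interpolation to pass from $L^2$- to $H^1$-coercivity. Both are valid; the paper's min-max step is shorter and avoids the existence-of-minimizer issue entirely (which in your version silently requires $\nu_\omega$ to lie below the essential spectrum $[\omega,\infty)$ — harmless, since otherwise $\nu_\omega\ge\omega>0$ and you are done, but worth a sentence), while your resolvent computation gives more precise spectral information (e.g.\ the sign of $h$ on all of $(-\lambda^2,0]$) than is strictly needed here.
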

\begin{proof}[Proof of Lemma \ref{13/01/02/12:08}]
We prove the claim {\rm (i)}. 
\par 
First, we shall show that for any $0< \omega <\omega_{2}$ and any nontrivial, radial real-valued  function $g \in H^{1}(\mathbb{R}^{d})$ satisfying $(g,f_{2})_{L_{real}^{2}}=0$, 
\begin{equation}\label{14/09/03/10:32}
\langle L_{\omega,+}g,g \rangle_{H^{-1},H^{1}}>0.
\end{equation}
Suppose for contradiction that there exists  $\omega>0$ and a nontrivial radial real-valued  function $g_{-} \in H^{1}(\mathbb{R}^{d})$ such that 
\begin{align}
\label{14/09/03/10:37}
&(g_{-},f_{2})_{L_{real}^{2}}=0, 
\\[6pt]
\label{14/09/03/10:38}
&\langle L_{\omega,+}g_{-}, g_{-} \rangle_{H^{-1},H^{1}}
\le 0.
\end{align}
Since $L_{\omega,+}$ is self-adjoint in  $L_{real}^{2}(\mathbb{R}^{d})$, we see from \eqref{13/01/02/16:25} and \eqref{14/09/03/10:37} that  
\begin{equation}\label{13/01/02/16:51}
\langle L_{\omega,+}g_{-},f_{1} \rangle_{H^{-1},H^{1}}
=
\big( g_{-},L_{\omega, +}f_{1} \big)_{L_{real}^{2}}
=
-\mu \big( g_{-}, f_{2} \big)_{L_{real}^{2}}
=0. 
\end{equation}
Moreover, it follows from \eqref{13/01/02/16:25} and \eqref{13/03/02/9:56} that \begin{equation}\label{13/03/02/10:12}
\langle L_{\omega,+}f_{1}, f_{1} \rangle_{H^{-1},H^{1}}
= 
-\big(\mu f_{2}, f_{1}\big)_{L_{real}^{2}}
=
-\mu \big( f_{1}, f_{2} \big)_{L_{real}^{2}} 
<0.
\end{equation}
Note here that \eqref{13/01/02/16:51} and \eqref{13/03/02/10:12} show that $g_{-}$ and $f_{1}$ are linearly independent in $L_{real}^{2}(\mathbb{R}^{d})$. 
We see from the hypothesis \eqref{14/09/03/10:38} and \eqref{13/01/02/16:51} that for any $a,b \in \mathbb{R}$, 
\begin{equation}\label{13/03/02/10:43}
\begin{split}
&\langle 
L_{\omega,+}\Bigm(a f_{1}+b \frac{g_{-}}{\|g_{-}\|_{L^{2}}} \Bigm),
\, 
\Bigm(af_{1}+b \frac{g_{-}}{\|g_{-}\|_{L^{2}}}\Bigm)
\rangle_{H^{-1},H^{1}}
\\[6pt]
&=
a^{2}
\big(L_{\omega,+}f_{1}, f_{1} \big)_{L_{real}^{2}}
+
\frac{b^{2}}{\|g_{-}\|_{L^{2}}^{2}}\langle L_{\omega,+}g_{-}, g_{-} \rangle_{H^{-1},H^{1}}
+
\frac{2ab}{\|g_{-}\|_{L^{2}}} \langle L_{\omega,+}g_{-}, f_{1} \rangle_{H^{-1},H^{1}}
\\[6pt]
&\le 
a^{2}\big(L_{\omega,+}f_{1}, f_{1} \big)_{L_{real}^{2}} \le 0.
\end{split}
\end{equation}
Put 
\begin{equation}
\label{14/09/03/17:45}
e_{1}:=\frac{g_{-}}{\|g_{-}\|_{L^{2}}},
\quad 
e_{2}:=\frac{ f_{1}-(f_{1},e_{1})_{L_{real}^{2}}e_{1}}{\big\| f_{1}-(f_{1},e_{1})_{L_{real}^{2}}e_{1} \big\|_{L^{2}}}.
\end{equation}
Then, it is clear that $\|e_{1}\|_{L^{2}}=\|e_{2}\|_{L^{2}}=1$ and $(e_{1},e_{2})_{L_{real}^{2}}=0$. Moreover, we find from \eqref{13/03/02/10:43} that for any $\alpha,\beta \in \mathbb{R}$, 
\begin{equation}\label{14/09/03/17:53}
\langle L_{\omega,+}\big( \alpha e_{1}+\beta e_{2}\big),\,  \alpha e_{1} +\beta e_{2} \rangle_{H^{-1},H^{1}}
\le 0. 
\end{equation}
Since Weyl's essential spectrum theorem shows $\sigma_{ess}(L_{\omega,+})=\sigma_{ess}(-\Delta+\omega^{2})=[\omega^{2},\infty)$, the min-max theorem (see Theorem 12.1 in \cite{Lieb-Loss}) together with \eqref{14/09/03/17:53} shows  that the second eigenvalue of $L_{\omega,+}$ is non-positive. However, this contradicts the fact that $L_{\omega,+}$ has only one non-positive eigenvalue as an operator in $L_{rad}^{2}(\mathbb{R}^{d})$ (see Lemma \ref{13/01/01/16:22}). Thus, we have proved the claim \eqref{14/09/03/10:32}.
\par 
Now, we are in a position to prove the claim {\rm (i)}. It follows from \eqref{13/02/19/10:42} and \eqref{13/03/03/10:05} that there exists $\omega_{0}>0$ such that for any $\omega \in (0,\omega_{0})$ and any $g \in H^{1}(\mathbb{R}^{d})$,
\begin{equation}\label{13/12/29/15:55}
\langle L_{\omega,+}g,g \rangle_{H^{-1},H^{1}} 
\lesssim 
\big(
\omega
+1
+\omega^{\frac{p-1}{p+1}(1-s_{p})}\|U\|_{L_{p+1}}^{p-1}
+
\omega^{\frac{4}{(d-2)(p-1)}-1}\|U\|_{L_{2^{*}}}^{2^{*}-2}
 \big) \|g\|_{H^{1}}^{2}.
\end{equation}  
Thus, it suffices to show that  
\begin{equation}\label{13/03/02/10:37}
\inf_{{g \in H_{rad}^{1}(\mathbb{R}^{d},\mathbb{R})}
\atop 
{ {\|g\|_{H^{1}}=1} \atop  {(g,f_{2})_{L_{real}^{2}}=0}}
} \langle L_{\omega,+} g, \, g \rangle_{H^{-1},H^{1}} >0,
\end{equation} 
where $H_{rad}^{1}(\mathbb{R}^{d},\mathbb{R})$ denotes the space of radial real-valued functions in $H^{1}(\mathbb{R}^{d})$. Suppose for a contradiction that \eqref{13/03/02/10:37} failed. Then, we could take a sequence $\{g_{n}\}$ of radial real-valued  functions in $H^{1}(\mathbb{R}^{d})$ such that 
\begin{align}
\label{14/09/03/12:02}
&\|g_{n}\|_{H^{1}}\equiv 1, 
\\[6pt]
\label{14/09/03/10:16}
&(g_{n},f_{2})_{L_{real}^{2}}\equiv 0, 
\\[6pt]
\label{13/03/02/10:15}
&
\lim_{n\to \infty}\langle L_{\omega,+}g_{n}, g_{n} \rangle_{H^{-1},H^{1}}
=0.
\end{align}
Here, passing to some subsequence, we may assume that there exists a radial real-valued function $g_{\infty}\in H^{1}(\mathbb{R}^{d})$ such that 
\begin{equation}\label{14/09/03/12:05}
\lim_{n\to \infty}g_{n}=g_{\infty}
\qquad 
\mbox{weakly in $H^{1}(\mathbb{R}^{d})$}.
\end{equation}  
If $g_{\infty}=0$, then we see from \eqref{13/03/02/10:15} and \eqref{14/09/03/12:05} that for any $\omega \in (0,1)$, 
\begin{equation}\label{14/09/03/12:13}
\begin{split}
0&=
\lim_{n\to \infty}\langle L_{\omega,+}g_{n}, g_{n} \rangle_{H^{-1},H^{1}}
=
\lim_{n\to \infty}\big( L_{\omega,+}g_{n}, g_{n} \big)_{L_{real}^{2}}
\\[6pt]
&=
\lim_{n\to \infty}
\bigg\{
\omega \int_{\mathbb{R}^{d}}\big| g_{n}\big|^{2}
+
\int_{\mathbb{R}^{d}}\big| \nabla g_{n}\big|^{2}
-
p \int_{\mathbb{R}^{d}}\Phi_{\omega}^{p-1}\big| g_{n}\big|^{2}
-
(2^{*}-1)\int_{\mathbb{R}^{d}}\Phi_{\omega}^{\frac{4}{d-2}}\big| g_{n}\big|^{2}\bigg\}
\\[6pt]
&\ge \omega
-\lim_{n\to \infty}
\bigg\{
p \int_{\mathbb{R}^{d}}\Phi_{\omega}^{p-1}\big| g_{n}\big|^{2}
+(2^{*}-1)\int_{\mathbb{R}^{d}}\Phi_{\omega}^{\frac{4}{d-2}}
\big| g_{n}\big|^{2}
\bigg\}
\\[6pt]
&=\omega,
\end{split}
\end{equation}
which is a contradiction. Thus, we have found that $g_{\infty}$ is nontrivial. Then, it follows from \eqref{13/03/02/10:15}, the lower semicontinuity and \eqref{14/09/03/10:32} that 
\begin{equation}\label{14/09/03/12:27}
0=
\lim_{n\to \infty}\langle L_{\omega,+}g_{n}, g_{n} \rangle_{H^{-1},H^{1}}
\ge 
\langle L_{\omega,+}g_{\infty}, g_{\infty} \rangle_{H^{-1},H^{1}}
>0.
\end{equation}
This absurd conclusion comes from the hypothesis that \eqref{13/03/02/10:37} failed.  Thus, we have prove the claim {\rm (i)}.
\par 
Next, we prove the claim {\rm (ii)}. Let $g \in H^{1}(\mathbb{R}^{d})$ be a non-trivial, real-valued, radial function with $(g,\partial_{\omega}\Phi_{\omega})_{L_{real}^{2}}=0$. We write $g$ in the form 
\begin{equation}\label{13/03/02/12:04}
g=a\Phi_{\omega}+h,
\end{equation}
where $h$ satisfies $(h,\Phi_{\omega})_{L_{real}^{2}}=0$. Note that the condition $(g,\partial_{\omega}\Phi_{\omega})_{L_{real}^{2}}=0$ implies that 
\begin{equation}\label{13/03/02/12:07}
a=-\frac{\big(h ,\partial_{\omega}\Phi_{\omega}\big)_{L_{real}^{2}}}{\big(\Phi_{\omega}, \partial_{\omega}\Phi_{\omega}\big)_{L_{real}^{2}}}.
\end{equation}
Hence, we find from $(h,\Phi_{\omega})_{L_{real}^{2}}=0$ that     
\begin{equation}\label{13/03/02/16:01}
\begin{split}
&\|g\|_{H^{1}}^{2}
= \big( g, g \big)_{L_{real}^{2}}
+
\big( \nabla g, \nabla  g \big)_{L_{real}^{2}}
\\[6pt]
&=a^{2}\|\Phi_{\omega}\|_{H^{1}}^{2}+\|h\|_{H^{1}}^{2}
- 
2a \big( \nabla \Phi_{\omega}, \nabla h \big)_{L_{real}^{2}}
\\[6pt]
&\le  
\frac{\|\partial_{\omega}\Phi_{\omega}\|_{L^{2}}^{2}\|h\|_{L^{2}}^{2}}{\big| (\Phi_{\omega},\partial_{\omega}\Phi_{\omega})_{L_{real}^{2}}\big|^{2}}\|\Phi_{\omega}\|_{H^{1}}^{2}
+
\|h\|_{H^{1}}^{2}
+
\frac{2\|\partial_{\omega}\Phi_{\omega}\|_{L^{2}}\|h\|_{L^{2}}}{\big| (\Phi_{\omega},\partial_{\omega}\Phi_{\omega})_{L_{real}^{2}}\big|} \| \nabla \Phi_{\omega}\|_{L^{2}}\|\nabla h\|_{L^{2}}
\\[6pt]
&\lesssim \|h\|_{H^{1}}^{2}.
\end{split}
\end{equation}
Moreover, we see from $L_{\omega,-}\Phi_{\omega}=0$ and  Lemma \ref{13/03/18/15:50}, that  
\begin{equation}\label{13/03/02/17:21}
\begin{split}
\langle L_{\omega,-}g, g \rangle_{H^{-1},H^{1}}
&=
\langle L_{\omega,-}( a\Phi_{\omega}+h ),\,   a\Phi_{\omega}+h  
\rangle_{H^{-1},H^{1}}
\\[6pt]
&=
\langle L_{\omega,-}h,  h  \rangle_{H^{-1},H^{1}}
\gtrsim \|h\|_{H^{1}}^{2}.
\end{split}
\end{equation}
Putting \eqref{13/03/02/16:01} and \eqref{13/03/02/17:21} together, we obtain
\begin{equation}\label{13/03/02/17:24}
\|g\|_{H^{1}}^{2}\lesssim 
\langle L_{\omega,-}g, g \rangle_{H^{-1},H^{1}}.
\end{equation}
The opposite relation follows from \eqref{13/02/19/10:42}. Hence, we have completed the proof. 
\end{proof}

\section{Inequalities for the radial functions}\label{13/12/29/11:25}

We see from the fundamental theorem of calculus and Hardy's inequality that 
 for any $d\ge 3$ and any radial function $g \in H^{1}(\mathbb{R}^{d})$,
\begin{equation}\label{13/12/26/21:32}
\begin{split}
&\sup_{x\in \mathbb{R}^{d}}\{ |x|^{d-2}|g(x)|^{2}\}
=
\sup_{x\in \mathbb{R}^{d}}\int_{|x|}^{\infty} \frac{d}{dr}\bigg\{- r^{d-2}|g(r)|^{2} \bigg\} \,dr 
\\[6pt]
&\lesssim \int_{0}^{\infty} r^{\frac{d-3}{2}}|g(r)| r^{\frac{d-1}{2}}|g'(r)|
\,dr 
\\[6pt]
&\le 
\bigg( 
\int_{0}^{\infty} |g(r)|^{2} r^{d-3}\,dr 
\bigg)^{\frac{1}{2}} 
\bigg(\int_{0}^{\infty}
|g'(r)|^{2}r^{d-1}
\,dr 
\bigg)^{\frac{1}{2}}
\lesssim 
\|\nabla g \|_{L^{2}(\mathbb{R}^{d})}^{2}
.
\end{split}
\end{equation}
Similarly, we can verify that for any $d\ge 3$ and any radial function $g \in H^{1}(\mathbb{R}^{d})$,  
\begin{equation}\label{14/01/18/14:30}
\begin{split}
\sup_{x\in \mathbb{R}^{d}}\big\{ |x|^{d-1}|g(x)|^{2}\big\}
&\lesssim 
\int_{0}^{\infty}  r^{\frac{d-1}{2}}|g(r)| r^{\frac{d-1}{2}}|g'(r)|\,dr 
\lesssim 
\| g \|_{L^{2}(\mathbb{R}^{d})} \|\nabla g \|_{L^{2}(\mathbb{R}^{d})}
.
\end{split}
\end{equation}

\section{Small-data theory}\label{14/01/30/10:18}

We record standard small-data theories for \eqref{12/03/23/17:57} and \eqref{14/01/31/11:03} (see Lemma \ref{14/06/05/10:08} below and Corollary 3.9 in \cite{Killip-Visan-book}):
\begin{lemma}\label{14/01/30/10:19} 
{\rm (i)}~Assume $d\ge 3$ and $1+\frac{4}{d}<p< 2^{*}-1$. Then, for any $A>0$, there exists $\delta(A)>0$ with the following property: for any $t_{0}\in \mathbb{R}$ and any $\psi_{0}\in H^{1}(\mathbb{R}^{d})$ satisfying       
\begin{align}
\label{17/11/25/16:33}
&\|\langle \nabla \rangle \psi_{0}\|_{L^{2}}\le A,
\\[6pt]
\label{17/11/25/17:25}
&\| e^{i(t-t_{0})\Delta} \psi_{0}\|_{W_{2+\frac{4}{d}}(\mathbb{R})\cap W_{2^{*}}(\mathbb{R})}\le \delta(A),
\end{align} 
there exists a unique strong $H^{1}$-solution $\psi$ to \eqref{12/03/23/17:57} with $\psi(t_{0})=\psi_{0}$. Furthermore, the maximal existence-interval of $\psi$ is the whole of $\mathbb{R}$, and $\psi$ satisfies  
\begin{equation}\label{14/01/30/12:12}
\| \langle \nabla \rangle \psi \|_{St(\mathbb{R})}\lesssim 
\|\langle \nabla \rangle \psi_{0}\|_{L^{2}}. 
\end{equation} 
\noindent 
{\rm (ii)}~Assume $d\ge 3$. Then, there exists $\delta_{0}>0$ with the following property: for any $t_{0}\in \mathbb{R}$ and any $\psi_{0} \in \dot{H}^{1}(\mathbb{R}^{d})$ satisfying     
\begin{equation}
\label{17/11/03/21:27}
\|\nabla \psi_{0}\|_{L^{2}}\le \delta_{0}, 
\end{equation} 
there exists a unique strong $\dot{H}^{1}$-solution $\psi$ to \eqref{14/01/31/11:03} with $\psi(t_{0})=\psi_{0}$. 
 Furthermore, the maximal existence-interval of $\psi$ is the whole of $\mathbb{R}$ and 
\begin{equation}\label{14/01/30/12:12}
\|  \nabla  \psi \|_{St(\mathbb{R})}
\lesssim 
\|\nabla \psi_{0}\|_{L^{2}}. 
\end{equation} 
\end{lemma}

We can control the norm of the full Strichartz space by a few particular norms:  
\begin{lemma}\label{14/06/11/11:16}
Assume $d\ge 3$, and let $A_{1},A_{2},A_{3}>0$. Then, there exists a constant $C(A_{1},A_{2},A_{3})>0$ with the following property: 
\\
{\rm (i)} for any interval $I$ and any space-time function $u$ satisfying  
\begin{align}
\label{11/05/14/14:46}
&\left\| u \right\|_{L^{\infty}(I,H^{1})}\le A_{1},
\\[6pt] 
\label{15/11/15/15:57}
&\left\| u \right\|_{W_{p+1}(I) \cap W_{2^{*}}(I)}\le A_{2},
\\[6pt]
\label{11/05/27/4:09}
&\big\|\langle \nabla \rangle e[u] \big\|_{L_{t,x}^{\frac{2(d+2)}{d+4}}(I)} 
\le A_{3},
\end{align} 
we have 
\begin{equation}\label{15/11/15/17:07}
\left\| \langle \nabla \rangle u \right\|_{St(I)}
\le C(A_{1},A_{2},A_{3}).
\end{equation}
{\rm (ii)} for any interval $I$ and any space-time function $u$ satisfying  
\begin{align}
\label{11/06/14/16:23}
&\left\| \nabla u \right\|_{L^{\infty}(I,L^{2})}\le A_{1},
\\[6pt] 
&\left\| u \right\|_{W_{2^{*}}(I)}\le A_{2},
\\[6pt]
\label{11/06/14/16:24}
&\big\|\nabla  e^{\ddagger}[u] \big\|_{L_{t,x}^{\frac{2(d+2)}{d+4}}(I)} 
\le A_{3},
\end{align} 
we have 
\begin{equation}\label{15/11/15/17:05}
\left\| \nabla  u \right\|_{St(I)}
\le C(A_{1},A_{2},A_{3}).
\end{equation}

\end{lemma}
\begin{proof}[Proof of Lemma \ref{14/06/11/11:16}]
See the proof of Lemma 4.1 in \cite{AIKN2}. 
\end{proof}

\section{Long-time perturbation theory}\label{15/12/09/14:27}

In this section, we give the long-time perturbation theory for a general nonlinear Schr\"odinger equation including \eqref{12/03/23/17:57} with $1+\frac{4}{d}\le p <2^{*}-1$:
\begin{equation}\label{15/12/04/09:01}
i\frac{\partial \psi}{\partial t} +\Delta \psi +f(\psi)=0
,
\end{equation}
where $f\colon \mathbb{C}\to \mathbb{C}$ is a continuously differentiable function in the real-sense.
\par 
Fix an even, smooth cut-off function $\chi$ defined on $\mathbb{R}$ such that 
 $\chi(r)=1$ if $0\le r \le 1$ and $\chi(r)=0$ if $r\ge 2$, and define 
\begin{align}
\label{15/12/02/10:01}
f_{\le 1}(z)&:=\chi(|z|)f(z),
\\[6pt] 
\label{15/12/02/10:02}
f_{>1}(z)&:=f(z)-f_{\le 1}(z)
.
\end{align} 
Then, our assumptions about the nonlinearity $f$ are the followings: 
\begin{enumerate}
\item
\begin{equation}\label{15/12/02/10:51}
f(0)=\frac{\partial f}{\partial z}(0)=\frac{\partial f}{\partial \bar{z}}(0)=0.
\end{equation}
\item There exists a constant $C_{\le 1}>0$ such that for any $z_{1},z_{2} \in \mathbb{C}$,
\begin{equation}\label{15/12/02/11:06}
\begin{split}
&\Big| \frac{\partial f_{\le 1}}{\partial z} (z_{1}) - \frac{\partial f_{\le 1}}{\partial z} (z_{2}) \Big|
+
\Big| \frac{\partial f_{\le 1}}{\partial \bar{z}} (z_{1}) - \frac{\partial f_{\le 1}}{\partial \bar{z}} (z_{2}) \Big|
\\[6pt]
&\le C_{\le 1} 
\left\{ \begin{array}{ccc} 
|z_{1}-z_{2}| \min\big\{ 10, (|z_{1}|+|z_{2}|)^{\frac{4}{d}-1} \big\} &\mbox{if}& d=3,
\\[6pt]
|z_{1}-z_{2}|^{\frac{4}{d}} &\mbox{if} & d\ge 4.
\end{array} \right.
\end{split}
\end{equation}
\item There exists a constant $C_{>1}>0$ such that for any $z_{1},z_{2} \in \mathbb{C}$,
\begin{equation}\label{15/12/02/11:17}
\begin{split}
&\Big| \frac{\partial f_{> 1}}{\partial z} (z_{1}) - \frac{\partial f_{> 1}}{\partial z} (z_{2}) \Big|
+
\Big| \frac{\partial f_{> 1}}{\partial \bar{z}} (z_{1}) - \frac{\partial f_{> 1}}{\partial \bar{z}} (z_{2}) \Big|
\\[6pt]
&\le C_{>1} 
\left\{ \begin{array}{ccc} 
|z_{1}-z_{2}| (|z_{1}|+|z_{2}|)^{\frac{4}{d-2}-1} &\mbox{if}& 3\le d \le 5,
\\[6pt]
|z_{1}-z_{2}|^{\frac{4}{d-2}} &\mbox{if} & d\ge 6 . 
\end{array} \right.
\end{split}
\end{equation} 
\end{enumerate}

Throughout this section, we allow the implicit constants to depend on $C_{\le 1}$ and $C_{>1}$, so that the assumptions \eqref{15/12/02/10:51} through \eqref{15/12/02/11:17} imply that  
\begin{align}
\label{15/12/02/12:01}
&|f_{\le 1}(z)| 
\lesssim 
|z|^{1+\frac{4}{d}},
\\[6pt]  
\label{15/12/02/12:02}
&|f_{> 1}(z)|
\lesssim 
|z|^{1+\frac{4}{d-2}},
\\[6pt]
\label{15/12/04/09:11}
&
\Big|\frac{\partial f_{\le 1}}{\partial z}(z) \Big|
+
\Big|\frac{\partial f_{\le 1}}{\partial \bar{z}}(z) \Big|
\lesssim |z|^{\frac{4}{d}},
\\[6pt]
\label{15/12/04/09:12}
&
\Big|\frac{\partial f_{> 1}}{\partial z}(z) \Big|
+
\Big|\frac{\partial f_{> 1}}{\partial \bar{z}}(z) \Big|
\lesssim |z|^{\frac{4}{d-2}}
\end{align}
for any $z \in \mathbb{C}$.

Now, we state the long-time perturbation theory for the equation \eqref{15/12/04/09:01}: 
\begin{lemma}\label{14/06/05/10:08}
Let $d\ge 3$, $I$ an interval, $\psi \in C(I,H^{1}(\mathbb{R}^{d}))$ a solution to \eqref{15/12/04/09:01}, and let $u$ be a function in  $C(I,H^{1}(\mathbb{R}^{d}))$. Assume that  
\begin{align}
\label{14/06/05/07:01}
&\| u \|_{L_{t}^{\infty}H_{x}^{1}(I)}\le A, 
\\[6pt]
\label{14/06/05/07:02}
&\| u \|_{V_{2+\frac{4}{d}}(I)\cap W_{2^{*}}(I)} \le B,
\\[6pt]
\label{14/06/08/07:03}
&\|  \psi(t_{0})-u(t_{0}) \|_{H^{1}}
\le \gamma
\end{align} 
for some $t_{0}\in I$ and some constants $A>0$, $B>0$ and $\gamma >0$. Then, there exist constants $\delta_{0}(A,B,\gamma)>0$ and $C(A,B,\gamma)>0$ such that if\begin{align}
\label{15/12/07/18:26}
&\| e^{i(t-t_{0})\Delta} \big\{ \psi(t_{0})-u(t_{0})
\big\} \|_{V_{2+\frac{4}{d}}(I)\cap W_{2^{*}}(I)}
\le \delta,
\\[6pt]
\label{15/12/07/18:27}
&\| \langle \nabla \rangle e(u) \|_{L_{t,x}^{\frac{2(d+2)}{d+4}}(I)}
\le \delta 
\end{align}
for some  $0<\delta <\delta_{0}(A,B,\gamma)$, then we have 
\begin{align}
\label{14/06/05/07:07}
&\|  \psi-u \|_{V_{2+\frac{4}{d}}(I)\cap W_{2^{*}}(I)}
\le C(A,B,\gamma) \delta^{\frac{1}{d(d+2)}}  ,
\\[6pt]
\label{15/12/08/21:07}
&\|\langle \nabla \rangle \{ \psi-u \} \|_{St(I)}
\le C(A,B,\gamma) \gamma ,
\\[6pt]
\label{14/06/05/07:06}
&\|\langle \nabla \rangle \psi \|_{St(I)}\le C(A,B,\gamma)
.
\end{align}
Here, the constant $C(A,B,\gamma)$ is non-decreasing with respect to each of the parameters $A$, $B$ and $\gamma$.
\end{lemma}

We can prove Lemma \ref{14/06/05/10:08} in a way similar to the proof of 
  Theorem 3.8 in \cite{Killip-Visan-book}. The main difference of the proof is that we need another exotic Strichartz estimate. For example, the following 
 exotic Strichartz norms work well: 
\begin{align}
\label{11/05/21/12:00}
\| u \|_{ES(I)}
&:=
\| \langle \nabla \rangle^{\frac{4}{d+2}} u 
\|_{L_{t}^{\frac{d(d+2)}{2(d-2)}}L_{x}^{\frac{2d^{2}(d+2)}{d^{3}-4d+16}}(I)}
+
\| u\|_{L_{t}^{\frac{d+2}{2}}L_{x}^{\frac{2d(d+2)}{(d-2)(d+4)}}(I)}
, 
\\[6pt] 
\label{11/05/21/14:05}
\| u \|_{ES^{*}(I)}
&:=
\| \langle \nabla \rangle^{\frac{4}{d+2}}u 
\|_{L_{t}^{\frac{d}{2}}L_{x}^{\frac{2d^{2}(d+2)}{d^{3}+4d^{2}+4d-16}}(I)}
+
\| u\|_{L_{t}^{\frac{d(d+2)}{2(d+4)}}L_{x}^{\frac{2d^{2}(d+2)}{(d+4)^{2}(d-2)}}(I)}
,
\end{align}
where $I$ is an interval. In particular, we have the following exotic Strichartz estimate. 
\begin{lemma}[Exotic Strichartz estimate]
\label{10/10/16/22:32}
Let $d\ge 3$. Then, we have  
\begin{equation}\label{10/10/16/15:57}
\| u - e^{i(t-t_{0})\Delta}u(t_{0}) \|_{ES(I)} 
\lesssim 
\| i\frac{\partial u}{\partial t}+\Delta u -g\|_{ES^{*}(I)}
+
\|\langle \nabla \rangle g \|_{L_{t,x}^{\frac{2(d+2)}{d+4}}(I)}
\end{equation}
for any interval $I$, any $t_{0} \in I$, and any functions $u$ and $g$ defined on $\mathbb{R}^{d}\times I$.
\end{lemma}
Moreover, we can derive an analogue of Lemma 3.11 in \cite{Killip-Visan-book}: 
\begin{lemma}\label{15/10/23/11:30}
Let $d\ge 3$. Then, we have 
\begin{equation}
\label{15/12/06/16:29}
\|u\|_{ES(I)}
\lesssim
\| u \|_{L_{t,x}^{\frac{2(d+2)}{d-2}}(I)}^{\frac{1}{d+2}}
\| \nabla u \|_{St(I)}^{\frac{d+1}{d+2}}
+
\| u \|_{L_{t,x}^{\frac{2(d+2)}{d}}(I)}^{\frac{16(d-2)}{d^{2}(d+2)}}
\|\langle \nabla \rangle u\|_{St(I)}^{\frac{d^{3}+2d^{2}-16d+32}{d^{2}(d+2)}}
\end{equation}
for any interval $I$ and any function $u$ on $\mathbb{R}^{d}\times I$. In particular, we have 
\begin{equation}\label{11/06/02/22:24}
\| u \|_{ES(I)}
\lesssim 
\| \langle \nabla \rangle u 
\|_{St(I)}.
\end{equation}
Moreover, there exist constants $\theta_{0},\theta_{1} \in (0, 1]$ depending only on $d$ such that  
\begin{align}
\label{15/10/23/11:37}
\|u\|_{L_{t,x}^{\frac{2(d+2)}{d}}(I)}
&\le 
\| u\|_{St(I)}^{1-\theta_{0}}
\| u \|_{ES(I)}^{\theta_{0}},
\\[6pt] 
\label{15/10/23/11:51}
\|u\|_{L_{t,x}^{\frac{2(d+2)}{d-2}}(I)}
&\le 
\| \nabla u\|_{St(I)}^{1-\theta_{1}}
\| u \|_{ES(I)}^{\theta_{1}}
\end{align}
for any interval $I$ and any function $u$ on $\mathbb{R}^{d}\times I$.
\end{lemma}


\newpage 

\section{Table of notation}\label{17/12/09/16:47}

\begin{center}
\begin{tabular}{|c|c|}
\hline
Symbols 
& Description or equation number 
\\ \hline
$\mathcal{M}$ 
& \eqref{13/03/30/2:01}
\\
$\mathcal{H}, \mathcal{H}^{\dagger}, \mathcal{H}^{\ddagger}, $ 
& \eqref{13/03/30/2:02},  \eqref{15/05/06/16:50}, \eqref{14/02/01/17:08}
\\
$\mathcal{S}_{\omega}, \mathcal{S}^{\dagger}_{\omega}, \mathcal{S}^{j}_{\omega}$ 
& \eqref{13/03/30/2:03}, \eqref{13/03/05/23:59}, \eqref{14/02/06/09:41} 
\\
$\mathcal{K}, \widetilde{K}_{\omega}, \mathcal{K}^{\dagger}, \mathcal{K}^{\ddagger}$ 
& \eqref{13/03/30/12:13}, \eqref{13/03/17/16:24}, \eqref{13/03/06/0:11}, \eqref{14/02/01/17:06} 
\\
$\mathcal{I}_{\omega}, \widetilde{\mathcal{I}}_{\omega}, \mathcal{I}^{\ddagger}, 
\mathcal{I}^{j}_{\omega}$   
& \eqref{13/03/30/12:14}, \eqref{13/03/17/16:25}, \eqref{14/09/20/13:55}, 
\eqref{14/02/06/09:41}  
\\
$\mathcal{J}_{\omega}$
& \eqref{13/03/08/10:38} 
\\
$\Phi_{\omega}, U$ 
& positive, radial ground states of \eqref{12/03/23/18:09} and \eqref{15/01/29/09:41} 
\\
$W$ 
& \eqref{13/03/16/16:43} 
\\
$m_{\omega}, \sigma, \nu_{\omega}$ 
& \eqref{12/03/24/17:51}, \eqref{12/03/24/17:52}, \eqref{14/06/27/16:27} 
\\
$\widetilde{PW}_{\omega}, PW_{\omega} , PW_{\omega,+}, PW_{\omega,-}$
& \eqref{15/03/22/11:23}, \eqref{15/03/21/11:40}, \eqref{13/02/15/15:32}, \eqref{15/01/24/00:01}
\\
$PW^{\dagger}, PW_{\omega}^{\dagger}, PW^{\dagger,\varepsilon}$ 
& \eqref{15/05/06/16:35}, \eqref{15/05/06/18:09}, \eqref{15/05/06/17:43} 
\\
$PW_{+}^{\ddagger}$
& \eqref{14/02/01/16:53}
\\
$T_{\omega}$
& \eqref{13/03/30/12:20}
\\
$s_{p}$ 
& \eqref{13/03/08/10:39}
\\
$\mathcal{L}_{\omega}, L_{\omega, +}, L_{\omega, -}$ 
& \eqref{15/02/02/10:25}, \eqref{13/02/02/17:56}, \eqref{13/02/02/17:57} 
\\
$L^{\dagger}_{+}, L^{\dagger}_{-}$ 
& \eqref{13/03/30/12:01}, \eqref{13/04/12/12:14}
\\
$\widetilde{L}_{\omega, +}, \widetilde{L}_{\omega, -}$ 
& \eqref{13/03/30/11:40}, \eqref{13/04/12/14:40} 
\\
$\mu$ 
& positive eigen-value of $- i \mathcal{L}_{\omega}$
\\
$\mathscr{U}_{\pm}$
& eigen-function of $- i \mathcal{L}_{\omega}$ corresponding to $\pm \mu$
\\
$\lambda_{\pm}(t)$ 
& \eqref{13/05/04/9:35} 
\\
$\Gamma(t)$ 
& \eqref{13/05/04/14:30}
\\
$\lambda_{1}(t), \lambda_{2}(t)$
& \eqref{13/05/07/16:17}
\\
$d_{\omega}(\psi(t)), \widetilde{d}_{\omega}(\psi(t))$ 
& \eqref{13/05/05/13:27}, \eqref{13/05/14/21:30} 
\\
$\delta_{E}, \delta_{X}, \delta_{S}, \delta_{*}$ 
& \eqref{13/05/05/13:33}, \eqref{15/02/08/22:02}, 
\eqref{15/07/11/11:18}, \eqref{15/02/17/10:01}
\\
$T_{X}$ 
& \eqref{13/06/29/10:51}
\\
$\kappa_{1}(\delta)$ 
& \eqref{13/05/14/17:21}
\\
$A_{\omega}^{\varepsilon}, S_{\omega,R}^{\varepsilon}, S_{\omega,R,\pm}^{\varepsilon}$ 
& \eqref{13/03/02/20:21}, \eqref{14/01/23/11:24}, \eqref{14/01/23/11:58} 
\\
$St(I), V_{q}(I), V(I), W_{q}(I), W(I), W^{j}(I)$ 
& \eqref{14/01/30/12:07}, \eqref{14/01/30/11:19}, \eqref{14/02/08/16:35}
\\
$G^{j}_{n}, g^{j}_{n}$ 
& \eqref{15/07/11/13:00}, \eqref{14/01/26/14:02} 
\\
$\sigma^{j}_{n}, \sigma^{j}_{\infty}$ 
& \eqref{14/01/26/14:14}, \eqref{14/01/26/14:40}
\\
$x^{j}_{n}, t^{j}_{n}, \lambda^{j}_{n}$ 
&
\eqref{14/01/26/11:46}, \eqref{14/01/26/11:47}, \eqref{14/01/26/11:48}
\\
$\widetilde{u}^{j}, w^{k}_{n}$ 
& 
\eqref{14/01/26/12:02}, \eqref{14/01/26/11:57}
\\ 
$\widetilde{\psi}^{j}, \psi^{j}_{n}$ 
& \eqref{14/01/26/14:26}, \eqref{14/02/01/10:50}
\\ 
$\psi_{n}^{k\mbox{-}app}$ 
& \eqref{14/02/03/11:10} 
\\
\hline
\end{tabular}
\end{center}

\section*{Acknowledgements}
T.A. is partially supported by the Grant-in-Aid 
for Young Scientists (B) \# 25800077 of JSPS. 
\\
S. I. is partially supported by NSERC Discovery grant \# 371637-2014.
\\
H. K. is partially supported by the Grant-in-Aid for Young Scientists (B) \# 00612277
of JSPS.
\\
H. N. is partially supported by the Grant-in-Aid for Scientific Research (B) 23340030 of JSPS and the Grant-in-Aid for Challenging  Exploratory Research 23654052 of JSPS.

\bibliographystyle{jplain}

\end{document}